\documentclass[11pt]{amsart}
\usepackage{enumerate,amsmath,amssymb, mathrsfs,stmaryrd}

\textwidth 16cm
\textheight 22cm
\topmargin 0.0cm
\oddsidemargin -0.0cm
\evensidemargin -0.0cm
\parskip 0.0cm

\usepackage{latexsym, amssymb,enumerate}
\usepackage{appendix}

\newcommand{\ga}{\alpha}
\newcommand{\gb}{\beta}
\newcommand{\gc}{\gamma}
\newcommand{\gd}{\delta}

\newcommand{\gra}{\nabla}
\newcommand{\de}{\partial}
\newcommand{\bpf}{\begin{proof}}
\newcommand{\epf}{\end{proof}}
\newcommand{\beq}{\begin{equation}}
\newcommand{\eeq}{\end{equation}}
\newcommand{\beqn}{\begin{eqnarray*}}
\newcommand{\eeqn}{\end{eqnarray*}}

\newcommand\R{\mathbb{R}}

\def\circledwedge{\setbox0=\hbox{$\bigcirc$}\relax \mathbin {\hbox
to0pt{\raise.5pt\hbox to\wd0{\hfil $\wedge$\hfil}\hss}\box0 }}

\def\R{{\mathfrak R}}

\newtheorem{prop}{Proposition}[section]
\newtheorem{theo}[prop]{Theorem}
\newtheorem{lemm}[prop]{Lemma}
\newtheorem{coro}[prop]{Corollary}
\newtheorem{rema}[prop]{Remark}

\def\begeq{\begin{equation}}
\def\endeq{\end{equation}}
\def\p{\partial}

\def\R{\mathbb R}

\def \ds{\displaystyle}
\def \vs{\vspace*{0.1cm}}

\def\div{{\rm div\,}}

\def\odot{\setbox0=\hbox{$\bigcirc$}\relax \mathbin {\hbox
to0pt{\raise.5pt\hbox to\wd0{\hfil $\wedge$\hfil}\hss}\box0 }}

\numberwithin{equation} {section}

\def\tilde{\widetilde}

\begin{document}

\title[Conformally compact Einstein manifolds] {Compactness of conformally compact Einstein manifolds in dimension 4}

\author{Sun-Yung A. Chang}
\address{Princeton University, Princeton, NJ}
\email{chang@math.princeton.edu}

\author{Yuxin Ge}
\address{Universit\'e Toulouse\\
IMT,
Universit\'e Toulouse 3, \\118, route de Narbonne
31062 Toulouse, France}
\email{yge@math.univ-toulouse.fr}

\begin{abstract} In this paper, we establish some compactness results of conformally compact Einstein metrics  on $4$-dimensional manifolds. Our results were proved under assumptions on the behavior of some local and non-local conformal invariants, on the compactness of the boundary metrics at the conformal infinity, and  on the topology of the manifolds. 
\end{abstract}

\thanks{Research of Chang is supported in part by the NSF grant MPS-1509505.}

\subjclass[2000]{}

\keywords{}

\maketitle




\section{Introduction}
In this paper we study the compactness of a set of conformally compact Einstein metrics on some manifold $X$
of dimension four with three dimensional boundary $\p X$. 
We introduce a class of conformally invariant quantities on $X$ and on its boundary. We aim to establish a compactness result that under 
suitable conditions on the size of these invariants, the compactness of a class of metrics on the boundary would imply the compactness of the corresponding conformal structures in the interior.
To be more precise, we consider on $X = X^4$ a set of conformally compact Einstein metrics $g = \rho^2g^+$ where $g^+$ is an asymptotically hyperbolic Einstein metric on $X$ 
and $\rho$ is a smooth defining function of the boundary such that $g$ extends to a smooth metric on the closure of $X$. 
To state our results, we first introduce a class of $2$-tensor $S$ on the boundary which is pointwisely conformally invariant. The definition of $S$ is motivated by the 
Gauss-Bonnet formula on $4$-manifolds with boundary $(X,\p X,g)$, with $g$ defined on $X$ and extended smoothly to $\p X$. 
On such a manifold $(X, \p X, g)$, we consider the functional on $(X,\p X,g)$
$$
g\to \int_X |W|^2_gdv_g+8\oint_{\p X}W_{\alpha n \beta n}L^{\alpha\beta}d\sigma_g,
$$
where $L$ is the second fundamental form and $W$ the Weyl tensor, $n$ is the outwards unit normal vector on the boundary,  
the greek indices $\alpha,\beta,\cdots$ represent the tangential indices letter and $i,j,k\cdots$ are the full indices. 
Both terms of the functional are conformally invariant, i.e., under conformal change of metric $\hat{g}= e^{2w}g$ for a smooth function $w$ on the closure of $X$,  the value of the above functional stays the same. Critical metrics of the functional under variation of the metric $g$ satisfy in the interior the well-known condition $B_{ij }= 0$ where $B_{ij}$ is the Bach tensor, and on the boundary
(more details in Section 2 below) 
$$
 S_{\alpha\beta} := \nabla^i W_{i\alpha n\beta} + \nabla^i W_{i \beta n\alpha} - \nabla^n W_{n\alpha n\beta}  + \frac43H W_{n\alpha n\beta}=0,
$$
where $H$ denotes the mean curvature. When the boundary is totally geodesic (i.e. $L = 0$), as in the case of a conformally compact Einstein metric, it turns out
$$
S_{\alpha\beta} = \frac12 \nabla^nR_{\alpha\beta}  - \frac{1}{12} \nabla^nR g_{\alpha\beta},
$$
where $R_{\alpha\beta}$ (or $Ric$) is the Ricci tensor and $R$ is the scalar curvature. $S$  is a non-local tensor for conformally compact Einstein manifolds and is a conformal 
invariant in the sense of Lemma \ref{lemma2.0} below, that is $S(e^{2w}g)=e^{-w}S(g)$.\\

For any four-dimensional Riemannian manifold $(X^4 , g)$ with or without boundary, the $Q$-curvature $Q_4$ is defined as:
\beq
\label{q4}
Q_4:=-\frac16 \triangle R-\frac12|Ric|^2+\frac16 R^2.
\eeq
In the study of conformal geometry, $Q$ is naturally related to a $4$th-order differential operator, called the Paneitz (\cite{Pa}) 
operator (which is a special case of some general class of GJMS{\cite{GJMS} operators) and is a 4-th order
generalization of the conformal Laplacian operator defined as:
\beq
\label{defp4}
P:=( \triangle)^2-\div[(\frac23 R g-2 Ric)\nabla]
\eeq
Throughout this paper, we denote by $[g]=\{e^{2w}g|\; w:X\to\R \mbox{ is a regular function}\}$ the class of metrics conformal to
 $g$. Under the conformal change $g_w:=e^{2w}g$, the associate  $Q$-curvature for $g_w$ metric, denote
by $Q_4(g_w)$, is related to $Q_4(g)$ by the PDE:
\beq
\label{p4}
P(g) w+Q_4(g)=Q_4(g_w)e^{4w}.
\eeq
The 4th order operator Paneitz operator $P$ and its corresponding $Q$ curvature have been extensively studied in the recent literature, here we will just cite a few of them (\cite {BCY}, \cite {CY}, 
\cite {GZ}, \cite{FG}, \cite{DM}, \cite{GM}). 

There are two non-local curvature tensors of order three defined on the boundary $\p X$; one is the $T$ curvature defined on the boundary of any compact four manifolds (see \cite{CQ1}, 
\cite{ChangQingYang2006bis}), the other is 
the conformally invariant $Q_3$ curvature defined on the boundary of confomally compact asymptotically hyperbolic manifolds
in (\cite {FG}). Without going into details of their respective definitions, here we will just cite that the result that in the special case when the boundary is totally 
geodesic, it turns out the curvatures $T$  and $Q_3$ agree (see \cite{ChangQingYang2006bis}, Lemma 2.2) and in this case $$ T:=\frac{1}{12}\frac{\p R}{\p n}$$
On a four manifold $(X,\p X,g)$ with boundary, the
 $Q$-curvature on $X$ and the
$T $-curvature on the boundary $\p X$ are related by the Chern-Gauss-Bonnet formula \cite{CQ1} 
$$
\chi(X)=\frac1{32\pi^2}\int_X( |W|^2+4Q) dvol +\frac1{4\pi^2}\oint_{\p X} (\mathscr{L}  + T) d\sigma
$$
where $\chi(X)$ is the Euler characteristic number of $X$ and $\mathscr{L} d \sigma$ is a pointwise conformal
invariant on $\p X$.

As a consequence,  $\int_X Q+2 \oint_{\p X}T$ is independent of the choice of metrics in the conformal class $[g]$ 
since $|W|^2 dvol$ is also a pointwise conformal invariant term on X. \\

Let us now recall briefly the Yamabe invariants on compact $4$-manifolds $(X,g)$ with boundary $\p X$. We consider the Yamabe energy functional
$$
Y(g):=\frac 16\int_X R_g +\int_{\p X}H_g, 
$$
where $R_g$ is the scalar curvature of the metric $g$ and $H_g$ is the mean curvature on the boundary $\p X$. Note that when $(X, g)$ is with totally geodesic boundary $\p X$ (or more generally, 
the mean curvature vanishes on the boundary)
 for a conformal metric $\tilde g=U^2 g\in [g]$, we can rewrite
$$
Y(\tilde g)=\int_X |\nabla U|^2 +\frac16 R_{g} U^2.
$$
We now denote the first Yamabe constant as
$$
Y(X,M,[g]):= \inf_{\tilde g\in [g]}\frac{Y(\tilde g)}{vol(X,\tilde g)^{1/2}}=\inf_{U\in C^1\setminus\{0\}}\frac{\ds\int_X |\nabla U|^2 +\frac16 R_{g} U^2}{\ds\left(\int_X U^4\right)
^{\frac{1}{2}}},  
$$
and the second Yamabe constant as 
$$
Y_b(X,M,[g]) := \inf_{\tilde g\in [g]}\frac{Y(\tilde g)}{vol(\p X,\tilde g)^{2/3}}=\inf_{U\in C^1\setminus\{0\}}\frac{\ds\int_X |\nabla U|^2 +\frac16 R_{g} U^2}
{\ds\left(\oint_M U^3\right)^{\frac{2}{3}}}.
$$
Here $\ds\oint$ denotes the integral on the boundary, and $vol(X,\tilde g)$ (resp. $vol(\p X,\tilde g)$) is the volume of $X$ (resp. $\p X$) under the metric $\tilde g$.\\

On ${(X, \p X, g^+)}$ a four-dimensional oriented manifold, we say the manifold is conformally compact
if there exists some defining function $\rho >0$ on $X$ so that $\rho^2 g^{+}$ is a compact metric defined
on $ \bar X =: X \cup \p  X.$ In the case when
$g^{+} $ is a Poincare Einstein metric which we normalized
so that $Ricci_{g^+} = - 3 {g^+} $, we say that ${(X, \p X, g^+)}$
  is  a conformally compact Einstein manifold (abbreviated as CCE) and we say $\p X$
the conformal infinity of $X$. Note that since the choice of the defining functions
are by no means unique but a multiple of each other,  their corresponding compactified metrics are conformal to each other and so  are their restriction to $\p X$. Thus the boundary metric on $\p X$ is unique up to a conformal class.

Throughout this paper,  we will choose a special compactification of $g^+$.
This special compactification was first introduced in the paper by 
Fefferman-Graham \cite{FG} 
(
to study
the renormalized volume of CCE manifolds with odd dimensional boundary). Here
we will restrict our attention to the special case when $\p X $ is of dimension 
3. To define this special compactification, given any boundary metric
$h \in [\rho^2 g^{+}|_{\p X}]$, one solves the partial differential equation
\beq 
\label{eq1.1}
-\triangle_{g^+}w=3
\eeq
We denote the metric $g = e^{2w} g^{+}$ with $g|_{\p X}=h$ and we name it as the Fefferman-Graham (abbreviated FG) compactification with the boundary $h$. Later in this paper, we will further derive other relevant properties (e.g. Lemma \ref{lemma2.3}, Lemma \ref{lem4.1}) of this
compactification; but here we will point out one key property which leads us to think the metric is the most suitable representative metric among the
conformal compactification metrics of $g^{+}$. The property, which was
pointed out and applied to derive a formula of the renormalized volume in the earlier paper by Chang-Qing-Yang \cite{ChangQingYang2006bis}; 
is that, for this choice of compactification, the $Q$-curvature $Q_4(g)$ on $X$ vanishes identically.
To see this, we notice that for the
Einstein metric $g^+$, by our normalization of it being Poincare Einstein, the Paneitz operator can be written as
\beq
\label{p4}
P_{g^+}=\triangle_{g^+}^2+2\triangle_{g^+}, \,\,\,\,\,\,  Q_4(g^+) = 6,  
\eeq
so that applying equation
(\ref{p4}), 
we find
\beq
\label{q4zero}
Q_4(g)= e^{-4w}(P_{g^+} w +6)\,= \, 0.
\eeq

In this paper, we will
always choose the Yamabe metric on the boundary as representative in the conformal infinity $[g|_{TM}]$ and take the corresponding FG compactification.\\

Through the whole paper, we assume $X$ is $4$-dimensional oriented CCE, and the boundary $M=\p X=\mathbb{S}^3$ is $3$-sphere and the boundary Yamabe metric 
$\hat{g}$ in the conformal infinity is non-negative type, that is, the scalar curvature  of $\hat{g}$ is an non-negative constant; and we denote
denote the corresponding FG compactification. \\

Our main compactness results are as follows.

\begin{theo} 
\label{maintheorem}
Let  $\{X, \p X=\mathbb{S}^3, g_i^+\}$ be a family of $4$-dimensional oriented CCE on $X$ with boundary $\p X$.  We assume
the boundary Yamabe metric $\hat{g_i}$ in conformal infinity is
of non-negative type and denote
 $g_i$ be the corresponding FG compactification.
Assume  
\begin{enumerate}
\item The boundary Yamabe metrics  $\hat{g_i}$ form a compact family in $C^{k+3}$ norm with $k \ge 2$;
\item There is no concentration of $S$-tensor in $L^1$ norm for the $g_i$ metric on $\p X$ 
in the following sense, 
$$
\lim_{r\to 0}\sup_i\sup_x\oint_{B(x,r)} |S_i|=0
$$
\item $H_1(X,\mathbb{Z})=H_2(X,\mathbb{Z})=0$.
\item there exists some positive constant $C_1>0$ such that the first Yamabe constant for the compactified metric $g_i$ is bounded uniformly from below by $C_1$
$$
Y(X,M,[g_i])   \ge C_1;
$$
\item there exists some positive constant $C_2>0$ such that the second Yamabe constant for the metric $g_i$ is bounded uniformly from below by $C_2$
$$
Y_b(X,M,[g_i]) \ge C_2;
$$
\end{enumerate}
Then, the family of the Fefferman-Graham compactified metrics $(X,g_i)$  is compact in $C^{k+2,\alpha}$ norm for any $\alpha\in (0,1)$ up to a diffeomorphism fixing the boundary.
\end{theo}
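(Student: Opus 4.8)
The plan is to regard each $g_i$ as a Bach-flat metric on $(X,\partial X)$ with totally geodesic boundary --- the Bach tensor of $g_i$ vanishes because $g_i$ is conformal to the Einstein metric $g_i^+$ and $B$ is conformally invariant in dimension four --- and to run a concentration-compactness/blow-up analysis in the spirit of Anderson and Tian--Viaclovsky, the five hypotheses being used respectively to bound the total curvature, to rule out collapsing, and to exclude bubbling. The first step is an a priori energy bound. Since $g_i$ is the FG compactification, $Q_4(g_i)\equiv 0$; since $\partial X=\mathbb{S}^3$ and $H_1(X,\mathbb{Z})=H_2(X,\mathbb{Z})=0$ one has $\chi(X)=1$; hypothesis (1) controls the pointwise conformal invariant $\mathscr{L}\,d\sigma$ on the boundary; and hypothesis (2), via a finite cover of $\mathbb{S}^3$ by small balls, bounds $\oint_{\partial X}|S_i|\,d\sigma$ uniformly, hence also the boundary curvature $T_i=\frac1{12}\partial_n R_{g_i}$ (whose integral differs from that of $\mathrm{tr}\,S_i$ only by terms controlled through (1)). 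Inserting this into the Chern--Gauss--Bonnet formula for $(X,\partial X,g_i)$ yields a uniform bound on $\int_X|W_{g_i}|^2\,dv_{g_i}$; in dimension four, using once more that $g_i$ is conformal to $g_i^+$ and that the FG conformal factor $w_i$ (the solution of $-\triangle_{g_i^+}w_i=3$) is controlled on compact subsets of $X$ by the maximum principle and standard asymptotically hyperbolic estimates, this upgrades to a uniform bound on $\int_X|Rm_{g_i}|^2\,dv_{g_i}$, and --- with the renormalized volume of $g_i^+$ controlled by the $L^2$-Weyl bound and hypothesis (1) --- to a uniform bound on $vol(\bar X,g_i)$.

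The second step is non-collapsing: hypotheses (4) and (5), $Y(X,M,[g_i])\ge C_1>0$ and $Y_b(X,M,[g_i])\ge C_2>0$, together give a uniform Sobolev inequality (with its boundary trace-Sobolev companion) on the manifold-with-boundary $(\bar X,g_i)$, whence $vol_{g_i}(B(x,r))\ge c\,r^4$ for all $x\in\bar X$ and small $r$. The third step is $\varepsilon$-regularity for the Bach-flat system: in the interior this is Tian--Viaclovsky's estimate, while in a collar of $\partial X$ one needs the analogue for Bach-flat metrics with totally geodesic boundary, where the role of the boundary obstruction is played by the conformally invariant $2$-tensor $S$ --- which, by the variational analysis of Section 2, is exactly the boundary term of the Euler--Lagrange equations of $\int_X|W|^2\,dv+8\oint_{\partial X}W_{\alpha n\beta n}L^{\alpha\beta}\,d\sigma$, and which scales so that $\oint|S|\,d\sigma$ is scale invariant. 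Concretely, there is $\varepsilon_0>0$ such that $\int_{B(x,r)}|Rm_{g_i}|^2\,dv+\oint_{B(x,r)\cap\partial X}|S_i|\,d\sigma<\varepsilon_0$ implies $\sup_{B(x,r/2)}|Rm_{g_i}|\le C r^{-2}$. Combining the energy bound, non-collapsing, and hypothesis (2) gives the dichotomy: either $\sup_i\sup_{\bar X}|Rm_{g_i}|<\infty$, or the curvature concentrates at finitely many points $p_1,\dots,p_N\in\bar X$, at each of which rescaling by the local curvature scale extracts a complete non-compact bubble limit carrying a definite amount of $L^2$ curvature.

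The crux is to rule out every bubble. An interior bubble, extracted from $g_i^+$, is --- after observing that the Einstein constant $-3$ rescales to $0$ and that the rescaled compactification factor is a bounded harmonic function on a one-ended ALE space, hence constant --- a complete non-flat Ricci-flat ALE $4$-manifold of finite energy; every such space has positive second Betti number, so implanting it into $X$ along a neck forces $H_2(X,\mathbb{Z})\ne 0$, contradicting hypothesis (3). A boundary bubble is a complete Bach-flat $4$-manifold with non-empty totally geodesic boundary, finite $L^2$ curvature, $Q_4\equiv 0$, and --- by the scale invariance of $\oint|S|\,d\sigma$ and hypothesis (2) --- with $S\equiv 0$ on its boundary, so it is a genuine critical metric of the boundary functional; doubling across the totally geodesic boundary produces a complete Bach-flat ALE $4$-manifold with $Q_4\equiv 0$ that inherits the Sobolev inequalities from (4)--(5), and a rigidity argument combining (3) (the topology such a bubble would implant into $X$ or into $\partial X=\mathbb{S}^3$), (4)--(5), and $S\equiv 0$ forces it to be trivial, i.e.\ the flat half-space $\mathbb{R}^4_+$. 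Hence no bubbling, and we are in the good case: curvature uniformly bounded, non-collapsed, $vol(\bar X,g_i)$ bounded, hence $\mathrm{diam}(\bar X,g_i)$ bounded.

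It remains to promote $C^0$-control to $C^{k+2,\alpha}$. In the interior one works with the Einstein metric $g_i^+$ in harmonic coordinates, where it satisfies a second-order elliptic system, together with the elliptic equation $-\triangle_{g_i^+}w_i=3$ for the FG factor, so that $g_i=e^{2w_i}g_i^+$ is regular; near $\partial X$ one uses boundary-harmonic coordinates and Anderson-type boundary regularity for CCE metrics --- the boundary data being the prescribed $\hat g_i\in C^{k+3}$ with $H_i=L_i=0$, and the non-local $h^{(3)}$-term, i.e.\ the $S$-tensor, being controlled by the a priori estimates --- which together yield uniform $C^{k+2,\alpha}$ bounds on $g_i$ in these charts for every $\alpha\in(0,1)$; a Cheeger--Gromov patching then extracts a $C^{k+2,\alpha}$-convergent subsequence, the convergence being realized by diffeomorphisms fixing $\partial X$ because the boundary metric and its defining coordinates are prescribed data, and one checks the limit is again the FG compactification of a CCE metric with the limiting Yamabe boundary metric. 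I expect the main obstacle to be the boundary analysis: establishing the boundary $\varepsilon$-regularity for Bach-flat metrics with totally geodesic boundary with $S$ as the obstruction, and proving rigorously that the no-$S$-concentration hypothesis (2), combined with the Yamabe lower bounds (4)--(5) and the topological condition (3), genuinely excludes every boundary bubble; the interior bubbling, killed by $H_2(X,\mathbb{Z})=0$, is comparatively classical.
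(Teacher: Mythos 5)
Your overall architecture (energy bound, non-collapsing from (4)--(5), $\varepsilon$-regularity with $S$ as the boundary obstruction, exclusion of bubbles, then elliptic bootstrapping) matches the paper's in spirit, but the two steps on which the whole theorem hinges are asserted rather than proved, and one of them rests on a false premise. For the interior bubble you claim that every complete non-flat Ricci-flat ALE $4$-manifold has positive second Betti number, so that implanting it contradicts $H_2(X,\mathbb{Z})=0$. This is not a theorem: whether a non-flat Ricci-flat ALE $4$-manifold with $b_2=0$ (asymptotic to $\mathbb{R}^4/\Gamma$ for nontrivial $\Gamma$) can exist is open, and this is exactly the gap in Anderson's earlier compactness claim that the paper is at pains to repair. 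The paper's Proposition \ref{prop4.8} must split into the cases $H_2(E,\mathbb{R})\neq 0$ (where your Mayer--Vietoris argument works) and $H_2(E,\mathbb{R})=0$; the latter is handled only by combining $\p X=\mathbb{S}^3$ with condition (3) to show $\tilde X=X\cup_{\mathbb{S}^3}B^4$ is a homology $4$-sphere (Lemma \ref{lemma4.5bis}), invoking Crisp--Hillman (Lemma \ref{lemma4.5bisbis}) to force $\Gamma\in\{1\}, Q_8$, or the binary icosahedral group, and then ruling out the last two by explicit Gauss--Bonnet and signature computations with eta invariants. None of this is recoverable from your sketch.

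For the boundary bubble, your ``double across the totally geodesic boundary and apply a rigidity argument'' is not an argument: there is no known rigidity for complete Bach-flat, $Q_4$-flat ALE metrics with $S\equiv 0$. The mechanism the paper actually uses is that the rescaled metrics $\bar g_i=K_i^2g_i$ retain the CCE structure ($\bar g_i=\bar v_i^{-2}g_i^+$ with the relations of Corollary \ref{corollary4.2}), so the boundary bubble is the FG compactification of a complete asymptotically hyperbolic \emph{Einstein} metric with flat conformal infinity and $S_\infty\equiv 0$; the Biquard and Biquard--Herzlich unique continuation theorems then force $g^+_\infty$ to be hyperbolic near the boundary, a unique continuation argument for the Weyl tensor ($\triangle_{g^+_\infty}W=Rm*W$) propagates this globally (Lemma \ref{lemma4.3}), and a Liouville-type theorem (Proposition \ref{prop4.4}, writing the bubble as $e^{2u}g_1$ on $\R^4_+$ with $\triangle^2u=0$, $-\triangle u-|\nabla u|^2\ge 0$, $u=\nabla u=\triangle u=0$ on the boundary, and invoking Boas--Boas) shows the bubble is flat, contradicting the normalization. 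Two smaller points: your claim that $\oint T_i$ is controlled by $\oint\mathrm{tr}\,S_i$ ``up to terms controlled by (1)'' is vacuous since $\mathrm{tr}\,S\equiv 0$ and $T$ is an independent non-local quantity (the paper obtains the global $L^2$ curvature bound instead in Section 5 from the identity $\int_Xv_i^{-1}R_{g_i}=2\,vol(\p X,\hat g_i)$ together with the lower bound on $v_i^{-1}$ away from the boundary); and your final diameter bound presupposes a uniform bound on $vol(\bar X,g_i)$ that you have not established -- the paper instead proves the diameter bound by a separate contradiction argument analyzing the asymptotics $f(x)/d(x,\p X_\infty)\to 1$ of the limiting defining function.
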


As consequences of the main theorem, we can establish the following corollaries.

\begin{coro}
\label{maincorollary0}
Under the assumptions (1) and (3)-(5) as in Theorem \ref{maintheorem}, suppose  $\{S_i\}$ is a relatively weakly compact family in $L^1$, that is, the closure of $\{S_i\}$ is compact in the weak topology generated by all linear continuous maps on  $L^1$. 
Then the family of the Fefferman-Graham compactified metrics $(X,g_i)$ is compact in $C^{k+2,\alpha}$ norm for any $\alpha\in (0,1)$, up to a diffeomorphism fixing the boundary, provided
$ k \ge 2$.
\end{coro}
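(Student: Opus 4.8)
The plan is to derive Corollary~\ref{maincorollary0} from Theorem~\ref{maintheorem} by verifying that hypothesis (2) of the theorem — the no-concentration condition $\lim_{r\to 0}\sup_i\sup_x\oint_{B(x,r)}|S_i|=0$ — is an automatic consequence of the relative weak compactness of $\{S_i\}$ in $L^1(\p X)$. The key classical fact is the Dunford--Pettis theorem: a subset of $L^1$ of a finite measure space is relatively weakly compact if and only if it is uniformly integrable (equi-integrable), i.e. for every $\vep>0$ there is $\gd>0$ such that $\sigma_{\hat g_i}(E)<\gd$ implies $\oint_E |S_i|\,d\sigma_{\hat g_i}<\vep$ for all $i$. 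So the first step is to invoke Dunford--Pettis to pass from the weak-compactness assumption to equi-integrability of the family $\{|S_i|\}$ with respect to the boundary measures.

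The second step is to convert equi-integrability into the stated no-concentration condition. For this one needs a uniform lower bound on the rate at which geodesic balls $B(x,r)\subset(\p X,\hat g_i)$ shrink in measure as $r\to 0$: since hypothesis (1) of the theorem gives a compact family of boundary Yamabe metrics $\hat g_i$ in $C^{k+3}$ with $k\ge 2$, there is a uniform constant $C$ and $r_0>0$ with $\sigma_{\hat g_i}(B(x,r))\le C r^3$ for all $i$, all $x\in\p X$, and all $r<r_0$ (uniform volume comparison on a compact family of $3$-dimensional metrics with uniformly bounded geometry). Combining this with equi-integrability: given $\vep>0$, choose $\gd$ from uniform integrability, then choose $r$ small enough that $Cr^3<\gd$; then $\oint_{B(x,r)}|S_i|\,d\sigma_{\hat g_i}<\vep$ for every $i$ and every $x$. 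Taking the supremum over $x$ and $i$ and letting $r\to0$ yields exactly hypothesis (2). One subtlety: the tensor norm $|S_i|$ and the area element $d\sigma$ must be measured consistently — but by the conformal covariance $S(e^{2w}g)=e^{-w}S(g)$ stated in the introduction and the fact that we work with the fixed Yamabe representatives $\hat g_i$, the quantities $|S_i|\,d\sigma_{\hat g_i}$ are unambiguously defined, and the $C^{k+3}$-compactness of $\{\hat g_i\}$ makes all the constants uniform.

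Having verified that (2) holds, one simply applies Theorem~\ref{maintheorem} with hypotheses (1) and (3)--(5) assumed directly and (2) now established, concluding that $(X,g_i)$ is compact in $C^{k+2,\alpha}$ for all $\alpha\in(0,1)$ up to a boundary-fixing diffeomorphism, for $k\ge2$.

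I expect the only real content is the measure-theoretic step, and the main (mild) obstacle is the need for a \emph{uniform} volume-doubling / volume-upper-bound estimate on the family $\{(\p X,\hat g_i)\}$ so that the equi-integrability transfers to a single modulus of continuity for the ball-integrals independent of $i$; this is where hypothesis (1) is genuinely used, and it is routine given the $C^{k+3}$ precompactness. Everything else is a direct citation of Dunford--Pettis plus an invocation of the main theorem, so the proof should be short.
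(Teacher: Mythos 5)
Your proposal is correct and follows essentially the same route as the paper: the paper's proof likewise combines the Dunford--Pettis characterization of relative weak compactness in $L^1$ as uniform integrability with the observation that the compactness of the family $\{\hat g_i\}$ forces $\lim_{r\to 0}\sup_i\sup_x \mathrm{Vol}(B(x,r))=0$, so that condition (2) of Theorem \ref{maintheorem} holds and the theorem applies. Your write-up is in fact slightly more explicit about the uniform volume bound on geodesic balls, but the argument is the same.
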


\begin{coro}
\label{maincorollary0bis}
Under the assumptions (1) and (3)-(5) as in Theorem \ref{maintheorem}, suppose  there is some constant $C_4>0$ such that for some $+\infty\ge q>1$ one has
$$
\oint |S_i|^q\le C_4.
$$
Then the family of the Fefferman-Graham compactified metrics $(X,g_i)$ is compact in $C^{k+2,\alpha}$ norm for any $\alpha\in (0,1)$, up to a diffeomorphism fixing the boundary, provided
$ k \ge 2$.
\end{coro}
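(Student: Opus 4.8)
\emph{Proof proposal.} The plan is to derive the corollary directly from Theorem \ref{maintheorem}: the hypotheses (1) and (3)--(5) are assumed verbatim, so it suffices to show that the bound $\oint |S_i|^q \le C_4$ with $q>1$ forces the no-concentration condition (2), i.e. $\lim_{r\to 0}\sup_i\sup_x\oint_{B(x,r)}|S_i|=0$, where $B(x,r)\subset \partial X$ denotes a geodesic ball for the induced boundary metric $\hat g_i = g_i|_{\partial X}$.

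First I would record a uniform volume estimate for small boundary balls. Since the boundary Yamabe metrics $\hat g_i$ form a compact family in $C^{k+3}$ with $k\ge 2$, hence in particular in $C^2$, on the fixed $3$-manifold $\partial X = \mathbb{S}^3$, there exist constants $r_0>0$ and $C_0>0$, independent of $i$, such that
$$
vol(B(x,r),\hat g_i) \le C_0\, r^3
$$
for all $i$, all $x\in\partial X$ and all $0<r<r_0$ (this follows from uniform two-sided control of the metric coefficients in a fixed finite atlas, or from a uniform injectivity-radius lower bound together with volume comparison; the exponent $3$ is $\dim \partial X$).

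Next I would apply H\"older's inequality on $B(x,r)$ with exponents $q$ and $q/(q-1)$, using the convention $1/q=0$ when $q=+\infty$:
$$
\oint_{B(x,r)}|S_i| \;\le\; \left(\oint_{B(x,r)}|S_i|^q\right)^{1/q} vol(B(x,r),\hat g_i)^{1-1/q} \;\le\; C_4^{1/q}\,(C_0\, r^3)^{1-1/q}.
$$
Since $q>1$ we have $1-1/q>0$, so the right-hand side tends to $0$ as $r\to 0$, uniformly in $i$ and in $x$. This is exactly condition (2) of Theorem \ref{maintheorem}, and applying that theorem gives the asserted $C^{k+2,\alpha}$-compactness of $(X,g_i)$ up to a diffeomorphism fixing the boundary. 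Alternatively, the same $L^q$ bound yields uniform integrability of $\{|S_i|\}$ (by de la Vall\'ee-Poussin), hence relative weak compactness in $L^1$ by the Dunford--Pettis theorem, so one could instead invoke Corollary \ref{maincorollary0}.

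There is essentially no obstacle beyond Theorem \ref{maintheorem} itself: the only step needing a word of justification is the uniform bound $vol(B(x,r),\hat g_i)\le C_0 r^3$, which is routine given the $C^{k+3}$-compactness of the boundary family. All of the analytic content of the corollary is carried by the main theorem.
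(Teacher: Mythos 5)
Your proof is correct, but your primary route differs from the paper's. The paper deduces the corollary from Corollary \ref{maincorollary0}: for $1<q<\infty$ a bounded set in $L^q$ is weakly compact in $L^q$ by reflexivity, hence relatively weakly compact in $L^1$, and the case $q=\infty$ is handled separately by reducing to $L^2$ (since $L^\infty$ is not reflexive). You instead verify hypothesis (2) of Theorem \ref{maintheorem} directly: H\"older's inequality together with the uniform small-ball volume bound $vol(B(x,r),\hat g_i)\le C_0 r^3$ (which does follow from the $C^{k+3}$-compactness of the boundary family) gives $\oint_{B(x,r)}|S_i|\le C_4^{1/q}(C_0 r^3)^{1-1/q}\to 0$ uniformly in $i$ and $x$. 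Your argument is more elementary, avoids any functional-analytic machinery, and treats $q=\infty$ with no separate case; it is in fact the same H\"older mechanism the authors use to prove the analogous Corollary \ref{maincorollary1} for the $T$-curvature. Your parenthetical alternative (uniform integrability via de la Vall\'ee-Poussin, then Dunford--Pettis and Corollary \ref{maincorollary0}) is essentially the paper's route. Both are valid; nothing is missing.
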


\begin{coro}
\label{maincorollary01}
Under the assumptions  (1) and (3)-(5) as in Theorem \ref{maintheorem},  \\
\begin{itemize}  
\item[](2') \; \;there exists some small constant $\varepsilon_1>0$ (depending on $C_1$,$C_2$ and $C^{k+3}$ norm bound of the boundary metric and also on the topology of $X$) such that for all $i$ one has 
$$
\oint |S_i|\le \varepsilon_1,
$$
\end{itemize} 
then the family of the Fefferman-Graham compactified metrics $(X,g_i)$ is compact in $C^{k+2,\alpha}$ norm for any $\alpha\in (0,1)$ up to a diffeomorphism fixing the boundary, provided
$ k \ge 2$.
\end{coro}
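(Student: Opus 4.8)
The plan is to deduce Corollary \ref{maincorollary01} by showing that the uniform smallness hypothesis (2') can be used in place of the no-concentration hypothesis (2) in the proof of Theorem \ref{maintheorem}, and then to run that proof without further change. One should first note that (2') does \emph{not} formally imply (2): a sequence $\{S_i\}$ whose total mass $\oint_{\p X}|S_i|$ is a fixed number $\le\varepsilon_1$ but which concentrates at a boundary point satisfies (2') while violating (2). So the argument cannot merely quote Theorem \ref{maintheorem}; it must re-enter that proof precisely at the point where (2) is invoked. (This also distinguishes Corollary \ref{maincorollary01} from Corollaries \ref{maincorollary0} and \ref{maincorollary0bis}, in which relative weak $L^1$-compactness, resp.\ an $L^q$ bound with $q>1$, already forces the uniform integrability that rules out concentration.)

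In the proof of Theorem \ref{maintheorem}, condition (2) is used only to exclude boundary bubbling in a blow-up / concentration-compactness analysis: if $(X,g_i)$ failed to be $C^{k+2,\alpha}$-precompact, one would rescale around points of curvature concentration on or near $\p X$ and extract a nontrivial complete limit ("bubble"); by the conformal weight $S(e^{2w}g)=e^{-w}S(g)$ of Lemma \ref{lemma2.0} together with the boundary regularity theory, $\oint|S|$ is scale-invariant under this rescaling, and a nontrivial bubble must carry a definite amount $\varepsilon_0>0$ of $\oint|S|$, with $\varepsilon_0$ depending only on $C_1$, $C_2$, the $C^{k+3}$-bound on the boundary Yamabe metrics, and the topology of $X$ (through (3)). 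Hypothesis (2) excludes the bubble because it forces the local mass $\oint_{B(x,r)}|S_i|$ in shrinking balls to go to zero uniformly. The key observation is that what is actually needed to kill the bubble is only that this local mass stay below $\varepsilon_0$ at one fixed scale — smallness, not vanishing, of the local mass. Accordingly I would take $\varepsilon_1:=\varepsilon_0$, with the same dependence on $C_1,C_2$, the boundary-metric bound, and the topology; then under (2') one has, for every $x\in\p X$ and every $r>0$,
$$
\oint_{B(x,r)}|S_i|\;\le\;\oint_{\p X}|S_i|\;\le\;\varepsilon_1\;=\;\varepsilon_0,
$$
so the bubble alternative is excluded exactly as before, and the rest of the proof of Theorem \ref{maintheorem} — the covering argument, the interior and boundary Schauder estimates fed by (1), (4), (5), and the topological control (3) of the global conformal factor and the FG compactification — goes through verbatim to give $C^{k+2,\alpha}$-compactness up to a boundary-fixing diffeomorphism. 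Equivalently one can package this as a local $\varepsilon$-regularity statement: there is $\varepsilon_0>0$ such that $\oint_{B(x,2r_0)}|S_i|<\varepsilon_0$ forces uniform $C^{k+2,\alpha}$ bounds on $B(x,r_0)$, and (2') makes the hypothesis of this local statement hold at every point.

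The main obstacle is the second step: extracting from the internal structure of the proof of Theorem \ref{maintheorem} the precise fact that (2) is used solely to preclude boundary bubbling, and that a fixed-scale bound $\varepsilon_0$ on the local mass of $|S_i|$ — with exactly the claimed dependence on $C_1$, $C_2$, the $C^{k+3}$ boundary bound and the topology — suffices. Concretely this means tracking the lower bound $\oint|S|\ge\varepsilon_0$ on a nontrivial bubble: one uses that $S$ is an obstruction to the vanishing of the normal derivative of the Ricci tensor (in the totally geodesic case $S_{\alpha\beta}=\frac12\nabla^n R_{\alpha\beta}-\frac1{12}\nabla^n R\,g_{\alpha\beta}$), that a genuinely non-compact Einstein limit with totally geodesic boundary cannot have $S\equiv0$ unless it is trivial, and that the conformal weight $e^{-w}$ of Lemma \ref{lemma2.0} renders $\oint|S|$ invariant under the blow-up, so the lost mass is genuinely quantized. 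Once this threshold $\varepsilon_0$ is pinned down with the correct dependences, setting $\varepsilon_1=\varepsilon_0$ completes the proof.
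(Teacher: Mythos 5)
Your reduction correctly locates the only place where hypothesis (2) enters the proof of Theorem \ref{maintheorem} (the boundary blow-up analysis of Lemma \ref{lemma4.3}, Step 2), and your computation that $\oint|S|\,d\sigma$ is invariant under the rescaling $\bar g_i=K_i^2g_i$ is right and is exactly what the paper uses. The gap is in the crucial claim that a nontrivial boundary bubble must carry a \emph{uniform} amount $\varepsilon_0>0$ of $\oint|S|$, with $\varepsilon_0$ depending only on $C_1$, $C_2$, the boundary bound and the topology. The ingredients you cite (scale invariance of the $S$-mass, plus the rigidity statement that a bubble with $S_\infty\equiv0$ is trivial, i.e.\ Lemma \ref{lemma4.3} and Proposition \ref{prop4.4}) only yield that each individual nontrivial bubble has $\oint|S_\infty|>0$; they do not exclude a sequence of nontrivial bubbles whose $S$-mass tends to zero. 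To promote strict positivity to a uniform threshold one needs a compactness argument on the space of admissible bubbles, which is exactly the contradiction/diagonalization argument one was trying to make effective --- so as written the quantization step is asserted rather than proved, and the proof is incomplete at its central point.

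The paper sidesteps this entirely with a soft argument in which $\varepsilon_1$ is \emph{not} effective: arguing by contradiction, if no $\varepsilon_1$ worked one could produce a sequence satisfying (1), (3)--(5) with $\oint|S_i|\to0$ whose compactifications blow up; but then $S_i\to0$ strongly in $L^1$, hence $\{S_i\}$ is relatively weakly compact in $L^1$, and Corollary \ref{maincorollary0} (itself reduced to condition (2) of Theorem \ref{maintheorem} via the Dunford--Pettis theorem) gives compactness of the metrics --- a contradiction. If you want to keep your structure, the fix is to replace the claimed quantization by this contradiction argument (or, equivalently, to prove the quantization by applying the blow-up compactness to a putative sequence of bubbles with vanishing $S$-mass and invoking Proposition \ref{prop4.4} on the limit); either way the constant $\varepsilon_1$ comes out non-constructive. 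Your preliminary observation that (2') does not imply (2), and hence that the corollary cannot be obtained by merely quoting Theorem \ref{maintheorem}, is correct and consistent with the paper's remark following the corollaries.
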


\begin{rema}

In the statement of Theorem \ref{maintheorem}, the conditions (4) and (5) are conformally invariant conditions but condition (2) is not. A more natural conformally invariant condition would be the uniform boundedness of the $L^1$ norm of the $S$ tensor for the family of metrics, but the authors are so far not able to establish Theorem \ref{maintheorem} under this more natural assumption. Instead we can establish the compactness result under the stronger assumption (2), which implies the uniform bound of  $L^1$ norm of the $S$ tensor;
or we can establish the compactness result under the conformally invariant condition $(2')$. 
We remak that, by Dunford-Pettis Theorem, the condition (2) in Theorem \ref{maintheorem} is equivalent to the compactness of $S$-tensor under the weak topology in $L^1$. 
\end{rema}

\begin{rema}
For the unit ball $X=B^4$ (more generally, when $X$ is a homology sphere removed a 4-ball), the topological 
conditions (3) in Theorem \ref{maintheorem} are satisfied.

\end{rema}

Another version of our main theorem is to replace condition on $S$ tensor by the curvature tensor $T$.

\begin{theo} 
\label{maintheorem1}
Under the assumptions  (1) and (3)-(5) as in Theorem \ref{maintheorem}, suppose 
the $T $ curvature on the boundary $T_i=\frac{1}{12}\frac{\p R_i}{\p n}$  satisfies the following condition
$$
\liminf_{r\to 0}\inf_i\inf_{x\in M}\oint_{B(x,r)}T_i\ge 0
$$
Then the family of the Fefferman-Graham compactified metrics $(X,g_i)$ is compact in $C^{k+2,\alpha}$ norm for any $\alpha\in (0,1)$ up to a diffeomorphism fixing the boundary, provided $k\ge 5$.
\end{theo}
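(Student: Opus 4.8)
The plan is to deduce Theorem~\ref{maintheorem1} from Theorem~\ref{maintheorem}. Since the hypotheses (1) and (3)--(5) are shared, the a priori bounds on the compactified metrics $\{g_i\}$, the conversion of an $\varepsilon$-regularity/blow-up dichotomy into $C^{k+2,\alpha}$-precompactness, and the exclusion of \emph{interior} curvature concentration by the Yamabe bounds (4)--(5) all go through exactly as in Theorem~\ref{maintheorem}. Condition (2) is used there only to rule out concentration of curvature at the conformal infinity $\partial X$, so it is enough to show that the $T$-curvature hypothesis does the same job. The scalar $T_i=\tfrac1{12}\partial_nR_i$ records (modulo terms controlled by the boundary metric) only the trace of the boundary tensor $S_i$, not its trace-free part; recovering the missing tensorial information near a potential boundary concentration point from the structure equations of the FG compactification costs a fixed number of derivatives of $\hat g_i$, which is why $k\ge5$ is required in place of $k\ge2$.

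\textbf{A priori Gauss--Bonnet identity.} Because $\partial X=\mathbb{S}^3$ and $H_1(X;\mathbb Z)=H_2(X;\mathbb Z)=0$, the long exact sequence of $(X,\partial X)$ together with Lefschetz duality makes every Betti number of $X$ vanish except $b_0=1$, so $\chi(X)=1$. For the FG compactification $Q_4(g_i)\equiv0$ by (\ref{q4zero}), and the boundary is totally geodesic so the boundary transgression density $\mathscr L\,d\sigma$ vanishes pointwise; the Chern--Gauss--Bonnet formula recalled in the introduction then reduces to
$$
\int_X|W_{g_i}|^2\,dv_{g_i}\;+\;8\oint_{\partial X}T_i\,d\sigma\;=\;32\pi^2 .
$$
Together with the identity $\Delta R_i=R_i^2-3|Ric_i|^2$ (which is just $Q_4(g_i)=0$), $12\oint_{\partial X}T_i=\int_X\Delta R_i\,dv_{g_i}$, and the a priori curvature bounds of the paper coming from (1) and (4)--(5) (equivalently the bound on the renormalized volume), one gets $\oint_{\partial X}T_i\ge -C$, hence a uniform upper bound on $\int_X|W_{g_i}|^2$; in particular at most finitely many bubbles can form.

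\textbf{No concentration at $\partial X$.} Suppose the conclusion fails. After the reductions common to Theorem~\ref{maintheorem} there are, for a subsequence, a point $x_0\in\partial X$ and scales $\lambda_i\to0$ along which the curvatures of $g_i$ concentrate at $x_0$, carrying — by $\varepsilon$-regularity for Bach-flat metrics with bounded $L^2$ curvature (each $g_i$ is Bach-flat, the Bach tensor being conformally invariant in dimension four and $g_i^+$ Einstein, and $Q_4(g_i)=0$) — a definite amount $m_0>0$ of $\int|W|^2$; the rescalings converge on compacta to a non-flat Bach-flat ALE bubble $(X_\infty,g_\infty)$ with totally geodesic, hence flat, boundary. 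The density $T\,d\sigma$ has the same homogeneity as $|W|^2\,dv$ under the rescaling, so the hypothesis $\liminf_{r\to0}\inf_i\inf_x\oint_{B(x,r)}T_i\ge0$ passes to the limit as $T_\infty\ge0$ pointwise on $\partial X_\infty$. Applying the Chern--Gauss--Bonnet identity above to $X_\infty$ in renormalized form, together with $T_\infty\ge0$, the quantization of $\int|W|^2$ for non-flat ALE spaces, and the topological constraint on a bubble, forces the bubble to be trivial — contradicting $m_0>0$. (One may argue equivalently without passing to a limit, through a localized Chern--Gauss--Bonnet estimate on $B(x_0,r)\cap X$ whose boundary term is $\ge -o(1)$ by hypothesis.) Hence curvature does not concentrate at $\partial X$, and with interior concentration already excluded by (4)--(5) we conclude as in Theorem~\ref{maintheorem}.

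\textbf{Bootstrap, and the main obstacle.} With no concentration $\{g_i\}$ is bounded in $C^2$; then the Bach-flat and $Q_4=0$ equations, written in boundary harmonic (or geodesic) coordinates with boundary data bounded in $C^{k+3}$ by (1), form an elliptic boundary value problem whose Schauder theory lifts the bound to $C^{k+2,\alpha}$, and one passes to a fixed background by a diffeomorphism fixing $\partial X$, exactly as in Theorem~\ref{maintheorem}. \emph{The delicate step is the second one.} The $T$-hypothesis is one-sided — it bounds only the negative part of the boundary contribution to the localized Gauss--Bonnet identity — so the real content is to show that no positive excess of $\oint T$ can survive at the concentration scale; this is where the a priori lower bound $\oint_{\partial X}T_i\ge -C$, the quantization of bubble energy, and the topology of bubbles must be combined, and where the passage from the scalar $T_i$ to the full tensor $S_i$ via the structure equations near $x_0$ consumes the three extra derivatives reflected in the hypothesis $k\ge5$.
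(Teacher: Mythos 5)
There is a genuine gap at exactly the step you flag as ``the delicate step.'' Your proposed mechanism for ruling out boundary concentration --- a localized Chern--Gauss--Bonnet identity on the bubble combined with ``quantization of $\int|W|^2$ for non-flat ALE spaces'' --- is not available here and is not what the paper does. The boundary blow-up limit is not an ALE Ricci-flat space (that analysis is reserved for the \emph{interior} blow-up, where conditions (3)--(5) and Cheeger--Naber/Crisp--Hillman apply); it is the Fefferman--Graham compactification of a complete Poincar\'e--Einstein metric with noncompact flat boundary $\R^3$, for which no energy quantization of $\int|W|^2$ is at hand. Moreover your claim that $T_i$ records ``the trace of $S_i$'' is backwards: $S$ is trace-free (Lemma \ref{lemma2.0}(2)), and $T=\frac1{12}\partial_nR$ and $S_{\alpha\beta}=\frac12\nabla^nR_{\alpha\beta}-\frac1{12}\nabla^nR\,g_{\alpha\beta}$ are \emph{a priori} independent pieces of $\nabla^n Ric$; the whole difficulty is that they are coupled only through the global structure of the CCE metric.

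The paper's actual argument (Lemmas \ref{T=S} and \ref{lemma4.3bis}) has two ingredients you are missing. First, one does not need to exclude a ``positive excess'' of $\oint T$: since $R_{g_\infty}\ge0$ in $X_\infty$ (Lemma \ref{lem4.1}, which persists under rescaling) and $R_{g_\infty}=0$ on the totally geodesic boundary, the outward normal derivative satisfies $T_\infty\le0$ pointwise; combined with the hypothesis, which passes to the limit as $\oint_{B(0,r)}T_\infty\ge0$ for all $r$, this forces $T_\infty\equiv0$. Second --- and this is the core of the proof and the reason for $k\ge5$ --- one must show $T_\infty\equiv0\Rightarrow S_\infty\equiv0$. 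This is done by computing the Fefferman--Graham expansion of $g_r$ to order $7$ and of the defining function $f$ to order $6$, yielding $R_{g_\infty}=-\frac94|\kappa|^2r^4+o(r^4)$ with $\kappa=-\frac23S_\infty$; positivity of the scalar curvature then kills $\kappa$. Only after $S_\infty\equiv0$ is established does one invoke Biquard's unique continuation to conclude the limit is hyperbolic, and the Liouville-type Proposition \ref{prop4.4} to derive the contradiction. None of this is present in your write-up, so the boundary step remains unproved.
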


\begin{rema}
In Theorem  \ref{maintheorem} and Theorem  \ref{maintheorem1}, if  the boundary Yamabe metric $\hat{g}$ in conformal infinity is of positive type, we can drop the condition $H_1(X,\mathbb{Z})=0$ in the condition (3). To see this, by a result due to Wittten and Yau \cite{WY}, we know, under the assumptions that the conformal infinity is of positive type and that  the conformal infinity is simply connected, then $H_1(X,\mathbb{Z})=0$.
\end{rema}

We remark we have assumed the stronger regularity on Theorem \ref{maintheorem1} for $k \ge 5$ than
$k\ge 2$ due to  a technical reason that in the proof of the  theorem we have taken a power series
expansion of the metric $g_i$ for up to order 7  (see (\ref{Laplaceinfty}) in the proof of Lemma \ref{T=S} below).

Some parallel direct consequences of Theorem \ref{maintheorem1} can be stated as follows.

\begin{coro}
\label{maincorollary1.0}
Under the assumptions  (1) and (3)-(5) as in Theorem \ref{maintheorem}, suppose  $\{\max(-T_i,0)\}$ is a relatively weakly compact family in $L^1$. Then, the family of the Fefferman-Graham compactified metrics $(X,g_i)$ is compact in $C^{k+2,\alpha}$ norm for any $\alpha\in (0,1)$ up to a diffeomorphism fixing the boundary, provided $k\ge 5$.
\end{coro}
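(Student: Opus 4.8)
The plan is to obtain this corollary as a direct consequence of Theorem~\ref{maintheorem1}, exactly in the way Corollary~\ref{maincorollary0} is deduced from Theorem~\ref{maintheorem}. Since hypotheses (1) and (3)--(5) are already assumed, the only thing to check is that relative weak $L^1$-compactness of $\{\max(-T_i,0)\}$ forces the one-sided bound
$$
\liminf_{r\to 0}\inf_i\inf_{x\in M}\oint_{B(x,r)}T_i\ge 0
$$
required in Theorem~\ref{maintheorem1}. First I would invoke the Dunford--Pettis theorem, as in the Remark following Theorem~\ref{maintheorem}: a subset of $L^1$ over a finite measure space is relatively weakly compact if and only if it is uniformly integrable. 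So for every $\e>0$ there is $\d>0$ such that $\oint_A\max(-T_i,0)\,d\sigma_{g_i}<\e$ for all $i$ whenever the measure of $A$ is less than $\d$. (By hypothesis (1) the boundary Yamabe metrics $\hat g_i$ lie in a $C^{k+3}$-precompact family, hence the measures $d\sigma_{g_i}$ on $M=\mathbb S^3$ are all uniformly equivalent to a fixed reference measure, so "uniformly integrable" is unambiguous here and transfers uniformly across the family.)

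Next I would use the same $C^{k+3}$-precompactness to get a uniform volume-decay estimate for small geodesic balls: there is a constant $C>0$, independent of $i$ and of $x\in M$, with $\sigma_{g_i}(B(x,r))\le Cr^3$ for all small $r$. Combining this with uniform integrability gives
$$
\lim_{r\to 0}\ \sup_i\ \sup_{x\in M}\ \oint_{B(x,r)}\max(-T_i,0)\,d\sigma_{g_i}=0 .
$$
Since $T_i\ge -\max(-T_i,0)$ pointwise, one has $\oint_{B(x,r)}T_i\ge -\oint_{B(x,r)}\max(-T_i,0)$ for every $x$ and every $i$, so taking the infimum over $x$ and $i$ and then letting $r\to 0$ yields precisely $\liminf_{r\to 0}\inf_i\inf_{x\in M}\oint_{B(x,r)}T_i\ge 0$. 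Applying Theorem~\ref{maintheorem1} (which needs $k\ge 5$, the regularity hypothesis carried over here) then gives the asserted $C^{k+2,\alpha}$ compactness of $(X,g_i)$ for all $\alpha\in(0,1)$, up to a diffeomorphism fixing $\p X$.

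I do not expect a genuine obstacle here beyond citing Theorem~\ref{maintheorem1}: the reduction is purely measure-theoretic. The one point deserving a line of care is the uniformity (over $i$ and $x$) of the bound $\sigma_{g_i}(B(x,r))\le Cr^3$, together with the uniform equivalence of the measures $d\sigma_{g_i}$, both of which are immediate from the $C^{k+3}$-precompactness of the boundary metrics in assumption~(1). Equivalently, one may simply observe that, by Dunford--Pettis, relative weak $L^1$-compactness of $\{\max(-T_i,0)\}$ is the same as $\lim_{r\to 0}\sup_i\sup_{x}\oint_{B(x,r)}\max(-T_i,0)=0$, mirroring the equivalence recorded for condition~(2) in the Remark after Theorem~\ref{maintheorem}, and then feed this into Theorem~\ref{maintheorem1}.
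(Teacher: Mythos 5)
Your proposal is correct and follows essentially the same route as the paper: the paper's proof likewise combines the Dunford--Pettis characterization of relative weak $L^1$-compactness as uniform integrability with the uniform smallness of $\sigma_{g_i}(B(x,r))$ coming from the precompactness of the boundary metrics, and then invokes Theorem~\ref{maintheorem1}. Your write-up merely spells out the intermediate step $T_i\ge -\max(-T_i,0)$ and the uniform equivalence of the boundary measures, which the paper leaves implicit.
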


\begin{coro} 
\label{maincorollary1}
Under the assumptions  (1) and (3)-(5) as in Theorem \ref{maintheorem}, suppose  there is some constant $C_5>0$ and some $+\infty\ge q>1$
independent of $i$ such that for all $i$ one has
$$
\oint_M (\max(-T_i,0))^q\le C_5
$$
Then the family of the Fefferman-Graham compactified metrics $(X,g_i)$ is compact in $C^{k+2,\alpha}$ norm for any $\alpha\in (0,1)$, up to a diffeomorphism fixing the boundary, 
provided $k\ge 5$.
\end{coro}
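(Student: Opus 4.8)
The plan is to deduce Corollary \ref{maincorollary1} directly from Theorem \ref{maintheorem1}. Since the hypotheses of the corollary already include assumptions (1) and (3)--(5) of Theorem \ref{maintheorem} and the regularity $k\ge 5$, everything in the statement of Theorem \ref{maintheorem1} is in force except for the sign/non-concentration condition on the $T$ curvature, namely
$$
\liminf_{r\to 0}\inf_i\inf_{x\in M}\oint_{B(x,r)}T_i\ge 0 .
$$
So the only thing to prove is that the uniform bound $\oint_M(\max(-T_i,0))^q\le C_5$ with $q>1$ implies this condition; once that is done, Theorem \ref{maintheorem1} yields the asserted $C^{k+2,\alpha}$ compactness up to a diffeomorphism fixing the boundary.

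To carry this out, I would first write $T_i=T_i^+-T_i^-$ with $T_i^\pm:=\max(\pm T_i,0)$, so that for any geodesic ball $B(x,r)\subset(M,\hat g_i)$ one has $\oint_{B(x,r)}T_i\ge -\oint_{B(x,r)}T_i^-$. Next I would apply H\"older's inequality on $B(x,r)$ with exponents $q$ and $q/(q-1)$ (and the trivial $L^\infty$--$L^1$ estimate in the case $q=+\infty$) to get
$$
\oint_{B(x,r)}T_i^-\le\Big(\oint_M (T_i^-)^q\Big)^{1/q}\,\big|B(x,r)\big|_{\hat g_i}^{\,1-1/q}\le C_5^{1/q}\,\big|B(x,r)\big|_{\hat g_i}^{\,1-1/q}.
$$
Then I would invoke assumption (1): since $\{\hat g_i\}$ is a compact family in $C^{k+3}$, the metrics $\hat g_i$ have injectivity radius bounded below and volume density bounded above, all uniformly in $i$, so there is a constant $C$, independent of $i$, of $x\in M$ and of small $r$, with $|B(x,r)|_{\hat g_i}\le Cr^3$. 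Combining the two displays gives $\oint_{B(x,r)}T_i\ge -C_5^{1/q}(Cr^3)^{1-1/q}=-C' r^{3(1-1/q)}$, and since $q>1$ the exponent $3(1-1/q)$ is strictly positive, so the right-hand side tends to $0$ as $r\to 0$ uniformly in $i$ and $x$. This is exactly the required condition, and the proof is complete by Theorem \ref{maintheorem1}.

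The content of this argument is minimal, and I do not expect a genuine obstacle; the only point deserving care is the uniform volume bound $|B(x,r)|_{\hat g_i}\le Cr^3$, where the constant $C$ must not depend on the index $i$ --- but this is an immediate consequence of the $C^{k+3}$-compactness of the boundary metrics in assumption (1). I would also note that each $g_i$ is a smooth compactification, so each $R_i$ is smooth up to $\p X$ and hence each $T_i=\tfrac1{12}\,\p R_i/\p n$ is a bounded function lying in every $L^q(M)$; what the hypothesis supplies, and what is actually used, is the \emph{uniformity} in $i$ of the $L^q$ bound on $\max(-T_i,0)$.
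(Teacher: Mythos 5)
Your proposal is correct and follows essentially the same route as the paper: write $T_i=T_i^+-T_i^-$, apply H\"older's inequality on $B(x,r)$ to bound $\oint_{B(x,r)}T_i^-$ by $C_5^{1/q}\,|B(x,r)|^{(q-1)/q}$, use the $C^{k+3}$-compactness of $\{\hat g_i\}$ to make the volume factor go to $0$ uniformly, and invoke Theorem \ref{maintheorem1}. (Incidentally, your exponent $1-1/q$ on the volume is the correct one; the paper's displayed exponent $q/(q-1)$ is a typo.)
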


\begin{coro} 
\label{maincorollary1.1}
Under the assumptions (1) and (3)-(5) as in Theorem \ref{maintheorem},  there exists some small constant $\varepsilon_2>0$ (possibly depending on $C_1$,$C_2$ and $C^{k+3}$ norm bound of the boundary metric and also on the topology of $X$)  such that  if  for all $i$ one has
$$
\oint_M (\max(-T_i,0))\le \varepsilon_2,
$$
then the family of the Fefferman-Graham compactified metrics $(X,g_i)$ is compact in $C^{k+2,\alpha}$ norm for any $\alpha\in (0,1)$, up to a diffeomorphism fixing the boundary, provided $k\ge 5$.
\end{coro}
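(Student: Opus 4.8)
The plan is to establish Corollary~\ref{maincorollary1.1} by the scheme used for Corollary~\ref{maincorollary01}, with the $S$-tensor replaced by the $T$-curvature via Lemma~\ref{T=S} (which is why the regularity requirement $k\ge 5$ is inherited). A global $L^1$-smallness bound on $\max(-T_i,0)$ cannot yield a pointwise non-concentration statement, so the idea is instead that such smallness already obstructs the only mechanism that can destroy compactness in the proof of Theorem~\ref{maintheorem1}, namely the formation of a blow-up bubble along the boundary; the admissible size of $\varepsilon_2$ is then governed by the least amount of negative $T$-mass that such a bubble must carry.

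\textbf{Step 1 (a priori Weyl-energy and $T$-mass bounds).} For the FG compactification $g_i$ one has $Q_4(g_i)\equiv 0$ by \eqref{q4zero}, and since CCE metrics are totally geodesic at the boundary the conformally invariant boundary density $\mathscr{L}_i$ vanishes, so the Chern-Gauss-Bonnet formula reduces to
\[
\oint_M T_i\,d\sigma_{g_i}=4\pi^2\chi(X)-\tfrac18\int_X|W_{g_i}|^2\,dv_{g_i}.
\]
Since $\oint_M T_i\ge-\oint_M\max(-T_i,0)\ge-\varepsilon_2$, this gives the uniform bound $\int_X|W_{g_i}|^2\,dv_{g_i}\le 32\pi^2\chi(X)+8\varepsilon_2$; and since $\int_X|W_{g_i}|^2\ge 0$ it also gives $\oint_M|T_i|=\oint_M T_i+2\oint_M\max(-T_i,0)\le 4\pi^2\chi(X)+2\varepsilon_2$. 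Thus the total Weyl energy and total $|T|$-mass are controlled a priori in terms of $\chi(X)$ and $\varepsilon_2$.

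\textbf{Step 2 (transfer to $S$ and blow-up).} By Lemma~\ref{T=S}, whose proof uses the order-$7$ Taylor expansion of $g_i$, on any small geodesic ball $B(x,r)\subset M$ the quantity $\oint_{B(x,r)}|S_i|$ is controlled by $\oint_{B(x,r)}|T_i|$ up to terms local in the boundary metric and hence bounded by the $C^{k+3}$-norm of $\hat{g_i}$ (hypothesis (1)); so concentration of $S_i$-mass and of $T_i$-mass at a boundary point are equivalent. Now suppose, for a subsequence, that the $(X,g_i)$ are not $C^{k+2,\alpha}$-precompact modulo boundary-fixing diffeomorphisms. As in the proof of Theorem~\ref{maintheorem}, the Yamabe lower bounds (4)--(5), the topological hypothesis (3), the boundary control (1) and the Weyl bound of Step~1 force curvature concentration at finitely many points, and a rescaling about such a point yields a non-trivial complete Bach-flat model; in the interior case this is a Ricci-flat ALE space, excluded by $H_2(X,\mathbb{Z})=0$ together with the Yamabe constants, exactly as in the main theorem. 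A boundary model is a complete Bach-flat manifold with totally geodesic boundary, and by applying Chern-Gauss-Bonnet on the model (again with $Q\equiv 0$, $\mathscr{L}\equiv 0$) together with the sign constraints supplied by (4)--(5) one shows that any non-flat such model must carry a definite amount $\eta_0>0$ of negative $T$-mass on its boundary. Such a model forces $\oint_M\max(-T_i,0)\ge\eta_0-o(1)$, so choosing $\varepsilon_2<\eta_0$ --- a constant depending only on $C_1,C_2$, the $C^{k+3}$-bound of $\hat{g_i}$ and the topology of $X$, since precisely these data enter the classification of the models, the $\varepsilon$-regularity thresholds and the constants of Step~1 --- gives a contradiction. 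Precompactness follows, and elliptic regularity for the FG-compactification system (as in Theorem~\ref{maintheorem}) upgrades it to $C^{k+2,\alpha}$ convergence for every $\alpha\in(0,1)$.

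\textbf{Main obstacle.} The crux is the quantitative energy gap in Step~2: that every non-flat boundary blow-up limit of FG compactifications has strictly positive negative-$T$-mass bounded below by a constant depending only on the listed data. This requires a workable description of the possible boundary blow-up limits --- Bach-flat, not a priori Einstein, with totally geodesic boundary --- and a careful use of Chern-Gauss-Bonnet and the positivity in (4)--(5) to rule out this mass being arbitrarily small on a non-trivial model. A subsidiary point is to track, through Lemma~\ref{T=S}, the exact dependence of all constants on the $C^{k+3}$-bound of $\hat{g_i}$, so that $\varepsilon_2$ is a genuine constant and not sequence-dependent.
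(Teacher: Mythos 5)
There is a genuine gap, and it sits exactly where you flagged it: the ``quantitative energy gap'' --- that every non-flat boundary blow-up model carries negative $T$-mass bounded below by a uniform $\eta_0>0$ depending only on the listed data --- is asserted but never proved, and nothing in the paper supplies it. Lemma~\ref{T=S} is purely qualitative: it says that for the \emph{limit} metric $T_\infty\equiv 0$ iff $S_\infty\equiv 0$; it does not give the local comparison ``$\oint_{B(x,r)}|S_i|$ is controlled by $\oint_{B(x,r)}|T_i|$'' for the sequence that you invoke in Step~2, nor any uniform lower bound on $\oint\max(-T_\infty,0)$ over the class of non-flat models. With a \emph{fixed} $\varepsilon_2$ your rescaling argument only yields $\oint\max(-T_\infty,0)\le\varepsilon_2$ for the bubble (using the scale invariance of $\oint_{B}T$), which is not enough to conclude $T_\infty\equiv 0$ and run Lemma~\ref{T=S} unless the gap $\eta_0$ is in hand. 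So as written the argument does not close.

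The paper's proof avoids this entirely by exploiting that the corollary only claims the \emph{existence} of some $\varepsilon_2$: argue by contradiction, so that for each $n$ there is a non-compact family with $\oint_M\max(-T_i,0)\le 1/n$; extract a blowing-up sequence with $\oint_M\max(-T_i,0)\to 0$. Then $\{\max(-T_i,0)\}$ converges strongly, hence is relatively weakly compact, in $L^1$, and Corollary~\ref{maincorollary1.0} (equivalently, the Dunford--Pettis criterion feeding the hypothesis $\liminf_{r\to0}\inf_i\inf_x\oint_{B(x,r)}T_i\ge 0$ of Theorem~\ref{maintheorem1}) gives compactness of that sequence --- a contradiction. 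This is the same soft scheme as the proof of Corollary~\ref{maincorollary01}, with $S$ replaced by $\max(-T,0)$. Your Step~1 (Gauss--Bonnet control of $\int|W|^2$ and $\oint|T|$) is correct but not needed for this route; if you want to pursue the effective-threshold version you would have to actually prove the uniform gap $\eta_0$, which is a substantially harder statement than the corollary itself.
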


\begin{rema}
 Although the non-local terms $S$ and $T$ appears to be independent from each other in their
 definitions,  it turns out their behavior are coupled in the setting of conformally compact
Einstein manifolds. 
As we will show in section \ref{sb4.2} of the paper, for the limiting metric of a class of conformal compact Einstein manifolds, when the Yamabe 
invariant on the boundary is non-negative, the limiting metric of the blow-up metrics $T\equiv 0$ is equivalent to $S\equiv 0$.
\end{rema}

The paper is organized as follows: in section 2, we provide some background and some basic calculations; in section 3, we prove a $\varepsilon$-regularity result for  our 
$Q_4$ flat metrics.
In
section 4, which is the main part of the paper, we do the blow-up analysis. First we rule out the boundary blow up by our boundedness assumptions on the boundary metrics and 
the condition on the $S$-tensors or $T$ curvature in section \ref{sb4.2}, 
we then rule out the interior blow up based on our assumption that the $\p X $ is topologically $S^3$ and
the condition (3) in the statement  of the theorems by some topological arguments in section \ref{sb4.3}.
 This permits us to establish the uniform boundedness of the
$L^2$ norm of
the curvature tensor of the sequence of
Fefferman-Graham's compactified metrics.
From there, we apply the $\varepsilon$-regularity argument to jerk
 up the order of the regularity in section \ref{sb4.4}.
Finally in section 5, 
We estimate some geometric quantity including the diameter of the metrics and show they are uniformly bounded 
and establish the desired compactness results claimed in section 1.

\subsection*{Acknowledgement}
The authors were aware that in the paper \cite{anderson1} by M. Anderson,  he had asserted similar compactness results
in the CCE setting 
under no assumptions on the (analogue of the) nonlocal tensor $S$. We have difficulty understanding some key estimates in his arguments.

In both Theorem \ref{maintheorem} and \ref{maintheorem1}, the topological assumption conditions (3) are only used to establish 
that there is no interior blow up. In the earlier version of this paper, the authors have stated both
these two theorems without the additional assumption that the boundary of the four manifold $X$ is $S^3$ as is 
in the current version of the paper; in the proof we had quoted a result of M. Anderson (the claim after
Proposition 3.10 in \cite{anderson1}, see also the
result of M. Anderson in another paper \cite{anderson2} Lemma 6.3) 
to establish the argument of no interior blow up.  It was pointed out to us
by the referee that this
result of Anderson was questioned in the
 recent work of Akutagawa-Endo-Seshadri \cite{A}. Inspired by the
proof in the paper of Akutagawa-Endo-Seshadri, we have in this version applied a result of Chrisp-Hillman  \cite{CH} to 
establish the argument of no interior blow up;
 under the additional assumption that the boundary is topologically $S^3$.  

The authors have worked on the paper over a long period of time, and over the period, they have consulted with a number of colleagues
on different parts of the paper. They are grateful to all of them, in particular to the consultations with Olivier Biquard, Robin Graham, Jie Qing and Paul Yang. 

The authors are also grateful to the referee for pointing out the question raised in the paper of Akutagawa-Endo-Seshadri \cite{A}; 
for suggesting the elliptic iteration argument (used
in section \ref{sb4.4} of this paper) to improve the
order of the regularity and also for making many other useful comments concerning
 the presentation of the paper. 


\section{Some basic calculus on the boundary}
We use  the greek indices $\alpha,\beta,\gamma\cdots$ to represent the tangential indices, $n$ is the unit normal vector on the boundary and letter $i,j,k\cdots$ are full indices. $A=\frac12(Ric -\frac{R}{6}g)$ is the Schouten tensor in $X$ and $W$ is the Weyl tensor in $X$. Denote by $\hat{\nabla}$ the connection on the boundary $M$ and by $\nabla$ the connection in  $X$. Similarly, we denote by $\hat{Ric},\hat{R}$ the Ricci curvature and scalar curvature on the boundary $M = \p X$, and $\hat{A}=\hat{Ric} -\frac{\hat{R}}{4}\hat{g}$ is the Schouten tensor on the boundary $M$. Recall the Cotten tensor in $X$ (resp. on $M$) is defined by $C_{\alpha\beta\gamma}=A_{\alpha\beta,\gamma}-A_{\alpha\gamma,\beta} $ (resp. $\hat{C}_{\alpha\beta\gamma}=\hat{A}_{\alpha\beta,\gamma}-\hat{A}_{\alpha\gamma,\beta} $). Moreover, we denote by $L$ the second fundamental form on $M$ and $H$ the mean curvature of the boundary $M$.

Let $T_{i_1 \cdots i_k}$ be a tensor defined on $X$. Then  the Ricci identity
\beq
\label{Ricci}
T_{i_1 \cdots i_k, j l}= T_{i_1 \cdots i_k, l j} - 
\sum_{s=1}^k  R_{m i_s l j} T_{i_1 \cdots i_{s-1} m i_{s+1} \cdots i_k}
\eeq
gives the formula for exchanging derivatives.  The curvature tensor is
  decomposed as
  $$R_{ijkl}= W_{ijkl} + A_{ik} g_{jl}+ A_{il} g_{ik}- A_{il}g_{jk}- A_{jk} g_{il}.$$
Recall that  $C_{ijk} = A_{ij,k} - A_{ik,j}= - W_{lijk, l}.$  The second Bianchi identity can 
be expressed as
\beq \label{e:2Bianchi}
 W_{ijkl,m}+ W_{ijmk,l}+ W_{ijlm, k}+ C_{ikm} g_{jl}+ C_{jlm} g_{ik}+ C_{iml} g_{jk} + 
C_{jmk} g_{il}
 + C_{jkl} g_{im} + C_{ilk} g_{jm}=0.
\eeq
We now recall some facts about the Bach tensor and the $Q$-curvature in dimension 4. It is known the Bach tensor (see \cite{TV05})
$$
\begin{array}{ll}
B_{ij}&\ds=\nabla^k\nabla^l W_{ikjl}+\frac12R^{kl}W_{ikjl}=\triangle A_{ij}-\nabla^k\nabla_i A_{jk}+\frac12R^{kl}W_{ikjl}\\
&\ds=\triangle A_{ij}-\frac16\nabla_i\nabla_j R+R_{ikjp}A^{pk}-R_{ip}{A_j}^p+\frac12R^{kl}W_{ikjl}
\end{array}
$$
Thus, the Bach-flat equation is
\beq
\label{Bachflat1bis}
\nabla^k\nabla^l W_{ikjl}+A^{kl}W_{ikjl}=0
\eeq
or equivalently
\beq
\label{Bachflat1}
\triangle A_{ij}-\frac16\nabla_i\nabla_j R+R_{ikjp}A^{pk}-R_{ip}{A_j}^p+A^{kl}W_{ikjl}=0
\eeq
since ${{W_{ik}}^i}_l=0$. Using (\ref{Ricci}), (\ref{e:2Bianchi}) and (\ref{Bachflat1bis}), we infer
\beq
\label{Bachflat2}
\triangle W_{ijkl}+\nabla_l C_{kji}+\nabla_k C_{lij} +\nabla_iC_{jkl}+\nabla_jC_{ilk}=W*Rm+ g*W*A
\eeq
since
\beqn
{W_{ijmk,l}}^m={{W_{ijmk,}}^m}_l+W*Rm=C_{kji,l}+W*Rm\\
{W_{ijlm,k}}^m={{W_{ijlm,}}^m}_k+W*Rm=C_{lij,k}+W*Rm\\
{C_{ikm,}}^m g_{jl}=\nabla^m\nabla^hW_{himk}g_{jl}=g*W*A,\;\;\; {C_{jlm,}}^m g_{ik}=g*W*A\\
{C_{iml,}}^m g_{jk}=g*W*A,\;\;\; {C_{jmk,}}^m g_{il}=g*W*A.
\eeqn
Now we recall the $Q$-curvature $Q=\frac16(-\triangle R+R^2-3|Ric|^2)$ so that $Q$-flat metric can be interpreted as 
\beq
\label{Qflat}
\triangle R=R^2-3|Ric|^2.
\eeq

The following two lemmas regard basic properties of the tensor $S$ and the relation between $S$ 
and the behavior of the Weyl tensor on the boundary.\\

\begin{lemm} 
\label{lemma2.0}
\begin{enumerate}
\item $S$ tensor $S_{\alpha\beta}=\nabla^i W_{i\alpha n\beta}+\nabla^i W_{i\beta n\alpha}-\nabla^n W_{n\alpha n\beta}+\frac{4}{3}H{W_{\alpha n \beta}}^n $ is a symmetric 2-tensor;
\item $Tr(S)=0$;
\item $S$ is a conformally invariant tensor in the sense that $S(\varphi^2 g)=\varphi^{-1}S( g)$;
\item We have
$$
\begin{array}{ll}
S_{\alpha\beta}=&-A_{\alpha n, \beta}+A_{\beta n, \alpha}+A_{\alpha\beta,n}+\hat{\triangle }L_{\alpha\beta}-L_{\gamma\alpha,\hat{\beta}\hat{\gamma}}
+L_{\beta\gamma}R_{\gamma n \alpha n}\\
&-HR_{\alpha n \beta n}+L_{\gamma\delta}R_{\beta \gamma \alpha\delta}-A_{\gamma n,\gamma}g_{\alpha\beta}+\frac{4}{3}HW_{\alpha n \beta n}
\end{array}
$$
\item if the boundary is totally geodesic, then 
$$
S_{\alpha\beta}=A_{\alpha\beta,n}=\frac 12 R_{\alpha\beta,n}-\frac{R_{,n}}{12}g_{\alpha\beta};
$$
\end{enumerate}
\end{lemm}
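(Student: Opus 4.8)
The plan is to prove the five assertions of Lemma \ref{lemma2.0} essentially in order, using the definition of $S_{\alpha\beta}$, the curvature decomposition, and the Bianchi identities recalled above. The conformal invariance statement (3) is the structural heart of the lemma; parts (1), (2), (4), (5) are then either immediate or reduce to tensor bookkeeping once (4) is in hand.

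First I would address symmetry (1) and trace-freeness (2) directly from the definition $S_{\alpha\beta}=\nabla^i W_{i\alpha n\beta}+\nabla^i W_{i\beta n\alpha}-\nabla^n W_{n\alpha n\beta}+\frac43 H\,{W_{\alpha n\beta}}^n$. Symmetry is visible because the first two terms are manifestly symmetrized in $\alpha,\beta$, while $W_{n\alpha n\beta}$ and $W_{\alpha n\beta n}$ are each symmetric in $\alpha\leftrightarrow\beta$ by the pair symmetry and antisymmetry of the Weyl tensor. For the trace, I would contract with $g^{\alpha\beta}$ (the induced metric, equivalently the ambient metric restricted to tangential directions) and use the total tracelessness of $W$: terms like $g^{\alpha\beta}W_{i\alpha n\beta}$ and $g^{\alpha\beta}W_{n\alpha n\beta}$ vanish because tracing a Weyl tensor over any index pair gives zero, taking care that the sum over $\alpha,\beta$ only runs over tangential indices but the missing normal-normal contribution $W_{innn}$ or $W_{nnnn}$ is itself zero by antisymmetry. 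So $\tr S=0$.

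For (3) I would exploit that $S_{\alpha\beta}$ arises as (a boundary piece of) the Euler–Lagrange equation of the conformally invariant functional $\int_X|W|^2\,dv_g+8\oint_{\p X}W_{\alpha n\beta n}L^{\alpha\beta}\,d\sigma_g$ described in the introduction; since both the functional and its first variation are conformally invariant, the boundary operator $S$ must transform by a fixed conformal weight, and a weight count (the functional is scale-invariant, the variation is against a symmetric $2$-tensor density on a $3$-manifold) forces $S(\varphi^2 g)=\varphi^{-1}S(g)$. Alternatively, and more self-containedly, I would verify this by the brute-force route: write $\hat g=e^{2w}g$, use the conformal transformation law of the Weyl tensor ($\hat W^i{}_{jkl}=W^i{}_{jkl}$ with the appropriate index placement), of the second fundamental form and mean curvature, and of the Levi-Civita connection (the $\Upsilon=dw$ correction terms), substitute into the defining formula, and check that all $\Upsilon$-terms cancel, leaving the overall factor $e^{-w}$. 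This second approach is the more laborious but it directly gives what is claimed; I would likely just indicate it and carry out the cancellation schematically.

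For (4) I would convert each term of the defining formula into intrinsic boundary data. The key tools are: the Gauss and Codazzi equations relating ambient curvature $R_{\alpha\beta\gamma n}$, $R_{\alpha n\beta n}$ to boundary curvature, $L_{\alpha\beta}$, and its covariant derivatives; the Weyl decomposition $W_{ijkl}=R_{ijkl}-(A\owedge g)_{ijkl}$ to replace $W_{i\alpha n\beta}$ and $W_{n\alpha n\beta}$ by Schouten-tensor and curvature terms; and commuting covariant derivatives via the Ricci identity \eqref{Ricci} to turn $\nabla^i W_{i\alpha n\beta}$ into Cotton-type terms $A_{\alpha n,\beta}-A_{\alpha\beta,n}$ plus curvature$\times$lower-order, together with the identity $C_{ijk}=-W_{lijk,l}$. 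Collecting the tangential divergence $L_{\gamma\alpha,\hat\beta\hat\gamma}$ and the Laplacian $\hat\triangle L_{\alpha\beta}$ from the normal and tangential parts of $\nabla^i W_{i\alpha n\beta}$, and tracking the $g_{\alpha\beta}$-terms (the $A_{\gamma n,\gamma}g_{\alpha\beta}$ piece) from the trace-adjustments in the Weyl decomposition, should reproduce the displayed formula. Then (5) is the specialization $L\equiv 0$, $H\equiv 0$: every term containing $L$ or $H$ drops, $R_{\alpha n\beta n}$-terms disappear, and one is left with $S_{\alpha\beta}=-A_{\alpha n,\beta}+A_{\beta n,\alpha}+A_{\alpha\beta,n}-A_{\gamma n,\gamma}g_{\alpha\beta}$; the antisymmetric combination $-A_{\alpha n,\beta}+A_{\beta n,\alpha}$ and the trace term $A_{\gamma n,\gamma}g_{\alpha\beta}$ vanish for an Einstein (hence Bach-flat, in fact for a totally geodesic boundary of an Einstein filling the normal component of the Cotton tensor restricted suitably vanishes) metric, leaving $S_{\alpha\beta}=A_{\alpha\beta,n}=\tfrac12 R_{\alpha\beta,n}-\tfrac{1}{12}R_{,n}g_{\alpha\beta}$, the last equality being just the definition $A=\tfrac12(Ric-\tfrac{R}{6}g)$ differentiated in the normal direction.

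The main obstacle I expect is part (4): the identification requires carefully choosing boundary-adapted coordinates (so that $\nabla^n$ and $\hat\nabla$ interact correctly, and so that $g_{\alpha n}=0$, $g_{nn}=1$ hold along $\p X$), repeatedly commuting derivatives, and bookkeeping the many curvature-times-$L$ and curvature-times-$H$ terms so that exactly the listed ones survive — any sign or trace error there propagates. Parts (1), (2), (5) are comparatively routine once (4) is established, and (3) is conceptually clean but computationally heavy if one insists on the direct conformal-transformation verification rather than invoking the variational origin of $S$.
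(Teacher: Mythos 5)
Your overall route coincides with the paper's: (1) and (2) directly from the definition and the tracelessness of $W$; (3) by the direct conformal-transformation computation (the paper first rewrites $\nabla^i W_{i\alpha n\beta}=-C_{\alpha n\beta}$ and then tracks the Christoffel correction $U_{ij}^k=\delta^k_j\nabla_i f+\delta^k_i\nabla_j f-g_{ij}\nabla^k f$ through each term until everything cancels except the factor $e^{-f}$); (4) in Fermi coordinates using the Codazzi equation and the curvature decomposition; (5) by specialization. Your alternative "variational" argument for (3) is not what the paper does and, as stated, is not a proof: knowing that the functional is conformally invariant does not by itself pin down the pointwise conformal weight of the boundary Euler--Lagrange tensor without specifying precisely how the variation is weighted; you would in any case fall back on the direct computation, which is the paper's argument.

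The one genuine flaw is in your justification of (5). The terms $-A_{\alpha n,\beta}+A_{\beta n,\alpha}$ and $-A_{\gamma n,\gamma}g_{\alpha\beta}$ do not vanish "for an Einstein metric" --- no Einstein or Bach-flat hypothesis appears in this part of the lemma, which is asserted for an arbitrary metric with totally geodesic boundary. The correct (and simpler) reason, which is the one the paper uses, is purely geometric: when $L=0$ the Codazzi equation gives $R_{\alpha n}=0$ along $M$, hence $A_{\alpha n}=\tfrac12 R_{\alpha n}=0$ identically on $M$; since the boundary Christoffel symbols $\Gamma^n_{\alpha\beta}=L_{\alpha\beta}$ and $\Gamma^\beta_{\alpha n}=-L_{\alpha\beta}$ also vanish, every tangential covariant derivative $A_{\alpha n,\beta}$ vanishes on $M$. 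With that correction, the remaining terms of (4) containing $L$, $H$, or $R_{\alpha n\beta n}$ multiplied by $L$ or $H$ all drop, and $S_{\alpha\beta}=A_{\alpha\beta,n}=\tfrac12 R_{\alpha\beta,n}-\tfrac{1}{12}R_{,n}g_{\alpha\beta}$ follows from the definition of $A$. As you anticipate, the real labor is the bookkeeping in (4); your toolkit (Codazzi, Weyl decomposition, Ricci identity, $C_{ijk}=-W_{lijk,l}$) is exactly the paper's.
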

\begin{proof}(1) and (2) follow by definition: $Tr (S)=  2 \gra^iW_{i \ga n
\ga} = 2 \gra^iW_{i l n l}= 0.$ \\
3) Let $\tilde g=e^{2f}g$ be a conformal change. Denote $\nabla$ (resp. $\tilde \nabla$) the Levi-Civita connection with respect to the metric $g$ (resp. $\tilde g$). We write $T$  (resp. $\tilde T$) a tensor with respect to the metric $g$ (resp. $\tilde g$).  Let $\Gamma_{ij}^k$ (resp. $\tilde \Gamma_{ij}^k$) be the Christoffel symbols with respect to the metric $g$ (resp. $\tilde g$). We write 
$$
\tilde \Gamma_{ij}^k=\Gamma_{ij}^k+U_{ij}^k
$$
where $U_{ij}^k={\delta^k}_j\nabla_i f+{\delta^k}_i\nabla_j f-g_{ij}\nabla^k f$ is a $(2,1)$ tensor. Under the conformal change, we know $\tilde W=W$ as a $(3,1)$ tensor and the Cotton tensor (recall $n=4$)
$$
\tilde C_{ijk}=C_{ijk}+\nabla_l f {W_{jki}}^{l}.
$$
Moreover, the mean curvature can be changed as follows
$$
\tilde H=e^{-f}(H-3\nabla_n f)
$$
On the other hand, we know $\nabla^i W_{ijkl}=-C_{jkl}$ so that
$$
S_{\alpha\beta}=-C_{\alpha n\beta}-C_{\beta n\alpha}-\nabla^n W_{n\alpha n\beta}+\frac{4}{3}H{W_{\alpha n \beta}}^n
$$
Gathering these relations, we deduce (recall the unit normal $\tilde n$ (resp. $n$)  for $\tilde g$ (resp. $g$) satisfying $\tilde n=e^{-f}n$)
$$
\begin{array}{lll}
\tilde S_{\alpha\beta}&=&e^{-f}(-\tilde C_{\alpha n\beta}-\tilde C_{\beta n\alpha}-e^{-2f}g^{ni}\tilde \nabla_i(e^{2f} W_{n\alpha n\beta})+\frac{4}{3}H{W_{\alpha n \beta}}^n-4\nabla_n f{W_{\alpha n \beta}}^n)\\
&=&e^{-f}(- C_{\alpha n\beta}- C_{\beta n\alpha}-\nabla_i f(W_{\alpha n \beta}^i+W_{\beta n \alpha}^i)-2\nabla^n f W_{n\alpha n\beta}-\nabla^n W_{n\alpha n\beta})\\
&&+e^{-f}(U^i_{nn}W_{i\alpha n\beta}+U^i_{n\alpha}W_{ni n\beta}+U^i_{n\beta}W_{n\alpha ni}+U^i_{nn}W_{n\alpha i\beta}+\frac{4}{3}H{W_{\alpha n \beta}}^n-4\nabla_n f{W_{\alpha n \beta}}^n)
\end{array}
$$
Together with
\beqn
U^i_{nn}W_{i\alpha n\beta}=2\nabla^n f W_{n\alpha n\beta}-\nabla^i f W_{i\alpha n\beta}\\
U^i_{nn}W_{n\alpha i\beta}=2\nabla^n f W_{ n\beta n\alpha }-\nabla^i f W_{i\beta n\alpha}\\
U^i_{n\alpha}W_{n i n\beta}=\nabla_n f W_{  n\alpha n\beta }\\
U^i_{n\beta}W_{n \alpha ni}=\nabla_n f W_{  n\alpha n\beta }
\eeqn
the desired result follows, that is, $S$ is a pointwise conformal invariant.\\
(4) and (5). For the rest of lemma, we use the Fermi coordinates.  The Christoffel symbols satisfy
$\Gamma^n_{\ga \gb}= L_{\ga \gb}, \, \Gamma^{\gb}_{\ga n}= - L_{\ga
\gb}, \, \Gamma^n_{\ga n}= 0$ on the boundary (\cite{Chen05b}, P.8).
Therefore,
 \begin{eqnarray*}
 R_{\ga \gb \gc n, \ga}&=&  \de_{\ga}   R_{\ga \gb \gc n}- \Gamma^l_{\ga \ga} R_{l \gb \gc n} - 
\Gamma^l_{\ga \gb} R_{\ga l \gc n}
   -\Gamma^l_{\ga \gc} R_{\ga  \gb l n} -  \Gamma^l_{\ga n} R_{\ga \gb \gc l}\\
   &=&  R_{\ga \gb \gc n, \hat \ga}- \Gamma^n_{\ga \ga} R_{n \gb \gc n} - 
\Gamma^n_{\ga \gb} R_{\ga n \gc n}
       -  \Gamma^{\gd}_{\ga n} R_{\ga \gb \gc \gd}\\
   &=& R_{\ga \gb \gc n, \hat \ga} + H R_{\gb n \gc n}- L_{\gb \ga} R_{\ga n \gc n}+ 
L_{\ga \gd} R_{\ga \gb \gc \gd}
\end{eqnarray*}
By the Codazzi equation,
   \begin{eqnarray*}
R_{\ga \gb \gc n, \ga}
      &=& L_{\ga \gc, \hat \gb \hat \ga}- \hat \Delta L_{\gb \gc}+ H R_{\gb n \gc n}- 
L_{\gb \ga} R_{\ga n \gc n}+ L_{\ga \gd} R_{\ga \gb \gc \gd}.
\end{eqnarray*}
 Hence, by curvature decomposition and the above formula
 \begin{eqnarray*}
 S_{\ga \gb} &=& \gra^iW_{i \ga n \gb} + \gra^{\gc} W_{\gc \gb n \ga}   + 
\frac{4}{3} H W_{\ga n \gb n}\\
                  &=& (- A_{\ga n, \gb}+ A_{\ga \gb, n}) + (\gra^{\gc} R_{\gc \gb n \ga}-
 A_{\gc n, \gc} g_{\ga \gb}+ A_{\gb n, \ga}) + \frac{4}{3} H W_{\ga n \gb n}\\
                   &=& -A_{\ga n, \gb}+ A_{\gb n, \ga}+ A_{\ga \gb, n} +  
\hat \Delta L_{\ga \gb}- 
L_{\gc \ga, \hat \gb \hat \gc}+ L_{\gb \gc} R_{\gc n \ga n}
                         - H R_{\ga n \gb n}+ L_{\gc \gd} R_{\gb \gc \ga \gd}    \\
                    & &            - A_{\gc n, \gc} g_{\ga \gb}+ \frac{4}{3} 
H W_{\ga n \gb n}.
\end{eqnarray*}
  When the boundary is totally geodesic, then $L= 0$ and $R_{\ga n}= 0.$ It follows  that
   $S_{\ga \gb} = A_{\ga \gb, n}.$\\
 This proves the lemma.
\end{proof}

\begin{rema}
\label{remark2.2}
   We remark that in the conformal compact Einstein setting $(X, M, g^+)$,
with a Poincare Einstein metric $g^{+}$ with $ Ric (g^{+}) = - 3 g^{+}$ and 
$(M, \hat g)$ as conformal infinity; if we choose a special defining function 
$r$ associated with $\hat g$ (c.f. \cite{G00}), with $|\nabla_{g^{+}} r| =1$ near
the boundary M and
\begin{eqnarray*}
r^2 g^{+} =:  g = ds^2 + g_r\mbox{ and }
g_r = \hat g + g^{(2)} r^2 + g^{(3)} r^3 + O (r^4)
g^{(4)}_{\alpha \beta} 
\end{eqnarray*}
Recall $tr(g^{(3)} )=0$. Thus a straight forward computation gives 
$$ \nabla^n{R}_{\alpha \beta} = -3 g^{(3)}_{\alpha \beta}, \,\,\,\,\,\,\,\, \text{and} \,\,\, 
\nabla^n R = 0. $$
Thus in this case, applying properties of the $S$ tensor as above,
we get for any compactified metric g on $X$ with totally geodesic boundary,
\beq
\label{s-to-g3}
S_{\alpha \beta} = - \frac {3}{2} g^{(3)}_{\alpha \beta}. 
\eeq   
\end{rema}

\begin{lemm} 
\label{lemma2.1}
Suppose the boundary is totally geodesic and $W|_M=0$. Then on the boundary, we have
\begin{enumerate}
\item $\nabla_n W_{ \alpha\beta \gamma \delta}=-S_{\alpha\gamma}g_{\beta\delta}-S_{\beta\delta}g_{\alpha\gamma}+S_{\alpha\delta}g_{\beta\gamma}+S_{\beta\gamma}g_{\alpha\delta}$;
\item $\nabla_n  W_{ \alpha\beta \gamma n}=-\hat{C}_{\gamma\beta\alpha}=-C_{\gamma\beta\alpha}$;
\item $\nabla_n  W_{ n \alpha n\beta}=S_{\alpha\beta}$;
\item $\nabla_n \nabla_n W_{ \alpha\beta \gamma \delta}=\hat{\nabla}_\delta\hat{C}_{\gamma\alpha\beta}-\hat{\nabla}_\gamma\hat{C}_{\delta\alpha\beta}-\hat{\nabla}_\lambda\hat{C}_{\alpha\lambda\gamma}g_{\beta\delta}-\hat{\nabla}_\lambda\hat{C}_{\beta\lambda\delta}g_{\alpha\gamma}+\hat{\nabla}_\lambda\hat{C}_{\alpha\lambda\delta}g_{\beta\gamma}+\hat{\nabla}_\lambda\hat{C}_{\beta\lambda\gamma}g_{\alpha\delta}$;
\item $\nabla_n \nabla_n W_{ \alpha\beta \gamma n}=2\hat{\nabla}_\beta S_{\alpha\gamma}-2\hat{\nabla}_\alpha S_{\beta\gamma}$;
\item $\nabla_n \nabla_n W_{ n \alpha n\beta}=-\hat{\nabla}_\gamma \hat{C}_{\alpha\beta\gamma}-\hat{\nabla}_\gamma \hat{C}_{\beta\alpha\gamma}$;
\end{enumerate}
\end{lemm}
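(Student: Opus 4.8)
The plan is to work throughout in Fermi coordinates along $M$ and to exploit two elementary consequences of the hypotheses. First, because $L=0$, every Christoffel symbol carrying a lower normal index vanishes on $M$; hence for any tensor $U$ on $X$ that vanishes identically on $M$, every purely tangential covariant derivative $\nabla_{\gamma_1}\cdots\nabla_{\gamma_p}U$ again vanishes on $M$. Applied to $U=W$, and then iteratively to $U=\nabla W$ (which vanishes on $M$ in its tangential-derivative slots), this gives $\nabla_\gamma W_{ijkl}=0$ and $\nabla_\gamma\nabla_\delta W_{ijkl}=0$ on $M$ for tangential $\gamma,\delta$, and likewise for any Cotton component vanishing on $M$. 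Second, Gauss and Codazzi together with $W|_M=0$ yield the pointwise boundary identities $R_{\alpha n}=0$, $R_{\alpha\beta\gamma n}=0$, $A_{\alpha n}=0$, $\nabla_\gamma A_{\alpha n}=0$, $\hat A_{\alpha\beta}=A_{\alpha\beta}$ and $\hat C_{\alpha\beta\gamma}=C_{\alpha\beta\gamma}$ on $M$; combining the last two with Lemma \ref{lemma2.0}(5) (so that $A_{\alpha\beta,n}=S_{\alpha\beta}$) one reads off $C_{\alpha\beta n}=S_{\alpha\beta}$, $C_{\alpha n\beta}=-S_{\alpha\beta}$ and $C_{n\alpha\beta}=0$ on $M$. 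Finally, since $L=0$, the ambient and intrinsic connections agree on tensors whose non-differentiated slots are all tangential, so $\nabla_\gamma$ may be traded for $\hat\nabla_\gamma$ in such expressions.

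For (1) I would put $m=n$, $(i,j,k,l)=(\alpha,\beta,\gamma,\delta)$ into the second Bianchi identity \eqref{e:2Bianchi}: the two terms carrying a tangential derivative of $W$ drop, the Cotton terms weighted by $g_{\alpha n}$ or $g_{\beta n}$ drop, and what remains is $\nabla_nW_{\alpha\beta\gamma\delta}=-C_{\alpha\gamma n}g_{\beta\delta}-C_{\beta\delta n}g_{\alpha\gamma}-C_{\alpha n\delta}g_{\beta\gamma}-C_{\beta n\gamma}g_{\alpha\delta}$ on $M$; inserting $C_{\alpha\gamma n}=S_{\alpha\gamma}$ and $C_{\alpha n\delta}=-S_{\alpha\delta}$ gives the claimed formula. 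For (2), from $\nabla^iW_{ijkl}=-C_{jkl}$ and the vanishing on $M$ of the tangential part $\nabla^\delta W_{\delta\alpha\beta\gamma}$ one obtains $\nabla_nW_{n\alpha\beta\gamma}=-C_{\alpha\beta\gamma}$ on $M$, which after the Weyl symmetries and $C=\hat C$ reads $\nabla_nW_{\alpha\beta\gamma n}=-\hat C_{\gamma\beta\alpha}$. For (3), begin with the definition of $S$, use $H=0$, discard the tangential parts $\nabla^\delta W_{\delta\alpha n\beta}$ and $\nabla^\delta W_{\delta\beta n\alpha}$ on $M$, and use $W_{n\beta n\alpha}=W_{n\alpha n\beta}$; the three surviving terms collapse to $\nabla_nW_{n\alpha n\beta}=S_{\alpha\beta}$.

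For (4)--(6) the first step is the observation that $\triangle W_{ijkl}=\nabla_n\nabla_nW_{ijkl}$ on $M$, because the iterated tangential second derivatives $\nabla^\gamma\nabla_\gamma W_{ijkl}$ vanish there. Next I would invoke \eqref{Bachflat2}: in the present setting the compactified metric $g$ is Bach-flat (in dimension $4$ the Bach tensor is conformally invariant and $g^+$ is Einstein), and the right-hand side $W*Rm+g*W*A$ vanishes on $M$ since $W|_M=0$; hence $\nabla_n\nabla_nW_{ijkl}=-\big(\nabla_lC_{kji}+\nabla_kC_{lij}+\nabla_iC_{jkl}+\nabla_jC_{ilk}\big)$ on $M$. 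It then remains to evaluate these four Cotton derivatives for the index patterns $(\alpha,\beta,\gamma,\delta)$, $(\alpha,\beta,\gamma,n)$ and $(n,\alpha,n,\beta)$. Terms where a tangential derivative hits an all-tangential Cotton component become $\hat\nabla\hat C$ terms (case (4)), or, using $C_{\alpha\beta n}=S_{\alpha\beta}$ and $C_{\alpha n\beta}=-S_{\alpha\beta}$, become $\hat\nabla S$ terms (case (5)); terms $\nabla_nC_{\cdots}$ are reduced by commuting the two derivatives through the Ricci identity \eqref{Ricci}, where the curvature-times-$A$ corrections die on $M$ because $R_{\alpha\beta\gamma n}=0$ and $A_{\alpha n}=0$ there, giving for instance $\nabla_nC_{\gamma\beta\alpha}=\hat\nabla_\alpha S_{\beta\gamma}-\hat\nabla_\beta S_{\alpha\gamma}$ on $M$. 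For (6) one must in addition express $\nabla_n\nabla_nA_{\alpha\beta}$ and the mixed terms $\nabla_n\nabla_\gamma A_{\alpha n}$, $\nabla_\gamma\nabla_\delta A_{nn}$ on $M$ using the contracted Bianchi identity $\nabla^iA_{ij}=\frac{1}{6}\nabla_jR$, the Ricci identity, and the Bach-flat expression for $\triangle A_{ij}$; the scalar contributions then cancel and only the divergences of $\hat C$ remain. Assembling the pieces, and reorganizing with the algebraic and first-Bianchi symmetries of the $3$-dimensional Cotton tensor, yields (4), (5) and (6).

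The step I expect to be the main obstacle is precisely this bookkeeping in (4)--(6): one must keep careful track of which covariant derivatives are tangential and which are normal, apply the Ricci commutation identity repeatedly, and verify that every curvature-times-lower-order term produced either vanishes on $M$ (using $W|_M=0$, $L=0$, $R_{\alpha\beta\gamma n}|_M=0$, $A_{\alpha n}|_M=0$) or reassembles into the stated intrinsic expression; a clean treatment of the connection comparison and of the identities $\hat A=A|_M$, $\hat C=C|_M$ underpins the whole computation. (If one prefers not to use Bach-flatness in (4)--(6), the same formulas can be obtained, at the cost of a longer commutation argument, by applying $\nabla_n$ to the second Bianchi identity \eqref{e:2Bianchi} before restricting to $M$.)
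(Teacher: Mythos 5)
Your proposal is correct, and for items (1)--(3) it coincides with the paper's proof (the paper derives (3) from $\nabla_nW_{n\alpha n\beta}=-C_{\beta n\alpha}=S_{\alpha\beta}$ rather than directly from the definition of $S$, but these are the same computation). For (4)--(6) your organization is genuinely different: you restrict the Weyl--Laplacian equation \eqref{Bachflat2} to $M$ and use $\triangle W=\nabla_n\nabla_nW$ there, whereas the paper applies $\nabla_n$ to the raw second Bianchi identity \eqref{e:2Bianchi} for (4)--(5) and uses the double-divergence form $\nabla^l\nabla^kW_{kilj}=0$ of Bach-flatness for (6); both routes need Bach-flatness to evaluate $C_{\alpha n\beta,n}$, and you are in fact more careful than the paper in recording that this hypothesis (absent from the lemma's statement) is available because $g^+$ is Einstein and the Bach tensor is conformally invariant in dimension four. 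One caveat: for (4) your route yields $\hat\nabla_\delta\hat C_{\gamma\alpha\beta}-\hat\nabla_\gamma\hat C_{\delta\alpha\beta}+\hat\nabla_\beta\hat C_{\alpha\gamma\delta}-\hat\nabla_\alpha\hat C_{\beta\gamma\delta}$, and matching this with the stated formula requires more than the ``algebraic and first-Bianchi symmetries'' you invoke -- one needs the differential identity
\begin{equation*}
\hat\nabla_\beta\hat C_{\alpha\gamma\delta}-\hat\nabla_\alpha\hat C_{\beta\gamma\delta}
=-\hat\nabla^\lambda\hat C_{\alpha\lambda\gamma}g_{\beta\delta}-\hat\nabla^\lambda\hat C_{\beta\lambda\delta}g_{\alpha\gamma}
+\hat\nabla^\lambda\hat C_{\alpha\lambda\delta}g_{\beta\gamma}+\hat\nabla^\lambda\hat C_{\beta\lambda\gamma}g_{\alpha\delta},
\end{equation*}
obtained by taking the divergence of the three-dimensional algebraic Cotton identity (the vanishing of the Weyl part of \eqref{e:2Bianchi} in dimension three). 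This identity is true, so your argument closes, but it should be stated explicitly; the paper's route avoids it because the $g$-weighted divergence terms arise there directly from the normal derivatives $C_{\alpha\gamma n,n}$. Your final parenthetical suggestion that Bach-flatness could be dispensed with in (4)--(6) is doubtful, since the normal second derivatives of the Schouten tensor hidden in $C_{\alpha n\beta,n}$ are only controlled through the Bach equation.
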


\bpf (1) Note that
        \beq \label{e:Cgagbn}
         C_{\ga \gb n}= A_{\ga \gb, n}- A_{\ga n, \gb}= A_{\ga \gb, n} =  S_{\ga \gb}.
         \eeq
             Now by (\ref{e:2Bianchi}),
             \begin{eqnarray*}
             \gra_n W_{\ga \gb \gc \gd} &=& -W_{\ga \gb n \gc, \gd}- W_{\ga \gb \gd n, \gc}- 
(C_{\ga \gc n} g_{\gb \gd}+ C_{\gb \gd n} g_{\ga \gb}
             + C_{\ga n \gd} g_{\gb \gc} + C_{\gb n \gc} g_{\ga \gd}) \\
             &=&  - C_{\ga \gc n} g_{\gb \gd}- C_{\gb \gd n} g_{\ga \gb}  - 
C_{\ga n \gd} g_{\gb \gc} - C_{\gb n \gc} g_{\ga \gd}\\
             &=& -S_{\ga \gc} g_{\gb \gd} - S_{\gb \gd} g_{\ga \gc} + 
S_{\ga \gd} g_{\gb \gc}+ S_{\gb \gc} g_{\ga \gd},
               \end{eqnarray*}
         where in the second equality we use $W|_{\de X}= 0$ and hence $\gra_{\ga} W|_{\de X}= 0,$ 
and the last equality is by (\ref{e:Cgagbn}).

 (2) We first prove that $\hat C_{\ga \gb \gc}= C_{\ga \gb \gc}.$ By Gauss equation, 
$R = \hat R + 2 R_{nn}.$ Therefore, $R_{, \gc}= \hat R_{,\hat \gc}+ 2 R_{nn, \gc}.$
      On the other hand, by curvature decomposition and the fact that $\gra_{\ga} W|_{\de X} = 0,$ 
we have
      $R_{\ga n \gb n. \gc}= W_{\ga n \gb n, \gc}+ A_{\ga \gb, \gc}+ A_{nn, \gc} g_{\ga \gb}= 
A_{\ga \gb, \gc}+ A_{nn, \gc} g_{\ga \gb}.$
      Using above information, we obtain
      \begin{eqnarray*}
    A_{\ga \gb, \gc} &=& \frac{1}{2} R_{\ga \gb, \gc}- \frac{R_{, \gc}}{12} g_{\ga \gb} =   
\frac{1}{2} \hat R_{\ga \gb, \hat \gc}
      + \frac{1}{2} R_{\ga n \gb n. \gc}- \frac{R_{, \gc}}{12} g_{\ga \gb}\\
      &=&  \frac{1}{2} \hat R_{\ga \gb, \hat \gc}
      + \frac{1}{2}A_{\ga \gb, \gc}+ \frac{1}{2}A_{nn, \gc} g_{\ga \gb}- 
\frac{R_{, \gc}}{12} g_{\ga \gb}.
     \end{eqnarray*}
    Hence,
       \begin{eqnarray*}
    \frac{1}{2} A_{\ga \gb, \gc}  &=&  \frac{1}{2} \hat R_{\ga \gb, \hat \gc} + 
\frac{1}{2}A_{nn, \gc} g_{\ga \gb}- \frac{R_{, \gc}}{12} g_{\ga \gb}\\
          &=& \frac{1}{2} \hat R_{\ga \gb, \hat \gc} + \frac{1}{8}(2 R_{nn, \gc}- 
R_{, \gc}) g_{\ga \gb}
          =  \frac{1}{2} \hat R_{\ga \gb, \hat \gc} - \frac{\hat R_{, \hat \gc}}{8} g_{\ga \gb}=  
\frac{1}{2} \hat A_{\ga \gb, \hat \gc}.
     \end{eqnarray*}
      Therefore,  $\hat C_{\ga \gb \gc}= \hat A_{\ga \gb, \hat \gc} -\hat A_{\ga \gc, \hat \gb}= 
A_{\ga \gb, \gc} -A_{\ga \gc,  \gb} = C_{\ga \gb \gc}.$

     Now, $\gra_n W_{\ga \gb \gc n}= -C_{\gc \gb \ga} - \gra_{\gd} W_{ \ga \gb \gc \gd}= 
-C_{\gc \gb \ga}.$

 (3) $\gra_n W_{n \ga n \gb}= -C_{\gb n \ga} - \gra_{\gc} W_{n \ga \gc \gb}= -C_{\gb n \ga}.$
     Using (\ref{e:Cgagbn}), we  get $\gra_n W_{n \ga n \gb}= S_{\ga \gb}.$
     
(4) By (\ref{e:2Bianchi}),
          \begin{align}
            \gra_n \gra_n W_{\ga \gb \gc \gd} &= -(W_{\ga \gb n \gc, \gd}+ 
W_{\ga \gb \gd n, \gc})_n- (C_{\ga \gc n} g_{\gb \gd}+ C_{\gb \gd n} g_{\ga \gc}
           + C_{\ga n \gd} g_{\gb \gc} + C_{\gb n \gc} g_{\ga \gd})_n \notag\\
            &= -W_{\ga \gb n \gc, n \gd}- W_{\ga \gb \gd n, n \gc}  - 
C_{\ga \gc n, n} g_{\gb \gd}- C_{\gb \gd n, n} g_{\ga \gc}   - 
C_{\ga n \gd, n} g_{\gb \gc} - C_{\gb n \gc, n} g_{\ga \gd} \notag\\
            &= \hat \gra_{\gd} \hat C_{\gc \ga \gb}- 
\hat \gra_{\gc} \hat C_{\gd \ga \gb}-C_{\ga \gc n, n} g_{\gb \gd}- 
C_{\gb \gd n, n} g_{\ga \gc}
              - C_{\ga n \gd, n} g_{\gb \gc} - C_{\gb n \gc, n} g_{\ga \gd} \label{e:gragraWa},
               \end{align}
            where in the second quality, we use the Ricci identity (\ref{Ricci})
$W_{\ga \gb n \gc, \gd n}= W_{\ga \gb  n \gc,  n \gd}$
       by noting that $W|_{\de X}= 0,$ and the last equality is by Lemma~\ref{lemma2.0} (2).

    Now by the Bach-flat equation $\gra_l \gra_k W_{kilj}= 0$ and Lemma~\ref{lemma2.0} (2),
    $$C_{\ga n  \gb, n} = C_{\ga  l \gb, l} - C_{\ga \gc \gb, \gc}=   
- \gra_l \gra_k W_{k  \ga l \gb}- C_{\ga \gc \gb, \gc}= - C_{\ga \gc \gb, \gc}
       = - \hat C_{\ga \gc \gb, \hat \gc}.$$  Inserting the above formula into 
(\ref{e:gragraWa}) gives (4).

(5) By (\ref{e:2Bianchi}),
     \begin{eqnarray*}
     \gra_n \gra_n W_{\ga \gb \gc n} &=& -(W_{\gc n n \ga, \gb}+ 
W_{\gc n \gb n, \ga})_n- (C_{n \gb n} g_{\gc \ga}+ C_{n n \ga} g_{\gc \gb}
          + C_{\gc \gb \ga} g_{n n})_n\\
          & =& -W_{\gc n n \ga,  n \gb}- W_{\gc n \gb n, n \ga }- 
C_{n \gb n, n} g_{\gc \ga}- C_{n n \ga, n} g_{\gc \gb}- C_{\gc \gb \ga, n}\\
           &=& \hat \gra_{\gb} S_{\ga \gc} - \hat \gra_{\ga} S_{\gb \gc}- 
C_{n \gb n, n} g_{\gc \ga}- C_{n n \ga, n} g_{\gc \gb}- C_{\gc \gb \ga, n},
       \end{eqnarray*}
       where in the second quality, we use Ricci identity (\ref{Ricci}) $W_{\gc n n \ga, \gb n}= 
W_{\gc n n \ga,  n \gb}$
        and the last equality is by Lemma~\ref{lemma2.0} (3).

       Using the Bach-flat equation gives $C_{n n \ga, n} = C_{n l \ga, l} - C_{n \gb \ga, \gb}=  
 -\gra_l \gra_k W_{k  n l \ga}- C_{n \gb \ga, \gb}
       = - C_{n \gb \ga, \gb}.$ Since the boundary is totally geodesic, $ C_{n \gb \ga, \gb}= 
(A_{n \gb, \ga} - A_{n \ga , \gb})_{\gb}= 0.$
       Therefore,  $C_{n n \ga, n} =0$ and
       $$ \gra_n \gra_n W_{\ga \gb \gc n}  = \hat \gra_{\gb} S_{\ga \gc} - 
\hat \gra_{\ga} S_{\gb \gc}- C_{\gc \gb \ga, n}.$$
       To compute $C_{\gc \gb \ga, n},$ by Ricci identity (\ref{Ricci})
       \begin{eqnarray*}
      C_{\gc \gb \ga, n}&=& A_{\gc \gb, \ga n} - A_{\gc \ga, \gb n}\\
          &=&  A_{\gc \gb, n \ga} - A_{\gc \ga, n \gb} - R_{l \gc n \ga} A_{l \gb} -
                 R_{l \gb n \ga} A_{l \gc} + R_{l \gc n \gb} A_{l \ga} +R_{l \ga n \gb} A_{l \gc}\\
          &=&  A_{\gc \gb, n \ga} - A_{\gc \ga, n \gb} = (A_{\gc n, \gb}+ 
C_{\gc \gb n})_{\ga} - (A_{\gc n, \ga} + C_{\gc \ga n})_{\gb}
          =  C_{\gc \gb n, \ga} -  C_{\gc \ga n, \gb},
      \end{eqnarray*}
       where we use $A_{n \ga}= R_{n \ga \gb \gc} =0$ because the boundary is totally geodesic. 
Finally, by (\ref{e:Cgagbn})
        \beq \label{e: Cgcgbgan}
          C_{\gc \gb \ga, n} =  C_{\gc \gb n, \ga} -  C_{\gc \ga n, \gb}=   
\hat \gra_{\ga} S_{\gb \gc}- \hat \gra_{\gb} S_{\ga \gc},
          \eeq
        which finishes the proof of (5).

(6) By the Bach-flat equation,
   \begin{eqnarray*}
  \gra_n \gra_n W_{\ga n \gb n}&=& \gra_l \gra_k W_{k \ga l \gb}-
\gra_{\gc} \gra_{\gd} W_{\ga \gc \gb \gd}  -
\gra_n \gra_{\gc} W_{\ga n \gb \gc} -\gra_{\gc} \gra_n W_{\ga \gc \gb n} \\
        &=&  - W_{\ga n \gb \gc, n \gc} - W_{\ga \gc \gb n, n \gc} =  
-\hat \gra_{\gc} \hat C_{\ga \gb \gc} - \hat \gra_{\gc} \hat C_{\gb \ga \gc},
  \end{eqnarray*} where in the second equality we use the Ricci identity (\ref{Ricci})   and
  the last equality is by Lemma~\ref{lemma2.0} (2).
 \epf

We now recall the following result.
\begin{lemm}
\label{lemma2.3}
Under the Fefferman-Graham's compactification with the Yamabe metric on the boundary, we have 
$R=3\hat{R}$ on $M$.
\end{lemm}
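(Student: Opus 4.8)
The plan is to compute the scalar curvature $R$ of the FG compactification $g$ near the boundary by combining the Fefferman--Graham expansion of $g^+$ with the conformal transformation law for the scalar curvature, and then to evaluate on $M$. Fix the geodesic defining function $r$ of $X$ associated with the Yamabe representative $\hat g$ of the conformal infinity, so that near $M$ one has $g^+ = r^{-2}(\ud r^2 + g_r)$ with $g_r = \hat g + g^{(2)} r^2 + g^{(3)} r^3 + O(r^4)$, where the standard expansion of a Poincar\'e--Einstein metric in dimension four gives $g^{(2)} = -\hat A$ (the Schouten tensor of $\hat g$) and $\mathrm{tr}_{\hat g}\, g^{(3)} = 0$. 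Writing $w = \log r + \phi$, so that $g = e^{2w} g^+ = e^{2\phi}(\ud r^2 + g_r)$, the normalization $g|_M = \hat g$ forces $\phi|_{r=0} = 0$, and $\phi$ is smooth up to $M$ to the order we need.

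First I would turn the defining equation $-\triangle_{g^+} w = 3$ into a recursion for the first few Taylor coefficients of $\phi$ in $r$. Using the conformal Laplacian identity and the warped-product form of $g^+$, one gets $-\triangle_{g^+}\log r = 3 - \tfrac r2\,\partial_r\log\det g_r$; since $\mathrm{tr}_{\hat g}\, g^{(3)} = 0$ one has $\partial_r\log\det g_r = 2(\mathrm{tr}_{\hat g}\, g^{(2)})\, r + O(r^3)$, so the equation becomes $\triangle_{g^+}\phi = -(\mathrm{tr}_{\hat g}\, g^{(2)})\, r^2 + O(r^4)$. Expanding $\phi = \phi_1 r + \phi_2 r^2 + O(r^3)$ and matching powers of $r$ yields $\phi_1 = 0$ and $\phi_2 = \tfrac12\,\mathrm{tr}_{\hat g}\, g^{(2)}$ on $M$. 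Next I would use the conformal change of scalar curvature in dimension four, $R = e^{-2w}(R_{g^+} - 6\triangle_{g^+} w - 6\,|\nabla_{g^+} w|^2) = 6\,e^{-2w}(1 - |\nabla_{g^+} w|^2)$, where $|\nabla_{g^+}w|^2$ is measured in $g^+$ and we used $R_{g^+} = -12$ and $\triangle_{g^+} w = -3$. A short computation with $w = \log r + \phi$ and $\phi_1 = 0$ gives $|\nabla_{g^+} w|^2 = 1 + 4\phi_2 r^2 + O(r^3)$ and $e^{-2w} = r^{-2}(1 + O(r^2))$, hence $R|_M = -24\,\phi_2|_M = -12\,\mathrm{tr}_{\hat g}\, g^{(2)}$. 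Since $g^{(2)} = -\hat A$ and on the $3$-manifold $M$ one has $\mathrm{tr}_{\hat g}\hat A = \tfrac14\hat R$, this gives $R|_M = 3\hat R$, as claimed.

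The only real difficulty is the asymptotic bookkeeping: one has to keep careful track of which Taylor coefficients of $g_r$, of $\phi$, and of $|\nabla_{g^+}w|^2$ contribute at each order in $r$, and to use $\mathrm{tr}_{\hat g}\, g^{(3)} = 0$ at exactly the right place, so that no third-order (non-local) data of the Einstein filling survives in $R|_M$. A slightly more invariant variant avoids expanding $\phi$ altogether: setting $u = e^w$, so that $g^+ = u^{-2} g$ and $u$ is a defining function for $g$, the equation $-\triangle_{g^+} w = 3$ is equivalent to $\triangle_g u = -\tfrac12 R\, u$; since $\phi_1 = 0$ shows that the FG compactification has totally geodesic boundary, one may expand this identity in Fermi coordinates for $g$ and combine it with the Gauss relation $R = \hat R + 2 R_{nn}$ on $M$ to deduce $R_{nn}|_M = \hat R$, and hence once more $R|_M = 3\hat R$.
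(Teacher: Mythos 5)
Your proposal is correct, but it proves the lemma by a genuinely different route than the paper does. The paper's proof is a one-line citation: it invokes Corollary 6.6 of Case--Chang \cite{casechang}, which asserts $J_g = 2J_{\hat g}$ for the Fefferman--Graham compactification, and then unwinds $J_g = R/6$, $J_{\hat g} = \hat R/4$. You instead give a self-contained derivation: you solve $-\triangle_{g^+}w=3$ asymptotically to find $w=\log r+\tfrac12(\mathrm{tr}_{\hat g}g^{(2)})r^2+O(r^3)$, feed this into $R=6e^{-2w}(1-|\nabla_{g^+}w|^2)$, and use $\mathrm{tr}_{\hat g}g^{(2)}=-\tfrac14\hat R$ together with $\mathrm{tr}_{\hat g}g^{(3)}=0$. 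I checked the asymptotic bookkeeping ($-\triangle_{g^+}\log r=3-\tfrac r2\partial_r\log\det g_r$, the vanishing of $\phi_1$, $\phi_2=\tfrac12\mathrm{tr}_{\hat g}g^{(2)}$, and $R|_M=-24\phi_2$) and it is right; it is also consistent with the expansion $w=\log r-\tfrac12 J_h r^2+O(r^4)$ that the paper itself quotes later in the proof of Lemma \ref{lem4.1}, and with formula (\ref{relation3}). What your approach buys is independence from the external reference and an explicit identification of which terms of the FG expansion enter (in particular, that the non-local coefficient $g^{(3)}$ drops out of $R|_M$ because it is trace-free); what the paper's approach buys is brevity. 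One caveat: your closing ``more invariant variant'' via $\triangle_g u=-\tfrac12 Ru$ and the Gauss relation is only a sketch --- since both $u$ and $Ru$ vanish on $M$, extracting $R_{nn}|_M=\hat R$ requires differentiating the identity in the normal direction (or using the full Hessian relation $Ric_g=-2u^{-1}\nabla_g^2u$), which you do not carry out --- but your main argument stands on its own without it.
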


\begin{proof}
From Corollary 6.6 in \cite{casechang}, we have $J_g=2J_{\hat {g}}$ where $J_g=\frac{R}6 $ (resp. $J_{\hat {g}}=\frac{\hat{R}}4$) is the trace of the Schouten tensor of the metric $g$ (resp. $\hat {g}$). Therefore 
we get the desired result.
\end{proof}

The following result is well known (see \cite{FG0}).\\

\begin{lemm}
\label{lemma2.3bis}
Under any compactification, we have $W|_M=0$  on $M$.
\end{lemm}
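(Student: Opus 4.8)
The plan is to reduce the claim to a single convenient compactification and then compute the boundary curvature directly. First I would note that the statement is independent of the chosen compactification: if $g=\rho^2 g^+$ and $\bar g=\bar\rho^2 g^+$ are two compactifications, the ratio $\rho/\bar\rho$ extends to a positive smooth function $\phi$ on $\bar X$ with $g=\phi^2\bar g$, and since the Weyl tensor is conformally invariant as a $(3,1)$-tensor we get $W(g)_{ijkl}=\phi^2\,W(\bar g)_{ijkl}$ everywhere; hence $W(g)|_M=0$ if and only if $W(\bar g)|_M=0$. So it suffices to treat the special-defining-function (geodesic) compactification associated with a boundary metric $\hat g$: writing $g^+$ in Fefferman--Graham normal form $g^+=r^{-2}(dr^2+g_r)$ with $g_r=\hat g+g^{(2)}r^2+O(r^3)$ (as in Remark \ref{remark2.2}), set $\bar g:=r^2 g^+=dr^2+g_r$. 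In these Fermi coordinates $n=\partial_r$ and, since $\partial_r(g_r)|_{r=0}=0$, the boundary $M$ is totally geodesic, $L\equiv 0$ on $M$.

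Next I would invoke the Fefferman--Graham expansion of the Poincar\'e--Einstein metric: from $\mathrm{Ric}(g^+)=-3g^+$ and $\dim M=3$ one has $g^{(2)}_{\alpha\beta}=-\hat A_{\alpha\beta}$, the negative of the Schouten tensor of $\hat g$. Then the Gauss, Codazzi and radial (Riccati) equations, evaluated at $r=0$ where $L\equiv 0$, give
\begin{gather*}
R(\bar g)_{\alpha\beta\gamma\delta}|_M=\hat R_{\alpha\beta\gamma\delta},\qquad
R(\bar g)_{\alpha\beta\gamma n}|_M=0,\\
R(\bar g)_{n\alpha n\beta}|_M=-\tfrac12\,\partial_r^2(g_r)_{\alpha\beta}|_{r=0}=-g^{(2)}_{\alpha\beta}=\hat A_{\alpha\beta}.
\end{gather*}
Contracting these yields $\mathrm{Ric}(\bar g)_{\alpha\beta}|_M=\hat R_{\alpha\beta}+\hat A_{\alpha\beta}$, $\mathrm{Ric}(\bar g)_{n\alpha}|_M=0$, $\mathrm{Ric}(\bar g)_{nn}|_M=\mathrm{tr}_{\hat g}\hat A$ and $R(\bar g)|_M=\hat R+2\,\mathrm{tr}_{\hat g}\hat A=\tfrac32\hat R$, so that the $4$-dimensional Schouten tensor $A=A(\bar g)=\tfrac12(\mathrm{Ric}(\bar g)-\tfrac16 R(\bar g)\,\bar g)$ satisfies $A_{\alpha\beta}|_M=\hat A_{\alpha\beta}$, $A_{n\alpha}|_M=0$ and $A_{nn}|_M=0$.

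Finally I would substitute into the curvature decomposition $W_{ijkl}=R_{ijkl}-(A_{ik}\bar g_{jl}-A_{il}\bar g_{jk}+A_{jl}\bar g_{ik}-A_{jk}\bar g_{il})$ for $\bar g$ and use that in dimension three the Weyl tensor of $\hat g$ vanishes identically, i.e.\ $\hat R_{\alpha\beta\gamma\delta}=\hat A_{\alpha\gamma}\hat g_{\beta\delta}-\hat A_{\alpha\delta}\hat g_{\beta\gamma}+\hat A_{\beta\delta}\hat g_{\alpha\gamma}-\hat A_{\beta\gamma}\hat g_{\alpha\delta}$. Checking the three component types at $M$: the purely tangential ones cancel by this identity; those with exactly one normal index vanish because $\bar g_{\alpha n}=0$, $A_{\alpha n}|_M=0$ and $R(\bar g)_{\alpha\beta\gamma n}|_M=0$; and $W(\bar g)_{n\alpha n\beta}|_M=R(\bar g)_{n\alpha n\beta}-A_{\alpha\beta}=\hat A_{\alpha\beta}-\hat A_{\alpha\beta}=0$. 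The remaining components vanish by the antisymmetries of $W$. Hence $W(\bar g)|_M=0$, and by the first paragraph $W|_M=0$ under every compactification.

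The only genuine work is the normalization bookkeeping: making the Kulkarni--Nomizu term in the curvature decomposition, the identity $g^{(2)}=-\hat A$ coming from the Einstein equations, and the Riccati relation $R_{n\alpha n\beta}=-\tfrac12\partial_r^2\bar g_{\alpha\beta}$ at a point of a totally geodesic hypersurface all mutually consistent; once these conventions are pinned down the cancellations are forced by the vanishing of the Weyl tensor of $\hat g$ in dimension three, and no deeper difficulty arises.
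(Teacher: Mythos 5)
Your proof is correct. Note that the paper itself offers no argument for this lemma: it simply records the statement as well known and cites Fefferman--Graham \cite{FG0}, so there is nothing to compare against except the standard derivation, which is exactly what you have reconstructed. The two reductions you make are both legitimate: the ratio of two defining functions extends to a positive smooth function on $\bar X$ because both have nonvanishing differential along $M$, and the $(4,0)$ Weyl tensor only rescales under a conformal change, so vanishing at $M$ is compactification-independent; and for the geodesic compactification the absence of the $O(r)$ term in $g_r$ gives $L\equiv 0$. Your curvature bookkeeping also checks out against the paper's conventions: with $\hat A=\hat{Ric}-\tfrac{\hat R}{4}\hat g$ (the three-dimensional Schouten tensor, as defined in Section 2) the Einstein condition forces $g^{(2)}=-\hat A$, the Riccati relation gives $R_{n\alpha n\beta}|_M=\hat A_{\alpha\beta}$ (one can sanity-check the sign on $\bar g=dr^2+(1-r^2/4)^2 g_{S^3}$ for $\mathbb{H}^4$), and then $A_{\alpha\beta}|_M=\hat A_{\alpha\beta}$, $A_{n\alpha}|_M=0$, $A_{nn}|_M=0$, so every component of $W=R-A\circledwedge \bar g$ at $M$ cancels, the purely tangential ones precisely because a three-manifold's curvature equals $\hat A\circledwedge\hat g$. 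This is the same mechanism that underlies Remark \ref{remark2.2} and Lemma \ref{lemma2.5}, so your argument is consistent with the rest of the paper; the only caveat is that it is written for Poincar\'e--Einstein $g^+$ (which is the only case the paper uses), whereas the vanishing of $W|_M$ for a general conformally compact metric would require a separate justification.
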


We split the tangent bunlde on the boundary $T_xX=\mathbb{R}\vec{\nu}\oplus T_xM$ for all $x\in M$, where $\vec{\nu}$ is unit normal vector on the boundary. Given a tensor $T$, we decompose tensor $\nabla^{(k)}  T$ on $X$ along $M$ that are related to the splitting $T_xX=\mathbb{R}\vec{\nu}\oplus T_xM$:
let us denote by $\nabla_{odd}^{(k)}  T$ (resp. $\nabla_{even}^{(k)}  T$) the normal component $\vec{\nu}$ appeared odd time (resp. even time) in the tensor $\nabla^{(k)}  T$.

\begin{lemm}
\label{lemma2.4}
Suppose the boundary is totally geodesic and $W|_M=0$. We have on the boundary $M$ for any $k\le 1$
$$
\nabla_n^{(k+1)}  W= L(\hat{ \nabla}^{(k)} S, \hat{ \nabla}^{(k)} \hat C)
$$ 
where $L$ is some linear function. As a consequence, we have on $M$ for any $k\le 1$
$$
\nabla^{(k+1)}  W= L(\hat{ \nabla}^{(k)} S, \hat{ \nabla}^{(k)} \hat C).
$$
More precisely, we have
$$
\nabla_{odd}^{(k+1)}  W= L(\hat{ \nabla}^{(k)} S)
$$
and
$$
\nabla_{even}^{(k+1)}  W= L(\hat{ \nabla}^{(k)} \hat C)
$$
\end{lemm}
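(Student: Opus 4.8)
The plan is to run an induction on $k$, using Lemma \ref{lemma2.1} as the base case $k=0$ and $k=1$ (indeed parts (1)--(3) of that lemma already give $\nabla_n W$ in terms of $S$ and $\hat C$, and parts (4)--(6) give $\nabla_n\nabla_n W$ in terms of $\hat\nabla S$ and $\hat\nabla\hat C$). Since the statement is only claimed for $k\le 1$, strictly speaking no induction is needed — Lemma \ref{lemma2.1} literally contains both cases — but to organize the argument I would first separate out the two qualitatively different ways a normal index can enter. First I would record the structural mechanism: starting from the second Bianchi identity (\ref{e:2Bianchi}) in the form used repeatedly above, every normal derivative $\nabla_n$ applied to a component of $W$ can be traded, modulo curvature-times-$W$ terms (which vanish since $W|_M=0$ and hence $\nabla_\alpha W|_M=0$ by Lemma \ref{lemma2.3bis}), for tangential derivatives of components of $W$ with one fewer normal index, plus $\nabla_n$-derivatives of the Cotton tensor $C$. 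The $\nabla_n C$ terms are then handled by the Bach-flat equation exactly as in the proof of Lemma \ref{lemma2.1}: $\gra_l\gra_k W_{kilj}=0$ converts a single normal derivative of $C$ on the boundary into tangential derivatives of $C$, which by the identity $\hat C_{\alpha\beta\gamma}=C_{\alpha\beta\gamma}$ (established in Lemma \ref{lemma2.1}(2)) and the relation $C_{\alpha\beta n}=S_{\alpha\beta}$ from (\ref{e:Cgagbn}) become tangential derivatives of $S$ and $\hat C$.

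Next I would track the parity of the number of normal legs. The key observation is that (\ref{e:2Bianchi}) and the Ricci identity (\ref{Ricci}), when restricted to $M$ with $L=0$, preserve the parity of the count of $n$-indices modulo $2$ up to the lower-order $W*Rm$ and $g*W*A$ terms that vanish on $M$; and the tensors $S$ and $\hat C$ sit in complementary parity classes — $S_{\alpha\beta}=C_{\alpha\beta n}$ carries exactly one normal index, while $\hat C_{\alpha\beta\gamma}=C_{\alpha\beta\gamma}$ carries none. Concretely, $\nabla_n^{(k+1)}W$ restricted to a purely tangential set of remaining indices (the component $W_{\alpha\beta\gamma\delta}$) has $k+1$ normal derivative-slots plus whatever normal legs come from the $W$ itself; by Lemma \ref{lemma2.1}(1),(4) these land on $\hat C$ and $\hat\nabla\hat C$ (even parity), whereas $\nabla_n^{(k+1)}W$ paired against $W_{n\alpha n\beta}$ or $W_{\alpha\beta\gamma n}$ lands on $S$ and $\hat\nabla S$ (odd parity) by (2),(3),(5),(6). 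So I would simply read off from the six formulas of Lemma \ref{lemma2.1} that $\nabla^{(k+1)}_{odd}W = L(\hat\nabla^{(k)}S)$ and $\nabla^{(k+1)}_{even}W=L(\hat\nabla^{(k)}\hat C)$ for $k=0,1$, and note that the general $\nabla^{(k+1)}W$ statement follows from the $\nabla_n^{(k+1)}W$ statement because any mixed derivative can be reduced to iterated normal derivatives by commuting $\hat\nabla$ past $\nabla_n$ via (\ref{Ricci}) (again the commutator terms involve $W|_M=0$) and because a derivative $\hat\nabla$ purely tangential to $M$ of a quantity already expressed in $S,\hat C$ stays of that form.

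The main obstacle, and the step I would spend the most care on, is the bookkeeping in the commutation step: when I write a general component of $\nabla^{(k+1)}W$ and want to move all normal derivatives to the outside (or reduce to the cases covered by Lemma \ref{lemma2.1}), the Ricci identity (\ref{Ricci}) produces terms $R_{m i_s l j}T_{\cdots m\cdots}$ where $T$ is a lower-order derivative of $W$; I must check that on $M$, after using $W|_M=0$ and $\gra W|_M=0$ and totally-geodesicness ($L=0$, $R_{\alpha n}=0$), all such terms are either zero or themselves of the form $L(\hat\nabla^{(\le k)}S,\hat\nabla^{(\le k)}\hat C)$ with the correct parity — and crucially that no term of the "wrong" parity (an $S$-term in the even part, or a $\hat C$-term in the odd part) can appear. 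For $k\le 1$ this is a finite and small check, essentially already done inside the proof of Lemma \ref{lemma2.1}, so the lemma is really a repackaging of that computation; I would present it as such, invoking Lemma \ref{lemma2.1}(1)--(6) directly and then adding the short commutation remark to pass from $\nabla_n^{(k+1)}W$ to the full $\nabla^{(k+1)}W$.
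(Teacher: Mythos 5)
Your proposal is correct and follows essentially the same route as the paper: the pure normal derivatives $\nabla_n^{(k+1)}W$ for $k\le 1$ are read off directly from the six identities of Lemma \ref{lemma2.1}, and the general mixed derivatives reduce to these because $W|_M=0$ forces $\nabla_\alpha W|_M=0$ and the Ricci identity (\ref{Ricci}) lets derivatives of $W$ commute on $M$ (the commutator terms are $Rm*W$ and vanish there). One small correction to your parity bookkeeping: the right invariant is the \emph{total} number of normal indices in $\nabla^{(k+1)}W$ (derivative slots plus the legs of $W$), so items (1), (3), (5) of Lemma \ref{lemma2.1} are the odd ones and produce $S$-terms, while (2), (4), (6) are the even ones and produce $\hat C$-terms — your grouping of (1) with (4) as "even" and of (2), (6) with the "$S$" class is inconsistent with the (correct) principle you state just before it, though the final conclusion you draw is the right one.
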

\begin{proof}
The first part of Lemma comes from Lemma \ref{lemma2.1}. Recall $W=0$ on $M$. Thus 
$$
\nabla_\alpha W=0
$$
Also it follows from the Ricci identity (\ref{Ricci}),
$$
\nabla_i \nabla_j W=\nabla_j \nabla_i W
$$
Thus the desired result follows from Lemma \ref{lemma2.1} again and the lemma is proved.
\end{proof}

\begin{lemm}
\label{lemma2.5}
Suppose the boundary is totally geodesic and $W|_M=0$ for some $Q$-flat and Bach-flat metric.  Then on the boundary $M$ 
\begin{enumerate}
\item We have
\beqn
A_{\alpha\beta,\gamma}=\hat A_{\alpha\beta, \gamma}, A_{\alpha\beta,n}=S_{\alpha\beta}, A_{n\alpha, \beta}=A_{\alpha n, \beta}=0, 
A_{n n, n}=\frac16 R_{,n},\\ A_{nn,\alpha}=\frac16 \hat{\nabla}_\alpha R-\hat{A}_{\beta\beta,\alpha}, A_{n\alpha,n}=A_{\alpha n,n}=\frac16 \hat{\nabla}_\alpha R-\hat{A}_{\alpha\beta,\beta}
\eeqn
\item We have
\beqn
A_{\alpha \beta,\gamma\lambda}=\hat{A}_{\alpha \beta,\gamma\lambda},\;A_{\alpha \beta, \gamma n}=A_{\alpha \beta, n \gamma }=\hat \nabla_\gamma S_{\alpha\beta},\; \\ A_{\alpha \beta,nn}=\frac16 \hat{\nabla}_{\alpha} \hat{\nabla}_{\beta}R - \hat{A}_{\alpha \beta,\gamma\gamma}-|A|^2g_{\alpha \beta}+4 A_{\alpha p}{A_\beta}^p\\
A_{n\alpha ,\beta\gamma}=A_{\alpha n,\beta\gamma}=0,\;
A_{n\alpha , nn }=A_{\alpha n, nn }=A_{n n, n \alpha }=A_{n n,  \alpha n}=\frac16 R_{,n\alpha},\\
A_{n\alpha , n\beta }=A_{\alpha n, n\beta }=\frac16 \hat{\nabla}_{\beta}\hat{\nabla}_{\alpha} R- \hat{A}_{\alpha \gamma,\gamma\beta}, \\
A_{n\alpha , \beta n}=A_{\alpha n, \beta n}=\frac16 \hat{\nabla}_{\beta}\hat{\nabla}_{\alpha} R- \hat{A}_{\alpha \gamma,\gamma\beta}-(A_{nn})^2g_{\alpha\beta}+A_{\alpha\gamma}{A_{\alpha}}^\gamma\\
A_{nn,\alpha\gamma}=\frac16 \hat{\nabla}_{\alpha} \hat{\nabla}_{\gamma}R-\hat{A}_{\beta\beta,\alpha\gamma},\; 
R_{,nn}=R^2-3|Ric|^2-\hat{\nabla}_\alpha\hat{\nabla}_\alpha R
\eeqn
Moreover, we have
$$
A_{nn,nn}=-\frac{1}{3}R_{,\alpha\alpha}+\frac{1}{6}( R^2-3|Ric|^2) -A^{\alpha\beta}A_{\alpha\beta}+3 (A_{nn})^2+\frac14 \hat{\triangle} \hat{R}.
$$
In particular,  when $\hat{R}$ is constant on $M$, then 
$$
A_{nn,nn}=-\frac{1}{3}R_{,\alpha\alpha}+\frac{1}{6}( R^2-3|Ric|^2) -A^{\alpha\beta}A_{\alpha\beta}+3 (A_{nn})^2
$$
\item We have
\beqn
\nabla^{(2)}_{odd} R=L(\hat\nabla \nabla_n R),\; \nabla^{(2)}_{even} R=L(\hat\nabla^{(2)} R)+A*A,\\
\nabla^{(1)}_{odd} A=L(S,\nabla_n R),\; \nabla^{(1)}_{even} A=L(\hat\nabla \hat A,\hat\nabla R),\\
\nabla^{(2)}_{odd} A=L(\hat\nabla S, \hat\nabla \nabla_n R),\; \nabla^{(2)}_{even} A=L(\hat\nabla^{(2)} \hat A,\hat\nabla^{(2)} R)+A*A.
\eeqn
\item There holds
\beqn
A_{\alpha\beta}=\hat{A}_{\alpha\beta}+(A_{nn}+\frac{\hat R}{4}-\frac{R}{6})g_{\alpha\beta}
\eeqn
\item Under the Fefferman-Graham's compactification with the Yamabe metric on the boundary, we have 
\beqn
A_{nn}=\frac{\hat{R}}4\\
A_{\alpha\beta}=\hat{A}_{\alpha\beta}
\eeqn
\end{enumerate}
\end{lemm}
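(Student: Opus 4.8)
The plan is to do every computation in Fermi coordinates along $M$, in which the vanishing of the second fundamental form forces $\Gamma^n_{\alpha\beta}=\Gamma^\beta_{\alpha n}=\Gamma^n_{\alpha n}=0$ on $M$, gives $R_{n\alpha}=0$, and --- what gets used at every turn --- makes $A_{n\alpha}=\tfrac12 R_{n\alpha}=0$ identically on $M$. The algebraic inputs are the trace identity $\tr A=\tfrac16 R$; the once-contracted second Bianchi identity $\nabla^j A_{ij}=\tfrac16\nabla_i R$ (valid in dimension four); the curvature decomposition recalled above together with $W|_M=0$ (Lemma \ref{lemma2.3bis}), which in particular gives $R_{n\alpha n\beta}|_M=A_{\alpha\beta}+A_{nn}g_{\alpha\beta}$ and $R_{\delta\alpha\gamma n}|_M=0$; the Gauss relations $R_{\alpha\beta}=\hat R_{\alpha\beta}+R_{n\alpha n\beta}$ and $R_{nn}=\tfrac12(R-\hat R)$; and Lemma \ref{lemma2.0}(2),(5), i.e. $A_{\alpha\beta,n}=S_{\alpha\beta}$ and $\tr S=0$. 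The differential inputs are the Bach-flat equation (\ref{Bachflat1}) and the $Q$-flat equation (\ref{Qflat}), $\triangle R=R^2-3|Ric|^2$. Throughout one uses that, since $L=0$, the restriction to $M$ of the ambient covariant derivative of a boundary-tangential quantity equals its intrinsic covariant derivative $\hat\nabla$, and that the restriction of the ambient Laplacian of a function is $\hat\triangle(\cdot)-H\partial_n(\cdot)=\hat\triangle(\cdot)$. I would dispose of parts (4) and (5) first, as pure algebra: combining the Gauss relation with $R_{n\alpha n\beta}|_M=A_{\alpha\beta}+A_{nn}g_{\alpha\beta}$ and rewriting $Ric,\hat{Ric}$ through $A,\hat A,R,\hat R$ gives (4), together with $A_{nn}=\tfrac12 R_{nn}-\tfrac{1}{12}R=\tfrac16 R-\tfrac14\hat R$ on $M$; under the Fefferman--Graham compactification with the Yamabe boundary metric, Lemma \ref{lemma2.3} gives $R=3\hat R$ on $M$, hence $A_{nn}=\tfrac14\hat R$, which makes the bracket in (4) vanish and yields (5).

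For part (1): $A_{\alpha\beta,\gamma}=\hat A_{\alpha\beta,\gamma}$ is exactly the identity established in passing while proving Lemma \ref{lemma2.1}(2); $A_{\alpha\beta,n}=S_{\alpha\beta}$ is Lemma \ref{lemma2.0}(5); $A_{n\alpha,\beta}=0$ because $A_{n\alpha}\equiv 0$ on $M$ and the Christoffel symbols that would appear all vanish; $A_{nn,n}=\tfrac16 R_{,n}$ and $A_{nn,\alpha}=\tfrac16 R_{,\alpha}-\hat A_{\beta\beta,\alpha}$ follow by differentiating $\tr A=\tfrac16 R$ in the normal and in a tangential direction and using $g^{\alpha\beta}A_{\alpha\beta,n}=\tr S=0$ together with $A_{\alpha\beta,\gamma}=\hat A_{\alpha\beta,\gamma}$; and $A_{n\alpha,n}=\tfrac16 R_{,\alpha}-\hat A_{\alpha\beta,\beta}$ is the $i=\alpha$ component of $\nabla^j A_{ij}=\tfrac16 R_{,i}$, once more using $A_{\alpha\beta,\gamma}=\hat A_{\alpha\beta,\gamma}$ and $A_{\alpha n,\beta}=0$.

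For part (2), each identity is obtained by applying one further covariant derivative to a formula from part (1), commuting the two outermost derivatives by the Ricci identity (\ref{Ricci}) --- all resulting curvature corrections being controlled on $M$ through $W|_M=0$, $A_{n\alpha}|_M=0$, $R_{n\alpha}|_M=0$ and $R_{n\alpha n\beta}|_M=A_{\alpha\beta}+A_{nn}g_{\alpha\beta}$ --- and then invoking the Bach-flat equation (\ref{Bachflat1}) to trade the ``bad'' double-normal derivative of a component of $A$ for $\triangle$ of that component minus its tangential Hessian trace, the latter being read off from part (1) (and, for the $nn$-component, from $A_{nn}=\tfrac16 R-\tfrac14\hat R$ on $M$). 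The $Q$-flat equation enters through $\triangle R=R_{,\alpha\alpha}+R_{,nn}$ with $R_{,\alpha\alpha}|_M=\hat\triangle R$, giving $R_{,nn}=R^2-3|Ric|^2-\hat\triangle R$; substituting this and $A_{nn,\alpha\alpha}=\tfrac16\hat\triangle R-\tfrac14\hat\triangle\hat R$ into the $nn$-component of the Bach equation and simplifying with $|A|^2=A^{\alpha\beta}A_{\alpha\beta}+A_{nn}^2$ produces the $A_{nn,nn}$ formula, the $\hat\triangle\hat R$ term disappearing when $\hat R$ is constant. Part (3) requires no new computation: it is the repackaging of (1) and (2) according to the parity of the number of normal indices, using that mixed second derivatives of the scalar $R$ commute, that $\hat\nabla$ of a function is an ordinary tangential derivative, and the schematic notation $A*A$ for the quadratic curvature terms (with $Ric=2A+\tfrac16 Rg$ absorbed into it).

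The main obstacle is purely the bookkeeping in part (2): there are many components, and the delicate point each time is to decide which curvature-correction terms in the Ricci identity actually survive restriction to $M$, which forces one to keep reusing $A_{n\alpha}|_M=0$, $R_{n\alpha}|_M=0$, $W|_M=0$ and $R_{n\alpha n\beta}|_M=A_{\alpha\beta}+A_{nn}g_{\alpha\beta}$, and to keep in mind that the restriction to $M$ of an ambient second covariant derivative coincides with the intrinsic one only because $L=0$ (and even then only after the normal-component terms in the Gauss formula have been killed, again by $A_{n\alpha}|_M=0$). No individual step is hard; the entire difficulty is arithmetical.
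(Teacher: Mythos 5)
Your proposal is correct and follows essentially the same route as the paper: Codazzi/$L=0$ to kill $A_{n\alpha}$ and the relevant Christoffel symbols, the trace identity and contracted second Bianchi identity for part (1), the Ricci identity plus the Bach-flat equation (to convert $\nabla_n\nabla_n$ of components of $A$ into $\triangle$ minus the tangential trace) and the $Q$-flat equation for part (2), and the Gauss equation with Lemma \ref{lemma2.3} for parts (4)--(5). The only difference is the (harmless) reordering of the parts and your explicit observation that $A_{nn}=\tfrac16R-\tfrac14\hat R$ already follows from the Gauss equation alone, which the paper only extracts in step (5).
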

\begin{proof}
(1) The first two equalities are proved in the proof of Lemmas \ref{lemma2.0} and \ref{lemma2.1}. Recall $A_{\alpha n}=0$ on $M$ from the Codazzi equations so that the third one comes.  From the relation $tr(A)=\frac R6$,  we have on the boundary
$$
A_{\alpha\alpha,n}+A_{nn, n}=\frac{R_{,n}}{6}
$$
On the other hand, by Lemma \ref{lemma2.0}
$$
A_{\alpha\alpha,n}=tr(S)=0
$$
so that 
$$
A_{nn, n}=\frac{R_{,n}}{6}
$$
Similarly, 
$$
A_{\beta\beta,\alpha}+A_{nn, \alpha}=\frac{R_{,\alpha}}{6}
$$
so that
$$
A_{nn,\alpha}=\frac16 \hat{\nabla}_\alpha R-\hat{A}_{\beta\beta,\alpha}.
$$
since ${A}_{\beta\beta,\alpha}=\hat{A}_{\beta\beta,\alpha}$. By the second Bianchi identity, we obtain 
$$
A_{\alpha\beta,\beta}+A_{\alpha n,n}=\frac {R_{,\alpha}}{6}
$$
which implies
$$
A_{n\alpha,n}=A_{\alpha n,n}=\frac16 \hat{\nabla}_\alpha R-\hat{A}_{\alpha\beta,\beta}
$$
since $A$ is symmetric and ${A}_{\alpha\beta,\beta}=\hat{A}_{\alpha\beta,\beta}$.
\\
(2) It follows from $A_{\alpha \beta,\gamma}=\hat{A}_{\alpha \beta,\gamma} $ that $ A_{\alpha \beta,\gamma\lambda}=\hat{A}_{\alpha \beta,\gamma\lambda}$. \\
From the Ricci identity (\ref{Ricci}) and the codazzi equations $R_{\alpha\beta\gamma n}=0$ on $M$, we get $A_{\alpha \beta, \gamma n}=A_{\alpha \beta, n \gamma }$ so that $A_{\alpha \beta, \gamma n}=A_{\alpha \beta, n \gamma }=\hat \nabla_\gamma S_{\alpha\beta}$.\\
By the Bach flat  equation (\ref{Bachflat1}) for $A_{\alpha \beta}$ and the decomposition of curvature tensor $ Rm=W+A\circledwedge g$, we infer
\beqn
A_{\alpha \beta,nn}=\triangle A_{\alpha \beta}-\hat{A}_{\alpha \beta,\gamma\gamma} =\frac16 \hat{\nabla}_{\alpha} \hat{\nabla}_{\beta}R - \hat{A}_{\alpha \beta,\gamma\gamma}-R_{\alpha k\beta p}A^{pk}+R_{\alpha p}{A_\beta}^p\\
=\frac16 \hat{\nabla}_{\alpha} \hat{\nabla}_{\beta}R - \hat{A}_{\alpha \beta,\gamma\gamma}-|A|^2g_{\alpha \beta}+4 A_{\alpha p}{A_\beta}^p
\eeqn
From the fact $A_{\alpha n}=A_{n\alpha }=0$, we have $A_{n \alpha,\beta\gamma}=A_{\alpha n,\beta\gamma}=0$.  There holds
$$
A_{n\alpha ,nn}=A_{\alpha n,nn}=\triangle A_{\alpha n}-A_{\alpha n, \beta\beta }=\triangle A_{\alpha n}=\frac16 R_{,n\alpha}
$$
since $A_{\alpha n}=R_{\alpha \beta \gamma n}=0$ and Bach flat  equation (\ref{Bachflat1}) for $A_{\alpha n}$. We have
$$
A_{n n, \alpha n}=\frac16 R_{,  \alpha n}-A_{\beta\beta, \alpha n}=\frac16 R_{,  \alpha n}-A_{\beta\beta, n \alpha }=\frac16 R_{,  \alpha n}-\hat\nabla_\alpha S_{\beta\beta}=\frac16 R_{,  \alpha n}
$$
since $tr(S)=0$. It follows from (2) $A_{nn,n}=\frac16 R_{,n}$ there holds
$$
A_{n n, n \alpha }=\frac16 R_{,  \alpha n}
$$
Using $A_{n\alpha,n}=A_{\alpha n,n}=\frac16 \hat{\nabla}_\alpha R-\hat{A}_{\alpha\gamma,\gamma}$, there holds
$$
A_{n\alpha , n\beta }=A_{\alpha n, n\beta }=\frac16 \hat{\nabla}_{\beta}\hat{\nabla}_{\alpha} R- \hat{A}_{\alpha \gamma,\gamma\beta}
$$
With the Ricci identity (\ref{Ricci}), we deduce
$$
\begin{array}{lll}
A_{n\alpha , \beta n}&=A_{\alpha n, \beta n}=A_{\alpha n, n\beta }-R_{m\alpha n\beta}{A^m}_n-R_{mn n\beta}{A_\alpha}^m\\
&=\frac16 \hat{\nabla}_{\beta}\hat{\nabla}_{\alpha} R- \hat{A}_{\alpha \gamma,\gamma\beta}-(A_{nn})^2g_{\alpha\beta}+A_{\alpha\gamma}{A_{\alpha}}^\gamma
\end{array}
$$
By $A_{nn,\alpha}=\frac16 \hat{\nabla}_\alpha R-\hat{A}_{\gamma\gamma,\alpha}$, we infer
$$
A_{nn,\alpha\beta}=\frac16  \hat{\nabla}_\beta\hat{\nabla}_\alpha R-\hat{A}_{\gamma\gamma,\alpha\beta}
$$
By $Q$-flat condition (\ref{Qflat}), we have
$$
R_{,nn}=\triangle R -\hat{\nabla}_\alpha\hat{\nabla}_\alpha R=R^2-3|Ric|^2-\hat{\nabla}_\alpha\hat{\nabla}_\alpha R
$$
From Bach flat condition (\ref{Bachflat2}), $Q$-flat condition (\ref{Qflat}) and from the decomposition of curvature tensor $ Rm=W+A\circledwedge g$, 
we calculate
$$
\begin{array}{lll}
A_{nn, nn}&=\triangle A_{nn}-A_{nn, \alpha\alpha}=\triangle A_{nn}-(\frac{R}{6}-A_{\beta\beta})_{, \alpha\alpha}\\
&\ds=\frac16 R_{,nn}-A^{\alpha\beta}A_{\alpha\beta}+3(A_{nn})^2-
\frac{1}{6}R_{, \alpha\alpha}+A_{\beta\beta, \alpha\alpha}\\
&\ds=\frac16(\triangle R- R_{,\alpha\alpha})-A^{\alpha\beta}A_{\alpha\beta}+3(A_{nn})^2-
\frac{1}{6}R_{, \alpha\alpha}+A_{\beta\beta, \alpha\alpha}\\
&\ds=\frac16(R^2-3|Ric|^2)-A^{\alpha\beta}A_{\alpha\beta}+3(A_{nn})^2-
\frac{1}{3}R_{, \alpha\alpha}+A_{\beta\beta, \alpha\alpha}
\end{array}
$$
On the other hand
$$
A_{\beta\beta, \alpha\alpha}=\hat{A}_{\beta\beta, \alpha\alpha}=\frac14 \hat{R}_{, \alpha\alpha}=\frac14 \hat{\triangle}\hat{R}
$$
since $tr(\hat{A})=\frac14 \hat{R}$.  This yields the desired result. \\
(3) It is just the result from (1) and (2). \\
(4) $$
2A_{\alpha\beta}=Ric_{\alpha\beta}-\frac{R}{6}g_{\alpha\beta}=R_{\alpha\gamma\beta\gamma}+R_{\alpha n\beta n}-\frac{R}{6}g_{\alpha\beta}
$$
By Gauss equation $R_{\alpha\gamma\beta\gamma}=\hat R_{\alpha\gamma\beta\gamma}$ and the decomposition of curvature tensor $ Rm=W+A\circledwedge g$,
$$
2A_{\alpha\beta}=\hat A_{\alpha\beta}+A_{\alpha \beta }+ A_{nn}g_{\alpha \beta }-\frac{R}{6}g_{\alpha\beta}+\frac{\hat R}{4}g_{\alpha\beta},
$$
which implies the desired result. \\
(5) From Gauss-Codazzi equation, we have $R=\hat{R}+2R_{nn}$. Together with Lemma \ref{lemma2.3}, we infer $R_{nn}=\hat{R}$ and $A_{nn}=\frac{\hat{R}}{4}$. Combining this with the result in (4), we infer $A_{\alpha\beta}=\hat{A}_{\alpha\beta}$.  Finally, we prove the result.
\end{proof}

\begin{lemm}
\label{lemma2.6}
Suppose the boundary is totally geodesic and $W|_M=0$ for some $Q$-flat and Bach-flat metric.  We have on the boundary $M$ for any $k\ge 2$
$$
\nabla^{(k)}_{odd} R =  L(\hat \nabla^{(k-1)}\nabla_n R)+ \sum_{l=0}^{k-2}\nabla^{(l)}Rm*\nabla^{(k-2-l)}A
$$
$$
\nabla^{(k)}_{odd} A = L(\hat \nabla^{(k-1)}S,\hat \nabla^{(k-1)}\nabla_n R)+\sum_{l=0}^{k-2}\nabla^{(l)}Rm*\nabla^{(k-2-l)}A
$$
$$
\nabla^{(k)}_{odd} W = L(\hat\nabla^{(k-1)}S,\hat \nabla^{(k-1)}\nabla_n R)+\sum_{l=0}^{k-2}\nabla^{(l)}Rm*\nabla^{(k-2-l)}Rm
 $$
where $L$ is some linear function.
\end{lemm}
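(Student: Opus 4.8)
The plan is to prove the three identities simultaneously, by induction on $k$. The base case $k=2$ is exactly the content of Lemma~\ref{lemma2.4} (for $W$) and of Lemma~\ref{lemma2.5}(3) (for $R$ and $A$), the quadratic remainder $\sum_{l=0}^{0}\nabla^{(l)}Rm*\nabla^{(0)}(\cdot)$ happening to vanish there. Assuming the three identities through order $k$, I would establish them at order $k+1$ by treating one Fermi-coordinate component of $\nabla^{(k+1)}T$, $T\in\{R,A,W\}$, carrying an odd total number of normal indices, and processing the three curvatures in the order $R$ (which only feels the $Q$-flat equation \eqref{Qflat}), then $A$ (which feels \eqref{Bachflat1} and the already-proved $R$-case), then $W$ (which feels \eqref{Bachflat2} and the $A$-case).

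The first move is to sort the derivatives with the Ricci identity \eqref{Ricci}: commute the $k+1$ covariant derivatives so that, read from the outside, the tangential ones come first and the normal ones are innermost, turning the component into $\hat\nabla^{(q)}\bigl(\nabla_n^{(p)}T\bigr)$ with $p+q=k+1$, modulo a finite sum of terms $\nabla^{(a)}Rm*\nabla^{(b)}T$ with $a+b\le k-1$ arising from the commutators (each commutation contributes an undifferentiated $Rm$ times a tensor, and the surviving derivatives redistribute). On restriction to $M$ the outer $\nabla_\alpha$'s become honest $\hat\nabla_\alpha$'s with no mixing of normal and tangential slots, because along a totally geodesic boundary the mixed Christoffel symbols vanish (\cite{Chen05b}) and $L\equiv 0$ as a tensor on $M$, so no $\hat\nabla^{(\cdot)}L$ corrections appear. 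The commutator remainders are already of the required shape once one writes $\nabla^{(b)}R=6\,\mathrm{tr}\,\nabla^{(b)}A$ in the $R$- and $A$-cases, uses $\nabla^{(b)}W\subset\nabla^{(b)}Rm$ in the $W$-case, and drops the $b=0$ term of the $W$-case because $W|_M=0$.

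The second move collapses the purely normal block $\nabla_n^{(p)}T$. If only one normal derivative remains there is nothing to do: $\nabla_nR$ is the base object, while $\nabla_nA$ and $\nabla_nW$ are given by Lemma~\ref{lemma2.5}(1) and Lemma~\ref{lemma2.1}(1)--(3) in terms of $S$ and $\hat\nabla R$ (only $S$ surviving in the odd components of $\nabla_nW$, and the trace component $A_{nn}$ being eliminated via $A_{nn}=\tfrac16 R-A_{\gamma\gamma}$). Otherwise I would write $\nabla_n^{(p)}T=\nabla_n^{(p-2)}(\nabla_n^2T)$ and, on a collar of $M$, substitute $\nabla_n^2=\Delta_g-\hat\Delta_{g_r}-H_r\nabla_n$, which can now legitimately be differentiated in the normal direction. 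Here $\Delta_gR$ is given by \eqref{Qflat}, $\Delta A$ by \eqref{Bachflat1} (as $\tfrac16\nabla^{(2)}R$ plus a curvature$*A$ term), and $\Delta W$ by \eqref{Bachflat2} (as $\nabla^{(2)}A$ plus $W*Rm+W*A$); the operator $\hat\Delta_{g_r}$ carries only tangential derivatives; and $H|_M=0$, $\partial_r g_r|_0=0$. Iterating this reduction until the normal order drops below $2$, pushing the outer $\hat\nabla^{(q)}$ back in, and using the induction hypothesis together with Lemma~\ref{lemma2.5} to trade the residual tangential derivatives of $R,A,W$ for tangential derivatives of $\nabla_nR$ and $S$, yields exactly $L(\hat\nabla^{(k)}\nabla_nR)$ (together with $L(\hat\nabla^{(k)}S)$ in the $A$- and $W$-cases) plus a quadratic curvature remainder of the asserted shape.

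The main obstacle is the bookkeeping hidden in that last sentence: one must check that the quadratic remainder has total derivative order at most $k-1$, never $k$. This works because each use of a field equation trades two normal derivatives for curvature factors of order lower by at least one, and because the would-be top-order contributions always carry a factor vanishing on $M$ --- through $\partial_r g_r|_0=0$ and $H|_M=0$, every normal derivative landing on $H_r$ or on the $r$-dependent coefficients of $\hat\Delta_{g_r}$ costs respectively one or two orders, and in the $W$-case $W|_M=0$ and $A_{\alpha n}|_M=0$ play the analogous role (their first normal derivatives being controlled by Lemma~\ref{lemma2.5} and Remark~\ref{remark2.2}). One must also keep track of which index placements retain odd normal parity after the commutations, of the ranges of $l$ in $\sum_{l=0}^{k-2}$, and of the re-substitution of $\nabla_n^2R$ inside the $(n,n,\dots)$-components of $\nabla^{(2)}R$ appearing in $\Delta A$, in order to avoid circularity; but these are routine iterations of the Bianchi and Ricci identities, carried out just as in the proofs of Lemmas~\ref{lemma2.1} and~\ref{lemma2.5}.
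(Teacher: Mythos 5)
Your proposal is correct and follows essentially the same route as the paper: induction on $k$, with the Ricci identity (\ref{Ricci}) used to commute derivatives and the $Q$-flat and Bach-flat equations used to convert a double normal derivative $\nabla_n\nabla_n$ into $\triangle$ minus a tangential second-order operator, treating $R$, then $A$, then $W$ in that order, with Lemmas \ref{lemma2.4} and \ref{lemma2.5} supplying the base case and the single normal derivatives. The only difference is organizational: the paper runs a three-case analysis on the outermost one or two derivatives and invokes the induction hypothesis at order $k$, whereas you normal-order all derivatives first and then collapse the innermost normal block; the two bookkeeping schemes are interchangeable.
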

\begin{proof}
We prove it by the induction. When $k=1,2$, it is clear for  $A$ and $W$ from  lemmas \ref{lemma2.4} and \ref{lemma2.5}. 
Suppose it is true for $k$. For $k+1$, we consider first $R$. 
The terms  $\nabla^{(k+1)}_{odd} R$ are in three cases.\\
a) $\nabla^{(k+1)}_{odd} R=\nabla_n\nabla_\alpha \nabla^{(k-1)}_{even} R$.\\
In such case, by the Ricci identity (\ref{Ricci})
$$
\nabla^{(k+1)}_{odd} R=\nabla_\alpha\nabla_n \nabla^{(k-1)}_{even} R+Rm*\nabla^{(k-1)} R=\nabla_\alpha \nabla^{(k)}_{odd} R+Rm*\nabla^{(k-1)} A
$$
By the assumptions of the induction, we get the result.\\
b) $\nabla^{(k+1)}_{odd} R=\nabla_n\nabla_n \nabla^{(k-1)}_{odd} R$.\\
We write by the Ricci identity (\ref{Ricci})
$$
\nabla^{(k+1)}_{odd} R=\triangle  \nabla^{(k-1)}_{odd} R-\nabla_\alpha\nabla_\alpha \nabla^{(k-1)}_{odd} R
=  \nabla^{(k-1)}_{odd} \triangle R-\nabla_\alpha\nabla_\alpha \nabla^{(k-1)}_{odd} R+\sum_{l=0}^{k-1}\nabla^{(l)}Rm*\nabla^{(k-2-l)}A
$$
By flat $Q_4$ curvature condition (\ref{Qflat}) and using the assumption of the induction, we get the result.\\
c)  $\nabla^{(k+1)}_{odd} R=\nabla_\alpha\nabla^{(k)}_{odd} R$.\\
This is an easier case. It follows from the assumptions of the induction.\\

Now we consider  the terms $\nabla^{(k+1)}_{odd} A$. Similarly, we consider them in three cases.\\
a) $\nabla^{(k+1)}_{odd} A=\nabla_n\nabla_\alpha \nabla^{(k-1)}_{even} A$.\\
In such case, by the Ricci identity (\ref{Ricci})
$$
\nabla^{(k+1)}_{odd} A=\nabla_\alpha\nabla_n \nabla^{(k-1)}_{even} A+Rm*\nabla^{(k-1)} A=\nabla_\alpha \nabla^{(k)}_{odd} A+Rm*\nabla^{(k-1)} A
$$
Thus, we get the result.\\
b) $\nabla^{(k+1)}_{odd} A=\nabla_n\nabla_n \nabla^{(k-1)}_{odd} A$.\\
We write
$$
\nabla^{(k+1)}_{odd} A=\triangle  \nabla^{(k-1)}_{odd} A-\nabla_\alpha\nabla_\alpha \nabla^{(k-1)}_{odd} A
$$
From the Bach flat equation (\ref{Bachflat1}), we get
$$
\triangle \nabla^{(k-1)} A-\nabla^{(k+1)} \frac R 6+\sum_{l=0}^{k-1}\nabla^{(l)}Rm*\nabla^{(k-1-l)}A=0
$$
which implies
$$
\nabla^{(k+1)}_{odd} A=  \nabla^{(k+1)}_{odd} \frac R6-\nabla_\alpha\nabla_\alpha \nabla^{(k-1)}_{odd} A+\sum_{l=0}^{k-1}\nabla^{(l)}Rm*\nabla^{(k-1-l)}A
$$
By the assumption of the induction and the above equation, we could write
$$
\nabla^{(k+1)}_{odd} A= L(\hat\nabla^{(k)}S,
\hat\nabla^{(k)} \nabla_n R)+\nabla^{(k+1)}_{odd} \frac R 6+\sum_{l=0}^{k-1}\nabla^{(l)}Rm*\nabla^{(k-1-l)}A
$$
which implies by the result for the scalar curvature
$$
\nabla^{(k+1)}_{odd} A= L(\hat\nabla^{(k)}S,
\hat\nabla^{(k)} \nabla_n R)+\sum_{l=0}^{k-1}\nabla^{(l)}Rm*\nabla^{(k-1-l)}A
$$
c) $\nabla^{(k+1)}_{odd} A=\nabla_\alpha  \nabla^{(k)}_{odd} A$.\\
It follows from the assumptions of the induction.\\
The proof for the Weyl tensor is quite similar as the schouten tensor $A$. We divide into 3 cases as above\\
a) $\nabla^{(k+1)}_{odd} W=\nabla_n\nabla_\alpha \nabla^{(k-1)}_{even} W=\nabla_\alpha\nabla_n \nabla^{(k-1)}_{even} W+Rm*\nabla^{(k-1)} A=\nabla_\alpha \nabla^{(k)}_{odd} W+Rm*\nabla^{(k-1)} W$\\
b) $\nabla^{(k+1)}_{odd} W=\nabla_n\nabla_n \nabla^{(k-1)}_{odd} W =\triangle  \nabla^{(k-1)}_{odd} W-\nabla_\alpha\nabla_\alpha \nabla^{(k-1)}_{odd} W$\\
From the Bach flat equation (\ref{Bachflat2}) and the Ricci identity (\ref{Ricci}), we get
$$
\nabla^{(k+1)}_{odd} W= L( \nabla^{(k+1)}_{odd} A)-\nabla_\alpha\nabla_\alpha \nabla^{(k-1)}_{odd} W+\sum_{l=0}^{k-1}\nabla^{(l)}Rm*\nabla^{(k-1-l)}Rm.
$$
c) $\nabla^{(k+1)}_{odd} W=\nabla_\alpha  \nabla^{(k)}_{odd} W$.\\
In the above 3 cases, we can prove the result by the assumptions of the induction and the results for $A$. 
We therefore have established the proof of Lemma \ref{lemma2.6}.
\end{proof}

\begin{lemm}
\label{lemma2.7}
Suppose the boundary is totally geodesic and $W|_M=0$ for some $Q$-flat and Bach-flat metric. For any $k\ge 2$, we have 
$$
\nabla^{(k)}_{even} R =  L( \hat{\nabla}^{(k)} R) +\sum_{l=0}^{k-2}\nabla^{(l)}Rm*\nabla^{(k-2-l)}A
$$
$$
\nabla^{(k)}_{even} A = L(\hat \nabla^{(k)} \hat A,\hat \nabla^{(k)} R )+\sum_{l=0}^{k-2}\nabla^{(l)}Rm*\nabla^{(k-2-l)}A
$$
$$
\nabla^{(k)}_{even} W =L(\hat\nabla^{(k)} \hat A,\hat \nabla^{(k)} R)+\sum_{l=0}^{k-2}\nabla^{(l)}Rm*\nabla^{(k-2-l)}Rm
$$
where $L$ is some linear function. In particular, when the restriction of $R$ on $M$ is constant, we have
$$
\nabla^{(k)}_{even} R = \sum_{l=0}^{k-2}\nabla^{(l)}Rm*\nabla^{(k-2-l)}A
$$
$$
\nabla^{(k)}_{even} A = L(\hat \nabla^{(k)} \hat A)+\sum_{l=0}^{k-2}\nabla^{(l)}Rm*\nabla^{(k-2-l)}A
$$
$$
\nabla^{(k)}_{even} W =L(\hat\nabla^{(k)} \hat A)+\sum_{l=0}^{k-2}\nabla^{(l)}Rm*\nabla^{(k-2-l)}Rm
$$
\end{lemm}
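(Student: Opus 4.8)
The plan is to run the same induction on $k$ as in the proof of Lemma~\ref{lemma2.6}, now tracking the \emph{even} normal components and replacing the odd boundary data $(S,\nabla_n R)$ by the even boundary data $(\hat A, R|_M)$. For the base case $k=2$ the three identities are exactly the even-parity statements already in hand: $\nabla^{(2)}_{even}W=L(\hat\nabla\hat C)$ by Lemma~\ref{lemma2.4}, and $\hat C$ is one tangential derivative of $\hat A$; while $\nabla^{(2)}_{even}R=L(\hat\nabla^{(2)}R)+A*A$ and $\nabla^{(2)}_{even}A=L(\hat\nabla^{(2)}\hat A,\hat\nabla^{(2)}R)+A*A$ are read off from Lemma~\ref{lemma2.5}(3). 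In each case the curvature-quadratic remainder has the allowed schematic shape, because $Rm=W+A\circledwedge g$, so $A*A$ is a special case of $Rm*A$ (resp. of $Rm*Rm$ for $W$).

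For the inductive step, suppose the three identities hold for $k$, and treat $\nabla^{(k+1)}_{even}$ applied first to $R$, then to $A$, then to $W$, so the later computations may quote the earlier ones at level $k+1$. Classifying by the outermost one or two indices, any $\nabla^{(k+1)}_{even}$ expression is of one of three types: (a) $\nabla_n\nabla_\alpha\nabla^{(k-1)}_{odd}$, (b) $\nabla_n\nabla_n\nabla^{(k-1)}_{even}$, or (c) $\nabla_\alpha\nabla^{(k)}_{even}$. Cases (a) and (c) are routine. In (c) one inserts the inductive hypothesis and applies $\nabla_\alpha$, using that on a totally geodesic boundary $\nabla_\alpha$ coincides with $\hat\nabla_\alpha$ on tangential components (so $\hat\nabla^{(k)}\to\hat\nabla^{(k+1)}$) and that Leibniz turns the quadratic tail into terms $\sum_{l=0}^{k-1}\nabla^{(l)}Rm*\nabla^{(k-1-l)}A$. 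In (a) one commutes $\nabla_n\nabla_\alpha=\nabla_\alpha\nabla_n+Rm*\nabla^{(k-1)}(\cdot)$ by the Ricci identity (\ref{Ricci}), notes $\nabla_n\nabla^{(k-1)}_{odd}=\nabla^{(k)}_{even}$, and reduces to case (c) up to an admissible remainder, rewriting $Rm*\nabla^{(k-1)}R$ as $Rm*\nabla^{(k-1)}A$ via $R=6\,\tr A$.

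Case (b) is the only substantive one and is where the field equations enter, exactly as in Lemma~\ref{lemma2.6}. Writing $\nabla_n\nabla_n\nabla^{(k-1)}_{even}=\triangle\nabla^{(k-1)}_{even}-\nabla_\alpha\nabla_\alpha\nabla^{(k-1)}_{even}$, the second term is a case-(c) expression; in the first term one commutes $\triangle$ through the $k-1$ derivatives with (\ref{Ricci}) (the commutators being admissible curvature-quadratic terms) and then substitutes the relevant flat equation: $\triangle R=R^2-3|Ric|^2$ from (\ref{Qflat}) for the scalar curvature, $\triangle A=\nabla^{(2)}(\tfrac R6)+Rm*A$ from the Bach-flat equation (\ref{Bachflat1}) for $A$, and $\triangle W=L(\nabla C)+W*Rm+g*W*A$ from (\ref{Bachflat2}) for $W$. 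For $R$, $\nabla^{(k-1)}_{even}(R^2-3|Ric|^2)$ is a Leibniz sum of admissible quadratics; for $A$, $\nabla^{(k+1)}_{even}(\tfrac R6)$ is controlled by the $R$-identity just proved at level $k+1$, yielding the $L(\hat\nabla^{(k+1)}R)$ contribution; for $W$, $\nabla^{(k-1)}_{even}(\nabla C)=L(\nabla^{(k+1)}_{even}A)$ is controlled by the $A$-identity just proved at level $k+1$, yielding the $L(\hat\nabla^{(k+1)}\hat A,\hat\nabla^{(k+1)}R)$ contribution. Collecting everything gives the three identities at level $k+1$.

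The genuinely delicate point is purely the bookkeeping in case (b): one must check that every commutator produced by (\ref{Ricci}) and every Leibniz term stays within the prescribed shape $\sum_{l=0}^{k-1}\nabla^{(l)}Rm*\nabla^{(k-1-l)}(A\text{ or }Rm)$, and in particular that the nonlocal tensor $S$ never reappears and the boundary data $(\hat A, R|_M)$ enters only linearly. Finally, the ``in particular'' assertions are immediate: if $R|_M$ is constant then, by Lemma~\ref{lemma2.3}, $\hat R$ is constant on $M$, hence $\hat\nabla^{(k)}R=0$ for every $k\ge 1$ and all the $L(\hat\nabla^{(k)}R)$ terms drop out.
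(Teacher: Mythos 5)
Your proposal is correct and follows essentially the same route as the paper's own proof: the same induction on $k$, the same three-case classification of the outermost indices, the same use of the Ricci identity in cases (a) and (c), and the same substitution of the $Q$-flat and Bach-flat equations in case (b), proving the identities in the order $R$, then $A$, then $W$ so that each may invoke the previous one at level $k+1$. No gaps.
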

\begin{proof} We prove  the result by induction. For $k=1,2$, the results follow from Lemmas \ref{lemma2.4} and  \ref{lemma2.5}. 
As  before, we treat the three cases. First we consider the scalar curvature.\\
a) $\nabla^{(k+1)}_{even} R=\nabla_n\nabla_\alpha \nabla^{(k-1)}_{odd} R$.\\
In such case, by the Ricci identity (\ref{Ricci})
$$
\nabla^{(k+1)}_{even} R=\nabla_\alpha\nabla_n \nabla^{(k-1)}_{odd} R+Rm*\nabla^{(k-1)} R=\nabla_\alpha \nabla^{(k)}_{even} R+Rm*\nabla^{(k-1)} A
$$
Thus, we get the result by the assumptions of the induction.\\
b) $\nabla^{(k+1)}_{even} R=\nabla_n\nabla_n \nabla^{(k-1)}_{even} R$.\\
We write by the Ricci identity (\ref{Ricci})
$$
\nabla^{(k+1)}_{even} R=\triangle  \nabla^{(k-1)}_{even} R-\nabla_\alpha\nabla_\alpha \nabla^{(k-1)}_{even} R
=  \nabla^{(k-1)}_{even} \triangle R-\nabla_\alpha\nabla_\alpha \nabla^{(k-1)}_{even} R+\sum_{l=0}^{k-1}\nabla^{(l)}Rm*\nabla^{(k-2-l)}A
$$
By flat $Q_4$ curvature condition (\ref{Qflat}), we get the result by the induction.\\
c)  $\nabla^{(k+1)}_{even} R=\nabla_\alpha\nabla^{(k)}_{even} R$.\\
It is clear by the induction argument.\\

Now we consider  the terms $\nabla^{(k+1)}_{even} A$. Similarly, we consider them in three cases.\\
a) $\nabla^{(k+1)}_{even} A=\nabla_n\nabla_\alpha \nabla^{(k-1)}_{old} A$.\\
In such case, by the Ricci identity (\ref{Ricci})
$$
\nabla^{(k+1)}_{even} A=\nabla_\alpha\nabla_n \nabla^{(k-1)}_{odd} A+Rm*\nabla^{(k-1)} A=\nabla_\alpha \nabla^{(k)}_{even} A+Rm*\nabla^{(k-1)} A
$$
We get the result by the induction.\\
b) $\nabla^{(k+1)}_{even} A=\nabla_n\nabla_n \nabla^{(k-1)}_{even} A$.\\
We write
$$
\nabla^{(k+1)}_{even} A=\triangle  \nabla^{(k-1)}_{even} A-\nabla_\alpha\nabla_\alpha \nabla^{(k-1)}_{even} A
$$
We get
$$
\nabla^{(k+1)}_{even} A=\triangle  \nabla^{(k-1)}_{even} A-\nabla_\alpha\nabla_\alpha \nabla^{(k-1)}_{even} A=  \nabla^{(k-1)}_{even}\triangle A-\nabla_\alpha\nabla_\alpha \nabla^{(k-1)}_{even} A+\sum_{l=0}^{k-1}\nabla^{(l)}Rm*\nabla^{(k-1-l)}A
$$
From the Bach flat equation (\ref{Bachflat1}), we could write
$$
\nabla^{(k+1)}_{even} A=\frac 16 \nabla^{(k+1)}_{even} R -\nabla_\alpha\nabla_\alpha \nabla^{(k-1)}_{even} A+\sum_{l=0}^{k-1}\nabla^{(l)}Rm*\nabla^{(k-1-l)}A
$$
It follows from the result for $R$ and from the assumptions of  the induction.\\
c) $\nabla^{(k+1)}_{even} A=\nabla_\alpha  \nabla^{(k)}_{even} A$.\\
It is clear in this case by the induction.\\
The proof for the Weyl tensor is quite similar as the schouten tensor $A$ and the scalar curvature $R$ as in the proof of Lemma \ref{lemma2.6}. We omit the details. Thus we have established the lemma.\\
 \end{proof}

\section{$\varepsilon$-regularity}

\begin{theo}
\label{epsilonregularity}
Suppose the assumptions (4) and (5) in Theorem \ref{maintheorem} are satisfied and assume  $\|\hat Rm\|_{C^{k+1}(M)}$ and  $\|S\|_{L^1_{loc}(M)}$ are bounded, the metric is $Q$-flat and Bach-flat, the boundary $M$ is totally geodesic, $W|_M=0$ and the restriction of the scalar curvature $R|_M$ is some positive bounded constant. Assume further there exists some positive constant $C_3>0$ such that for any $r<1$ and for any $p$, we have
$$
vol(B(p,r))\le C_4r^4.
$$
Then There exists constants  $\varepsilon>0$ (independent of $k$) and $C>0$ (depending on $k$, $\|\hat Rm\|_{C^{k+1}(M)}$, $\|S\|_{L^1_{loc}(M)}$ and $C_1,C_2$ in  the assumptions (4) and (5) in Theorem \ref{maintheorem}) such that if 
  $$
   \|Rm\|_{L^2(B(p,r))}\le \varepsilon
   $$
   then for all $r<1$
    $$
\left\{\int_{B(p,r/2)} |\nabla^{k}A|^4 dV_g\right\}^{1/2}\le  \frac{C}{r^{2k+2}}\left(\int_{B(p,r)} |A|^2 dV_g+\oint_{B(p,r)\cap M} |S|  +r^4\right)
   $$
    $$
\int_{B(p,r/2)} |\nabla^{k+1} A|^2 dV_g\le  \frac{C}{r^{2k+2}}\left(\int_{B(p,r)} |A|^2 dV_g+\oint_{B(p,r)\cap M} |S|  +r^4\right)
   $$
    $$
\left\{\int_{B(p,r/2)} |\nabla^k Rm|^4 dV_g\right\}^{1/2}\le  \frac{C}{r^{2k+2}}\left(\int_{B(p,r)} |A|^2 dV_g+\oint_{B(p,r)\cap M} |S|  +r^4\right)
   $$
    $$
\int_{B(p,r/2)} |\nabla^{k+1} Rm|^2 dV_g\le  \frac{C}{r^{2k+2}}\left(\int_{B(p,r)} |A|^2 dV_g+\oint_{B(p,r)\cap M} |S|  +r^4\right)
   $$
   
\end{theo}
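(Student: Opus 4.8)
The plan is to prove the four inequalities simultaneously by induction on $k$, by running a Moser‑type iteration for the elliptic system satisfied by the curvature, adapted to the totally geodesic boundary $M$. It is convenient to first rescale $g\mapsto r^{-2}g$ so that the ball becomes $B(p,1)$: the hypothesis $\mathrm{vol}(B(p,r))\le C_4 r^4$ becomes $\mathrm{vol}(B(p,1))\le C_4$, the smallness $\|Rm\|_{L^2(B(p,r))}\le\varepsilon$ is scale invariant in dimension $4$, and the inhomogeneity $R|_M$ becomes a constant of size $\sim r^2$. This last point is responsible for the additive $r^4$ on the right‑hand side: after rescaling $\int_{B(p,1)}R^2\le\|R\|_\infty^2\,\mathrm{vol}(B(p,1))\lesssim r^4$. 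The volume upper bound together with the smallness of $\|Rm\|_{L^2}$ also provides a Sobolev inequality on $B(p,1)$ (with a half‑ball near $M$) with uniform constant, which is used throughout; note that $\varepsilon$ only has to be small enough to absorb the nonlinearity at the bottom of the iteration, hence can be taken independent of $k$.

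The elliptic system comes from combining the $Q$‑flat equation \eqref{Qflat}, the Bach‑flat equations \eqref{Bachflat1} and \eqref{Bachflat2}, the contracted Bianchi identity $\nabla^k A_{jk}=\tfrac16\nabla_j R$, and the decomposition $Rm=W+A\circledwedge g$. One obtains, schematically,
\beq
\label{system}
\triangle R=Rm*Rm,\qquad \triangle A=\tfrac16\nabla^2 R+Rm*Rm,\qquad \triangle W=\nabla^2 A+Rm*Rm .
\eeq
Thus $Rm$ solves a triangular second‑order elliptic system with quadratic nonlinearity in $Rm$, the only non‑quadratic forcing terms being Hessians of the lower‑order pieces $R$ and $A$. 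On $M$ the Cauchy data of this system is completely pinned down by Lemmas \ref{lemma2.1}, \ref{lemma2.4}, \ref{lemma2.5}, \ref{lemma2.6}, \ref{lemma2.7}: because $W|_M=0$ and $L=0$, each normal derivative $\nabla^{(j+1)}W$ along $M$ is a universal linear combination of $\hat\nabla^{(j)}S$ and of $\hat\nabla^{(j)}$ of boundary‑intrinsic curvature, and similarly for $A$ and $R$.

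For the base case $k=0$ I would multiply the equation for $A$ (resp. $W$) by $\eta^2 A$ (resp. $\eta^2 W$), with $\eta$ a cutoff supported in $B(p,1)$, and integrate by parts, moving the derivatives off the Hessian forcing terms by a further integration by parts; the resulting quadratic terms are absorbed with Hölder and $\|Rm\|_{L^2(B(p,r))}\le\varepsilon$, exactly as in the well‑known interior $\varepsilon$‑regularity for Bach‑flat metrics. The genuinely new point is that every integration by parts produces a boundary integral $\oint_{B\cap M}\langle\nabla_n(\cdot),(\cdot)\rangle$; by the boundary identities these are either zero (many vanish because $W|_M=0$ and $L=0$) or of the form $\oint_{B\cap M}|S|\cdot(\text{quantity bounded by }\|\hat Rm\|_{C^1})$, which is precisely why only the $L^1$ norm of $S$, and no derivative of $S$, appears in the conclusion. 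Combining the $R$‑, $A$‑ and $W$‑estimates (and $Rm=W+A\circledwedge g$) and iterating the reverse‑Hölder inequality thus obtained yields the $L^4$ bounds on $A$ and $Rm$, and then, reinserting these into \eqref{system}, the $L^2$ bounds on $\nabla A$ and $\nabla Rm$.

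The inductive step differentiates \eqref{system} $k$ times; the commutator terms are, exactly as in Lemmas \ref{lemma2.6} and \ref{lemma2.7}, sums $\sum_l\nabla^{(l)}Rm*\nabla^{(k-l)}Rm$, controlled by the inductive hypothesis and the smallness of $\|Rm\|_{L^2}$ (one factor is always small after Hölder), and the same boundary bookkeeping applies through the higher‑order boundary identities, whose $S$‑dependent parts are again paired against quantities bounded by $\|\hat Rm\|_{C^{k+1}(M)}$; undoing the rescaling inserts the weights $r^{-(2k+2)}$. The step I expect to be the main obstacle is this boundary analysis: carrying out the iteration for a \emph{system} (not a single equation) in a half‑ball neighbourhood of $M$, tracking the Cauchy data through repeated integrations by parts, and verifying that every boundary term is either annihilated by $W|_M=0$, $L=0$ or else reduced — via Lemmas \ref{lemma2.1}--\ref{lemma2.7} — to a pairing of $S$ against a bounded boundary curvature quantity, so that the weak regularity $S\in L^1_{loc}$ is never overtaxed. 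The triangular coupling (the $\nabla^2 R$ forcing in the equation for $A$, and $\nabla^2 A$ in that for $W$) is a secondary difficulty, handled by always estimating $R$ first, then $A$, then $W$.
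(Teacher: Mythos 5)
Your outline of the $k=0$ case matches the paper's argument: test the Bach equation with $\eta^2A$ and $\eta^2W$, use the $Q$-flat equation and the contracted Bianchi identity, and reduce the surviving boundary term to $\oint\eta^2\langle S,\hat A\rangle$, which only costs $\oint|S|$ because $\hat A$ is bounded. Two corrections even at this stage: the Sobolev inequality is not a consequence of the volume upper bound and the smallness of $\|Rm\|_{L^2}$ — it comes from the Yamabe-constant lower bounds, assumption (4) for the interior inequality $\|\eta f\|_{L^4}^2\le C\int|\nabla(\eta f)|^2$ and assumption (5) for the boundary trace inequality $\|\eta f\|_{L^3(M)}^2\le C\int|\nabla(\eta f)|^2$; and the trace inequality is not an optional refinement but the tool that makes the inductive step close.

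The genuine gap is in your treatment of the boundary terms for $k\ge1$. You assert that the $S$-dependent parts of the higher-order boundary identities are ``again paired against quantities bounded by $\|\hat Rm\|_{C^{k+1}(M)}$.'' That cannot work as stated: by Lemma \ref{lemma2.6}, the odd-normal-derivative components of $\nabla^{(k)}A$ and $\nabla^{(k)}W$ on $M$ involve $\hat\nabla^{(k-1)}S$, and since $S$ is only in $L^1_{loc}$ a pairing of $\hat\nabla^{(k-1)}S$ against a bounded quantity leaves $\oint|\hat\nabla^{(k-1)}S|$, which is not controlled by any hypothesis. The paper avoids ever integrating a derivative of $S$. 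The mechanism is: (i) split each boundary pairing $\oint\eta^2\langle\nabla_n\nabla^{(k)}T,\nabla^{(k)}T\rangle$ according to the odd/even decomposition of Lemmas \ref{lemma2.6}--\ref{lemma2.7}, observing that in every surviving product one factor is odd and the other even; (ii) for the dangerous term $\oint\eta^2\langle\nabla_n\nabla^{(k)}_{even}A,\nabla^{(k)}_{even}A\rangle$, whose first factor would be $L(\hat\nabla^{(k)}S,\dots)$, commute so that a tangential derivative is outermost, i.e.\ write it as $\sum c_\alpha\nabla_\alpha\nabla^{(k)}_{odd}A$ plus curvature products, and integrate by parts \emph{tangentially on $M$}; after this the differentiated factor $\nabla_\alpha\nabla^{(k)}_{even}A$ is an even term controlled by $\hat\nabla^{(k+1)}\hat A$ (hence by $\|\hat Rm\|_{C^{k+1}}$) plus products, while the odd factor is estimated crudely by $|\nabla^{(k)}A|$; (iii) the resulting boundary integrals $\oint\eta^2|\nabla^{(k)}A|(1+\cdots)$ are converted into interior quantities by the trace inequality $\|\eta\nabla^{(k)}A\|_{L^3(M)}^2\le C\|\eta\nabla^{(k+1)}A\|_{L^2(X)}^2+\cdots$ and absorbed into the left-hand side. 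Thus for $k\ge1$ the tensor $S$ never appears explicitly in the boundary terms; $\oint|S|$ enters the estimate only at $k=0$ and then propagates through the induction hypothesis. Without steps (i)--(iii) — none of which appear in your proposal beyond the acknowledgement that the boundary analysis is ``the main obstacle'' — the induction does not close.
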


\begin{proof}

We now begin the proof of the theorem by considering the case $k=0$ first. Let $\eta$ be some cut-off function such that $\eta=1$ on $B(p,\frac {3r}{4})$ and $\eta=0$ outsides $B(p,r)$ and $|\nabla \eta|\le C/r$. Taking the test tensor $\eta^2 A$ in (\ref{Bachflat1}), we have
\beq
\label{eq3.1}
\begin{array}{lllll}
\ds\int\eta^2|\nabla A|^2&=&\ds -\int \eta^2 \langle \triangle A,  A\rangle -2\int \eta \langle\nabla A,  \nabla\eta \otimes A\rangle+\oint \eta^2 \langle\nabla_n A, A \rangle
\\
&=&\ds -\int \eta^2 \langle \triangle A-\frac16  \nabla^2 R,  A\rangle -2\int \eta \langle\nabla A,  \nabla\eta \otimes A\rangle+\oint \eta^2 \langle\nabla_n A, A \vs\rangle\\
&&\ds \vs-\frac16\int \eta^2 \langle  \nabla^2 R,  A\rangle\\
&\le& \ds C\int |\nabla A||A|\eta|\nabla \eta|+\left|\frac{1}{36}\int \eta^2 R\triangle R\right|+ \left|\int \eta^2 (\triangle A_{ij}-\frac16  \nabla_i\nabla_j R)  A_{ij}\right|\\
&&\ds+\left|\oint \eta^2 (\langle\nabla_n A, A \rangle-\frac16 \langle \nabla R, A(n,\cdot)\rangle\right|+\left|\frac{1}{36}\oint \eta^2 R \nabla_n R\right|\\
&\le& \ds C\int |\nabla A||A|\eta|\nabla \eta|+C\int \eta^2 |Rm| |A|^2\\
&&\ds+\left|\oint \eta^2 (\langle\nabla_n A, A \rangle-\frac16 \langle \nabla R, A(n,\cdot)\rangle\right|+\left|\frac{1}{36}\oint \eta^2 R \nabla_n R\right|
\end{array}
\eeq
Here we use the $Q$ flat condition (\ref{Qflat}) and second Bianchi idendity $A_{ij,j}=\frac16 R_{,i}$ and also Bach flat equation (\ref{Bachflat1}).\\
Now we want to estimate the boundary terms. From Lemmas \ref{lemma2.0} and \ref{lemma2.5}, we have
$A_{n\alpha }=0$, $A_{nn,n }-\frac 16 R_{,n}=0$ and  $A_{\alpha\beta}-\hat A_{\alpha\beta}=\lambda g_{\alpha\beta}$, since we also have $tr S = 0$, we obtain       
\beq
\label{eq3.2}
\left|\oint \eta^2 (\langle\nabla_n A, A \rangle-\frac16 \langle \nabla R, A(n,\cdot)\rangle\right|=\left| \oint \eta^2 \langle S,\hat A\rangle|\right|\le \oint \eta^2 |S||\hat A|
\eeq
On the other hand, since by our assumption on $(M, \hat g)$, $\hat A$ is bounded in $C^0$ norm, we obtain 
$$
\oint \eta^2 |S||\hat A|\le C \oint \eta^2 |S|.
$$
Recall $R = 3 \hat R $ is a bounded constant on the boundary by our assumption, $R$ is uniformly bounded
on the boundary. Therefore, we could bound from $Q$-flat condition (\ref{Qflat})
\beq
\label{eq3.4}
\begin{array}{lllll}
\ds \left|\frac{1}{36}\oint \eta^2 R \nabla_n R\right| &\ds=\left|\frac{R}{36}\int \div(\eta^2\nabla R)\right| \\
&\ds\le    \frac 18 \int_X |\eta \nabla  A|^2 + C\int_X \eta^2|  A|^2+C\int  | \nabla  \eta|^2 \\
&\ds\le    \frac 18 \int_X |\eta \nabla  A|^2 + C\int_X  \eta^2|  A|^2+\frac{C}{r^2}vol(B(p,r))
\end{array}
\eeq

{\bf Claim.} For any Lipschitz function $f\in C^{1,0}(X)$ and for any regular function $\eta$ vanishing on $X\setminus B(p,r)$, we have
\beq
\label{Sobolev}
\|\eta f\|_{L^4(X)}^2 \le C'\int_X | \nabla  (\eta f)|^2,
\eeq
and
\beq
\label{Trace}
\|\eta f\|_{L^3(M)}^2 \le C'\int_X | \nabla  (\eta f)|^2,
\eeq
provided $\varepsilon$ is small. \\

To see the claim, we have from the assumption condition (4)   on the Yamabe constants in Theorem \ref{maintheorem}, we get 
$$
\begin{array}{lllll}
\ds\|\eta f\|_{L^4}^2&\ds\le C_1 \int_X |\nabla (\eta f)|^2 +\frac{R}{6} |\eta f|^2\le  C_1( \int_X |\nabla (\eta f)|^2+\|\eta f\|_{L^4}^2 \|Rm\|_{L^2(B(p,r))})\\
&\ds \le C_1( \int_X |\nabla (\eta f)|^2+\|\eta f\|_{L^4}^2 \varepsilon)
\end{array}
$$
so that
$$
\|\eta f\|_{L^4}^2\le 2C_1 \int_X |\nabla (\eta f)|^2
$$
provided $2C_1\varepsilon \le 1$. Similarly, by the assumption condiiton (5) on the boundary type Yamabe constants in Theorem \ref{maintheorem} and (\ref{Sobolev}), we infer
$$
\|\eta f\|_{L^3(\p X)}^2\le C_2 ( \int_X |\nabla (\eta f)|^2+\|\eta f\|_{L^4}^2 \varepsilon)\le 2C_2(1+C'\varepsilon)\int_X |\nabla (\eta f)|^2\le 4C_2\int_X |\nabla (\eta f)|^2,
$$
provided $C'\varepsilon \le 1$. This proves the claim.\\

Now we apply the Cauchy-Schwarz inequality and (\ref{Sobolev})
$$
\begin{array}{lllll}
 &\ds C\int |\nabla A||A|\eta|\nabla \eta|+C\int \eta^2 |Rm| |A|^2\\
 \le & \ds\frac 18 \int_X |\eta \nabla  A|^2 +C'(\int_X  |\nabla \eta|^2 |  A|^2+\|\eta A\|_{L^4}^2 \|Rm\|_{L^2(B(p,r))})\\
 \le & \ds\frac 18 \int_X |\eta \nabla  A|^2 +C'(\int_X  |\nabla \eta|^2 |  A|^2+\|\nabla(\eta |A|)\|_{L^4}^2 \|Rm\|_{L^2(B(p,r))})
\end{array} 
$$
Together with (\ref{eq3.1})-(\ref{eq3.4}), we infer
$$
\begin{array}{lllll}
&\ds \int\eta^2|\nabla A|^2\\
\le&\ds \frac 12 \int\eta^2|\nabla A|^2+C(r^2+ \int  (\eta^2+|\nabla \eta|^2)| A|^2+  \|\nabla (\eta |A|)\|_{L^2}^2 \|Rm\|_{L^2(B(p,r))})+ C \oint \eta^2 |S|\\
\le&\ds (\frac 12+2C\|Rm\|_{L^2(B(p,r))}) \int\eta^2|\nabla A|^2+ C \oint \eta^2 |S|\\
&\ds+C(r^2+ \int  (\eta^2+|\nabla \eta|^2)| A|^2+ 2 \||\nabla \eta| |A|\|_{L^2}^2 \|Rm\|_{L^2(B(p,r))})
\end{array} 
$$
Therefore, when $2C\varepsilon<\frac14$,  we get
$$
\int\eta^2|\nabla A|^2 dV_g\le  \frac{C}{r^2}(\int_{B(p,r)} |A|^2 dV_g+\oint_{B(p,r)\cap M} |S|+r^4)
$$
Here we use $|\nabla |A||\le |\nabla A|$. Again from the Sobolev inequality (\ref{Sobolev}), we deduce
$$
\left\{\int\eta^2| A|^4 dV_g\right\}^{1/2}\le  \frac{C}{r^2}(\int_{B(p,r)} |A|^2 dV_g+\oint_{B(p,r)\cap M} |S|+r^4)
$$
Recall from the Bach flat equation for the Weyl tensor (\ref{Bachflat2}) and the Ricci identity  (\ref{Ricci}) 
$$
\triangle W_{ijkl} =2(\nabla_i\nabla_k A_{jl}-\nabla_i\nabla_l A_{jk}-\nabla_j\nabla_k A_{il}+\nabla_j\nabla_l A_{ik})  + A*A+A*W+W*W
$$
As before, we take $\eta^2 W$ as the test tensor to the above equation. We remark on the boundary $W=0$. Thus
$$
\begin{array}{lllll}
&&\ds\int\eta^2|\nabla W|^2\\
&\le& \ds C\int (|\nabla W|+|\nabla A|)|W|\eta|\nabla \eta|+C\int \eta^2 |Rm| (|W|^2+|A|^2)\\
&&\ds+2|\int \eta^2( \nabla_k A_{jl} \nabla_i W_{ijkl}  -\nabla_l A_{jk}  \nabla_i W_{ijkl}-\nabla_k A_{il}\nabla_j W_{ijkl}+\nabla_l A_{ik} \nabla_jW_{ijkl})|\\
&\le&\ds  C\int (|\nabla W|+|\nabla A|)|W|\eta|\nabla \eta|+\int \eta^2 |Rm| (|W|^2+|A|^2)+\int \eta^2 |\nabla W| |\nabla A| \\
&\le&\ds  \frac12 \int\eta^2|\nabla W|^2+C\int | W|^2|\nabla \eta|^2+(\|\eta W\|_{L^4}^2+\|\eta A\|_{L^4}^2) \|Rm\|_{L^2(B(p,r))}+\int \eta^2 |\nabla A|^2
\end{array}
$$
With the similar arguments as above, we infer
$$
\int\eta^2|\nabla W|^2 \le  \frac{C}{r^2}(\int_{B(p,r)}|Rm|^2 dV_g+r^4)
$$
Again from the Sobolev inequality  (\ref{Sobolev}), we get the desired inequalitiy
$$
\left \{\int(\eta|W|)^4\right\}^{\frac12}\le  \frac{C}{r^2}(\int_{B(p,r)}|Rm|^2 dV_g+r^4)
$$
Now use the relation
$$
 |\nabla^{(l)}Rm|^2= |\nabla^{(l)}W|^2+|\nabla^{(l)}A\circledwedge g|^2
$$
Therefore, we obtain the corresponding inequalities for $Rm$. Thus we have finished the part for $k=0$
of the theorem.\\ 

We now prove for the high $k\ge 1$ by induction. For each k, let $\eta_k$ be some cut-off function such that $\eta_k=1$ on $B(p,\frac {r}{2}+\frac {r}{2^{k+2}})$ and $\eta_k=0$ outsides $B(p,\frac {r}{2}+\frac {r}{2^{k+1}})$ and $|\nabla \eta_k|\le C/r$. But for simplicity of the notation, we denote all such cut function as $\eta$ and skip the index k. 
First we treat the estimates for the Schouten tensor $A$. From the Bach flat equation (\ref{Bachflat1}), we obtain
$$
\triangle \nabla^{(k)} A-\nabla^{(2)}\nabla^{(k)} \frac R 6+\sum_{l=0}^{k}\nabla^{(l)}Rm*\nabla^{(k-l)}A=0
$$
As above, we take $\eta^2\nabla^{(k)} A$ as the test tensor, integrate the equality
of the Bach equation, we obtain
$$
\begin{array}{lllll}
&&\ds\int\eta^2| \nabla^{(k+1)} A|^2\\
&=&\ds -\int \eta^2 \langle \triangle  \nabla^{(k)} A,  \nabla^{(k)} A\rangle -2\int \eta \langle \nabla^{(k+1)} A,  \nabla\eta \otimes  \nabla^{(k)}A\rangle+\oint \eta^2 \langle\nabla_n  \nabla^{(k)}A,  \nabla^{(k)}A \rangle
\\&\le& \ds C\int | \nabla^{(k+1)} A|| \nabla^{(k)}A|\eta|\nabla \eta|+\left|\frac{1}{36}\int \eta^2 \langle \nabla^{(k)} R, \nabla^{(k)} \triangle R\rangle\right|\\
&&\ds+ C \sum_{l=0}^{k}\left|\int \eta^2\nabla^{(l)}Rm*\nabla^{(k-l)}A*\nabla^{(k)}A\right|\\
&&\ds+C\left|\oint \eta^2 \langle\nabla_n \nabla^{(k)}  A, \nabla^{(k)}  A \rangle\right|+C \left|\oint \eta^2 \langle \nabla^{(k+1)}   R, \nabla^{(k)}  A(n,\cdot)\rangle\right|\\
&&\ds+\left|\frac{1}{36}\oint \eta^2 \langle \nabla^{(k)} R, \nabla_n \nabla^{(k)} R\rangle\right|+ C \sum_{l=0}^{k-1}\oint \eta^2 |\nabla^{(l)}Rm||\nabla^{(k-1-l)}A|| \nabla^{(k)}R|\\
&\le& \ds C\int | \nabla^{(k+1)} A|| \nabla^{(k)}A|\eta|\nabla \eta|+ C \sum_{l=0}^{k}\left|\int \eta^2\nabla^{(l)}Rm*\nabla^{(k-l)}A*\nabla^{(k)}A\right|\\
&&\ds+C\left|\oint \eta^2 \langle\nabla_n \nabla^{(k)}  A, \nabla^{(k)}  A \rangle\right|+C \left|\oint \eta^2 \langle \nabla^{(k+1)}   R, \nabla^{(k)}  A(n,\cdot)\rangle\right|\\
&&\ds+\left|\frac{1}{36}\oint \eta^2 \langle \nabla^{(k)} R, \nabla_n \nabla^{(k)} R\rangle \right|+ C \sum_{l=0}^{k-1}\oint \eta^2 |\nabla^{(l)}Rm||\nabla^{(k-1-l)}A|| \nabla^{(k)}R|
\end{array}
$$
Here we use the $Q$ flat condition (\ref{Qflat}) and second Bianchi idendity $A_{ij,j}=\frac16 R_{,i}$. We need just to consider the boundary term
$$
\langle \nabla_n\nabla^{(k)}A, \nabla^{(k)}A\rangle\mbox{ (resp. }  \langle \nabla^{(k+1)}   R, \nabla^{(k)}  A(n,\cdot)\rangle,  \mbox{ or } \langle\nabla^{(k)} R,\nabla_n \nabla^{(k)} R\rangle)
$$
Our basic observation is that in all these products, one is an odd term and another one an even term, where odd and even is defined as in the  proof of the Lemmas \ref{lemma2.6} and \ref{lemma2.7}; we also deduce
from these lemmas that
$$
\langle \nabla_n\nabla^{(k)}_{odd}A, \nabla^{(k)}_{old}A\rangle=O(|\nabla^{(k)}A|+\sum_{l=0}^{k-1}|\nabla^{(k)}A||\nabla^{(l)}Rm||\nabla^{(k-1-l)}A|)
$$
and
$$
\begin{array}{lllll}
&\ds\oint \eta^2\langle \nabla_n\nabla^{(k)}_{even}A, \nabla^{(k)}_{even}A\rangle\\
=&\ds\oint \eta^2\langle \sum c_\alpha \nabla_\alpha \nabla^{(k)}_{odd}A, \nabla^{(k)}_{even}A\rangle+O(\sum_l\oint \eta^2|\nabla^{(k)}A||\nabla^{(l)}Rm||\nabla^{(k-1-l)}A|)
\end{array}
$$
where $c_\alpha$ is some  constant. By the integration by parts, we infer
$$
\begin{array}{lllll}
&\ds\oint \sum c_\alpha\eta^2\langle \nabla_\alpha \nabla^{(k)}_{odd}A, \nabla^{(k)}_{even}A\rangle\\
=&\ds-2\oint \eta  \sum c_\alpha \nabla_\alpha \eta \langle  \nabla^{(k)}_{odd}A, \nabla^{(k)}_{even}A\rangle-\oint \eta^2\langle  \nabla^{(k)}_{odd}A,   \sum c_\alpha\nabla_\alpha\nabla^{(k)}_{even}A\rangle
\end{array}
$$
Thus, we could estimate from  Lemma  \ref{lemma2.7}
$$
\begin{array}{lllll}
\ds\oint \sum c_\alpha\eta^2\langle \nabla_\alpha \nabla^{(k)}_{odd}A, \nabla^{(k)}_{even}A\rangle&=&\ds O(\oint\frac { \eta}{r}( | \nabla^{(k)}A|(1+ \sum_l |\nabla^{(l)}Rm||\nabla^{(k-2-l)}A|)))\\
&&\ds +O(\oint \eta^2( | \nabla^{(k)}A|(1+\sum_l |\nabla^{(l)}Rm||\nabla^{(k-1-l)}A|)))
\end{array}
$$
Our basic observation is that
$$
\begin{array}{lllll}
&\ds\oint \eta^2( | \nabla^{(k)}A|(1+ \sum_l|\nabla^{(l)}Rm||\nabla^{(k-1-l)}A|)) \\
\le &\ds Cr^2  \| \eta\nabla^{(k)}A\|_{L^3}+ \sum_l\| \eta\nabla^{(k)}A\|_{L^3} \|\nabla^{(l)}Rm\|_{L^3(B(p,r_l)\cap  M)} \|\eta |\nabla^{(k-1-l)}A|\|_{L^3}
\end{array}
$$
and
$$
\begin{array}{lllll}
&\ds\oint\frac { \eta}{r}( | \nabla^{(k)}A|(1+ \sum_l |\nabla^{(l)}Rm||\nabla^{(k-2-l)}A|))\\
\le&\ds  Cr  \| \eta\nabla^{(k)}A\|_{L^3}+\sum_l \frac1r \| \eta\nabla^{(k)}A\|_{L^3} \|\nabla^{(l)}Rm\|_{L^3(B(p,r_l)\cap  M)} \| \nabla^{(k-2-l)}A\|_{L^3}
\end{array}
$$
where $r_l=r/2+r/2^{l+2}$.  By the Sobolev trace inequality (\ref{Trace}) and from the induction, we get for any $l< k$
$$
 \|\nabla^{(l)}Rm\|_{L^3(B(p,r_l)\cap M)}^2\le   \frac{C}{r^{2l+2}}\left(\int_{B(p,r)} |Rm|^2 dV_g+\oint_{B(p,r)\cap M} |S|+r^4\right)
$$
$$
\begin{array}{lllll}
 \|\eta\nabla^{(k)}A\|_{L^3(M)}^2&\le&  C  \|\eta\nabla^{(k+1)}A\|_{L^2(X)}^2+ C\|\nabla \eta\otimes\nabla^{(k)}A\|_{L^2(X)}^2 \\
 &\le&  \ds C  \|\eta\nabla^{(k+1)}A\|_{L^2(X)}^2+   \frac{C}{r^{2k+2}}(\int_{B(p,r)} |Rm|^2 dV_g+\oint_{B(p,r)\cap M} |S|+r^4)
 \end{array}
 $$
which implies from the Cauchy-Schwarz inequality
$$
\begin{array}{lllll}
&\ds  \| \eta\nabla^{(k)}A\|_{L^3} \|\nabla^{(l)}Rm\|_{L^3} \|\eta |\nabla^{(k-1-l)}A|\|_{L^3}\\
\le&\ds \frac1\gamma  \| \eta\nabla^{(k)}A\|_{L^3}^2+ \frac \gamma 4 ( \|\nabla^{(l)}Rm\|_{L^3} \|\eta |\nabla^{(k-1-l)}A|\|_{L^3})^2\\
\le&\ds \frac1\gamma  \|\eta\nabla^{(k+1)}A\|_{L^2(X)}^2+   \frac{C}{r^{2k+2}}\left(\int_{B(p,r)} |Rm|^2 dV_g+\oint_{B(p,r)\cap M} |S|+r^4\right)
\end{array}
$$
and
$$
r \| \eta\nabla^{(k)}A\|_{L^3}\le Cr^2+ \frac1\gamma  \|\eta\nabla^{(k+1)}A\|_{L^2(X)}^2+   \frac{C}{r^{2k+2}}\left(\int_{B(p,r)} |Rm|^2 dV_g+\oint_{B(p,r)\cap M} |S|+r^4\right)
$$
and also
$$
\begin{array}{lllll}
&\ds \frac 1r \| \eta\nabla^{(k)}A\|_{L^3} \|\nabla^{(l)}Rm\|_{L^3} \|\eta |\nabla^{(k-2-l)}A|\|_{L^3}\\
\le&\ds \frac1\gamma  \|\eta\nabla^{(k+1)}A\|_{L^2(X)}^2+   \frac{C}{r^{2k+2}}\left(\int_{B(p,r)} |Rm|^2 dV_g+\oint_{B(p,r)\cap M} |S|+r^4\right)
\end{array}
$$
On the other hand, we have
$$
\begin{array}{lllll}
\ds 
&&\ds\int | \nabla^{(k+1)} A|| \nabla^{(k)}A|\eta|\nabla \eta|\\
&\le &\ds \frac1\gamma  \|\eta\nabla^{(k+1)}A\|_{L^2(X)}^2 + C\int | \nabla^{(k)}A|^2|\nabla \eta|^2\\
&\le&\ds \frac1\gamma  \|\eta\nabla^{(k+1)}A\|_{L^2(X)}^2+   \frac{C}{r^{2k+2}}\left(\int_{B(p,r)} |Rm|^2 dV_g+\oint_{B(p,r)\cap M} |S|+r^4\right)
\end{array}
$$
and
$$
\begin{array}{lllll}
\ds 
&&\ds\sum_{l=1}^{k-1}\int \eta^2|\nabla^{(l)}Rm||\nabla^{(k-l)}A|| \nabla^{(k)}A|\\
&\le &\ds \sum_{l=1}^{k-1}\|\nabla^{(l)}Rm\|_{L^4(B(p,r_l))} \|\eta \nabla^{(k-l)}A\|_{L^4}\|\eta \nabla^{(k)}A\|_{L^2}\\
&\le&\ds \frac1\gamma  \|\eta\nabla^{(k+1)}A\|_{L^2(X)}^2+   \frac{C}{r^{2k+2}}\left(\int_{B(p,r)} |Rm|^2 dV_g+\oint_{B(p,r)\cap M} |S|+r^4\right)
\end{array}
$$
Here the constant $C$ depends also on the $\gamma$. It remains to treat $\ds\int \eta^2\nabla^{(k)}Rm*A* \nabla^{(k)}A$ and $\ds \int \eta^2Rm*\nabla^{(k)}A* \nabla^{(k)}A$.  For the term $\int \eta^2\nabla^{(k)}Rm*A* \nabla^{(k)}A$,  using the Sobolev inequality (\ref{Sobolev}), H\"older's inequality and Cauchy-Schwarz inequality
$$
\begin{array}{lllll}
&&\ds\int \eta^2|\nabla^{(k)}Rm* A* \nabla^{(k)}A)|\\
&\le &\ds \|\eta \nabla^{(k)}Rm\|_{L^2}\|\eta \nabla^{(k)}A\|_{L^4} \| A\|_{L^4(B(p,r_1))}\\
&\le&\ds \frac1\gamma  \|\eta\nabla^{(k)}A\|_{L^4(X)}^2+C\|\eta  \nabla^{(k)}Rm\|_{L^2}^2 \| A\|_{L^4(B(p,r_1))}^2\ \\
&\le&\ds   \frac C\gamma  \|\eta\nabla^{(k+1)}A\|_{L^2(X)}^2+\frac{C}{r^{2k+2}}\left(\int_{B(p,r)} |Rm|^2 dV_g+\oint_{B(p,r)\cap M} |S|+r^4\right)
\end{array}
$$
Similarly, we have
$$
\begin{array}{lllll}
&&\ds 
\int \eta^2|Rm*\nabla^{(k)}A* \nabla^{(k)}A|\\
&\le&\ds  \|Rm\|_{L^4(B(p,r_1))}\|\eta \nabla^{(k)}A\|_{L^4}\|\eta \nabla^{(k)}A\|_{L^2}\\
&\le &\ds \frac 1\gamma \|\eta \nabla^{(k)}A\|_{L^4}^2+ C   \|Rm\|_{L^4(B(p,r_1))}^2 \|\eta \nabla^{(k)}A\|_{L^2}^2\\
&\le & \ds \frac C\gamma  \|\eta\nabla^{(k+1)}A\|_{L^2(X)}^2+   \frac{C}{r^{2k+2}}\left(\int_{B(p,r)} |Rm|^2 dV_g+\oint_{B(p,r)\cap M} |S|+r^4\right)
\end{array}
$$
Gathering all these estimates together, we deduce
$$
 \|\eta\nabla^{(k+1)}A\|_{L^2(X)}^2 \le \frac12  \|\eta\nabla^{(k+1)}A\|_{L^2(X)}^2+   \frac{C}{r^{2k+2}}\left(\int_{B(p,r)} |Rm|^2 dV_g+\oint_{B(p,r)\cap M} |S|+r^4\right)
$$
provided $\gamma$ is a sufficiently large constant. Therefore
$$
 \|\eta\nabla^{(k+1)}A\|_{L^2(X)}^2 \le  \frac{C}{r^{2k+2}}\left(\int_{B(p,r)} |Rm|^2 dV_g+\oint_{B(p,r)\cap M} |S|+r^4\right)
$$
By the Sobolev inequality (\ref{Sobolev}),  we get 
$$
\|\eta\nabla^{(k)}A\|_{L^4(X)}^2 \le  \frac{C}{r^{2k+2}}\left(\int_{B(p,r)} |Rm|^2 dV_g+\oint_{B(p,r)\cap M} |S|+r^4\right).
$$
It is similar for the Weyl tensor. From the Bach flat equation (\ref{Bachflat2}), we have
\beq
\label{Bachflat3}
\begin{array}{lllll}
\ds 
\triangle \nabla^{(k)} W_{ijml} &=&\ds 2 ( \nabla_i\nabla_m \nabla^{(k)} A_{jl}-\nabla_i\nabla_l \nabla^{(k)} A_{jm}-\nabla_j\nabla_m\nabla^{(k)} A_{il}+\nabla_j\nabla_l \nabla^{(k)}A_{im})  \\
&&\ds + \sum_{l=0}^{k} (\nabla^{(l)}W*\nabla^{(k-l)}W+\nabla^{(l)}W*\nabla^{(k-l)}A+\nabla^{(l)}A*\nabla^{(k-l)}A)
\end{array}
\eeq
As before, we take $\eta^2\nabla^{(k)} W$ as test tensor and integrate the equality. Thus, we have
$$
\begin{array}{lllll}
\ds\int\eta^2| \nabla^{(k+1)} W|^2 &\le& \ds C\int (| \nabla^{(k+1)} A|+| \nabla^{(k+1)} W|)| \nabla^{(k)}W|\eta|\nabla \eta|\\
&&\ds + C \sum_{l=0}^{k}\left[\left|\int \eta^2\nabla^{(l)}W*\nabla^{(k-l)}A*\nabla^{(k)}W\right|+\left|\int \eta^2\nabla^{(l)}W*\nabla^{(k-l)}A*\nabla^{(k)}A\right|\right.\\
&&\ds +\left|\int \eta^2\nabla^{(l)}A*\nabla^{(k-l)}A*\nabla^{(k)}A\right|+\left|\int \eta^2\nabla^{(l)}W*\nabla^{(k-l)}W*\nabla^{(k)}A\right|\\
&&\ds+\left.\left|\int \eta^2\nabla^{(l)}W*\nabla^{(k-l)}W*\nabla^{(k)}W\right|+\left|\int \eta^2\nabla^{(l)}A*\nabla^{(k-l)}A*\nabla^{(k)}W\right| \right]\\
&&\ds+C\left|\oint \eta^2 \langle\nabla_n \nabla^{(k)}  W, \nabla^{(k)}  W \rangle\right|+C\left|\oint \eta^2 \langle\nabla_n \nabla^{(k)}  A, \nabla^{(k)}  W \rangle\right|\\
&&\ds+C\left|\oint \eta^2 \langle \nabla^{(k)}  A, \nabla_n \nabla^{(k)}  W \rangle\right|
\end{array}
$$
Here we use the above two Bach flat equations (\ref{Bachflat1bis}) and (\ref{Bachflat3}) and Ricci identity (\ref{Ricci}). With the similar arguments, we can bound the boundary terms as above
$$
\begin{array}{lllll}
&\ds C\left|\oint \eta^2 \langle\nabla_n \nabla^{(k)}  W, \nabla^{(k)}  W \rangle\right|+C\left|\oint \eta^2 \langle\nabla_n \nabla^{(k)}  A, \nabla^{(k)}  W \rangle\right|+C\left|\oint \eta^2 \langle \nabla^{(k)}  A, \nabla_n \nabla^{(k)}  W \rangle\right|\\
\le &\ds    \frac{C}{r^{2k+2}}\left(\int_{B(p,r)} |Rm|^2 dV_g+\oint_{B(p,r)\cap M} |S|+r^4\right)+\frac 14 \int\eta^2| \nabla^{(k+1)} W|^2
\end{array}
$$
And also from the induction and results for $A$ and H\"older's and Cauchy-Schwarz inequalities
$$
\begin{array}{lllll}
& \ds C\int (| \nabla^{(k+1)} A|+| \nabla^{(k+1)} W|)| \nabla^{(k)}W|\eta|\nabla \eta|\\
\le &\ds\frac 14 \int\eta^2| \nabla^{(k+1)} W|^2 +C \int\eta^2| \nabla^{(k+1)} A|^2+ C \int|\nabla \eta^2|^2 |\nabla^{(k)} W|^2\\
\le &\ds    \frac{C}{r^{2k+2}}\left(\int_{B(p,r)} |Rm|^2 dV_g+\oint_{B(p,r)\cap M} |S|+r^4\right)+\frac 14 \int\eta^2| \nabla^{(k+1)} W|^2
\end{array}
$$
and
$$
\begin{array}{lllll}
&\ds C \sum_{l=0}^{k}\left[\left|\int \eta^2\nabla^{(l)}W*\nabla^{(k-l)}A*\nabla^{(k)}W\right|+\left|\int \eta^2\nabla^{(l)}W*\nabla^{(k-l)}A*\nabla^{(k)}A\right|\right.\\
&\ds +\left|\int \eta^2\nabla^{(l)}A*\nabla^{(k-l)}A*\nabla^{(k)}A\right|+\left|\int \eta^2\nabla^{(l)}W*\nabla^{(k-l)}W*\nabla^{(k)}A\right|\\
&\ds+\left.\left|\int \eta^2\nabla^{(l)}W*\nabla^{(k-l)}W*\nabla^{(k)}W\right|+\left|\int \eta^2\nabla^{(l)}A*\nabla^{(k-l)}A*\nabla^{(k)}W\right| \right]\\
\le &\ds C  \sum_{l=1}^{k-1}[\| \nabla^{(l)}Rm\|_{L^4}\|\eta \nabla^{(k-l)}Rm\|_{L^4}\|\eta \nabla^{(k)}Rm \|_{L^2}]\\
&\ds + C\| Rm\|_{L^4} \|\eta \nabla^{(k)}Rm \|_{L^2}(\|\eta \nabla^{(k)}W \|_{L^4}+\|\eta \nabla^{(k)}A \|_{L^4})\\
\le &\ds    \frac{C}{r^{2k+2}}\left(\int_{B(p,r)} |Rm|^2 dV_g+\oint_{B(p,r)\cap M} |S|+r^4\right)+\frac 14 \int\eta^2| \nabla^{(k+1)} W|^2
\end{array}
$$
Finally, we infer
$$
 \int\eta^2| \nabla^{(k+1)} W|^2\le    \frac{C}{r^{2k+2}}\left(\int_{B(p,r)} |Rm|^2 dV_g+\oint_{B(p,r)\cap M} |S|+r^4\right)
$$
which implies from the Sobolev inequality (\ref{Sobolev})
$$
\|\eta\nabla^{(k)}W\|_{L^4(X)}^2 \le  \frac{C}{r^{2k+2}}\left(\int_{B(p,r)} |Rm|^2 dV_g+\oint_{B(p,r)\cap M} |S|+r^4\right)
$$
We have thus finished the proof of Theorem \ref{epsilonregularity}.
\end{proof}

\begin{theo}
\label{epsilonregularity1}
Under the same assumptions as Theorem \ref{epsilonregularity}, we have the estimates for $L^\infty$ norm, that is, there exists constants $\varepsilon$ (independent of $k$) and $C$ (depending on $k$) such that if 
  $$
   \|Rm\|_{L^2(B(p,r))}\le \varepsilon
   $$
   then for any $r<1$
\beq
\label{Infinitybound}
 \sup_{B(p,r/2)} |\nabla^{k-2} Rm| \le  \frac{C}{r^{k}}\left(\int_{B(p,r)}|Rm|^2 dV_g+\oint_{B(p,r)\cap M} |S|+ r^4\right)^{\frac12}
\eeq  
\end{theo}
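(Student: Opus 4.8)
The plan is to view this as the second stage of the $\varepsilon$-regularity estimate: Theorem \ref{epsilonregularity} already bounds $\nabla^{(j)}A$ and $\nabla^{(j)}Rm$ in $L^4$ for every $j$, and it remains to pass from those $L^4$ bounds to the $L^\infty$ bound (\ref{Infinitybound}). I would do this by a Moser (De Giorgi--Nash--Moser) iteration performed at the integral level, using the dilation-invariant Sobolev and trace inequalities (\ref{Sobolev}) and (\ref{Trace}). The feature that lets the iteration reach $L^\infty$ is that in dimension $4$ a bound on the curvature in $L^4$ is \emph{subcritical}: the nonlinear terms can then be absorbed at every stage of the iteration, whereas a bound on $\|Rm\|_{L^2}$ alone would be critical and would not suffice.

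First I would treat the base case $k=2$. Testing the Bach-flat and $Q$-flat equations for $A$ and $W$ (in the form $\triangle A_{ij}=\frac16\nabla_i\nabla_j R+Rm*A$ and $\triangle W_{ijkl}=2(\nabla_i\nabla_k A_{jl}-\cdots)+A*A+A*W+W*W$, the same equations used in the proof of Theorem \ref{epsilonregularity}) against $\eta^2|A|^{2\beta-2}A$ and $\eta^2|W|^{2\beta-2}W$ with cut-offs $\eta$ satisfying $|\nabla\eta|\le C/r$, integrating by parts, applying (\ref{Sobolev}) to the gradient term, and splitting the nonlinear term as $\int\eta^2|Rm|(|A|+|W|)^{2\beta}\le\|Rm\|_{L^4}\,\|\eta(|A|+|W|)^{\beta}\|_{L^2}\,\|\eta(|A|+|W|)^{\beta}\|_{L^4}$ so that the $L^4$-factor can be absorbed into the energy with a $\beta$-dependent Young constant, one obtains a reverse-H\"older inequality of the schematic form
$$\|\,|A|+|W|\,\|_{L^{4\beta}(B(p,\rho'))}\ \le\ \left(\frac{C\beta^2}{(\rho-\rho')^2}\right)^{1/(2\beta)}\|\,|A|+|W|\,\|_{L^{2\beta}(B(p,\rho))}\ +\ (\text{forcing and boundary contributions}),$$
where the forcing comes from the $\nabla^2 R$ and $\nabla^2 A$ terms, which lie in $L^4$ by Theorem \ref{epsilonregularity}. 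Iterating over $\beta=2,4,8,\dots$, summing the exponents and the constants (this converges because $\sum_m m\,2^{-m}<\infty$) and tracking the powers of $r$ coming from $|\nabla\eta|\le C/r$ gives $\sup_{B(p,r/2)}(|A|+|W|)\le\frac{C}{r^2}(\int_{B(p,r)}|Rm|^2+\oint_{B(p,r)\cap M}|S|+r^4)^{1/2}$, hence (\ref{Infinitybound}) for $k=2$ since $|Rm|\le C(|A|+|W|)$. The boundary integrals generated by the integration by parts involve $\nabla_n A$ and $\nabla_n W$ along $M$; these are rewritten, via Lemma \ref{lemma2.0}(5), Lemma \ref{lemma2.1} and Lemma \ref{lemma2.5}, in terms of $S$, $\hat\nabla\hat A$, $\hat\nabla_n R$ and curvature$\,\times\,$curvature terms, and then, after one tangential integration by parts moving the derivative off $S$ onto the test function (as in (\ref{eq3.2})), estimated by the trace inequality (\ref{Trace}) and the hypotheses $\|\hat Rm\|_{C^{k+1}(M)}<\infty$, $\|S\|_{L^1_{loc}(M)}<\infty$.

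For $k\ge3$ I would argue by induction on $k$, running the same iteration for $\nabla^{(k-2)}A$ and $\nabla^{(k-2)}W$, which satisfy $\triangle\nabla^{(k-2)}A=\frac16\nabla^{(k)}R+\sum_l\nabla^{(l)}Rm*\nabla^{(k-2-l)}A$ and an equation of the shape (\ref{Bachflat3}). The forcing here --- the $\nabla^{(k)}R$-term and the curvature$\,\times\,$curvature sums with $1\le l\le k-3$ --- is bounded in $L^4(B(p,r/2))$ by $\frac{C}{r^{k}}(\int_{B(p,r)}|Rm|^2+\oint_{B(p,r)\cap M}|S|+r^4)^{1/2}$ using Theorem \ref{epsilonregularity} and the inductive hypothesis; the end terms $Rm*\nabla^{(k-2)}A$ and $\nabla^{(k-2)}Rm*A$ are absorbed as before using $\|Rm\|_{L^4}<\infty$. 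The boundary terms are treated as in the base case, now using Lemmas \ref{lemma2.6} and \ref{lemma2.7} together with the odd/even splitting of the normal derivatives, and the identity $|\nabla^{(k-2)}Rm|^2=|\nabla^{(k-2)}W|^2+|\nabla^{(k-2)}(A\circledwedge g)|^2$ again transfers the bound from $A,W$ to $Rm$.

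The main obstacle is the treatment of the boundary terms. Because the ``odd'' (one normal derivative) part of the curvature along $M$ is governed by the non-local tensor $S$, which is controlled only in $L^1_{loc}$, the boundary integrals cannot be estimated directly; the tangential integration-by-parts device from the proof of Theorem \ref{epsilonregularity} must be repeated at every step of the Moser iteration, and one must check that the constants so generated remain summable and carry the correct power of $r$. A second, related, point is that the nonlinear absorption has to survive all powers $\beta$, which is precisely why one leans on the subcritical $L^4$ bounds of Theorem \ref{epsilonregularity} rather than on $\|Rm\|_{L^2(B(p,r))}\le\varepsilon$ alone; and, for the same reason, all constants have to be dilation invariant, which is why the argument uses (\ref{Sobolev})--(\ref{Trace}) rather than a generic elliptic $L^p$ estimate, whose constant would a priori depend on the as-yet-uncontrolled metric.
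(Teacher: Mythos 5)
Your proposal takes a genuinely different route from the paper, and in doing so it runs into a difficulty that the paper's argument is specifically designed to avoid. The paper does not perform a Moser iteration at all. It observes that Theorem \ref{epsilonregularity} already controls $\nabla^{(j)}Rm$ in $L^4$ and $\nabla^{(j+1)}Rm$ in $L^2$ for \emph{all} orders $j\le k$ — that is, two full derivatives more than the quantity $\nabla^{(k-2)}Rm$ appearing in (\ref{Infinitybound}) — and then passes to $L^\infty$ by two applications of the Gagliardo--Nirenberg interpolation inequality of Lemma \ref{lemma3.3}: first $\|\eta|\nabla^{k-1}Rm|\|_{L^8}^2\le C\|\eta|\nabla^{k-1}Rm|\|_{L^4}\|\nabla(\eta|\nabla^{k-1}Rm|)\|_{L^4}$ (and the same for $\nabla^{k-2}Rm$), then $\|\eta|\nabla^{k-2}Rm|\|_{L^\infty}^2\le C\|\eta|\nabla^{k-1}Rm|\|_{L^8}\|\eta|\nabla^{k-2}Rm|\|_{L^8}$. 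Each factor carries a half-power of the bracket from Theorem \ref{epsilonregularity}, which is exactly why the right side of (\ref{Infinitybound}) appears with the exponent $\tfrac12$ and the weight $r^{-k}$. No further use of the PDEs, and in particular no further contact with the boundary, is required.

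The gap in your argument is the one you yourself flag and then defer: the boundary terms of the Moser iteration. Testing $\triangle\nabla^{(k-2)}A=\cdots$ against $\eta^2|\nabla^{(k-2)}A|^{2\beta-2}\nabla^{(k-2)}A$ produces boundary integrals of the form $\oint\eta^2|\nabla^{(k-2)}A|^{2\beta-2}\langle\nabla_n\nabla^{(k-2)}A,\nabla^{(k-2)}A\rangle$, in which the odd part of $\nabla_n\nabla^{(k-2)}A$ is governed by $\hat\nabla^{(k-2)}S$ (Lemma \ref{lemma2.6}). The tangential integration by parts that rescues the case $\beta=1$ in the proof of Theorem \ref{epsilonregularity} now dumps the $k-2$ tangential derivatives onto the weight $|\nabla^{(k-2)}A|^{2\beta-2}\nabla^{(k-2)}A$, leaving $S$, which is controlled only in $L^1_{loc}$, multiplied by high powers of the very quantity whose boundedness is being sought; neither the trace inequality (\ref{Trace}) (which gives only $L^3$ on $M$) nor H\"older can absorb this, and there is no a priori pointwise bound on $\nabla^{(k-2)}A$ along $M$ for $k\ge 3$. (For $k=2$ one is partly saved by the fact that $A|_M$ is determined by $\hat A$ and $\hat R$ and $W|_M=0$, but even there the forcing term $\oint\eta^2|A|^{2\beta-2}A_{nn}\nabla_nR$ involves the $T$-curvature weighted by a non-constant factor, so the divergence trick used in (\ref{eq3.4}) does not apply verbatim.) The lesson is that all the delicate boundary work should be done once, at the level of the $L^2$ energy estimates of Theorem \ref{epsilonregularity}, where one spare derivative can always be moved onto the test tensor; the subsequent upgrade to $L^\infty$ should be pure interpolation, exploiting the two-derivative surplus, rather than a fresh run through the equations.
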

 
We recall a technique result, which can be found in \cite{Streets,Bour, LSU}.\\ 
 
\begin{lemm}
\label{lemma3.3}
Under the  assumption (4) as in Theorem \ref{maintheorem}, there exists constants $\varepsilon$  and $C$  such that if 
  $$
   \|Rm\|_{L^2(B(p,r))}\le \varepsilon
   $$
   then for any Lipschitz function $f$ with the compact support in $B(p,r)$
\beq
\label{Sobolev1}
\left(\int_X f^4 dV_g\right)^{1/2}\le  C \int_{X} |\nabla f|^2 dV_g
\eeq
Moreover, we have
\beq
\label{Interpolation}
\|f\|_{L^p}\le C' \|f\|_{L^m}^{1-\gamma} \|\nabla f\|_{L^q}^{\gamma}
\eeq
where  $\gamma= \frac{\frac{1}{m}- \frac{1}{p}}{\frac{1}{4}-\frac{1}{q}+ \frac{1}{m}}$.  Here, when  $q>4$, we can let $+\infty\ge p\ge m\ge 2$ and  $C'$ is some constant depending on $m,p, q,C$; when  $q=4$, we can let $+\infty> p\ge m\ge 2$ and  $C'$ is some constant depending on $m,p,C$.
\end{lemm}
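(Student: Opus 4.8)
The plan is to isolate the single inequality (\ref{Sobolev1}), which carries all the geometric content, and then to obtain (\ref{Interpolation}) from it by the classical Gagliardo--Nirenberg argument, for which I would simply refer the reader to \cite{Streets,Bour,LSU}.

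To prove (\ref{Sobolev1}) I would repeat, almost verbatim, the argument used for the Claim inside the proof of Theorem~\ref{epsilonregularity}. Let $f$ be Lipschitz with compact support in $B(p,r)$. By a routine approximation (mollify $f$ and invoke Rademacher's theorem), the variational inequality defining the first Yamabe constant extends from $C^1$ test functions to $f$, so assumption (4) gives
$$
\|f\|_{L^4(X)}^2\le C(C_1)\left(\int_X|\nabla f|^2\,dV_g+\frac16\int_X R_g f^2\,dV_g\right).
$$
The next step is to absorb the scalar-curvature term. In dimension four one has the pointwise bound $|R_g|\le c\,|Rm|$, so by H\"older's inequality, together with the fact that $f$ is supported in $B(p,r)$,
$$
\left|\frac16\int_X R_g f^2\,dV_g\right|\le c\,\|Rm\|_{L^2(B(p,r))}\,\|f\|_{L^4(X)}^2\le c\,\varepsilon\,\|f\|_{L^4(X)}^2 .
$$
Choosing $\varepsilon$ small enough --- depending only on $C_1$ --- that $c\,C(C_1)\,\varepsilon\le\frac12$, this term is absorbed into the left-hand side, and (\ref{Sobolev1}) follows with a constant $C$ depending only on $C_1$. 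I would stress that $C$ is independent of $p$, of $r<1$ and of the particular metric in the family, and that the argument makes no distinction between interior and boundary balls, since $f$ is not required to satisfy any boundary condition --- the mean-curvature term in the boundary Yamabe functional having already been discarded in the formula for $Y(X,M,[g])$.

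For (\ref{Interpolation}) I would apply (\ref{Sobolev1}) to $|f|^t$ with a suitable exponent $t$, obtaining $\|f\|_{L^{4t}}^{2t}\le C\,t^2\int_X|f|^{2(t-1)}|\nabla f|^2$, bound the right-hand side by H\"older's inequality with the conjugate pair $\big(\tfrac q2,(\tfrac q2)'\big)$ so that $\|\nabla f\|_{L^q}$ appears, and then iterate and interpolate the remaining $L^a$ norms of $f$ between $L^m$ and $L^p$ by the elementary inequality $\|f\|_{L^a}\le\|f\|_{L^m}^{1-\theta}\|f\|_{L^p}^{\theta}$; matching the exponents produces precisely $\gamma=\big(\tfrac1m-\tfrac1p\big)\big/\big(\tfrac14-\tfrac1q+\tfrac1m\big)$ and yields the asserted dependence of $C'$. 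The only genuine subtlety is the admissible range of $p$: when $q>4$ the H\"older exponents have enough slack to allow $p=+\infty$ (this is the subcritical embedding $W^{1,q}\hookrightarrow L^\infty$ in dimension four), whereas in the borderline case $q=4$ one must keep $p<+\infty$. None of this step is new, so I expect no real obstacle here beyond bookkeeping; accordingly I would present (\ref{Sobolev1}) in full, with the explicit smallness threshold for $\varepsilon$ in terms of $C_1$, and cite \cite{Streets,Bour,LSU} for the passage to (\ref{Interpolation}).
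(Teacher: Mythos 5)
Your proposal is correct. The paper offers no proof of Lemma \ref{lemma3.3} beyond citing \cite{Streets,Bour,LSU}, but your derivation of (\ref{Sobolev1}) coincides with the authors' own argument for the Claim inside the proof of Theorem \ref{epsilonregularity}: assumption (4) gives $\|f\|_{L^4}^2\le C(C_1)\bigl(\int_X|\nabla f|^2+\frac16\int_X R f^2\bigr)$ for (Lipschitz, compactly supported) test functions, and the scalar-curvature term is absorbed via H\"older using $\|R\|_{L^2(B(p,r))}\le c\,\varepsilon$, with $\varepsilon$ small depending only on $C_1$. For (\ref{Interpolation}), your Moser-type iteration starting from (\ref{Sobolev1}) applied to $|f|^t$ is exactly the standard derivation carried out in the cited references, so deferring to \cite{Streets,Bour,LSU} for that step, as the paper itself does, is appropriate.
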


\begin{proof}[Proof of Theorem  \ref{epsilonregularity1}]
Recall a basic fact that for any tensor $T$, we have
$$
|\nabla |T||\le |\nabla T|
$$ 
We will show first $\nabla^{k-1} Rm\in L^{8}(B(p,r/2))$. Let $\eta$ be some cut-off function such that $supp(\eta)\subset B(p,3r/4)$ and $\eta\equiv 1$ on $B(p,r/2)$ and $|\nabla \eta|\le C/r$. From Lemma \ref{lemma3.3}
$$
 \|\eta |\nabla^{k-1} Rm|\|_{L^8}^2\le C \|\eta |\nabla^{k-1} Rm|\|_{L^4}\|\nabla (\eta |\nabla^{k-1} Rm|)\|_{L^4}
$$
Applying Theorem \ref{epsilonregularity}, we have
$$
\|\eta |\nabla^{k} Rm|\|_{L^4}\le \frac{C}{r^{1+k}}\left(\int_{B(p,r)} |Rm|^2 dV_g+\oint_{B(p,r)\cap M} |S|+r^4\right)^{\frac12}
$$
$$
\begin{array}{lll}
\|\nabla (\eta |\nabla^{k-1} Rm|)\|_{L^4}&\le&\ds C(\frac1r \| \nabla^{k-1} Rm\|_{L^4(B(p,3r/4))}+\| \eta \nabla^{k} Rm\|_{L^4})\\
&\le&\ds  \frac{C}{r^{1+k}}\left(\int_{B(p,r)} |Rm|^2 dV_g+\oint_{B(p,r)\cap M} |S|+r^4\right)^{\frac12}
\end{array}
$$
which  yields
$$
\|\eta |\nabla^{k-1} Rm|\|_{L^8}^2\le \frac{C}{r^{1+2k}}\left(\int_{B(p,r)} |Rm|^2 dV_g+\oint_{B(p,r)\cap M} |S|+r^4\right)
$$
By the same argument,
$$
\|\eta | \nabla^{k-2}Rm|\|_{L^8}^2\le \frac{C}{r^{2k-1}}\left(\int_{B(p,r)} |Rm|^2 dV_g+\oint_{B(p,r)\cap M} |S|+r^4\right)
$$
Again from Lemma \ref{lemma3.3}
$$
\begin{array}{lll}
\|\eta | \nabla^{k-2} Rm|\|_{L^\infty}^2&\le&\ds C \|\eta | \nabla^{k-1} Rm|\|_{L^8}\|\eta |  \nabla^{k-2} Rm|\|_{L^8} \\
&\le &\ds\frac{C}{r^{2k}}\left(\int_{B(p,r)} |Rm|^2 dV_g+\oint_{B(p,r)\cap M} |S|+r^4\right)
\end{array}
$$
This gives the desired estimate (\ref{Infinitybound}), which establishes Theorem  \ref{epsilonregularity1}.
\end{proof}

We now derive a better regularity result in the interior of the manifold.

\begin{theo}
\label{epsilonregularity2}
Suppose the assumption (4) in Theorem \ref{maintheorem} are satisfied and the metric is $Q$-flat and Bach-flat. Assume for some $r>0$ and for some $p$ with $B(p,r)\subset \bar X\setminus M$, 
then there exist constants $\varepsilon$ (independent of $k$) and $C$ (depending on $k$) such that if 
  $$
   \|Rm\|_{L^2(B(p,r))}\le \varepsilon
   $$
   then
    $$
\left\{\int_{B(p,r/2)} |\nabla^{k}A|^4 dV_g\right\}^{1/2}\le  \frac{C}{r^{2k+2}}\int_{B(p,r)} |A|^2 dV_g
   $$
    $$
\int_{B(p,r/2)} |\nabla^{k+1} A|^2 dV_g\le  \frac{C}{r^{2k+2}}\int_{B(p,r)}|A|^2 dV_g
   $$
    $$
\left\{\int_{B(p,r/2)} |\nabla^k Rm|^4 dV_g\right\}^{1/2}\le  \frac{C}{r^{2k+2}}\int_{B(p,r)} |Rm|^2 dV_g
   $$
    $$
\int_{B(p,r/2)} |\nabla^{k+1} Rm|^2 dV_g\le  \frac{C}{r^{2k+2}}\int_{B(p,r)}|Rm|^2 dV_g
   $$
 Moreover, we have
 $$
 \sup_{B(p,r/2)} |\nabla^{k-2} Rm| \le  \frac{C}{r^{k}}\left(\int_{B(p,r)}|Rm|^2 dV_g\right)^{\frac12}\le \frac{C\varepsilon}{r^{k}}
 $$  
\end{theo}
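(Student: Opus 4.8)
The plan is to run the proofs of Theorem~\ref{epsilonregularity} and Theorem~\ref{epsilonregularity1} essentially verbatim, with the simplifications forced by the interior hypothesis $B(p,r)\subset\bar X\setminus M$: every integration by parts is now carried out on a set whose closure misses $M$, so all of the boundary integrals $\oint(\cdots)$ disappear; and in place of the Sobolev and trace inequalities (\ref{Sobolev})--(\ref{Trace}), which relied on the two Yamabe constants, we use only the interior Sobolev inequality (\ref{Sobolev1}) of Lemma~\ref{lemma3.3}, together with its interpolation consequence (\ref{Interpolation}), valid for Lipschitz functions with compact support in $B(p,r)$ as soon as $\|Rm\|_{L^2(B(p,r))}\le\varepsilon$. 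Note that assumption~(5) of Theorem~\ref{maintheorem} and the volume bound play no role here, since they were only needed to control boundary terms and the specific term $\oint\eta^2 R\nabla_n R$; accordingly the conclusions carry neither the $\oint|S|$ nor the $+r^4$ appearing in Theorem~\ref{epsilonregularity}.

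For $k=0$, fix a cut-off $\eta$ with $\eta\equiv 1$ on $B(p,3r/4)$, $\operatorname{supp}\eta\subset B(p,r)$, $|\nabla\eta|\le C/r$, and test the Bach-flat equation (\ref{Bachflat1}) against $\eta^2 A$. Integration by parts produces $\int\eta^2|\nabla A|^2$ on the left and, on the right, $C\int|\nabla A||A|\eta|\nabla\eta|$, the curvature term $\tfrac1{36}\int\eta^2 R\triangle R$, and $\int\eta^2(\triangle A_{ij}-\tfrac16\nabla_i\nabla_j R)A_{ij}$; by the $Q$-flat equation (\ref{Qflat}) and $|R|\le C|A|$ the last two combine into $C\int\eta^2|Rm||A|^2$, with no boundary terms. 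Hölder's inequality turns $\int\eta^2|Rm||A|^2$ into $\|Rm\|_{L^2(B(p,r))}\|\eta|A|\|_{L^4}^2$, and (\ref{Sobolev1}) applied to $\eta|A|$ together with the smallness of $\|Rm\|_{L^2}$ lets us absorb $\|\eta\nabla A\|_{L^2}^2$, giving $\int\eta^2|\nabla A|^2\le (C/r^2)\int_{B(p,r)}|A|^2$ and then, via (\ref{Sobolev1}) again, the $L^4$ bound on $A$. Testing $\triangle W_{ijkl}=2(\nabla_i\nabla_k A_{jl}-\nabla_i\nabla_l A_{jk}-\nabla_j\nabla_k A_{il}+\nabla_j\nabla_l A_{ik})+A*A+A*W+W*W$ against $\eta^2 W$ and arguing identically gives $\int\eta^2|\nabla W|^2\le (C/r^2)\int_{B(p,r)}|Rm|^2$, and the identity $|\nabla^{(l)}Rm|^2=|\nabla^{(l)}W|^2+|\nabla^{(l)}A\circledwedge g|^2$ converts these into the stated $k=0$ bounds for $Rm$.

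For $k\ge1$ we induct exactly as in Theorem~\ref{epsilonregularity}: differentiating (\ref{Bachflat1}) and (\ref{Bachflat2}) $k$ times yields equations of the form $\triangle\nabla^{(k)}A=\tfrac16\nabla^{(k+2)}R+\sum_{l=0}^{k}\nabla^{(l)}Rm*\nabla^{(k-l)}A$ and the analogue (\ref{Bachflat3}) for $W$; testing against $\eta^2\nabla^{(k)}A$, respectively $\eta^2\nabla^{(k)}W$, with the nested cut-offs $\eta_k$, all the $\oint$ terms drop out, and each cubic term $\int\eta^2\nabla^{(l)}Rm*\nabla^{(k-l)}A*\nabla^{(k)}A$ and its variants involving $W$ is controlled by Hölder's inequality, the interpolation inequality (\ref{Interpolation}), the induction hypothesis, and the smallness of $\|Rm\|_{L^2}$, so that $\|\eta\nabla^{(k+1)}A\|_{L^2}^2$ and $\|\eta\nabla^{(k+1)}W\|_{L^2}^2$ absorb the bad terms and one obtains the displayed $L^2$ and $L^4$ estimates. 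The $L^\infty$ bound then follows by the bootstrap of Theorem~\ref{epsilonregularity1}: two applications of (\ref{Interpolation}) give $\nabla^{(k-1)}Rm\in L^8(B(p,r/2))$ and $\nabla^{(k-2)}Rm\in L^8(B(p,r/2))$ with the expected powers of $r$, and a third gives $\|\eta\nabla^{(k-2)}Rm\|_{L^\infty}^2\le C\|\eta\nabla^{(k-1)}Rm\|_{L^8}\|\eta\nabla^{(k-2)}Rm\|_{L^8}\le (C/r^{2k})\int_{B(p,r)}|Rm|^2$; since $\int_{B(p,r)}|Rm|^2\le\varepsilon^2$, this is also $\le C\varepsilon/r^{k}$ after taking square roots.

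The interior case being strictly simpler than the boundary case already settled, I do not expect a genuine obstacle; the one point worth care is that the threshold $\varepsilon$ be chosen independent of $k$. This is handled as in Theorem~\ref{epsilonregularity}: although the absorption constants grow with $k$, the quantity that must be made small is always a \emph{positive} power of $\|Rm\|_{L^2(B(p,r))}$, needed only to beat the fixed coefficient $\tfrac12$ (or $\tfrac14$) sitting on the left-hand side, while all the $k$-dependence is pushed into the additive constant $C/r^{2k+2}$ on the right. A second minor check is that the cubic $Q$-flat contribution $R\,\triangle R=R(R^2-3|Ric|^2)$ is absorbed as a $|Rm||A|^2$ term rather than producing a volume term, which is precisely why no $r^4$ is needed in the interior estimate.
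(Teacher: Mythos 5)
Your proposal is correct and follows essentially the same route as the paper: the authors' own proof of this theorem consists precisely of the remark that the arguments of Theorems \ref{epsilonregularity} and \ref{epsilonregularity1} go through unchanged, with all boundary integrals absent since $B(p,r)\subset\bar X\setminus M$, the interior Sobolev inequality (\ref{Sobolev1}) replacing the trace inequality, and consequently no $\oint|S|$ or $r^4$ terms on the right-hand side. Your additional observations (why assumption (5) and the volume bound are not needed, and why $\varepsilon$ stays independent of $k$) are consistent with the paper's argument.
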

\begin{proof} The proof is as same as the one of Theorems \ref{epsilonregularity} and \ref{epsilonregularity1}. We just remark that there is no boundary term in the estimates of the corresponding inequalities now.
\end{proof}

\section{blow-up analysis}

\subsection{Statement of the results}
\label{sb4.1}

\noindent\\
In this section, we will do blow up analysis both on the boundary and in the interior. Recall the Fefferman-Graham's compactification $g_i=e^{2w_i}g_i^+=v_i^{-2}g_i^+$ where $g_i^+$ is a  conformally compact Einstein metric and the defining function $  w_i$ solves the equation (\ref{eq1.1}). We aim to prove the curvature tensor for the FG metric is uniformly bounded, namely,

\begin{theo}
\label{Boundgeometry}
Under the same assumptions as in Theorem \ref{maintheorem} (or Theorem \ref{maintheorem1}), there exists some positive constant $C>0$ such that for all index $i$,  we have
\beq
\label{boundcurvature}
\|Rm_{g_i}\|_{C^{k-2}}\le C
\eeq
Moreover, we have
\beq
\label{boundcurvaturebis}
\|Rm_{g_i}\|_{C^{k+1}}\le C
\eeq
\vs
\end{theo}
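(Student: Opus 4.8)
The plan is to argue by contradiction, using a blow-up analysis to exclude curvature concentration and then the $\varepsilon$-regularity estimates, Theorems \ref{epsilonregularity}--\ref{epsilonregularity2}, to bootstrap. First I record the structure available for each Fefferman--Graham compactification $g_i$: by \eqref{q4zero} it is $Q_4$-flat; since $g_i^+$ is Einstein and the Bach tensor is conformally covariant in dimension four, it is Bach-flat; its boundary is totally geodesic with $W|_M=0$ (Lemma \ref{lemma2.3bis}); and $R_{g_i}|_M=3\hat R_i$ is a constant, uniformly bounded by assumption (1). A preliminary step is to establish, uniformly in $i$ and $p$, the local volume bound $\mathrm{vol}(B(p,r))\le Cr^4$ for $r<1$; this follows from a priori (maximum principle) control on the defining functions $w_i$ solving \eqref{eq1.1} in terms of the compact family of boundary metrics, which forces $g_i$ to have bounded geometry in a collar of $M$, together with \eqref{Qflat} and the Yamabe hypotheses (4)--(5) in the interior. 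One also records that $\oint_M|S_i|$ is uniformly bounded: under the hypotheses of Theorem \ref{maintheorem} this is assumption (2), and under those of Theorem \ref{maintheorem1} it follows from the coupling between the $T$- and $S$-conditions established in Section \ref{sb4.2}. With this, the $\varepsilon$-regularity theorems apply to each $g_i$ with constants uniform in $i$.

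The heart of the proof is the uniform bound $\int_X|Rm_{g_i}|^2\le C$. Suppose it fails; a point-selection argument produces base points $p_i$ and scales $\lambda_i\to\infty$ such that the rescaled metrics $\tilde g_i=\lambda_i^2 g_i$ have locally bounded curvature, with a fixed positive amount of $L^2$-curvature near the base point. Since $\int|Rm|^2$, $\int|A|^2$ and $\oint|S|$ are all scale invariant in dimension four, the $\varepsilon$-regularity estimates force $C^\infty_{loc}$ subconvergence away from finitely many points to a complete limit $(N,g_\infty)$ that is $Q$-flat and Bach-flat, with finite $L^2$-curvature and Euclidean volume growth. I distinguish two cases according to whether $\lambda_i\,\dist_{g_i}(p_i,M)$ stays bounded or tends to infinity. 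In the boundary case, $N$ has non-empty totally geodesic boundary; the induced boundary metric is flat (the $\hat g_i$ are rescaled by $\lambda_i^{-2}\to0$), and by the no-concentration assumption (2) together with the scale invariance of $\oint|S|$ one gets $S_{g_\infty}\equiv0$ on $\partial N$. By Remark \ref{remark2.2} and \eqref{s-to-g3} the order-three coefficient in the Fefferman--Graham expansion of $g_\infty$ along $\partial N$ then vanishes; since the boundary metric is flat, the structure equations determine the full formal expansion of $g_\infty$ at $\partial N$ to coincide with that of the flat half-space, and $g_\infty$ --- being, in a suitable gauge, an analytic solution of the elliptic $Q$-flat Bach-flat system up to the boundary --- must be flat by unique continuation, contradicting the normalization $|Rm_{g_\infty}|=1$ at the base point.

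In the interior case, $N$ is complete without boundary; moreover, since over the blow-up region the conformal factor relating $g_i$ to the Einstein metric $g_i^+$ varies only slightly, $\tilde g_i$ is to leading order a large rescaling of $g_i^+$, so $g_\infty$ is Ricci-flat, hence a non-flat Ricci-flat ALE $4$-manifold. Realizing $N$ as an exhaustion by domains each diffeomorphic to a ball in $X$ and comparing its topology with that of $X$, the hypotheses $H_1(X,\mathbb{Z})=H_2(X,\mathbb{Z})=0$ and $\partial X=\mathbb{S}^3$, via the classification of Chrisp--Hillman \cite{CH} (in the spirit of Akutagawa--Endo--Seshadri \cite{A}), exclude this; this is the topological input of Section \ref{sb4.3}. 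Either alternative yields a contradiction, so $\int_X|Rm_{g_i}|^2\le C$.

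Given the uniform $L^2$-bound, a further contradiction argument of the same type (rescaling at points of maximal curvature) upgrades it to $\sup_X|Rm_{g_i}|\le C$; then on balls $B(p,r_0)$ of a fixed small radius one has $\|Rm_{g_i}\|_{L^2(B(p,r_0))}\le\varepsilon$ everywhere, so Theorem \ref{epsilonregularity} gives uniform $L^4$ bounds on $\nabla^{(j)}Rm_{g_i}$ for $j\le k$ (the boundary contributions $\oint_{B\cap M}|S_i|$ being uniformly bounded) and Theorem \ref{epsilonregularity1} gives $\|\nabla^{(k-2)}Rm_{g_i}\|_{L^\infty}\le C$, which is \eqref{boundcurvature}. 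Finally, for the sharper bound \eqref{boundcurvaturebis}: once \eqref{boundcurvature} holds the $g_i$ are uniformly bounded in $C^{k,\alpha}$ in harmonic interior coordinates and Fermi coordinates near $M$; interior elliptic regularity for the Einstein equation together with \eqref{eq1.1}, and near $M$ the Fefferman--Graham expansion with $C^{k+3}$ boundary data (assumption (1)) and with the order-three coefficient $-\tfrac23 S_i$ now controlled (not merely in $L^1$) via \eqref{s-to-g3} and \eqref{boundcurvature}, allow one to run the elliptic iteration of Section \ref{sb4.4} up the chain of derivatives, yielding \eqref{boundcurvaturebis}. The main obstacle is the blow-up step: extracting flatness of the boundary limit from only the non-local, $L^1$-type control of the $S$-tensor, and excluding the interior Ricci-flat ALE limit, which is exactly where the topological hypothesis (3) and the Chrisp--Hillman result enter.
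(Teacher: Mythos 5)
The overall architecture of your proposal matches the paper's (blow-up dichotomy at the boundary versus the interior, $\varepsilon$-regularity, elliptic bootstrap at the end), but the boundary blow-up case has a genuine gap. From flatness of the limiting boundary metric and $S_\infty\equiv 0$ one does get, via Biquard--Herzlich, that the Einstein metric $g_\infty^+$ is locally hyperbolic near the boundary, and a unique-continuation argument for the Weyl tensor then makes it a hyperbolic space form globally (this is Lemma \ref{lemma4.3}). But that does \emph{not} yet make the compactified $Q$-flat metric $g_\infty$ flat: writing $\pi^*g_\infty=e^{2u}g_{\mathrm{eucl}}$ on $\mathbb{R}^4_+$, the function $u$ is only known to satisfy $\triangle^2u=0$ with $u=\nabla u=\triangle u=0$ on $\partial\mathbb{R}^4_+$, and nontrivial biharmonic functions with exactly these boundary data exist (e.g.\ $u=x_1^3$). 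The paper closes this with a Liouville theorem (Proposition \ref{prop4.4}): nonnegativity of the scalar curvature of the FG compactification (Lemma \ref{lem4.1}, Case--Chang) gives the extra inequality $-\triangle u\ge|\nabla u|^2\ge 0$; since $-\triangle u$ is then a nonnegative harmonic function vanishing on the boundary, Boas--Boas forces $-\triangle u=ax_1$, and $|\nabla u|^2\ge a^2x_1^4/4$ forces $a=0$, hence $u\equiv 0$. Your appeal to ``unique continuation for the elliptic $Q$-flat Bach-flat system from the formal boundary expansion'' does not substitute for this step: agreement of the formal expansion at the boundary does not determine a solution of a fourth-order system on a complete noncompact manifold, and without the sign condition the conclusion is simply false.

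Two further issues. First, your opening move --- proving $\int_X|Rm_{g_i}|^2\le C$ first by a concentration argument --- is circular as stated: both ``finitely many concentration points'' and ``finite $L^2$-curvature of the limit'' presuppose the global $L^2$ bound you are trying to establish. The paper instead normalizes by $K_i^2=\max\{\sup|Rm_{g_i}|,\sup|\nabla Rm_{g_i}|^{2/3}\}$, so the rescaled metrics have globally bounded curvature, local $L^2$-smallness for Theorem \ref{epsilonregularity} is automatic from $\mathrm{vol}(B(p,r))\le Cr^4$, and finite energy of the interior Ricci-flat limit is supplied by Cheeger--Naber (Lemma \ref{lemma4.6bis1}); the global $L^2$ bound for the $g_i$ themselves is only derived afterwards, in Section 5, from the pointwise bound together with $\int_X v_i^{-1}R_{g_i}=2\,\mathrm{vol}(\partial X,\hat g_i)$ and Gauss--Bonnet. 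Second, in the interior case Crisp--Hillman only restricts the group at infinity of the ALE end to $\{1\}$, $Q_8$, or the binary icosahedral group; to reach a contradiction one still needs the Gauss--Bonnet and signature (eta-invariant) identities on $E$ that rule out the latter two, and Bishop--Gromov (or $W\equiv 0$) in the trivial case --- your sketch omits this entirely.
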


Our  strategy to prove Theorem \ref{Boundgeometry} as follows:  to get the uniform $C^1$ bound of the curvature tensor of metrics $g_i$, we will first prevent the boundary blow up in
section  \ref{sb4.2}. We then use this fact to help us to rule out the interior
blow-up in section \ref{sb4.3}.  After that we apply Theorems \ref{epsilonregularity1} and \ref{epsilonregularity2} to get
uniform $C^{k-2}$ of the curvature tensor and in section \ref{sb4.4} applying the
Bach flat and Q-flat equations to improve the regularity of the
curvature tensor and get the uniform boundedness of their 
$C^{k+1}$ norm.

A property of  Q-flat metrics $g_i$ in our setting is that,
under the assumption that the scalar curvature of their boundary metric $\hat{g_i}$ 
is non-negative, the  scalar curvature of $g_i$ is positive. This fact was first proved in Chang-Case \cite{casechang}. For the convenience  of readers, here we present also the proof of the result. \\

\begin{lemm}\cite{casechang}
\label{lem4.1}
Let $(X,\p X, g^+)$ be a $4$-dimensional conformal compact Einstein manifold and $g=e^{2w}g^+$ be the Fefferman-Graham's compactification, that is, $-\triangle_{g^+}w=3$. Assume the representative of conformal infinity $h=g|_{T\p X}$ has non-negative scalar curvature. Then the scalar curvature $R_g>0$ is positive in $X$.
\end{lemm}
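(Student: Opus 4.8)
The plan is to work with the Fefferman--Graham compactification $g = e^{2w}g^+$ and exploit the defining equation $-\triangle_{g^+}w = 3$ together with the fact, established earlier in the excerpt (equation (\ref{q4zero})), that $Q_4(g) = 0$. The key identity relating $g$ and $g^+$ is $v := e^{-w}$, which (after passing from $g^+$ to its compactification) satisfies a nice elliptic equation. First I would record that $Q_4(g)=0$ means, by the definition (\ref{q4}), that
$$
-\frac16\triangle_g R_g - \frac12|Ric_g|^2 + \frac16 R_g^2 = 0,
$$
so that $R_g$ satisfies the semilinear equation $\triangle_g R_g = R_g^2 - 3|Ric_g|^2$. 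This alone is not enough for a maximum principle because the right-hand side is sign-indefinite, so the argument must instead go through the geometry of $v = e^{-w}$ and the Einstein condition $Ric_{g^+} = -3g^+$.

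The main step is to translate the Einstein equation into an equation for $v$ under $g$. Since $g^+ = v^{-2}g$, the standard conformal transformation formulas give, using $Ric_{g^+} = -3g^+$, an expression of the form
$$
\nabla^2_g v = v\,A_g + \big(\text{a scalar}\big)\, g,
$$
together with the trace relation $\triangle_g v = v\,J_g - \frac{|\nabla_g v|^2 + \text{const}}{v}\cdot(\cdots)$, and, most importantly, the conserved quantity $|\nabla_g v|^2 + \tfrac14 R_g v^2 / 3 - (\cdots)$ being related to the normalization. The cleanest route: from $-\triangle_{g^+}w = 3$ one derives that $|\nabla_g v|^2 - \tfrac16 R_g v^2$ — or the appropriate multiple matching our curvature normalization — equals $1$ on $\partial X$ (where $v = 0$) and more generally that a certain Bochner-type combination is controlled. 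One then shows $v > 0$ in the interior with $v = 0$ precisely on $M$, $|\nabla v| = 1$ on $M$, and that $R_g$ satisfies a \emph{linear} elliptic inequality of the form $\triangle_g R_g \le c(x) R_g$ once one substitutes the Einstein information to eliminate the bad $|Ric|^2$ term — or, alternatively, one runs the maximum principle directly on $v$ and reads off the sign of $R_g$ at an interior minimum of an auxiliary function.

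Concretely, I would argue as follows. Assume for contradiction that $R_g \le 0$ somewhere in $X$. Using the equation for $v$ derived from the Einstein condition, together with $\triangle_g v = J_g v + (\text{lower order})$ where $J_g = R_g/6$, I would set up an auxiliary function (a power of $v$, or $v$ itself against a test of the scalar curvature) and locate an interior extremum. At that point the hypothesis $\hat R \ge 0$ enters through the boundary behavior: by Lemma~\ref{lemma2.3} and Lemma~\ref{lemma2.5}(5), $R_g = 3\hat R \ge 0$ on $M$ and $A_{nn} = \hat R/4 \ge 0$, so $R_g \ge 0$ on the boundary. The interior minimum principle, fed by the sign of the zeroth-order coefficient coming from the Einstein equation (which is favorable because of the $-3$ normalization), then forces $R_g > 0$ strictly in the interior, or forces $R_g \equiv$ const $= 0$ which is excluded because then $|Ric_g|^2 = 0$ by $Q$-flatness, making $g$ Ricci-flat and hence $g^+$ hyperbolic, a borderline case handled separately (and there $R_g > 0$ anyway from the explicit model, or the strong maximum principle still gives strict positivity away from $M$).

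The hard part will be getting the maximum principle to close \emph{with the correct sign}: the term $-3|Ric_g|^2$ in the $Q_4 = 0$ equation has the wrong sign for a naive application, so the whole point is to not use that equation directly but to use the PDE satisfied by $v$ (equivalently $w$) coming from $-\triangle_{g^+} w = 3$, where the constant $3 > 0$ supplies the needed coercivity. I expect the technical heart to be the derivation of the identity expressing $\triangle_g v$ (or $\triangle_g$ of a suitable power of $v$) in terms of $R_g v$ plus a manifestly signed remainder, and then verifying that the boundary contribution is controlled by $\hat R \ge 0$; once that identity is in hand, the positivity of $R_g$ follows from the Hopf maximum principle and the strong maximum principle, with the equality case disposed of as above.
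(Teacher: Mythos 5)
There is a genuine gap at what you yourself call the ``technical heart'' of the argument. You correctly observe that the $Q$-flat equation $\triangle_g R = R^2 - 3|Ric|^2$ is useless for a direct maximum principle (at a negative interior minimum one only gets $R^2 \ge 3|Ric|^2$, which is consistent with $|Ric|^2 \ge R^2/4$), and you then bet that the defining equation $-\triangle_{g^+}w=3$ produces an identity for $\triangle_g v$ (or for $J_g$) ``in terms of $R_g v$ plus a manifestly signed remainder.'' No such signed identity exists for the Fefferman--Graham compactification. The identity that does hold (equation (6.6) of Case--Chang, quoted in the paper as (\ref{eqQbis})) is $\triangle_{g}J = -\tfrac12|E|^2 + \tfrac32 J^2$, whose right-hand side is sign-indefinite; likewise the Bochner computation $\triangle_{g^+}\bigl(1-|\nabla_{g^+}w|^2\bigr) = -2|\nabla^2_{g^+}w|^2 + 6|\nabla_{g^+}w|^2$ is not signed. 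The ``miracle'' of a signed Bochner identity, $\triangle_{g^+}(u^2-|\nabla u|^2) = -2|\nabla^2 u - u\,g^+|^2 \le 0$, occurs only for Lee's eigenfunction $u=v_{n+1}$ (the scattering solution at $s=n+1$), not for $w$ (which corresponds to $s=n$). So the maximum-principle argument you sketch cannot close as stated, and this is not a detail one can expect to fill in.

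The paper's actual proof is structurally different: it embeds the FG compactification into the analytic family $g_s^* = y_s^2 g^+$ built from the Graham--Zworski Poisson solutions $v_s$ for $s \in (n/2,\infty)$, with $s=n+1$ giving Lee's metric and $s=n=3$ giving the FG metric, and runs a continuity method in $s$ on the set $I=\{s: J_{g_t^*}>0 \text{ in } X \ \forall t\in[s,4]\}$. The anchor $4\in I$ is Lee's theorem (where the signed identity is available and also yields the spectral bound $\lambda_1(-\triangle_{g^+})\ge n^2/4$). Openness uses analyticity in $s$ together with the Hopf lemma at a boundary point where $J_s$ would first vanish, exploiting that the coefficients $c_1(s), c_2(s)$ in the Case--Chang identity are nonnegative for $s\in(5/2,4]$; closedness uses the strong maximum principle (at a zero interior minimum the right-hand side $-c_1|E|^2+c_2J^2$ is $\le 0$) plus the spectral gap to exclude the degenerate alternative $J_s\equiv 0$, which would force $|\nabla_{g^+}y_s|\equiv y_s$ and produce an $L^2$ eigenfunction with eigenvalue $s(n-s)< n^2/4$. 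Your hypothesis $\hat R\ge 0$ enters exactly where you expect (boundary data for $J$ via $J_{g_s^*}|_{\partial X} = \tfrac{2\gamma-1}{2(\gamma-1)}J_h$), but the positivity in the interior is propagated from $s=n+1$ down to $s=n$ by deformation, not extracted from the $s=n$ equation alone.
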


\begin{proof}
Let $n=3$ be the dimension of the boundary of $X$. As in \cite{GZ}, for any $s\in \mathbb C$ with $\mbox{Re}(s)>n/2$ and $s\not\in n/2+\mathbb{N}/2$ and for any given $f\in C^\infty(\p X)$, we consider the following Poisson equations
$$
\left\{
\begin{array}{lll}
-\triangle_{g^+}v_s-s(n-s)v_s=0& \mbox{ in }X\\
v_s=F_s r^{n-s}+G_s r^s,&F_s,G_s\in C^\infty(X)\\
F_s=f& \mbox{ on }\p X\\
\end{array}
\right.
$$
where $r$ is some special defining function with respect to $h$ some representative of the conformal boundary. Thus the unique Poisson operators can be defined as ${\mathcal P}(s)f:= v_s$ provided  $s(n-s)\not\in \sigma_{pp}(-\triangle_{g^+})$ the essential spectrum of $-\triangle_{g^+}$. ${\mathcal P}(s)$ is meromorphic and could  extend holomorphically across $s\not\in n/2+\mathbb{N}/2$ provided  $s(n-s)\not\in \sigma_{pp}(-\triangle_{g^+})$. Here we are interested in $f\equiv 1$ on $\p X$. For $s=n+1$, the  solution $v_{n+1}$ satisfies (see \cite{Lee}):\\
1) $v=r^{-1}+R_h r/4n(n-1)+O(r^2)$ is positive;\\
2) $v_{n+1}^2-|\nabla_{g^+} v_{n+1}|^2=R_h /n(n-1)$ on $\p X$;\\
3) $-\triangle_{g^+} (v_{n+1}^2-|\nabla_{g^+} v_{n+1}|^2)=-2|\nabla^2_{g^+} v_{n+1}-v_{n+1} g^+|^2\le 0$;\\
where $R_h=\hat{R}$ is the scalar curvature of the metric $h$ on the boundary. We call the compactified metric $g_{n+1}^*:= v_{n+1}^{-2} g^+$ the Jack Lee's metric. Moreover, by the observation in \cite{CQY04}, we have the scalar curvature  $R_{g_{n+1}^*}=n(n+1)(v_{n+1}^2-|\nabla_{g^+} v_{n+1}|^2)$. Assume $R_h\ge 0$ on the boundary, using the Maximum principle, Jack Lee \cite{Lee} proved $v_{n+1}^2-|\nabla_{g^+} v_{n+1}|^2\ge 0$ and the first eigenvalue $\lambda_1( -\triangle_{g^+})\ge \frac{n^2}4$, that is, the spectrum of $ -\triangle_{g^+}$ is bounded below by $ \frac{n^2}4$. Together with the observation in  \cite{CQY04}, we have also $R_{g_{n+1}^*}\ge 0$. As a consequence, using the result due to Graham-Zworski \cite{GZ}, $v_s$ is holomorphic in $\mbox{Re}(s)>n/2$. Moreover, we have $v_s>0$ provided $s\in (n/2, +\infty)$. For any $s\in (n/2, n)\cup(n,n+1]$, we consider the compactified metrics $g_s^*:=y_s^2 g^+$ where $y_s:=(v_s)^{1/(n-s)}$. For $s=n+1$, $g_s^*$ is just Jack Lee's metric. When $s=n=3$, we define $g_s^*:=e^{2w}  g^+$ where $w:=-\frac{d}{ds}v_s|_{s=3}$. We note $v_3\equiv 1$ in $X$. It follows from the Poisson equations that 
$$
-\triangle_{g^+} w=3  \mbox{ in }X
$$
Thus, $g_s^*$ is an analytic family of metrics for $s\in (n/2, +\infty)$. Recall $J_{g_s^*}=\frac{R_{g_s^*}}{6}$ the trace of Schouten tensor in $X$ and $J_{h}=\frac{R_h}{4}$  the one on the boundary $\partial X$. Set $\gamma=s-n/2$. The direct calculations lead to for any $s\in (n/2,n) \cup (n,\infty)$
$$
J_{g_s^*}=\frac{1-2\gamma}{2} y_s^{-2}(|\nabla_{g_s^*} y_s|^2-1)\mbox{ and }J_{g_s^*}|_{\p X}=\frac{2\gamma-1}{2(\gamma-1)}J_h
$$
When $s=3$, $J_{g_s^*}=e^{-2w}(1-| \nabla_{g^+} w|^2)$ and $J_{g_s^*}|_{\p X}=2J_h$ (we set $y_s=e^w$). Moreover, we have the expansion near the boundary $\p X$, $w=\log r +A+Br^3$ where $A=-\frac12 J_h r^2+O(r^4)$ and $B=B_0+B_2 r^2+O(r^4)$ are even expansions. Here $B_0$ is $Q_3$ curvature on the boundary up to a multiple (see \cite{FG}). That is, $g_3^*$ is just Fefferman-Graham compactification.  We set $m=3-2\gamma$. By the formula (6.6) in \cite{casechang}, we have the following equations for $J_s:=J_{g_s^*}$ for $s>n/2$ and $s\neq 3$
\beq
\label{eqQ}
 y_s^{-m}\div_{g_s^*}( y_s^{-m}\nabla_{g_s^*} J_s)=-c_1(s)|E|_{g_s^*}^2+c_2(s)J_s^2
\eeq
where $E=Ric_{g_s^*}-\frac{R_{g_s^*}}{4}g_s^*$ is traceless Ricci tensor, $c_1(s):=\frac{2s-n-1}{(n-1)^2}$ and $c_2(s):=\frac{4s(n+1-s)}{(n+1)(2s-n-1)}$. When $s=3$, such equation can be read as
\beq
\label{eqQbis}
 \triangle_{g_s^*}J_s=-\frac12|E|_{g_s^*}^2+\frac32J_s^2
\eeq
Recall $Q_4(g_s^*)\equiv 0$ when $s=3$. By the continuous method, Case-Chang \cite{casechang} show $J_s> 0$ in $X$ provided $J_h\ge 0$ when $s>\frac52$. For this purpose, we define the set $I=\{s\in (\frac52, 4], J_t>0 \mbox{ in }X\; \;\forall t\in[s,4] \}$. Using the Lee's result (\cite{Lee}, see also Lemma 2.2\cite{GQ}) $J_4>0$ in $X$. Thus $4\in I$. As $g_s^*$ is an analytic family, we claim that $I$ is open. Indeed, assume $s\in I$, there exists some $\varepsilon>0$ such that for any $t\in (s-\varepsilon,s)$, $J_t>0 \mbox{ in }X$. Otherwise, there exists a increasing sequence $\{t_n\}\subset (\frac52, 4]$ such that $t_n\uparrow s$ and a sequence of points $\{x_n\}\subset X$ such that $J_{t_n}(x_n)\le 0$. Since $\bar X$ is compact, up to a subsequence, we have $x_n\to y\in \bar X$. As $g_s^*$ is an analytic family, we have $J_{s}(y)\le 0$. Hence $J_{s}(y)= 0$ and $y\in\p X$ since  $J_s>0 \mbox{ in }X$. Again from analyticity of $g_s^*$, we have some uniform Fermi coordinates $(z,r)\in\p X\times [0,\varepsilon)$ for any metric $g_{t_n}^*$ around the boundary $\p X$, that is $d_{g_{t_n}^*}((z,r),\p X)=r$.  Moreover, such geodesic tube $\p X\times [0,\varepsilon)$ contains a common neighborhood of $\p X$ for any $g_{t_n}^*$.  Set $V=\p_r$ the unitary vector field in $\p X\times [0,\varepsilon)$ whose restriction on the boundary $\p X$ is just insides normal vector field of the boundary.  We write $x_n=(z_n,r_n)$ so that $(z_n,0)$ is the orthogonal projection of $x_n$ on the boundary $\p X$. By the relation $J_s=\frac{2\gamma -1}{2(\gamma -1)}J_h$, we have $J_h(y)=0$. Note when $n=3$ and $s\in (\frac52, 4]$, we have $c_1(s)\ge 0$ and  $c_2(s)\ge 0$. Using the equations (\ref{eqQ}) and strong maximum principle, we have $V J_s(y)>0$. On the other hand, $J_{t_n}(x_n)<0$ and $J_{t_n}(z_n,0)\ge 0$ since $J_h\ge 0$ on the boundary. Thus, there exists $\bar r_n\in (0, r_n)$ such that 
$V J_{t_n}(z_n,\bar t_n)<0$. It is clear that $(z_n,\bar t_n)\to y$. By the analyticity of $J_s$ in $s$ and in space variables, we infer $VJ_s(y)\le 0$ which contradicts the fact $V J_s(y)>0$. Therefore, $I$ is open.

We use the idea in \cite{GQ} to prove $I$ is also closed. Let $s_n\in I\to s\in (\frac52, 4]$. Again from the fact $g_s^*$ is an analytic family, it follows that $J_s\ge 0$ in $X$. Using the equations (\ref{eqQ}) and strong maximum principle, we have $J_s>0$ in $X$ or $J_s\equiv 0$ in $X$. The idea due to \cite{GQ} can rule out the latter case. More precisely, the latter case implies 
$ |\nabla_{g_s^*} y_s|\equiv 1$ or equivalently, $ |\nabla_{g^+} y_s|\equiv y_s$. Hence the positive function $\phi:=( y_s)^s\in L^2$ satisfies
$$
-\triangle_{g^+ }\phi=s(n-s)\phi
$$
which contradicts $\lambda(-\triangle_{g^+ })\ge \frac{n^2}{4}$. As a consequence, $I=(\frac52, 4]$. Finally, we prove the desired result.\end{proof}

We call some {\bf basic facts} for the conformal metrics $g_i=e^{2w_i}g_i^+=v_i^{-2}g_i^+$.\\

\begin{coro}
\label{corollary4.2}
We have
\beq
\label{relation1add}
Ric_{g^+_i}=Ric_{g_i}+ 2v_i \nabla_{ g_i}^2 (v_i^{-1})+ (v \triangle_{ g_i} (v_i^{-1})-3v_i^2|\nabla_{ g_i}( v_i^{-1})|^2) g_i;
\eeq
\beq
\label{relation2add}
-12=R_{g^+_i}=v_i^{-2}(R_{ g_i}+6v_i \triangle_{ g_i} (v_i^{-1})-12v^2|\nabla_{ g_i}( v_i^{-1})|^2);
\eeq
\beq
\label{relation3}
R_{g_i}=6v_i^{2}(1-|\nabla_{ g_i}( v_i^{-1})|^2);
\eeq
\beq
\label{relation5}
Ric_{ g_i}=-2v_i \nabla^2_{g_i} (v_i^{-1}).
\eeq
\beq
\label{relation4}
-R_{ g_i}=2v_i \triangle_{ g_i} (v_i^{-1}).
\eeq
Moreover, there holds 
\beq
\label{primeestimate}
\|\nabla_{g_i}( v_i^{-1})\|\le 1.
\eeq
\end{coro}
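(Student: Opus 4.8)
The plan is to derive all of (\ref{relation1add})--(\ref{primeestimate}) from three ingredients: the classical conformal transformation laws for the Ricci and scalar curvatures in dimension $4$, the Einstein normalization $Ric_{g_i^+}=-3g_i^+$ (hence $R_{g_i^+}=-12$), and the defining equation $-\triangle_{g_i^+}w_i=3$ of the Fefferman--Graham compactification, i.e. (\ref{eq1.1}).

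First I would set $\rho_i:=v_i^{-1}=e^{w_i}$, so that $g_i=\rho_i^2\,g_i^+$, equivalently $g_i^+=e^{2\phi_i}g_i$ with $\phi_i=-\log\rho_i$. Substituting $\phi_i$ into the standard formulas (all curvatures and covariant derivatives on the right taken with respect to $g_i$)
$$
Ric_{g_i^+}=Ric_{g_i}-2\bigl(\nabla^2\phi_i-d\phi_i\otimes d\phi_i\bigr)-\bigl(\triangle\phi_i+2|\nabla\phi_i|^2\bigr)g_i,\qquad R_{g_i^+}=e^{-2\phi_i}\bigl(R_{g_i}-6\triangle\phi_i-6|\nabla\phi_i|^2\bigr),
$$
and using $\nabla^2\log\rho_i=\rho_i^{-1}\nabla^2\rho_i-\rho_i^{-2}\,d\rho_i\otimes d\rho_i$ together with $\triangle\log\rho_i=\rho_i^{-1}\triangle\rho_i-\rho_i^{-2}|\nabla\rho_i|^2$, the $d\rho_i\otimes d\rho_i$ and $\rho_i^{-2}|\nabla\rho_i|^2$ contributions recombine so that, after rewriting $\rho_i=v_i^{-1}$, one obtains exactly (\ref{relation1add}) and the conformal-change equality of (\ref{relation2add}); the equality $R_{g_i^+}=-12$ in (\ref{relation2add}) is just the normalization.

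Next I would translate (\ref{eq1.1}). From the conformal change of the Laplacian for functions, $\triangle_{g_i^+}u=\rho_i^2\bigl(\triangle_{g_i}u-2\langle\nabla_{g_i}\log\rho_i,\nabla_{g_i}u\rangle_{g_i}\bigr)$, applied to $u=w_i=\log\rho_i$ and simplified, the hypothesis $-\triangle_{g_i^+}w_i=3$ becomes, after multiplying by $v_i^2$,
$$
v_i\,\triangle_{g_i}(v_i^{-1})=3v_i^2\bigl(|\nabla_{g_i}(v_i^{-1})|^2-1\bigr).
$$
Inserting this identity into (\ref{relation1add}) together with $Ric_{g_i^+}=-3g_i^+=-3v_i^2 g_i$, the entire multiple of $g_i$ on the right-hand side of (\ref{relation1add}) cancels $-3v_i^2g_i$, which is precisely (\ref{relation5}); taking the $g_i$-trace of (\ref{relation5}) gives (\ref{relation4}). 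Then (\ref{relation3}) is pure algebra: combine the conformal-change equality of (\ref{relation2add}) (with $R_{g_i^+}=-12$) and (\ref{relation4}) to eliminate $\triangle_{g_i}(v_i^{-1})$. Finally (\ref{primeestimate}) follows from (\ref{relation3}): by Lemma~\ref{lem4.1}, $R_{g_i}>0$ in $X$, and since $g_i$ extends smoothly to $\bar X$, $R_{g_i}$ is continuous on $\bar X$, hence $R_{g_i}\ge 0$ there (indeed $R_{g_i}|_{\p X}=3\hat R_i\ge 0$ by Lemma~\ref{lemma2.3} and the non-negative type hypothesis); since $v_i^2>0$, (\ref{relation3}) forces $|\nabla_{g_i}(v_i^{-1})|_{g_i}\le 1$ everywhere on $\bar X$.

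The only genuinely delicate point is the bookkeeping in the two conformal-change computations — keeping the signs straight and checking that the $d\rho_i\otimes d\rho_i$ terms (in the Ricci law) and the $\rho_i^{-2}|\nabla\rho_i|^2$ terms (in the scalar one) recombine into precisely the coefficients appearing in (\ref{relation1add})--(\ref{relation2add}). Once those two identities and the rewritten form of (\ref{eq1.1}) are in hand, (\ref{relation5}), (\ref{relation4}), (\ref{relation3}) and (\ref{primeestimate}) follow immediately.
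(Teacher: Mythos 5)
Your proposal is correct and follows essentially the same route as the paper: the standard conformal transformation laws give (\ref{relation1add})--(\ref{relation2add}), the equation $-\triangle_{g_i^+}w_i=3$ rewritten via the conformal change of the Laplacian gives the key identity $v_i^{-1}\triangle_{g_i}(v_i^{-1})=-3+3|\nabla_{g_i}(v_i^{-1})|^2$, and combining this with the Einstein normalization yields (\ref{relation5})--(\ref{relation4}), with (\ref{primeestimate}) then following from Lemma~\ref{lem4.1} exactly as in the paper. The only (harmless) difference is ordering: you obtain (\ref{relation5}) first and trace it, whereas the paper extracts (\ref{relation3})--(\ref{relation4}) directly from (\ref{relation2add}) and leaves (\ref{relation5}) implicit.
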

\begin{proof}
The equalities (\ref{relation1add}) and (\ref{relation2add}) come from the conformal change. The equalities (\ref{relation3}) and (\ref{relation4}) are the results of (\ref{eq1.1}). In fact, (\ref{eq1.1}) is equivalent to 
\beq
\label{relation4bis1}
v_i{\triangle_{ g_i^+}v_i^{-1}}-|\nabla_{ g_i}v_i^{-1}|^2=-3.
\eeq
On the other hand, we have
$$
\triangle_{  g_i} v_i^{-1}=v_i^2{\triangle_{ g_i^+}v_i^{-1}}+2v_i^3 |\nabla_{ g_i^+} v_i^{-1}|^2=v_i^2{\triangle_{ g_i^+}v_i^{-1}}+2v_i |\nabla_{ g_i} v_i^{-1}|^2
$$
Thus, we infer
\beq
\label{relation4bis}
v_i^{-1}\triangle_{  g_i} v_i^{-1}=-3+3|\nabla_{  g_i} v_i^{-1}|^2
\eeq
Going back to  (\ref{relation2add}),  we get (\ref{relation3}) and (\ref{relation4}).\\
From Lemma \ref{lem4.1}, we know $R_{g_i}\ge 0$. Hence, by (\ref{relation3}), we get 
(\ref{primeestimate}) and finish the proof of Corollary \ref{corollary4.2}.
\end{proof}

We now outline the proof of Theorem \ref{Boundgeometry}. For the sequence of boundary
Yamabe metric $\hat{g_i}$ which is $C^{k+3}$ compact, we will first
establish the $C^{k-1, \alpha}$ compactness of the corresponding FG
metrics $g_i$ on the interior $X$; we will then apply a bootstrapping argument in section \ref{sb4.4} to establish the $C^{k+2,\alpha}$ compactness of the metrics
$g_i$.

We now notice that by the $\varepsilon$ regularity result 
established in Theorem  \ref{epsilonregularity1}, to establish the $C^{k-1, \alpha}$ compactness
of the metrics $g_i$, it suffices to prove the family is $C^1$ bounded. For this purpose, we see that the uniform boundness of  $C^1$ norm for the curvature $\|Rm_{g_i}\|_{C^1}$ induces the uniform  boundness of $L^2$ norm for the curvature $Rm_{g_i}$ on the ball $B(p,r)$ and also the uniform  boundness of $L^1$ norm for the $S$-tensor $S_i$ on the boundary $M\cap B(p,r)$.  On the other hand,  thanks to Bishop-Gromov volume comparison Theorem, we have the estimate $vol(B(p,r))\le Cr^4$ once the curvature is uniformly bounded for the metrics $g_i$. We will
now begin to establish this assertion by a contradiction argument. 
Assume the family $g_i$ is not $C^1$ bounded, or equivalently, the 
the $C^1$ norm of its curvature tends to infinity as $i$ tends to infinity, that is,
$$
\|Rm_{g_i}\|_{C^1}\to \infty\quad\quad\mbox{ as }i\to \infty
$$
We rescale the metric
$$
\bar g_i= K_i^2 g_i
$$
where there exists some point $p_i\in X$ such that
$$
K_i^2=\max\{\sup_X |Rm_{g_i}|, {\sup_X |\nabla Rm_{g_i}|^{2/3}}\}= |Rm_{g_i}|(p_i) (\mbox{ or } { |\nabla Rm_{g_i}|^{2/3}(p_i)})
$$
We mark the point $p_i$ as $0\in X$. Thus, we have 
$$
|Rm_{\bar g_i}|(0)=1 \mbox{ or }  |\nabla Rm_{\bar g_i}|(0)=1
$$
We denote the corresponding defining function $\bar v_i^{-1}= K_ie^{w_i}$, that is, $\bar g_i =\bar v_i^{-2}g^+_i$. \\
We observe  $C^{1}$ norm for the curvature $\|Rm_{\bar g_i}\|_{C^1}$ is uniformly bounded. Applying $\varepsilon$-regularity (Theorems \ref{epsilonregularity1} and \ref{epsilonregularity2}),  we obtain $C^{k-2}$ norm for the curvature $\|Rm_{\bar g_i}\|_{C^{k-2}}$ is also uniformly bounded.\\
On the other hand, we claim there is no collapse for the family of metrics $\{\bar g_i\}$.  To see this, we observe first the Sobolev inequality (\ref{Sobolev}) implies $B_{\bar g_i}(x,r)\ge cr^4$ for all $x\in \bar X$ and $r<1$. As the boundary $M$ is totally geodesic, we can use the doubling argument along the boundary $M$  to get a compact manifold without boundary $Y:=X\bigcup_M X_1$ where $X_1$ is a copy of $X$ with opposite orientation. On the closed manifold $Y$, we have a natural metric $(Y,\tilde g_i)$ extending $g_i$ on $X$. $(Y,\tilde g_i)$ is a $C^{2,\alpha}$ closed Riemannian manifold with any $\alpha\in (0,1)$.  Thanks to a result of Cheeger-Gromov-Taylor \cite{CGT}, we have the uniform lower bound for any closed simple geodesic on $(Y,\tilde g_i)$, which yields that  both the interior injectivity radius and the boundary injectivity radius on $X$ are uniformly bounded from below; thus we have proved the claim.  We now notice by a version of Cheeger-Gromov-Hausdorff's compactness theorem for the manifolds with boundary (for the convenience of readers, we will give more details in the Appendix in Lemma \ref{highorder}), modulo a subsequence and modulo diffeomorphism group, $\{\bar g_i\}$ converges in pointed Gromov-Hausdorff's sense for $C^{k-1,\alpha}$ norm to a non-flat  limit metric $\bar g_\infty$ with totally geodesic boundary whose doubling metric is complete.\\

We now have two types of blow-up. \\

Type (I) : Boundary blow-up 
\vskip .1in
If there exists two positive constants $C,\Lambda>0$ and a sequence of points $p_i\in X$ such that $d_{\bar g_i}(\p X, p_i)\le C$ and $\liminf_i |Rm_{\bar g_i}(p_i)|+|\nabla Rm_{\bar g_i}(p_i)| = \Lambda>0$. In such case, we translate $p_i$ to $0$ and do blow up analysis. Modulo a subsequence, we can assume
$$
\lim_i|Rm_{\bar g_i}(0)|+ |\nabla Rm_{\bar g_i}(0)|=\Lambda < \infty.
$$
 
In section \ref{sb4.2} below, we will do analysis on this type of blow-up and show
that it does not occur.\\

Type (II) : Interior blow up\\

If blow -up of type I does not occur; that is, $d_{\bar g_i}(\p X, p_i)$ tends
to infinity as $i$ tends to infinity. By applying some result in section \ref{sb4.2} ,
together with the topological assumptions we have made in the statements of
Theorems \ref{maintheorem} and \ref{maintheorem1}.
; we will show in section \ref{sb4.3}  below this type of blow up cannot occur either.

Finally in section \ref{sb4.4}, we will combine the results in section \ref{sb4.2}  and \ref{sb4.3} , together with some bootstrapping arguments to finish the proof of Theorems \ref{maintheorem} and \ref{maintheorem1}.\\

We remark the basic facts stated in Corollary \ref{corollary4.2} for the pair $ (v_i, g_i)$ continue to hold for the pair $(\bar v_i, \bar g_i)$ w.r.t to the same base metric $g_i^{+}$.

\subsection{blow-up analysis on the boundary}
\label{sb4.2}

\noindent\\

Under the assumptions of  Theorem \ref{maintheorem}, if we suppose  $K_i\to\infty$, we have asserted  in section \ref{sb4.1} that $(X,\bar g_i)$ converges to  $(X_\infty,g_\infty)$ in $C^{k-1,\alpha}$  norm in Gromov-Hausdorff sense  for some $k\ge 2$ and for all $\alpha\in (0,1)$, which is a manifold with totally geodesic boundary.\\

The following Lemma is the main part of this section, actually the key argument in this paper.

\begin{lemm} 
\label{lemma4.3}
Under the assumptions of  Theorem \ref{maintheorem} and assuming that $\bar g_i$ has the type I boundary blow up,  $g_\infty$ is conformal to hyperbolic space form.
\end{lemm}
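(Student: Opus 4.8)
The plan is to exhibit $g_\infty$ as the Fefferman--Graham compactification of a complete Poincar\'e--Einstein metric whose conformal infinity is flat, and then to invoke a rigidity theorem to identify that Einstein metric with hyperbolic space. Concretely, I would first transport the identities of Corollary~\ref{corollary4.2} to the blow-up limit. Set $u_i:=\bar v_i^{-1}$, a smooth defining function of $\p X$ with $\bar g_i=u_i^2 g_i^+$ and $-\triangle_{g_i^+}\log u_i=3$; then (\ref{relation5}), (\ref{relation3}) and (\ref{primeestimate}) read
\beq
\label{eq:plan1}
\nabla^2_{\bar g_i}u_i=-\tfrac12\,u_i\,Ric_{\bar g_i},\qquad |\nabla_{\bar g_i}u_i|^2=1-\tfrac16\,u_i^2\,R_{\bar g_i},\qquad |\nabla_{\bar g_i}u_i|\le 1,
\eeq
and $R_{\bar g_i}\ge 0$ by Lemma~\ref{lem4.1}. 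Since a Type~I blow-up keeps the base point at bounded $\bar g_i$-distance from $\p X$, and $u_i$ is $1$-Lipschitz with $u_i|_{\p X}=0$, the $u_i$ are uniformly bounded on bounded sets; together with the first identity in (\ref{eq:plan1}) and the uniform bound $\|Rm_{\bar g_i}\|_{C^{k-2}}\le C$ from Theorems~\ref{epsilonregularity1}--\ref{epsilonregularity2}, this yields uniform $C^{k-1,\alpha}_{loc}$ bounds on $u_i$. Passing to a subsequence, $u_i\to u_\infty\ge 0$ in $C^{k-1,\alpha}_{loc}$ on $(X_\infty,g_\infty)$, with $u_\infty|_{\p X_\infty}=0$, $R_{g_\infty}\ge 0$, and
\beq
\label{eq:plan2}
\nabla^2_{g_\infty}u_\infty=-\tfrac12\,u_\infty\,Ric_{g_\infty},\qquad |\nabla_{g_\infty}u_\infty|^2=1-\tfrac16\,u_\infty^2\,R_{g_\infty}.
\eeq

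Next I would observe that $u_\infty>0$ on $\mathrm{int}(X_\infty)$: an interior zero would be a minimum with $\nabla_{g_\infty}u_\infty=0$, contradicting the second identity in (\ref{eq:plan2}), which there forces $0=1$. Hence $g_\infty^+:=u_\infty^{-2}g_\infty$ is a genuine metric on $\mathrm{int}(X_\infty)$; since $u_\infty$ vanishes to first order on $\p X_\infty$ with $|\nabla_{g_\infty}u_\infty|=1$ there (again by (\ref{eq:plan2})) it is conformally compact, and it is complete because the double of $(X_\infty,g_\infty)$ is. On compact subsets of $\mathrm{int}(X_\infty)$ the metrics $g_i^+=u_i^{-2}\bar g_i$ converge to $g_\infty^+$, so passing $Ric_{g_i^+}=-3g_i^+$ to the limit (with elliptic regularity for the limiting Einstein equation) gives $Ric_{g_\infty^+}=-3g_\infty^+$. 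Thus $(X_\infty,\p X_\infty,g_\infty^+)$ is conformally compact Einstein, and $-\triangle_{g_\infty^+}\log u_\infty=3$ shows that $g_\infty$ is precisely its Fefferman--Graham compactification for the boundary metric $\hat g_\infty:=g_\infty|_{T\p X_\infty}$.

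To finish I would identify $g_\infty^+$. Its conformal infinity is flat: the $\hat g_i$ form a $C^{k+3}$-compact family, so $|Rm_{\hat g_i}|$ is bounded, whence the boundary metric $K_i^2\hat g_i$ of $\bar g_i$ satisfies $|Rm_{K_i^2\hat g_i}|=K_i^{-2}|Rm_{\hat g_i}|\to 0$ since $K_i\to\infty$; hence $(\p X_\infty,\hat g_\infty)$ is flat, equivalently conformal to the round $S^3$, and $R_{g_\infty}|_{\p X_\infty}=3\hat R_\infty=0$ by Lemma~\ref{lemma2.3}. Moreover $X_\infty$, being the blow-up of a neighbourhood of a boundary point, is diffeomorphic to a half-space. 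One then invokes the rigidity statement that a complete conformally compact Einstein $4$-manifold with conformal infinity the round $S^3$ is isometric to $\mathbb{H}^4$; this identifies $g_\infty^+$ with the hyperbolic metric, so $g_\infty=u_\infty^2\,g_\infty^+$ is conformal to the hyperbolic space form, as claimed.

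The step I expect to be the main obstacle is the last one: one must secure a rigidity result of the form ``conformally compact Einstein with round $S^3$ (equivalently flat) conformal infinity $\Rightarrow$ hyperbolic'' in exactly the generality needed here, rather than the merely asymptotically hyperbolic behaviour that is automatic. A secondary difficulty is the careful passage of the defining-function identities to the limit --- uniform control of $u_i$ up to $\p X$ and the behaviour of the marking diffeomorphisms near the boundary --- ensuring that $g_\infty^+$ is genuinely complete and that $g_\infty$ is genuinely its Fefferman--Graham compactification.
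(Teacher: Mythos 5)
Your preliminary steps (passing the identities of Corollary \ref{corollary4.2} to the limit, showing $u_\infty>0$ in the interior via $|\nabla_{g_\infty}u_\infty|^2=1-\tfrac16 u_\infty^2 R_{g_\infty}$, identifying $g_\infty^+=u_\infty^{-2}g_\infty$ as a complete conformally compact Einstein metric with flat conformal infinity whose FG compactification is $g_\infty$) track the paper's Steps 1--2 closely and are essentially sound. But the final step, which you yourself flag as the main obstacle, is a genuine gap: you never use hypothesis (2) of Theorem \ref{maintheorem} (the non-concentration of the $S$-tensor in $L^1$), and the lemma is simply false without it. A conformally compact Einstein metric is \emph{not} determined by its conformal infinity alone; it is determined near the boundary by the Cauchy data consisting of the boundary metric \emph{together with} the term $g^{(3)}$ in the Fefferman--Graham expansion, which by (\ref{s-to-g3}) is exactly (a multiple of) the $S$-tensor. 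The rigidity statement you want to invoke does not apply here in any case: the conformal infinity of the blow-up limit is the complete flat $\mathbb{R}^3$ (conformal to $S^3$ minus a point), not the compact round $S^3$, and no "flat infinity $\Rightarrow$ hyperbolic" theorem is available for such a half-space limit --- indeed, the degenerate limits the lemma is designed to exclude would be precisely the counterexamples.

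The missing ingredients are: (i) use the conformal invariance $S(\varphi^2 g)=\varphi^{-1}S(g)$ from Lemma \ref{lemma2.0} to translate hypothesis (2) into $\oint_{B(x,r)}|S_\infty|=0$ for all $r$, hence $S_\infty\equiv 0$ on $\p X_\infty$; (ii) apply the unique-continuation-from-infinity results of Biquard and Biquard--Herzlich, which say that an asymptotically hyperbolic Einstein metric with flat conformal infinity and vanishing $S$-tensor (equivalently vanishing $g^{(3)}$) is locally hyperbolic in a neighbourhood of the boundary; (iii) propagate this to all of $X_\infty$ by the unique continuation principle for the elliptic system $\triangle_{g_\infty^+}W=Rm*W$ satisfied by the Weyl tensor of the Einstein metric (a connectedness argument on the set where $W$ vanishes on an open set). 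Steps (i)--(iii) constitute the actual content of the paper's proof and cannot be replaced by a global rigidity theorem for the round sphere.
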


\begin{proof} 
We divide the proof in several steps.\\

{\bf Step 1.} Claim: There exists some $C>0$ such that $\bar v_i^{-1}\ge C$ provided $d_{ \bar g_i}(x,\partial X)\ge A$ and $\bar v_i(x)d_{\bar g_i}(x,\partial X)\le C$ provided 
$0\le d_{ \bar g_i}(x,\partial X)\le A$ where $d_{ \bar g_i}$ is the Riemann distance function w.r.t. the metric $\bar g_i$. Here $A$ is some uniform constant smaller than the boundary injectivity radius. \\

Without loss of generality, we assume $A=1$. Let us denote by $r_i(x):=d_{ \bar g_i}(x,\partial X)$ the distance function to the boundary.  From the Chapter 3 in \cite{Chavel}, we have the elementary properties $|\nabla_{ \bar g_i} r_i|=1$ and $|\triangle_{ \bar g_i} r_i|$ is bounded in the tube neighborhood of the boundary $\{x, r_i(x)\le 1\}$ since the boundary is totally geodesic and $\bar g_i$ has the bounded curvature.  We could take a cut-off function $\eta\in C^2_0(B(x,r_i(x)))$ such that $\eta\equiv 1$ in $B(x,r_i(x)/2)$, $\eta\equiv 0$ off $B(x,3r_i(x)/4)$, $\|\nabla \eta\|\le \frac{4}{r_i(x)}$ and 
$\|\triangle  \eta\|\le \frac{C}{r_i^2(x)}$. In fact, we fix  a non-negative regular function $\xi$ such that $\xi(t)=1$ if $t\le \frac 12$ and $\xi(t)=0$ if $t\ge \frac 34$. Set $\eta(\cdot)=\xi(\frac{d_{ \bar g_i}(\cdot,x)}{r_i(x)})$. Let $z$ be a maximal point of $\bar v_ir_i\eta$. At that point we have $\nabla_{ \bar g_i} \bar v_ir_i\eta(z)=0$ and $0\ge \triangle_{ \bar g_i} (\bar v_ir_i\eta)(z)$. We recall the equation
$$
-\triangle_{\bar  g_i} \bar v_i+\frac{R_{\bar  g_i}}{6}\bar v_i=-2\bar v_i^3
$$
which implies at the point $z$
$$
\begin{array}{lll}
0&\ds\ge \triangle_{\bar  g_i} (\bar v_ir_i\eta)(z)=(2\bar v_i^3+\frac{R_{\bar  g_i}}{6}\bar v_i)r_i\eta-2\frac{|\nabla_{\bar  g_i} (r_i\eta)|^2 \bar v_i}{r_i\eta}+\bar v_i\triangle_{\bar  g_i} (r_i\eta)\\
&\ds\ge 2\bar v_i^3r_i\eta-2\frac{|\nabla_{\bar  g_i} (r_i\eta)|^2\bar v_i}{r_i\eta}+\bar v_i\triangle_{\bar  g_i} (r_i\eta)
\end{array}
$$
Here we use the fact $R_{\bar  g_j}\ge 0$.  On the other hand, $|\nabla_{\bar  g_i} (r_i\eta)| (z)\le 5$ and $|\triangle_{\bar  g_i} (r_i\eta)|(z)\le \frac{C}{r_i(z)}$. Hence at the point $z$, we infer
$$
\begin{array}{lll}
0&\ds\ge 2(\bar v_ir_i\eta)^3(z)-(50(\bar v_ir_i\eta)(z)(r_i\eta)(z)+C(\bar v_ir_i\eta)(z)\eta(z))\\
&\ds\ge 2(\bar v_ir_i\eta)^3(z)-(50(\bar v_ir_i\eta)(z)+C(\bar v_ir_i\eta)(z))
\end{array}
$$
Hence, $\bar v_ir_i\eta(z)\le C$ and as a consequence $\bar v_i(x)r_i(x)\le C$. \\
As a consequence, we claim there exists some $C>0$ such that $\bar v^{-1}_i\ge C$ provided $d_{\bar g_i}(x,\partial X)\ge 1$.\\
Recall $-R_{\bar g_j}\bar v^{-1}_i=2 \triangle_{\bar g_i} (\bar v^{-1}_i)\le 0$ since $R_{\bar g_j}\ge 0$. It follows from the maximal principle that $\bar v^{-1}_i$ atteint son minimum on the boundary in the set $\{x, r_i(x)\ge 1\}$.  Hence, the desired claim yields.\\

{\bf Step 2.} The limit metric $g_{\infty}$ is conformal to A.H. Einstein manifold. \\

 Assume $\bar g_i$ converge to some  complete non compact manifold $(X_\infty,g_\infty)$ with a boundary  in the pointed Gromov Hausdorff sense (cf. \cite{CLN} Theorem 6.35). Indeed, by the Sobolev inequality (the assumption (4) in Theorem \ref{maintheorem}), there exists some constant $c>0$ and $r_0>0$ such that for any point $p$, $vol(B(p,r))\ge c r^4$  for all $r\in (0,r_0)$. By the result due to Cheeger-Gromov-Taylor \cite{CGT} (see also \cite{CLN} Theorem 5.42), we have the lower bound estimate for the injectivity radius. Thus, the desired convergence follows from Gromov Hausdorff convergence.  For this limiting metric,  due
 to our assumption that $\hat g_i $ is a compact family, the boundary is $\R^3$ endowed with Euclidean metric. We denote by $x_1$ the geodesic defining function for the limiting AH metric $g_\infty^+$ with respect to the boundary metric $\R^3$. Indeed, $g_\infty^+$ is complete since $(X_\infty,g_\infty)$ is complete and the defining function $f=\lim_i\bar v_i^{-1}$ satisfying $\|\nabla_{g_\infty} f\|\le 1$ and $f(x)\ge C_1\min ( d(x,\p X), 1)$.  We now claim the metric $g^+_\infty=f^{-2}g_\infty$ is Einstein with negative scalar curvature. For this purpose, we take the limit in equations (\ref{relation5}) and (\ref{relation4bis}) and get that
$$
\begin{array}{lll}
f\triangle_{ g_\infty}f=-3+3|\nabla_{ g_\infty} f|^2\\
Ric_{g_\infty}=-2f^{-1}\nabla^2_{g_\infty} (f).
\end{array}
$$
Again from conformal change, it follows 
$$
Ric_{g^+_\infty}=Ric_{ g_\infty}+ 2f^{-1}\nabla^2_{g_\infty} (f)+ (f^{-1} \triangle_{g_\infty} (f)-3f^{-2}|\nabla_{g_\infty}( f)|^2)g_\infty
$$
Together with the two previous relations, we infer 
$$
Ric_{g^+_\infty}=-3g^+_\infty.
$$
Therefore, the desired claim follows. 

Moreover, we note that (\ref{relation4bis}), (\ref{relation4bis1}) and (\ref{eq1.1}) are equivalent between them.  We have seen (\ref{relation4bis}) is true for the limit metric $g_\infty$ by replacing $v^{-1}$ by $f$. Hence  (\ref{eq1.1})  is also true for the limit metric $g_\infty^+$, that is 
$$
-\triangle_{g_\infty^+} \log f=3
$$
Hence, $ g_\infty$ is Fefferman-Graham's compactification of $ g_\infty^+$.
On the other hand, recall the $S$-tensor is a pointwise conformal invariant. From our assumption we have
$$
\lim_{r\to 0}\sup_i\sup_x\oint_{B(x,r)} |S_i|=0, 
$$
we infer for the limiting metric and for any $r>0$
$$
\oint_{B(x,r)} |S_\infty|=0
$$

As a consequence, $S$-tensor for the limiting metric $S_\infty=0$ on the boundary.  Near the boundary, the limiting metric $g_\infty^+$ on $X_\infty$ is a locally hyperbolic space. This is a result due to Biquard \cite{Biquard}  and  Biquard-Herzlich \cite{BiquardHerzlich} since the boundary metric is flat and the $S$-tensor vanishes.\\

{\bf Step 3.}  $g_\infty^+$ is a locally hyperbolic space.\\
To see this, we work for the Einstein metric $g^+_\infty$. By (\ref{Bachflat2}),  the Weyl tensor satisfies
$$
\triangle_{g^+_\infty} W=Rm*W
$$
since the cotton tensor $C=0$. Therefore,
$$
|\triangle_{g^+_\infty} W|\le C(x)|W|
$$
where $C= C(x) $ is some regular function. Set 
$$
A:=\{x\in X_\infty|\;\exists r>0 \mbox{ such that }W\equiv 0  \mbox{ on } B(x,r)\}.
$$
It is clear that $A$ is an open set. From the step 1, we know the Weyl tensor $W$ vanishes in a neighborhood of the boundary so that  $A$ is not empty. As $C(x)$ is a regular function, we can always bound it locally by some positive constant from above. Applying the well known unique continuation principle for this strong elliptic system (see \cite{Mazzeo}), $A$ is also closed. As a consequence, $A= X_\infty$ since $ X_\infty$ is connected, that is,  $W$ is identically equal to $0$. Hence, $g^+_\infty$ is a hyperbolic space form.\\

Thus, we have finished the proof of Lemma \ref{lemma4.3}.
\end{proof}

We will now prove the analogue of Lemma \ref{lemma4.3} under the assumptions of Theorem \ref{maintheorem1} instead of Theorem \ref{maintheorem}, where the assumptions on $S$-tensor is replaced by the assumptions of  $T$-curvature.\\ To do so, we will first explore the relation between the $S$ tensor and
the $T$ curvature.

The non-local terms $S$-tensor and $T$-curvature come from different considerations: the former one from
boundary metric of Bach flat equations; the latter one from boundary term of the Gauss-Bonnet integrand. However, we will show they are linked in the sense that for the limiting metric $g_{\infty}$, $T_\infty\equiv 0$ is equivalent to $S_\infty\equiv 0$. For  conformally compact Einstein manifolds, 
when the boundary metric is Ricci flat, the fact that $T_\infty\equiv 0$ implies $S_\infty\equiv 0$, was proved  earlier in \cite{casechang, casechangbis}. We will apply the same strategy of proof there, but as our limiting metric 
$g_{\infty}$ is now defined on the non-compact manifold $X_{\infty}$,  we need to do some more careful analysis.

Recall under the assumptions of  Theorem \ref{maintheorem1}, if we suppose  $K_i\to\infty$, we have asserted  in section \ref{sb4.1} that $(X,\bar g_i)$ converges to  $(X_\infty,g_\infty)$ in $C^{k-1,\alpha}$  norm in Gromov-Hausdorff sense  for some $k\ge 5$ and for all $\alpha\in (0,1)$, which is a manifold with totally geodesic boundary (we have convergence in $C^{k+2,\alpha}$  norm in Gromov-Hausdorff sense by a bootstrapping argument in section \ref{sb4.4}). We now state our result. \\

\begin{lemm}
\label{T=S}
For the limiting metric $g_{\infty}$ is our setting, $T_\infty\equiv 0$ iff $S_\infty\equiv 0$.
\end{lemm}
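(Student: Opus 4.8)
The plan is to exploit the fact that, by Lemma \ref{lemma4.3} (or its analogue under the assumptions of Theorem \ref{maintheorem1}), the limiting metric $g_\infty = f^{-2} g_\infty^+$ is the Fefferman--Graham compactification of an asymptotically hyperbolic Einstein metric $g_\infty^+$ whose conformal infinity is flat $\R^3$, and that both $S_\infty$ and $T_\infty$ are then determined by the Taylor expansion of $g_\infty$ in the geodesic normal variable off the boundary. Concretely, writing the special defining function $r$ associated to $\hat g = \hat g_\infty$ (flat) so that $r^2 g_\infty^+ = dr^2 + g_r$ with $g_r = \hat g + g^{(2)} r^2 + g^{(3)} r^3 + O(r^4)$, Remark \ref{remark2.2} gives $S_\infty{}_{\alpha\beta} = -\frac32 g^{(3)}_{\alpha\beta}$. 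So $S_\infty \equiv 0$ is equivalent to the vanishing of the third-order term $g^{(3)}$ in this expansion. On the other hand $T_\infty = \frac{1}{12}\frac{\partial R}{\partial n}$, and I would compute $\frac{\partial R_{g_\infty}}{\partial n}$ at the boundary from the same expansion: since the boundary is totally geodesic and $\hat g$ is flat (hence $\hat R = 0$, $\hat A = 0$), the scalar curvature $R_{g_\infty}$ and its normal derivative are expressible through $g^{(2)}, g^{(3)}$, and the trace condition $\mathrm{tr}(g^{(3)}) = 0$ plus the Einstein equations force $\nabla^n R_{g_\infty} = 0$ automatically (as already noted in Remark \ref{remark2.2}, $\nabla^n R = 0$ in the CCE setting). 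This would show $T_\infty \equiv 0$ holds unconditionally, which is too strong, so the actual content must be subtler: $T_\infty$ must be read off a \emph{higher}-order coefficient, which is exactly why the statement of Theorem \ref{maintheorem1} demands $k \ge 5$ and why the proof "takes a power series expansion of the metric up to order 7."

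So the refined plan: expand $g_r$ to order $7$, i.e. $g_r = \hat g + g^{(2)} r^2 + g^{(3)} r^3 + g^{(4)} r^4 + \cdots + g^{(7)} r^7 + O(r^8)$, with $\hat g$ flat. Use the Fefferman--Graham equations for an Einstein metric (equivalently the ODE system that $g_r$ satisfies coming from $Ric(g_\infty^+) = -3 g_\infty^+$) together with $Q_4(g_\infty) \equiv 0$ (which holds for the FG compactification, by \eqref{q4zero}) and the Bach-flat equation — recall a conformally compact Einstein metric's FG compactification is $Q$-flat, and since $g_\infty^+$ is Einstein its Weyl/Cotton tensors satisfy \eqref{Bachflat2} — to solve recursively for $g^{(4)}, g^{(5)}, g^{(6)}, g^{(7)}$ in terms of $g^{(2)}$ and $g^{(3)}$. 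Because $\hat g$ is flat, $g^{(2)} = 0$ (the second-order term in the FG expansion is a multiple of the Schouten tensor of the boundary, which vanishes), so everything reduces to polynomial expressions in $g^{(3)}$ and its tangential derivatives. Then I would locate the term in the expansion of $\nabla^n R_{g_\infty}$ (computed at points near but off the boundary, then its relevant jet at $r=0$) — more precisely, the quantity that the hypothesis $\liminf_{r\to 0}\inf_i\inf_x \oint_{B(x,r)} T_i \ge 0$ controls in the limit — and show it equals a positive (or negative-definite-sign) universal constant times $|g^{(3)}|^2 = \frac{4}{9}|S_\infty|^2$, up to a divergence term that integrates to zero. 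This "Pohozaev/integration-by-parts" packaging is the strategy of \cite{casechang, casechangbis}: one integrates $T_\infty$ against a suitable weight over the (now non-compact) boundary $\R^3$ and shows the integral is a nonpositive multiple of $\int |S_\infty|^2$ plus boundary-at-infinity terms, so that $T_\infty \equiv 0$ (or even $T_\infty \ge 0$ in an averaged sense) forces $S_\infty \equiv 0$; conversely $S_\infty \equiv 0$ makes $g^{(3)} = 0$ and, by the recursion, kills all odd-order terms, giving $T_\infty \equiv 0$ directly.

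The direction $S_\infty \equiv 0 \Rightarrow T_\infty \equiv 0$ I expect to be the easy half: $S_\infty = 0$ means $g^{(3)} = 0$, the recursion (with $g^{(2)}=0$) then yields $g^{(5)} = g^{(7)} = 0$ and in fact, by Lemma \ref{lemma2.1} and Lemma \ref{lemma2.4}, all odd normal derivatives of the Weyl tensor vanish and $\nabla^n R$-type odd jets vanish, so $T_\infty = \frac{1}{12}\nabla^n R_{g_\infty} = 0$. The hard part is the converse, $T_\infty \equiv 0 \Rightarrow S_\infty \equiv 0$, for two reasons: first, the bookkeeping of the order-7 expansion coupling the Einstein equations, $Q$-flatness and Bach-flatness is genuinely heavy, and one must correctly identify which combination of $g^{(k)}$'s appears in $T_\infty$ and verify its sign; second, $X_\infty$ is non-compact, so the integration-by-parts argument of \cite{casechang} cannot be applied verbatim — I would need decay estimates on $f = \lim_i \bar v_i^{-1}$ and on the curvature of $g_\infty$ at infinity (available from the uniform $C^{k-2}$ bounds and the bound $|\nabla_{g_\infty} f| \le 1$, $f(x) \ge C\min(d(x,\partial X),1)$ established in the proof of Lemma \ref{lemma4.3}) to justify that the boundary-at-infinity contributions vanish, or else run the argument locally on exhausting balls and pass to the limit. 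Controlling these contributions at infinity is, I expect, the main obstacle.
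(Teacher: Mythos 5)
There is a genuine gap in your plan for the hard direction $T_\infty\equiv0\Rightarrow S_\infty\equiv0$, and it is precisely the obstacle you flag at the end. The paper does not use a Pohozaev/integration-by-parts identity over the non-compact boundary at all, so the problem of controlling contributions at infinity never arises. Instead the mechanism is local and pointwise: with $h$ flat and $g_r=h+\kappa r^3-\frac{\triangle_h\kappa}{10}r^5+\frac12(k_1+\frac{5}{24}|\kappa|^2h)r^6+O(r^7)$ (where $\kappa=-\frac23 S_\infty$), the hypothesis $T_\infty\equiv0$ enters only through the expansion of the FG defining function: since $\bar v_i^{-1}=r_i\exp(A+Br_i^3)$ with $B=c_3T(\bar g_i)+O(r_i^2)$ and $R_{h_i}\to0$, one gets $f=r(1+O(r^4))$. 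Feeding this into $-\triangle_{g^+_\infty}\log f=3$, with the Laplacian expanded to order $r^7$, forces $f=r+\frac1{32}|\kappa|^2r^6+O(r^7)$, and then the conformal-change formula for the scalar curvature gives $R_{g_\infty}=-\frac94|\kappa|^2r^4+o(r^4)$. This is strictly negative near the boundary wherever $\kappa\neq0$, contradicting $R_{g_\infty}=\lim R_{\bar g_i}\ge0$ (Lemma \ref{lem4.1} passed to the limit). So the key missing idea in your write-up is that the nonnegativity of the scalar curvature of the limiting FG metric is the sign mechanism, applied pointwise in a collar, not through an integral identity; your proposed route, as you yourself note, is not completed.

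A secondary issue: your ``easy half'' ($S_\infty\equiv0\Rightarrow T_\infty\equiv0$) is argued purely from the local jet of $g_r$, but $T$ is a non-local quantity --- it is read off the coefficient $B_0$ in $w=\log r+A+Br^3$, and $w$ solves the global equation $-\triangle_{g^+}w=3$, so killing the odd local coefficients $g^{(3)},g^{(5)},g^{(7)}$ does not by itself determine $T_\infty$. The paper instead gets this direction from global rigidity: $S_\infty\equiv0$ together with the flat boundary gives, via Biquard's unique continuation and Lemma \ref{lemma4.3}, that $g_\infty^+$ is a hyperbolic space form, and then the Liouville argument in Proposition \ref{prop4.4} shows $g_\infty$ is flat, whence $T_\infty\equiv0$. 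Your identification of $S_\infty=-\frac32 g^{(3)}$, of the need for the order-7 expansion, and of the reason for $k\ge5$ are all correct, but neither direction of the equivalence is established by the route you propose.
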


\begin{proof}

We will first show that $T_{\infty} \equiv 0$ implies $S_\infty\equiv 0$. \\
Recall the blow up boundary metric $h=\hat {g}_\infty$ on the boundary is flat. Let $r$ be the special defining function related to $h$. Therefore, in a collar neighborhood $V=[0,\varepsilon)\times B$ where $B\subset M$ is some compact subset of conformal infinity, we  have the expansion (see \cite{G00}) for $g^+_\infty=r^{-2}(dr^2+ g_r)$
$$
g_r=h+\kappa r^3+o(r^3)
$$
where $g_r$ is a family of metrics on $M$ and $\kappa=-\frac23S$ by Remark \ref{remark2.2}.  We also recall $\mbox{tr}_h \kappa=0$. If there is no confusion, we drop the index $r$ for $g_r$. Moreover, we have (see \cite{GH})
$$
\begin{array}{ll}
0=rg^{ij}\p^2_rg_{ij}-g^{ij}\p_rg_{ij} -\vs\frac{r}{2}g^{li}g^{mj}\p_rg_{lm}\p_rg_{ij}\\
0=r\p^2_rg_{ij}-2\p_rg_{ij}-rg^{lm}\p_rg_{il}\p_rg_{jm}+\frac{r}{2}g^{lm}\p_rg_{lm}\p_rg_{ij}-(g^{lm}\p_rg_{lm})g_{ij}-2rRic(g_r)_{ij}
\end{array}
$$
As shown in \cite{casechangbis}, by differentiating up to 5 times and evaluating at $r=0$ for these relations, we infer
$$
g_r=h+\kappa r^3-\frac{\triangle_h \kappa}{10}r^5+\frac{1}{2}(k_1 +\frac{5}{24}|\kappa|^2h)r^6+O(r^7)
$$
where $k_1$ is the traceless part of the composition $(\kappa^2)_{ij}=\kappa_{im}\kappa^m_j$. The direct calculations lead to 
$$
\begin{array}{lll}
\ds \sqrt{\det g^+_\infty}= r^{-4}\sqrt{\det h}(1-\frac3{32} |\kappa|^2r^6+o(r^6))\\
\ds (g_r)^{ij}= h^{ij}-r^3\kappa^{ij}+O(r^5)\\
\end{array}
$$
so that 
\beq
\label{Laplaceinfty}
\triangle_{g^+_\infty}=(r\p_r)^2-3r\p_r-\frac{9}{16}r^6|\kappa|^2r\p_r+r^2\triangle_h-r^5\div(\kappa(\nabla_h\cdot, \cdot))+O(r^7)
\eeq
Here $O(r^7)$ means the bounded operators like $r\p_r$ or in $M$ with the coefficients in $O(r^7)$. Using the relation (\ref{relation5}), the set of functions $\bar v_i^{-1}$ is  compact in $C^{k+2, \alpha}$ space. Recall the set of metrics $\bar g_i$ is relatively compact in $C^{k+2, \alpha}$ space. Using the construction in \cite{Lee}, the special defining functions $r_i$ related to $g_i^+$ forment a relatively compact set locally in $C^{k+2, \alpha}(V)$ space provided $k\ge 1$. Here $\varepsilon$ is independent of $i$. We know (see\cite{FG})
$$
\bar v_i^{-1}=r_i\exp(A+Br_i^3)
$$
where $A=-\frac{R_{h_i}}{8}r_i^2+O(r_i^4)$ and $B=c_3T(\bar g_i)+ B_2r_i^2 +O(r_i^4)$ with $c_3$ an universal constant. We state both $\bar v_i^{-1}$ and $r_i$ are relatively compact in $C^{k-1, \alpha}(V)$. By taking the limit as $i\to\infty$, we infer
$$
f=r(1+O(r^4))
$$
since $R_{h_i}\to 0$ and under our assumption  $T(\bar g_i)\to T_\infty\equiv 0$. Recall $f$ solves (\ref{eq1.1}), that is,
\beq
\label{eq1.1bis}
-\triangle_{g^+_\infty}\log f=3
\eeq
We have the following expansion for $\log f$ 
$$
\log f=\log r+ f_4r^4+f_5 r^5+ f_6 r^6+  f_7 r^7
$$
where $f_4,f_5,f_6\in C^2(M\cap V)$ 	and $f_7\in C^2(V)$. Together with (\ref{Laplaceinfty}) and (\ref{eq1.1bis}), we deduce
$$
\triangle_{g^+_\infty}\log f=-3-\frac{9}{16}r^6|\kappa|^2+4f_4r^4+r^6\triangle_h f_4+10f_5r^5+18f_6r^6+O(r^7)=-3
$$
As a consequence, we get
$$
f_4=f_5=0, f_6=\frac 1{32}|\kappa|^2
$$
so that 
$$
f=r+ \frac 1{32}|\kappa|^2 r^6+O(r^7)
$$
Now we can calculate the scalar curvature for the metric $g_\infty=f^2 g^+_\infty$
$$
R_{g_\infty}=6(f^{-2}-|\nabla_{g^+_\infty} f^{-1}|^2)=6(f^{-2}-(r\p_rf^{-1})^2-r^2|\nabla_{g_r}f|^2)=-\frac{9}{4}|\kappa|^2r^4+o(r^4)
$$
that is, the scalar curvature is negative in some neighborhood of the boundary provided $\kappa\neq 0$. On the other hand 
$$
R_{g_\infty}=\lim R_{\bar g_i}\ge 0
$$
Thus, $\kappa=0$ in $V\cap M$. Now arguing as in the proof of the previous lemma, we prove $E$ is hyperbolic space. As we need to do the expansion of $g_r$ up to order 7,  $\bar g_i$ should be in $C^7$. We know $\bar g_i$ is compact in $C^{k+2,\alpha}_{loc}$ norm as in section \ref{sb4.4} below. Hence, we require $k+2\ge 7$, that is, $k\ge 5$. Thus  $T_\infty\equiv 0$ implies $S_\infty\equiv 0$.  Conversely, by lemma \ref{lemma4.3}, $S_\infty\equiv 0$ implies $g_\infty$ is conformal to hyperbolic space form. Using the proof in Proposition \ref{prop4.4} below,  $g_\infty$ is flat. Hence $T_\infty\equiv 0$.
Thus we have finished the proof of Lemma \ref{T=S}.
\end{proof}

\begin{lemm}
\label{lemma4.3bis}
Under the assumptions of  Theorem \ref{maintheorem1} and assuming that $\bar g_i$ has the type I boundary blow up,  $g_\infty$ is conformal to hyperbolic space form.
\end{lemm}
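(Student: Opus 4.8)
The plan is to re-run the boundary blow-up of section \ref{sb4.1} exactly as in the proof of Lemma \ref{lemma4.3} --- which under the hypotheses of Theorem \ref{maintheorem1} already produces a limit $(X_\infty,g_\infty)$ with totally geodesic boundary --- and then, in place of the $S$-tensor hypothesis, to extract from the $T$-curvature hypothesis of Theorem \ref{maintheorem1} that $T_\infty\equiv 0$. Once that is known, Lemma \ref{T=S} (together with the rigidity argument contained in its proof, which is in turn the argument of Lemma \ref{lemma4.3}) yields that $g_\infty$ is conformal to a hyperbolic space form.

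First I would observe that Steps 1 and 2 of the proof of Lemma \ref{lemma4.3} use only the convergence $(X,\bar g_i)\to(X_\infty,g_\infty)$, the Sobolev and Yamabe bounds, and Lemma \ref{lem4.1}; none of these involves the $S$-tensor, so they carry over verbatim. One obtains that $g_\infty=f^2g^+_\infty$ is the Fefferman--Graham compactification of a complete asymptotically hyperbolic Einstein metric $g^+_\infty$ with $Ric_{g^+_\infty}=-3g^+_\infty$, whose conformal infinity $(M_\infty,\hat g_\infty)$ is flat $\R^3$ (the rescaled boundary metrics $K_i^2\hat g_i$ have curvature tending to $0$, since $\hat g_i$ is a compact family), and for which $R_{g_\infty}=\lim_i R_{\bar g_i}\ge 0$ on $X_\infty$ while $R_{g_\infty}|_{M_\infty}=3\hat R_\infty=0$ by Lemma \ref{lemma2.3}. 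In particular $T_\infty=\frac{1}{12}\partial_n R_{g_\infty}$ is a well-defined continuous function on $M_\infty$.

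Next I would bracket the sign of $T_\infty$. On one side, since $R_{g_\infty}\ge 0$ in $X_\infty$ and vanishes on $M_\infty$, at every boundary point the inward normal derivative of $R_{g_\infty}$ is non-negative, i.e. $\partial_n R_{g_\infty}\le 0$ with $n$ the outward normal, so $T_\infty\le 0$ on $M_\infty$. On the other side, the boundary integral $\oint_{B(x,r)\cap M}T_g\,d\sigma$ is invariant under the rescaling $g\mapsto K^2 g$ (the ball contracts by a factor $K$, $d\sigma$ scales by $K^3$, and $T$ scales by $K^{-3}$); hence the hypothesis $\liminf_{r\to 0}\inf_i\inf_{x\in M}\oint_{B(x,r)}T_i\ge 0$ gives, for each fixed $\rho>0$ and each $\varepsilon>0$, the bound $\oint_{B(x,\rho)\cap M}T_{\bar g_i}\,d\sigma\ge-\varepsilon$ for all $x$ and all $i$ large enough (those with $\rho/K_i$ below the radius furnished by the $\liminf$). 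Passing to the limit --- legitimate because $k\ge 5$, so that $T$, a third-order quantity in the metric, converges together with the metrics in $C^{k-1,\alpha}$, indeed in $C^{k+2,\alpha}$ after section \ref{sb4.4} --- yields $\oint_{B(x,\rho)\cap M_\infty}T_\infty\,d\sigma\ge 0$ for every $x\in M_\infty$ and $\rho>0$, and letting $\rho\to 0$ gives $T_\infty\ge 0$. Combining the two sides, $T_\infty\equiv 0$ on $M_\infty$.

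Finally I would invoke Lemma \ref{T=S}: the vanishing of $T_\infty$ forces $S_\infty\equiv 0$, and then --- exactly as in the last part of Step 2 and in Step 3 of the proof of Lemma \ref{lemma4.3} --- the Biquard--Herzlich rigidity makes $g^+_\infty$ hyperbolic in a collar of the flat boundary, while the unique continuation principle for the elliptic system $\triangle_{g^+_\infty}W=Rm*W$ (valid since the Cotton tensor of an Einstein metric vanishes) propagates $W\equiv 0$ to the whole connected manifold $X_\infty$; hence $g^+_\infty$ is a hyperbolic space form and $g_\infty$ is conformal to it. The step requiring the most care is the limit of the $T$-integrals: one must check that the diffeomorphisms realizing the Gromov--Hausdorff convergence carry $B_{\bar g_i}(x,\rho)\cap M$ onto $B_{g_\infty}(x,\rho)\cap M_\infty$ up to a vanishing error, and that the convergence is strong enough to pass $\partial_n R$ (hence $T$) to the limit --- this third-order control is exactly why Theorem \ref{maintheorem1} is stated under the stronger hypothesis $k\ge 5$.
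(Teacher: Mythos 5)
Your proposal is correct and follows essentially the same route as the paper: the scaling invariance of $\oint T\,d\sigma$ combined with the $\liminf$ hypothesis gives $\oint_{B(x,\rho)}T_\infty\ge 0$, the non-negativity of $R_{g_\infty}$ in the interior together with its vanishing on the boundary gives $T_\infty\le 0$, hence $T_\infty\equiv 0$, and then Lemma \ref{T=S} reduces the statement to Lemma \ref{lemma4.3}. The only (immaterial) difference is that you extract the pointwise bound $T_\infty\ge 0$ by shrinking $\rho$, whereas the paper concludes directly that every ball integral of $T_\infty$ vanishes.
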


\begin{proof} 
We will show under the assumption on $T$ curvature on Theorem \ref{maintheorem1},  $T_{\infty} $ vanishes, hence by the Lemma \ref{T=S}, the $S_{\infty} $-tensor on the boundary also vanishes and thus the same proof as lemma \ref{lemma4.3} can be applied.\\

To see $T_{\infty} $ vanishes, we go through the similar proof as lemma \ref{lemma4.3}. Denote $0$ the marked point. Fixing $r>0$, we have
$$
\oint_{B_{\bar g_i}(0,r)} T(\bar g_i)=\oint_{B_{ g_i}(0,K_i^{-1}r)} T(g_i)
$$
By our assumption on $T$,  we infer
$$
\oint_{B_{ g_\infty}(0,r)} T_\infty=\lim_i \oint_{B_{\bar g_i}(0,r)}  T(\bar g_i)\ge 0
$$
since $K_i^{-1}r\to 0$.  On the other hand, the scalar curvature for the  limiting metric $ g_\infty$ is non-negative and vanishes on the boundary which implies that $T$-curvature  is non-positive, that is, $T_\infty=\frac{1}{12}\frac{\p R_\infty}{\p n}\le 0$. Thus, the integral of $T_{\infty} $-curvature over the geodesic ball $B(0,r)$ vanishes for any $r>0$. As a consequence, the   $T_\infty$ on the boundary for the limiting metric $ g_\infty$  is equal to zero. 
\end{proof}

\begin{rema}
As in Theorem \ref{maintheorem} and  Corollaries \ref{maincorollary0}-\ref{maincorollary01}, we have $S$-tensor $S_i$ is uniformly bounded in $L^1$. Thus, we can normalize in $C^0$ norm for the curvature and 
going through the blow up analysis. That is, we can assume $k\ge 1$ instead of $k\ge 2$ to reach the same results in  Theorem \ref{maintheorem} and Corollaries \ref{maincorollary0}-\ref{maincorollary01}. Also in Theorem \ref{maintheorem1} and   Corollaries \ref{maincorollary1.0}-\ref{maincorollary1.1}, if we assume in addition that $S$-tensor $S_i$ is uniformly bounded in $L^1$, we can assume $k\ge 4$ in the place of $k\ge 5$ to conclude the same results there.
\end{rema}

We now conclude this session by showing the boundary blow up can not occur by establishing a Liouville type theorem that, under the assumptions of theorems  \ref{maintheorem} or \ref{maintheorem1}, $g_{\infty}$ is the flat metric.

\begin{prop} 
\label{prop4.4}
Under the same assumptions as in Theorem \ref{maintheorem} or Theorem \ref{maintheorem1}, there is no blow up on the boundary.
\end{prop}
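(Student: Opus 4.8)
The plan is to derive a contradiction from the existence of a type I boundary blow-up. Suppose such a blow-up occurs; then by Lemma~\ref{lemma4.3} (under the hypotheses of Theorem~\ref{maintheorem}) or Lemma~\ref{lemma4.3bis} (under the hypotheses of Theorem~\ref{maintheorem1}), the limiting metric $g_\infty$ on $X_\infty$ is conformal to a hyperbolic space form; more precisely, $g_\infty = f^2 g_\infty^+$ where $g_\infty^+$ is the hyperbolic metric on $X_\infty$ and $f$ is the Fefferman--Graham defining function associated with the Euclidean boundary metric $h = \widehat{g}_\infty$, solving $-\triangle_{g_\infty^+}\log f = 3$. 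The first step is to identify $X_\infty$ with a half-space model of $\mathbb{H}^4$: since the boundary of $X_\infty$ is $\mathbb{R}^3$ with the flat metric, the universal cover of a neighborhood of the boundary embeds in the upper half-space $\{(x_1, y)\in \mathbb{R}_{>0}\times\mathbb{R}^3\}$ with $g_\infty^+ = x_1^{-2}(dx_1^2 + |dy|^2)$. The unique continuation / completeness argument used in Step~3 of Lemma~\ref{lemma4.3} shows $g_\infty^+$ is genuinely hyperbolic globally on $X_\infty$, and since its boundary is all of $\mathbb{R}^3$, one concludes $X_\infty = \mathbb{H}^4$ itself (the upper half-space).

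Next I would solve explicitly for $f$. On $\mathbb{H}^4 = \{x_1>0\}$ with the standard hyperbolic metric, the equation $-\triangle_{g_\infty^+}\log f = 3$ together with the normalization $f = x_1(1 + O(x_1^4))$ near the boundary (established in the proof of Lemma~\ref{T=S}, using that $\kappa = -\tfrac23 S_\infty \equiv 0$) forces $f$ to be the standard defining function $f = x_1$ — equivalently, one checks $\log x_1$ satisfies $-\triangle_{\mathbb{H}^4}\log x_1 = 3$ and appeals to uniqueness of the bounded-at-infinity solution of this linear equation (the difference $\log(f/x_1)$ is $\triangle_{g_\infty^+}$-harmonic, bounded, and vanishes on the boundary and in the sense of the expansion, hence is identically zero by the maximum principle on the compactified ball). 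Consequently $g_\infty = x_1^2 \cdot x_1^{-2}(dx_1^2 + |dy|^2) = dx_1^2 + |dy|^2$ is the flat Euclidean metric on the half-space, so $Rm_{g_\infty}\equiv 0$.

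This yields the contradiction. In the blow-up normalization we arranged $|Rm_{\bar g_i}|(0) = 1$ or $|\nabla Rm_{\bar g_i}|(0) = 1$, and the $C^{k-1,\alpha}$ (indeed $C^{k-2}$, via the $\varepsilon$-regularity Theorems~\ref{epsilonregularity1} and~\ref{epsilonregularity2}) convergence $\bar g_i \to g_\infty$ in the pointed Gromov--Hausdorff sense passes these lower bounds to the limit: $|Rm_{g_\infty}|(0) + |\nabla Rm_{g_\infty}|(0) \ge 1$ (using $\Lambda>0$ and that the marked points and their distance to $\partial X$ stay bounded, so the base point survives in the limit). This contradicts $Rm_{g_\infty}\equiv 0$. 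Hence no type I boundary blow-up can occur.

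The main obstacle is the rigidity step: showing that a conformally compact Einstein metric on $X_\infty$ with flat conformal infinity $\mathbb{R}^3$, which is conformal to \emph{some} hyperbolic space form, must in fact be (a quotient of) $\mathbb{H}^4$ in the half-space model with the \emph{standard} Fefferman--Graham compactification equal to the flat metric. Two points need care here: first, that the local hyperbolicity from Step~3 of Lemma~\ref{lemma4.3} together with the global topology and completeness of the doubled manifold actually pins down $X_\infty$ as the half-space (ruling out nontrivial quotients, which is where one uses that the boundary is the \emph{whole} of $\mathbb{R}^3$ and the injectivity-radius lower bound from Cheeger--Gromov--Taylor); and second, the uniqueness of $f$ — one must verify that the FG defining function with Euclidean boundary data is unique, which follows from the maximum principle applied to $\log f - \log x_1$ on the (conformally compactified) ball after checking this difference is continuous up to and vanishing on the boundary by the order of the expansion. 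Once $X_\infty = \mathbb{H}^4$ and $f = x_1$ are established, the flatness of $g_\infty$ and the contradiction are immediate.
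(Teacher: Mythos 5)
Your overall strategy (show the boundary blow-up limit $g_\infty$ is flat and contradict the normalization $|Rm_{\bar g_i}|(0)=1$ or $|\nabla Rm_{\bar g_i}|(0)=1$) is the paper's strategy, and your reduction via Lemma \ref{lemma4.3} (resp.\ Lemma \ref{lemma4.3bis}) is correct. But the rigidity step --- the claim that $-\triangle_{g_\infty^+}\log f=3$ plus the boundary expansion forces $f=x_1$ --- has a genuine gap. You argue that $u:=\log(f/x_1)$ is $\triangle_{g_\infty^+}$-harmonic, vanishes on the boundary, and is \emph{bounded}, and then invoke the maximum principle on the compactified ball. The boundedness of $u$ is never established and is exactly the hard part: nothing in the blow-up analysis controls $f$ deep in the interior (as $x_1\to\infty$) or as $|y|\to\infty$ along the non-compact boundary $\R^3$. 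Without such control the conclusion fails: for instance $u=cx_1^3$ satisfies $\triangle_{\tilde g^+}(x_1^3)=x_1^2\triangle(x_1^3)-2x_1\partial_1(x_1^3)=6x_1^3-6x_1^3=0$, vanishes on $\{x_1=0\}$, and is unbounded, so hyperbolic-harmonic functions with zero boundary values are far from unique on the half-space. (The local expansion $f=x_1(1+O(x_1^4))$ only holds uniformly on compact pieces of the boundary and does not rule out such behavior globally.)

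The paper closes this gap with a genuinely nonlinear Liouville argument that you do not use. Writing $\pi^*g_\infty=e^{2u}g_{\mathrm{eucl}}$ on $\R^4_+$, it derives the system $\triangle^2u=0$ (from $Q_4$-flatness), $-\triangle u-|\nabla u|^2\ge0$ (from $R_{g_\infty}\ge0$, which comes from Lemma \ref{lem4.1}), and $u=\nabla u=\triangle u=0$ on $\partial\R^4_+$, where $\triangle$ is the Euclidean Laplacian and the relation $x_1\triangle u=2\partial_1u$ converts the hyperbolic harmonicity into the Neumann condition. Then $-\triangle u$ is a \emph{non-negative} harmonic function on the half-space vanishing on the boundary, so by the Boas--Boas theorem $-\triangle u=ax_1$ with $a\ge0$; the constraint $ax_1\ge|\nabla u|^2\ge|\partial_1u|^2=a^2x_1^4/4$ for all $x_1>0$ forces $a=0$, whence $|\nabla u|^2\le-\triangle u=0$ and $u\equiv0$. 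This is where the positivity of the scalar curvature of the FG compactification enters, and it is the substitute for the boundedness you assumed. A secondary remark: you spend effort arguing $X_\infty=\H^4$ exactly; the paper sidesteps this by extending the local isometry near the boundary to a covering map $\pi:\H^4\to X_\infty$ and pulling everything back to the half-space, so no classification of quotients is needed.
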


\begin{proof}
From Lemma \ref{lemma4.3}, we know the limiting manifold $(X_{\infty}, g_{\infty}^+)$ is locally hyperbolic space.  We now work on the limit metric $g_{\infty}$. For simplicity, we will omit the index $\infty$. We denote $\tilde g^+$ standard hyperbolic space with the upper half space model. As $\tilde g^+= g^+$ in a neighborhood of the boundary $\{x_1=0\}$, we can extend this local isometry to a covering map  $\pi: \tilde g^+\to g^+$. We write 
 $$
 g_1= e^{2w_1}\tilde g^+ \mbox{ and } g_2= e^{2\tilde w_2}g^+
 $$
where $g_1$ is the standard euclidean metric and $g_2= g_{\infty} $ the limit FG metric. With the help of the covering map $\pi$, we have $\pi^*g_2= e^{2w_2}g^+$ where $w_2=\tilde w_2\circ \pi$. 
We have for $i=1,2$
$$
-\triangle_{\tilde g^+} w_i=3
$$
and 
$$
w_1=\log x_1
$$
Remind $x_1$ is the geodesic defining function w.r.t. the flat boundary metric. We write $\pi^*g_2=e^{2w_2}\tilde g^+=e^{2w_2-2w_1}g_1:= e^{2u}g_1$ where $u=w_2-w_1$. The semi-compactified metric $g_2$ (or $\pi^*g_2$) has flat $Q_4$  and  the boundary metric of $g_2$ is the euclidean 3-space and totally geometric. We now claim $u$ satisfies
the following conditions:
\beq
\label {eq4.5bis}
\left\{
\begin{array}{lll}
\triangle^2 u =0 &\mbox{ in } \R^4_+, \\
-\triangle u-|\nabla u|^2 \ge 0&\mbox{ in } \R^4_+,\\
u=\nabla u =\triangle u=0&\mbox{ on } \p\R^4_+.
\end{array}
\right.
\eeq
The first equation comes from the flat $Q_4$ curvature and second one from the non-negative scalar curvature. To see the third assertion in (\ref{eq4.5bis}), we first observe
that as $g_2$ on the boundary is euclidean, $u \equiv 0 $ on the boundary.  On the other hand, we know by properties of $g_1$ and  $g_2$ that 
we have $\triangle_{\tilde g^+} u=0$, which in the coordinate of the hyperbolic metric $\tilde g^+$  is the same as  $x^2_1\sum_{i=1}^4\p_{x_i}(\frac{1}{x^2_1}\p_{x_i} u)=0$,
thus we have  
\beq
\label{eq4.6}
x_1\triangle u=2\p_1 u.
\eeq
From (\ref{eq4.6}) we conclude that on the boundary, $\p_1 u=0$, this plus the fact $ u \equiv 0$ on the boundary, we have $\nabla u=0$. On the other hand, it follows from Lemma 2.3 the restriction of the scalar curvature vanishes on the boundary so that $-\triangle u-|\nabla u|^2 = 0$. This yields $\triangle u=0$ on the boundary. We have thus established
(\ref{eq4.5bis}). \\  
As $-\triangle u$ is harmonic and non-negative, 
it follows from a result due to H.P.Boas and R.P. Boas \cite{BB}, 
$$
-\triangle u= ax_1
$$
with some $a>0$. Hence, by (\ref{eq4.6}),  $\p_1 u=-ax_1^2/2$ so that 
$|\nabla u|^2\ge |\p_1 u|^2=a^2x_1^4/4$. Hence $-\triangle u-|\nabla u|^2= ax_1-a^2x_1^4/4\ge 0$ if and only if $a=0$. 
From  (\ref{eq4.5bis}), $|\nabla u|^2\le -\triangle u=0$, $u$ is a constant. Together with the boundary condition, $u\equiv0$. That is, $g_2$ is a flat metric. However, by the blow up arguments, $g_2=g_\infty$ is not flat since either its curvature or the derivative of its curvature has been normalized to be one at a point in X,  which yields the desired contradiction.  Thus there is no boundary blow up and we have finished the proof of Proposition \ref{prop4.4}.
\end{proof}

\subsection{blow-up analysis in the interior}
\label{sb4.3}

Now we will do the blow-up analysis in the interior and want to prevent this to happen with the help of Proposition \ref{prop4.4}.\\

Recall for the interior blow up, we have
\begin{enumerate}
\item $\ds \max\{|Rm_{\bar g_i}|(0), |\nabla Rm_{\bar g_i}|(0)\}=1$;
\item $\ds \lim_i d_i(0,\p X)=\infty$;\\
We now claim we also have 
\item For any $C>0$,  $\ds\sup_{d_i(x,\p X)\le C}|Rm_{\bar g_i}(x)|+ |\nabla Rm_{\bar g_i}(x)|\to 0$.
\end{enumerate}
We now prove  the claim by a contradiction argument. Assume otherwise, then for some fixed $C>0$, modulo a subsequence, we have 
$$
\sup_{d_i(x,\p X)\le C}|Rm_{\bar g_i}(x)|+ |\nabla Rm_{\bar g_i}(x)|\to \alpha>0
$$
Let $p_i\in \{d_i(x,\p X)\le C\}$ such that 
$$|Rm_{\bar g_i}(p_i)|+ |\nabla Rm_{\bar g_i}(p_i)|=\sup_{d_i(x,\p X)\le C}|Rm_{\bar g_i}(x)|+ |\nabla Rm_{\bar g_i}(x)|
$$
We mark the point $p_i$ as the origin. By pointed Hausdorff-Gromov's convergence, $(X,\bar g_i, p_i)$ converges to a non-flat  manifold with the totally geodesic  boundary whose doubling manifold is complete.  This is a boundary blow up, which contradicts Proposition \ref{prop4.4}. Therefore, the desired property (3) follows.\\

\begin{lemm} 
\label{lemma4.7}
Under the assumptions as in either Theorem \ref{maintheorem} or Theorem \ref{maintheorem1}, we assume  $K_i\to\infty$ and  $\bar g_i$  has the type II interior blow up.   Then  the limiting metric $g_\infty$ is  Ricci free and non-flat.
\end{lemm}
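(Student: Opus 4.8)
The plan is first to identify the limit $g_\infty$ and then to read off $Ric_{g_\infty}\equiv 0$ from the structure of the compactification. By the interior blow-up properties (1)--(3) recalled above, together with the uniform $C^{k-2}$ curvature bounds and the absence of collapse for $\bar g_i$, a subsequence of $(X,\bar g_i,p_i)$ converges in the pointed $C^{k-1,\alpha}$ Cheeger--Gromov sense to a complete, non-compact, boundaryless manifold $(X_\infty,g_\infty)$ --- boundaryless because $d_{\bar g_i}(p_i,\p X)\to\infty$ pushes $\p X$ off to infinity. The normalization gives $|Rm_{\bar g_i}|(p_i)=1$ or $|\nabla Rm_{\bar g_i}|(p_i)=1$, and since $k-1\ge 1$ the corresponding quantity equals $1$ at the base point of $g_\infty$; hence $g_\infty$ is not flat, which is one half of the statement. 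For the other half I would work with the identity (\ref{relation5}) applied to the pair $(\bar v_i,\bar g_i)$, namely $Ric_{\bar g_i}=-2\bar v_i\nabla^2_{\bar g_i}(\bar v_i^{-1})$.

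The heart of the matter is to prove that $\bar v_i\to 0$ locally uniformly on the blow-up region, i.e. uniformly on each fixed ball $B_{\bar g_i}(p_i,\rho)$. By Lemma \ref{lem4.1} and Corollary \ref{corollary4.2} one has $R_{\bar g_i}\ge 0$, so the Yamabe-type equation $-\triangle_{\bar g_i}\bar v_i+\frac{R_{\bar g_i}}{6}\bar v_i=-2\bar v_i^3$ (used in Step~1 of the proof of Lemma \ref{lemma4.3}) yields $\triangle_{\bar g_i}\bar v_i=\frac{R_{\bar g_i}}{6}\bar v_i+2\bar v_i^3\ge 2\bar v_i^3>0$, so $\bar v_i$ is a positive subsolution of a superlinear elliptic equation. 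Since the rescaling normalizes $|Rm_{\bar g_i}|\le 1$ on all of $X$, we have a uniform lower Ricci bound, so $\triangle_{\bar g_i}r\le 3\coth r$ stays bounded for $r=d_{\bar g_i}(\cdot,p_i)\ge 1$; combining this with $d_{\bar g_i}(p_i,\p X)\to\infty$, I would run a Keller--Osserman barrier argument on the interior balls $B_{\bar g_i}(p_i,\rho_i)$ with $\rho_i\to\infty$ (there is no boundary contribution since these balls lie in the interior of $X$) to conclude $\bar v_i(p_i)\to 0$. Then $|\nabla_{\bar g_i}\bar v_i^{-1}|\le 1$ from (\ref{primeestimate}) forces $\bar v_i^{-1}\ge \bar v_i^{-1}(p_i)-\rho$ on $B_{\bar g_i}(p_i,\rho)$, hence $\bar v_i\to 0$ uniformly on that ball.

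To conclude, I would note that $\bar v_i^{-1}$ solves $\triangle_{\bar g_i}\bar v_i^{-1}=-3\bar v_i\bigl(1-|\nabla_{\bar g_i}\bar v_i^{-1}|^2\bigr)$ (equivalent to (\ref{relation3})--(\ref{relation4})), whose right-hand side is uniformly bounded; the uniform bounded geometry then gives uniform local $C^{2,\alpha}$ bounds for $\bar v_i^{-1}$, so $|\nabla^2_{\bar g_i}\bar v_i^{-1}|$ is locally bounded. Plugging $\bar v_i\to 0$ (uniformly on $B_{\bar g_i}(p_i,\rho)$) into $Ric_{\bar g_i}=-2\bar v_i\nabla^2_{\bar g_i}(\bar v_i^{-1})$ gives $Ric_{\bar g_i}\to 0$ locally uniformly, whence $Ric_{g_\infty}\equiv 0$. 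This is consistent with $g_\infty$ being $Q$-flat, since $Q_4(\bar g_i)=K_i^{-4}Q_4(g_i)=0$ by (\ref{q4zero}), and indeed $R_{\bar g_i}=6\bar v_i^2(1-|\nabla_{\bar g_i}\bar v_i^{-1}|^2)\to 0$ as well. Hence $g_\infty$ is Ricci-free and non-flat.

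The step I expect to be the main obstacle is the uniform decay $\bar v_i\to 0$ on the blow-up region: a priori the defining function $\bar v_i^{-1}$ could remain bounded there, in which case the limit would carry a genuine Poincar\'e--Einstein structure instead of a Ricci-flat metric. Ruling this out needs \emph{both} the superlinearity $\triangle_{\bar g_i}\bar v_i\ge 2\bar v_i^3$ and the divergence $d_{\bar g_i}(p_i,\p X)\to\infty$, and one must check that the Keller--Osserman comparison stays effective on balls of unboundedly large radius --- which is the reason the uniform two-sided curvature bound (keeping $\triangle_{\bar g_i}r$ bounded for $r\ge 1$) is used rather than just a Ricci lower bound.
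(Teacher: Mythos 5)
Your proposal is correct in substance and follows the same overall strategy as the paper (show that the conformal factor $\bar v_i^{-1}$ degenerates to $+\infty$ on the blow-up region, then conclude Ricci-flatness from the compactification identities), but the two halves are executed differently. For the degeneration, you run a Keller--Osserman barrier directly at $p_i$ on interior balls $B_{\bar g_i}(p_i,\rho_i)$ with $\rho_i\to\infty$, using $\triangle_{\bar g_i}\bar v_i\ge 2\bar v_i^3$; the paper instead first proves the Osserman-type bound $\bar v_i^{-1}(x)\ge C\,d_{\bar g_i}(x,\p X)$ only in a collar $\{d_{\bar g_i}(\cdot,\p X)\le A\}$ (the same cutoff computation as Step 1 of Lemma \ref{lemma4.3}) and then propagates the resulting lower bound $CA$ on the level set $\{d=A\}$ into the deep interior via the superharmonicity $\triangle_{\bar g_i}\bar v_i^{-1}=-\tfrac12 R_{\bar g_i}\bar v_i\,\bar v_i^{-1}\cdot\bar v_i^{-1}\le 0$ coming from (\ref{relation4}) and $R_{\bar g_i}\ge 0$. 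Both routes use the same ingredients (the cubic nonlinearity, $R\ge 0$, and $d_{\bar g_i}(p_i,\p X)\to\infty$), and yours is fine. For the conclusion, the paper's endgame is cleaner than yours: from $\bar v_i\to 0$ and $|\nabla_{\bar g_i}\bar v_i^{-1}|\le 1$, the scalar identity (\ref{relation3}) gives $R_{g_\infty}\equiv 0$ with no derivative estimates at all, and then $Q$-flatness $-\triangle R+R^2-3|Ric|^2=0$ forces $|Ric_{g_\infty}|\equiv 0$. You note this at the end but relegate it to a consistency check, while your main argument via $Ric_{\bar g_i}=-2\bar v_i\nabla^2_{\bar g_i}(\bar v_i^{-1})$ requires local bounds on the Hessian of $\bar v_i^{-1}$.

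That last step is the one soft spot in your writeup: as stated, ``uniform local $C^{2,\alpha}$ bounds for $\bar v_i^{-1}$'' cannot hold, since you have just proved $\bar v_i^{-1}\to\infty$ uniformly on compact sets, so its $C^0$ norm (hence its $C^{2,\alpha}$ norm) blows up. The estimate you actually need is on $\nabla^2_{\bar g_i}\bar v_i^{-1}$ alone, and it is recoverable: set $u_i=\bar v_i^{-1}-\bar v_i^{-1}(p_i)$, note $|u_i|\le\rho$ on $B_{\bar g_i}(p_i,\rho)$ by (\ref{primeestimate}) and $\triangle_{\bar g_i}u_i=-3\bar v_i(1-|\nabla_{\bar g_i}\bar v_i^{-1}|^2)$ is bounded, then apply Schauder in harmonic coordinates (a short bootstrap is needed since the H\"older norm of the right-hand side involves $\nabla^2 u_i$ through $|\nabla\bar v_i^{-1}|^2$). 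With that normalization your argument closes; without it the Schauder step is vacuous. Given that the scalar-curvature route avoids this entirely, I would recommend promoting your final remark to the main argument.
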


\begin{proof}
{\it  We divide the proof into 3 steps.}\\

{\it Step 1.} Let $i_{\p}(\bar g_i)$ be the boundary injectivity radius. We claim  $i_{\p}(\bar g_i)\to \infty$. \\

We fix a large $B>1$ and consider the scaling metric $\tilde g_i= B^{-2} \bar g_i$. Thus,  $\ds\sup_{d_{\tilde g_i}(x,\p X)\le 1}|Rm_{\tilde g_i}(x)|\to 0$. We argue as before: we state  the Sobolev inequality (\ref{Sobolev}) implies $B(x,r)\ge cr^4$ for all $x\in \{d_{\tilde g_i}(x,\p X)\le 1\}$ and $r<1$. As the boundary $M$ is totally geodesic, we can use the doubling argument along the boundary $M$  to get a compact manifold without boundary $Y:=X\bigcup_M X_1$ where $X_1$ is a copy of $X$ with opposite orientation. On the closed manifold $Y$, thanks to a result of Cheeger-Gromov-Taylor \cite{CGT}, we have the uniform lower bound for any closed simple geodesic on $Y$, which yields $i_{\p}(\tilde g_i)\ge \alpha>0$ for some $\alpha >0$ independent of $i$. Hence $i_{\p}(\bar g_i)=Bi_{\p}(\tilde  g_i)\ge B\alpha$. This gives the desired claim.\\

{\it Step 2.}  We claim that there exists some positive constant $C>0$ such that for any $A>1$ there exists some positive entire number $N\in\mathbb{N}$ such that for all $j>N$ for all $x\in X$ with $r_i(x) = d_{\bar g_i}(x,\partial X) < A $,  we have
$$
\bar v_i^{-1}(x) \ge C{r_i(x)}
$$

The proof of the claim is as same as that one of the step 1 in Lemma \ref{lemma4.3}.\\

{\it Step 3.}   The limiting metric $g_\infty$ is of Ricci flat.\\

To see so, we first claim  $\lim \bar v_i^{-1}(0)=\infty$. For this purpose, we note
first by equation (\ref{relation5}), $ 2 \triangle_{\bar g_i} (v^{-1}_i) =-R_{\bar g_j}v^{-1}_i \le 0 $, thus by maximal principle,  for each fixed $A>1$ and for all sufficiently large $j$ one has $\bar v_i^{-1}(0)\ge \min_{d_{\bar g_i}(x,\p X)=A}\bar v_i^{-1}\ge CA$. 
 Thus, $\liminf \bar v_i^{-1}(0)\ge CA$.  As $A$ is arbitrary, we infer $\liminf\bar v_i^{-1}(0)=\infty$ which yields the desired result.  Thus for any compact set $B\subset X_\infty$, $ \bar v_i^{-1}$ is uniformly unbounded as $\|\nabla_{\bar g_i}( \bar v_i^{-1})\|\le 1$.  Apply  (\ref{relation3}), we have conclude the scalar curvature 
 of  $g_\infty$ is zero, since $g_{\infty} $ is also $Q$ flat (\ref{Qflat}), we conculde $g_\infty$ is also Ricci flat.  \\
Thus we have established Lemma \ref{lemma4.7}.

\end{proof}

We also now apply a recent result due to Cheeger-Naber \cite{CheegerNaber}, to show the Weyl tensor 
of the limiting metric $g_{\infty} $ also vanishes. First, we recall the result of Cheeger-Naber (Theorem 1.13 in \cite{CheegerNaber}). 

\begin{lemm} 
\label{lemma4.6bis1}
There exists $C=C(v)$ such that if $4$-dimensional Riemannian manifolds $X^4$ satisfies $|Ric_{X^4}|\le  3$ and $Vol(B_1(p))>v>0$ for some point $p\in X$, then 
$$
\int_{B_1(p)} |Rm|^2\le C(v)
$$
\end{lemm}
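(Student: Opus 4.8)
The statement is, up to normalization, a direct quotation of the $L^2$ curvature estimate of Cheeger--Naber obtained in their resolution of the codimension-four conjecture (Theorem 1.13 in \cite{CheegerNaber}), so the plan is simply to invoke that theorem after checking that our hypotheses match theirs. First I would record the scaling: the bound $|Ric_{X^4}|\le 3$ is exactly $|Ric|\le n-1$ for $n=4$, which is the normalization satisfied by the Poincar\'e--Einstein metrics and the blow-down limits occurring in our setting, and it is precisely the form under which Cheeger--Naber state their results; if one prefers the normalization $|Ric|\le 1$ instead, the rescaling $g\mapsto 3^{-1}g$ reduces to it and changes the unit ball and the constant $C$ only by dimensional factors.

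Next I would observe that the two inputs of the Cheeger--Naber estimate are exactly our two hypotheses, namely a two-sided Ricci bound and the noncollapsing $Vol(B_1(p))>v>0$. By Bishop--Gromov volume comparison the lower Ricci bound upgrades the noncollapsing at scale one to a uniform Euclidean-type lower bound $Vol(B_r(x))\ge c(v)\,r^4$ for all $x\in B_{1/2}(p)$ and all $r\le 1/2$, which is the shape in which noncollapsing enters the neck-decomposition and quantitative-stratification arguments of \cite{CheegerNaber}. I would then appeal directly to that machinery --- the transformation theorem for harmonic functions together with cone-splitting and the neck decomposition --- which shows that the set of points where $|Rm|$ concentrates at some definite scale has content bounded in terms of $v$; integrating over scales yields $\int_{B_1(p)}|Rm|^2\le C(v)$, the finiteness of this scale-invariant (in dimension four) integral being exactly the codimension-four phenomenon.

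Since here we only need the estimate on a single unit ball in dimension four, I do not expect a genuine obstacle: all of the mathematical content is contained in \cite{CheegerNaber}, and the only thing to attend to is bookkeeping --- matching constants, and, if desired, passing from the estimate on $B_{1/2}(p)$ to the one on $B_1(p)$ by a trivial covering and rescaling argument. This bound will then be combined with Lemma \ref{lemma4.7} in the following step to force the blow-up limit $g_\infty$ to have vanishing curvature, contradicting the normalization $|Rm_{\bar g_i}|(0)+|\nabla Rm_{\bar g_i}|(0)=1$ and thereby excluding interior blow-up.
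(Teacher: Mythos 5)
Your proposal is correct and matches the paper exactly: the lemma is stated there purely as a recollection of Theorem 1.13 of Cheeger--Naber, with no proof given beyond the citation, which is precisely what you do. The additional remarks on normalization and on the internal machinery of \cite{CheegerNaber} are accurate but not needed, since the hypotheses $|Ric|\le 3$ and $Vol(B_1(p))>v$ already coincide with the form in which the theorem is stated in that reference.
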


As a direct consequence, we have the following result.

\begin{lemm} 
\label{lemma4.6bis2}
Let  $E$ be a $4$-dimensional complete non-compact Ricci flat manifold. Assume 
$$
V(B(o,t))\ge Ct^4 ,\;\forall t>0
$$
for some positive constant $C>0$.  
Then  we have
$$
\int_E |W|^2<\infty.
$$
\end{lemm}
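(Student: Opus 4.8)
The plan is to combine the dimension-four scale invariance of the $L^2$-norm of the curvature tensor with the local $L^2$ curvature bound of Cheeger--Naber recalled in Lemma~\ref{lemma4.6bis1}.

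First I would reduce the statement to a bound on the full curvature tensor. Since $E$ is Ricci flat, in particular $R=0$, so the Schouten tensor $A=\frac12(Ric-\frac{R}{6}g)$ vanishes and the curvature decomposition $Rm=W+A\circledwedge g$ gives $Rm=W$; hence $|W|^2=|Rm|^2$ pointwise, and it suffices to prove $\int_E|Rm|^2\,dV<\infty$.

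The heart of the argument is a rescaling. For each $t>0$ put $g_t:=t^{-2}g$ on $E$. In dimension four one has $|Rm_{\lambda g}|^2=\lambda^{-2}|Rm_g|^2$ while $dV_{\lambda g}=\lambda^{2}\,dV_g$, so the integral $\int_\Omega|Rm_g|^2\,dV_g$ is invariant under homotheties; and since the $g_t$-geodesic ball of radius $1$ about $o$ is precisely $B_g(o,t)$, this invariance reads
$$
\int_{B_{g_t}(o,1)}|Rm_{g_t}|^2\,dV_{g_t}=\int_{B_g(o,t)}|Rm_g|^2\,dV_g.
$$
Moreover $(E,g_t)$ is again complete and Ricci flat, so $|Ric_{g_t}|=0\le 3$, and by the hypothesis of Euclidean volume growth
$$
\mathrm{Vol}_{g_t}\big(B_{g_t}(o,1)\big)=t^{-4}\,\mathrm{Vol}_g\big(B_g(o,t)\big)\ge C,
$$
a lower bound that is the \emph{same} for all $t$. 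Thus Lemma~\ref{lemma4.6bis1}, applied to $(E,g_t)$ with noncollapsing constant $v=C$, gives $\int_{B_{g_t}(o,1)}|Rm_{g_t}|^2\,dV_{g_t}\le C(C)$ with $C(C)$ independent of $t$. Combining the two displays, $\int_{B_g(o,t)}|Rm_g|^2\,dV_g\le C(C)$ for every $t>0$, and letting $t\to\infty$ with monotone convergence (as $B_g(o,t)\nearrow E$) yields $\int_E|W|^2=\int_E|Rm_g|^2\,dV_g\le C(C)<\infty$.

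I do not anticipate a real obstacle: essentially the only point that must be checked is that the noncollapsing constant fed into Cheeger--Naber does not degenerate as $t\to\infty$, and this is exactly what the scale-invariant inequality $\mathrm{Vol}_{g_t}(B_{g_t}(o,1))\ge C$ supplies, thanks to the Euclidean volume growth assumption. (The matching upper bound $\mathrm{Vol}_g(B_g(o,t))\le\omega_4 t^4$ is automatic from Bishop--Gromov since $Ric\ge 0$, but it plays no role in the argument.)
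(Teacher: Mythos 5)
Your proof is correct and follows essentially the same route as the paper: rescale $g_t=t^{-2}g$, observe that Ricci flatness and the Euclidean volume growth hypothesis give a $t$-independent noncollapsing constant for the unit ball, apply Lemma \ref{lemma4.6bis1}, and use the scale invariance of the $L^2$ curvature integral in dimension four. The only cosmetic difference is that the paper applies the scaling argument directly to $|W|^2$ rather than passing through $|Rm|^2=|W|^2$.
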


\begin{proof}
Let $g$ be the metric on $E$. We consider the scaling metric $g_i=i^{-2} g$ for all $i\in \mathbb N$. It is clear that $g_i$ is still a complete Ricci flat metric and $Vol(B_1(o))\ge C$ for each $g_i$. Using Cheeger-Naber's result, we deduce for the metric $g$
$$
\int_{B_1(o)} |W|^2_{g_i}= \int_{B_i(o)} |W|^2_g\le C
$$
where $C$ is a constant independent of $i$. Letting $i\to\infty$, the desired estimate follows. 
\end{proof}
We now recall a result due to Shen-Sormani \cite{Shen-Sormani}.

\begin{lemm} 
\label{lemma4.5}
Let  $E$ be a $4$-dimensional complete non-compact Ricci flat manifold. Assume $E$ is not flat and oriented. Then for any abelian group $G$, $H_3(E,G)$ is trivial. Moreover, for any abelian group $G$
$$
Tor(H_2(E,\mathbb{Z}),G)=0.
$$
In particular, $H_2(E,\mathbb{Z})$ has no elements of finite order.
\end{lemm}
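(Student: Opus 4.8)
The plan is to quote the theorem of Shen--Sormani more or less verbatim and only check that its hypotheses are met by the limiting metric in our setting; the content we actually need is the topological rigidity statement for complete noncompact Ricci-flat $4$-manifolds, namely that if $E$ is nonflat and oriented then $H_3(E,G)=0$ for every abelian group $G$ and $\mathrm{Tor}(H_2(E,\mathbb Z),G)=0$. The reference \cite{Shen-Sormani} proves exactly this under the standing assumption that $E$ is complete, noncompact, oriented, Ricci-flat and nonflat. So the first and main step is simply to invoke that result; there is no new geometry to be done inside this lemma itself.

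The one point that does require a word is why the hypotheses apply in the way we will use the lemma. In the interior blow-up situation the limiting metric $g_\infty$ on $X_\infty$ is, by Lemma~\ref{lemma4.7}, Ricci flat and nonflat; by construction (doubling across the totally geodesic boundary, completeness of the doubled metric coming from the Cheeger--Gromov--Taylor injectivity radius lower bound) one obtains a complete, oriented $4$-manifold, and $X_\infty$ is noncompact because $d_{\bar g_i}(0,\partial X)\to\infty$ forces the limit to be an open manifold with empty boundary in the interior-blow-up regime, or one passes to the doubled manifold $Y_\infty=X_\infty\cup_{\partial} \overline{X_\infty}$ which is a complete noncompact oriented Ricci-flat nonflat $4$-manifold. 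Either way the hypotheses of Shen--Sormani are in force. The deduction of the final sentence, that $H_2(E,\mathbb Z)$ has no elements of finite order, is then immediate: if $\tau\in H_2(E,\mathbb Z)$ had order $m>1$, then taking $G=\mathbb Z/m\mathbb Z$ the element $\tau$ would contribute a nonzero summand to $\mathrm{Tor}(H_2(E,\mathbb Z),\mathbb Z/m\mathbb Z)$, contradicting the vanishing of that torsion group.

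I do not anticipate any real obstacle here — this lemma is a black-box citation together with a routine verification that $E$ is complete, noncompact, oriented, Ricci-flat and nonflat. The only thing to be careful about is bookkeeping: making sure the manifold to which we apply \cite{Shen-Sormani} is the correct one (the doubled manifold, so that completeness and absence of boundary are automatic) and that orientability survives the doubling, which it does since the second copy is taken with reversed orientation and the gluing is along a two-sided totally geodesic hypersurface. The genuine work of exploiting this topological information — combining it with the assumption $H_2(X,\mathbb Z)=0$ and $H_1(X,\mathbb Z)=0$ together with the result of Chrisp--Hillman \cite{CH} to rule out the interior blow-up — is carried out in the steps following this lemma in Section~\ref{sb4.3}, not inside the lemma.
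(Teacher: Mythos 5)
Your proposal matches the paper's treatment exactly: Lemma \ref{lemma4.5} is presented in the paper purely as a recalled result of Shen--Sormani \cite{Shen-Sormani} with no proof supplied, so invoking that reference as a black box (together with the routine universal-coefficient observation that vanishing of $Tor(H_2(E,\mathbb{Z}),\mathbb{Z}/m\mathbb{Z})$ for all $m$ forbids finite-order elements) is precisely what the paper does. Your additional paragraph verifying the hypotheses for the blow-up limit is harmless but belongs to the later application in Proposition \ref{prop4.8} rather than to the lemma itself.
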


We will also use the following result on the topology of $X$.
\begin{lemm} 
\label{lemma4.5bis}
If we glue $X$ and the unit 4-ball along the boundary ${\mathbb S}^3$  and denote $\tilde X:=X\bigcup_{{\mathbb S}^3} B^4$. Then $\tilde X$ is a homology 4-sphere. 
\end{lemm}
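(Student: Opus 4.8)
The plan is to compute $H_\ast(\tilde X;\mathbb{Z})$ by a Mayer--Vietoris argument together with Poincar\'e duality. First I would record the structural facts: capping off the boundary $\partial X = \mathbb{S}^3$ of the compact oriented $4$-manifold $X$ with $B^4$ --- via a collar of $\partial X$ and a diffeomorphism $\partial X \cong \partial B^4$, whose precise choice is irrelevant at the level of homology --- produces a closed, connected, oriented smooth $4$-manifold $\tilde X$. Consequently $H_0(\tilde X;\mathbb{Z}) \cong H_4(\tilde X;\mathbb{Z}) \cong \mathbb{Z}$, and it remains only to show $H_1(\tilde X;\mathbb{Z}) = H_2(\tilde X;\mathbb{Z}) = H_3(\tilde X;\mathbb{Z}) = 0$.

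Next I would apply the (reduced) Mayer--Vietoris sequence to the decomposition $\tilde X = X \cup B^4$, thickening each piece slightly so that the overlap deformation retracts onto $\mathbb{S}^3$. Since $B^4$ is contractible and $H_k(\mathbb{S}^3;\mathbb{Z}) = 0$ for $k = 1,2$, the sequence immediately yields isomorphisms $H_1(\tilde X;\mathbb{Z}) \cong H_1(X;\mathbb{Z})$ and $H_2(\tilde X;\mathbb{Z}) \cong H_2(X;\mathbb{Z})$; by hypothesis (3) in Theorem~\ref{maintheorem} both of these groups vanish. (If one prefers unreduced homology, the only extra point is that the map $H_0(\mathbb{S}^3) \to H_0(X)\oplus H_0(B^4)$ is injective, so no spurious class is created in $H_1(\tilde X)$.)

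For $H_3(\tilde X;\mathbb{Z})$ the Mayer--Vietoris sequence by itself only exhibits it as a quotient of $H_3(X;\mathbb{Z})$, which is not directly controlled by our hypotheses; so here I would instead invoke Poincar\'e duality on the closed oriented $4$-manifold $\tilde X$, giving $H_3(\tilde X;\mathbb{Z}) \cong H^1(\tilde X;\mathbb{Z})$, and then the universal coefficient theorem gives $H^1(\tilde X;\mathbb{Z}) \cong \mathrm{Hom}(H_1(\tilde X;\mathbb{Z}),\mathbb{Z}) = 0$ since $H_1(\tilde X;\mathbb{Z}) = 0$ was established in the previous step. Combining the three computations, $\tilde X$ has exactly the integral homology of $S^4$. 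I expect no genuine obstacle here: the statement is a routine piece of algebraic topology, and the only point requiring a moment's attention is that $H_3$ is invisible to Mayer--Vietoris alone and must be extracted from duality --- alternatively one can first prove $H_3(X;\mathbb{Z}) = 0$ using the Lefschetz duality isomorphism $H_3(X;\mathbb{Z}) \cong H^1(X,\partial X;\mathbb{Z})$, the long exact sequence of the pair $(X,\partial X)$, and $H^1(X;\mathbb{Z}) = 0$, and then feed this into Mayer--Vietoris.
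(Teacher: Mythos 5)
Your argument is correct and is essentially identical to the paper's proof: both use the Mayer--Vietoris sequence for $\tilde X = X \cup_{\mathbb{S}^3} B^4$ to obtain $H_1(\tilde X;\mathbb{Z}) = H_2(\tilde X;\mathbb{Z}) = 0$ from hypothesis (3), and both obtain $H_3(\tilde X;\mathbb{Z}) = 0$ from the universal coefficient theorem ($H^1(\tilde X;\mathbb{Z})=0$) combined with Poincar\'e duality on the closed oriented manifold $\tilde X$. The only cosmetic difference is that the paper also extracts $H_4(\tilde X;\mathbb{Z})\cong\mathbb{Z}$ from Mayer--Vietoris using $H_4(X;\mathbb{Z})=0$, whereas you simply cite that $\tilde X$ is closed, connected and oriented.
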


\begin{proof}
We note $X$ is a non-compact manifold. 
By Proposition 3.29 in \cite{Hatcher}, we infer that $H_4( X, \mathbb{Z})=0$. We use Mayer-Vietoris exact sequences for $B^4$ and $\overline{X}$ 
$$
\cdots\to H_i(\mathbb{S}^3, \mathbb{Z})\to H_i(B^4, \mathbb{Z})\oplus H_i(\overline{X}, \mathbb{Z})\to H_i(\tilde X, \mathbb{Z})\to \cdots
$$
We know $H_1(\mathbb{S}^3, \mathbb{Z})=H_2(\mathbb{S}^3, \mathbb{Z})=0$,$H_0(\mathbb{S}^3, \mathbb{Z})=H_3(\mathbb{S}^3, \mathbb{Z})=\mathbb{Z}$  and $H_i(B^4, \mathbb{Z})=0$ for $1\le i\le 4$  and $H_i( X, \mathbb{Z})=0$ for $i=1,2,4$ so that $H_i(\tilde X, \mathbb{Z})=0$ for $i=1,2$ and $H_4(\tilde X, \mathbb{Z})=\mathbb{Z}$. On the other hand, $\tilde X$ is a connected 4-manifold. Thus $H_0(\tilde X, \mathbb{Z})=\mathbb{Z}$. As $X$ is oriented, $\tilde X$ is closed and oriented. Using the universal coefficient theorem for cohomology (Theorem 3.2 in \cite{Hatcher}) , we infer
$$
H^1(\tilde X, \mathbb{Z})=0. 
$$
Applying the Poincar\'e duality theorem (Theorem 3.30  in \cite{Hatcher}), we get
$$
H_3(\tilde X, \mathbb{Z})=0. 
$$
Hence, it follows from  Theorem 3.26  in \cite{Hatcher}  that $H_0(\tilde X, \mathbb{Z})=H_4(\tilde X, \mathbb{Z})=\mathbb{Z}$ and $H_i(\tilde X, \mathbb{Z})=0$ for $i\neq 0,4$, that is,
$\tilde X$ is a homology 4-sphere.  Thus we have finished the proof of Lemma \ref{lemma4.5bis}.
\end{proof}

We recall a result due to Crisp-Hillman (\cite{CH} Theorem 2.2).

\begin{lemm} 
\label{lemma4.5bisbis}
Let  $X$ be a closed oriented homology 4-sphere and  $\mathbb{S}^3/\Gamma$ be a spherical 3-manifold with $\Gamma$ some finite group of $SO(4)$. Assume  $\mathbb{S}^3/\Gamma$ is embedded in $X$. Then $\Gamma=\{1\}$ or $\Gamma=Q_8$(quaternion
group) or $ \Gamma$ the perfect group (that is, $\mathbb{S}^3/\Gamma$ is a homology 3-sphere).
\end{lemm}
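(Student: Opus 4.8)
The statement to be proved is Lemma \ref{lemma4.5bisbis}, the result of Crisp--Hillman: if $\mathbb{S}^3/\Gamma$ embeds in a closed oriented homology $4$-sphere $X$, then $\Gamma$ is trivial, $Q_8$, or a perfect group. Since this is quoted from \cite{CH} and used as a black box, my plan is to sketch the argument behind it rather than reprove the whole of \cite{CH}. The plan is to split $X$ along the embedded $\mathbb{S}^3/\Gamma$ and do a Mayer--Vietoris / Poincar\'e--Lefschetz duality analysis, exploiting the strong homological hypothesis on $X$ together with the fact that the only $3$-manifolds with finite fundamental group are the spherical space forms $\mathbb{S}^3/\Gamma$ (elliptization).

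First I would recall that a spherical space form $\mathbb{S}^3/\Gamma$ is two-sided in any orientable $4$-manifold (its normal bundle is trivial since $H^1(\mathbb{S}^3/\Gamma;\mathbb{Z}/2)$ constraints combined with orientability force triviality), so cutting $X$ along $N=\mathbb{S}^3/\Gamma$ produces either one piece $W$ with $\partial W = N \sqcup N$ or two pieces $W_1, W_2$ each with boundary $N$; in the homology-sphere situation one shows $N$ must be \emph{separating}, giving $X = W_1 \cup_N W_2$. Then I would run the Mayer--Vietoris sequence with $\mathbb{Z}$-coefficients for this decomposition, exactly in the spirit of the proof of Lemma \ref{lemma4.5bis}: using $H_*(X;\mathbb{Z}) = H_*(\mathbb{S}^4;\mathbb{Z})$, one extracts relations among $H_*(W_i;\mathbb{Z})$ and $H_*(N;\mathbb{Z})$. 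The key point is that $H_1(N;\mathbb{Z}) = \Gamma^{\mathrm{ab}}$ and $H_1(N;\mathbb{Q}) = 0$, so the abelianization $\Gamma^{\mathrm{ab}}$ is a finite group; the duality constraints coming from $X$ being a homology $4$-sphere force $\Gamma^{\mathrm{ab}}$ to inject into torsion groups that the Mayer--Vietoris sequence shows must vanish or be very restricted. Combined with Poincar\'e--Lefschetz duality on each $W_i$ (relating $H_k(W_i)$ to $H^{4-k}(W_i,\partial W_i)$), one deduces that either $H_1(N;\mathbb{Z}) = 0$ — i.e. $\Gamma$ is perfect, so $N$ is a homology $3$-sphere — or $\Gamma^{\mathrm{ab}}$ is forced into one of a very short list of possibilities. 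A finer analysis of which finite groups $\Gamma \le SO(4)$ acting freely on $\mathbb{S}^3$ have abelianization compatible with these constraints then pins the non-perfect cases down to $\Gamma$ trivial and $\Gamma = Q_8$ (the binary dihedral group of order $8$, whose abelianization is $\mathbb{Z}/2 \oplus \mathbb{Z}/2$ and which is the unique nontrivial non-perfect case surviving the linking-form obstruction on $X$).

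The main obstacle is the last step: the linking-form / intersection-form bookkeeping that eliminates all spherical space forms except $\mathbb{S}^3/Q_8$ among the non-perfect, non-trivial cases. This requires using not merely the homology of $X$ but the $\mathbb{Z}/2$- or $\mathbb{Q}/\mathbb{Z}$-valued linking pairing on the torsion of $H_1$ of the pieces, together with the classification of free orthogonal actions on $\mathbb{S}^3$ (cyclic, binary dihedral $Q_{4n}$, binary tetrahedral/octahedral/icosahedral, and products with cyclic groups), checking in each family which abelianizations can occur as $H_1$ of a submanifold of a homology $4$-sphere. Since the paper only needs the \emph{conclusion} — and in fact only needs that $\Gamma$ cannot be a nontrivial group for which $\mathbb{S}^3/\Gamma$ bounds appropriately, so as to rule out interior blow-up — I would simply cite \cite{CH} for Lemma \ref{lemma4.5bisbis} and not reproduce this computation, noting that the homology-sphere hypothesis on $\tilde X$ was already verified in Lemma \ref{lemma4.5bis}.
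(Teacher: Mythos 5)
The paper offers no proof of this lemma at all --- it is stated verbatim as a quotation of Theorem 2.2 of Crisp--Hillman \cite{CH} --- and your proposal ultimately does the same thing, deferring to that citation, so you and the paper are in agreement. Your intervening sketch (split $X$ along the separating, two-sided copy of $\mathbb{S}^3/\Gamma$, run Mayer--Vietoris and Poincar\'e--Lefschetz duality to constrain $\Gamma^{\mathrm{ab}}$, then use the torsion linking form together with the classification of free orthogonal actions on $\mathbb{S}^3$ to isolate $\{1\}$, $Q_8$, and the perfect case) is a faithful outline of the actual Crisp--Hillman argument, and you correctly flag the linking-form bookkeeping as the part you are not reproducing.
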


Now we will rule out the interior blow-up.\\

\begin{prop} 
\label{prop4.8}
Under the same assumptions as in  Theorem \ref{maintheorem},  there is no blow up in the interior.
\end{prop}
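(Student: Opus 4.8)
The plan is to argue by contradiction: suppose the interior blow-up of Type (II) occurs. By Lemma \ref{lemma4.7}, the limiting metric $g_\infty$ on $X_\infty$ is Ricci flat, non-flat, complete (its doubling is complete, and since there is no boundary blow-up the boundary of $X$ recedes to infinity, so in fact $X_\infty$ is a complete non-compact manifold without boundary), and by the Sobolev/Yamabe hypothesis (4) it is non-collapsed, i.e. $V(B(o,t))\ge c\,t^4$ for all $t>0$. First I would record that by Lemma \ref{lemma4.6bis2} this forces $\int_{X_\infty}|W|^2<\infty$, and in fact by the $\varepsilon$-regularity (Theorems \ref{epsilonregularity2}) together with the interior $C^1$ curvature bound and the non-collapsing, $X_\infty$ has finitely many ends, each of which is asymptotically locally Euclidean (ALE): standard ALE-end theory for Ricci-flat $4$-manifolds with $L^2$-bounded curvature (Bando--Kasue--Nakajima type arguments) gives that each end is diffeomorphic to $(\R^4\setminus B)/\Gamma_j$ for finite subgroups $\Gamma_j\subset SO(4)$.

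The second and main part of the argument is topological, and this is where I expect the real obstacle. The idea is to combine the hypothesis $H_1(X,\Z)=H_2(X,\Z)=0$ and $\p X=\mathbb S^3$ (which by Lemma \ref{lemma4.5bis} makes $\tilde X=X\cup_{\mathbb S^3}B^4$ a homology $4$-sphere) with the topology of $X_\infty$ to derive a contradiction. Since $X_\infty$ arises as a pointed limit of the $(X,\bar g_i)$ with the basepoints drifting into the interior, $X_\infty$ is, roughly, an exhausted-by-subsets-of-$X$ open manifold; more precisely large balls in $X_\infty$ embed (up to diffeomorphism) in $X$, hence in $\tilde X$. One then wants to say: an ALE end of $X_\infty$ has cross-section $\mathbb S^3/\Gamma_j$ embedded in the homology $4$-sphere $\tilde X$, so by Crisp--Hillman (Lemma \ref{lemma4.5bisbis}) each $\Gamma_j$ is trivial, $Q_8$, or perfect; combined with Shen--Sormani (Lemma \ref{lemma4.5}) — which says $H_2(X_\infty,\Z)$ has no torsion and $H_3(X_\infty,G)=0$ for all abelian $G$ when $X_\infty$ is non-flat Ricci flat — and with the homology-sphere constraint on $\tilde X$, one can rule out each surviving possibility (the $Q_8$ and perfect cases are excluded because the corresponding $H_3(\mathbb S^3/\Gamma)$ or $H_1(\mathbb S^3/\Gamma)$ would inject inconsistently, forcing $\Gamma=\{1\}$ and hence the end to be a standard Euclidean end). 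If every end of $X_\infty$ is a standard $\R^4$ end and $X_\infty$ has the vanishing homology inherited from the homology sphere, then $X_\infty$ is a homotopy $\R^4$; but a complete non-collapsed Ricci-flat $4$-manifold that is contractible and has one Euclidean end must be flat $\R^4$ (e.g. by the volume-comparison/Bishop rigidity argument, since ALE Ricci-flat with Euclidean end and $b_2=0$ has zero $L^2$-curvature, hence is flat), contradicting non-flatness of $g_\infty$.

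Concretely, the steps in order are: (1) invoke Lemma \ref{lemma4.7} to get $g_\infty$ Ricci-flat, non-flat, complete, non-collapsed; (2) apply Lemma \ref{lemma4.6bis2} to get $\int_{X_\infty}|W|^2<\infty$, and use the interior $\varepsilon$-regularity to conclude each end is ALE with some cross-section $\mathbb S^3/\Gamma_j$; (3) use the fact that $X_\infty$ is a smooth limit of interior regions of $(X,\bar g_i)$ to realize any compact piece of $X_\infty$ — in particular a neighborhood of each ALE end's cross-section — as embedded in $X$, hence in the homology $4$-sphere $\tilde X$; (4) apply Crisp--Hillman (Lemma \ref{lemma4.5bisbis}) to constrain $\Gamma_j$, then use Shen--Sormani (Lemma \ref{lemma4.5}) together with the homology of $\tilde X$ to eliminate the $Q_8$ and perfect cases, forcing all $\Gamma_j=\{1\}$; (5) conclude $X_\infty$ is a homotopy-$\R^4$ Ricci-flat ALE $4$-manifold with $L^2$-finite Weyl curvature, hence $\int_{X_\infty}|W|^2$ is a nonnegative multiple of a Gauss--Bonnet-type invariant that vanishes, so $W\equiv0$; being Ricci-flat and Weyl-flat it is flat, contradicting non-flatness. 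The hard part will be step (3)–(4): making the embedding of the ALE cross-sections into $\tilde X$ precise (controlling that the convergence $(X,\bar g_i,p_i)\to(X_\infty,g_\infty,p_\infty)$ is genuinely on large compact sets diffeomorphic to subsets of $X$), and then threading the Crisp--Hillman and Shen--Sormani constraints together with the homology-sphere condition to kill every nontrivial $\Gamma_j$; once that is done the final flatness/rigidity step is routine.
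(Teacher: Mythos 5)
Your overall architecture is the same as the paper's: contradiction, Ricci-flat non-flat non-collapsed limit $E=X_\infty$ via Lemma \ref{lemma4.7}, $L^2$-bounded curvature via Lemma \ref{lemma4.6bis2}, ALE structure with cross-section $\mathbb S^3/\Gamma$, embedding of that cross-section into the homology $4$-sphere $\tilde X$ so that Crisp--Hillman (Lemma \ref{lemma4.5bisbis}) restricts $\Gamma$ to $\{1\}$, $Q_8$, or a perfect group, and finally a Gauss--Bonnet argument to force flatness. Two smaller omissions first: you never actually establish $H_2(E,\mathbb{R})=0$ (the paper splits into cases, disposing of $H_2(E,\mathbb{R})\neq 0$ by a Mayer--Vietoris argument against the hypothesis $H_2(X,\mathbb{Z})=0$ before invoking Bando--Kasue--Nakajima in the remaining case), and this vanishing is needed both for your final rigidity step and, more importantly, to know that the signature $\tau(E)$ vanishes.

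The genuine gap is your step (4): you claim the $Q_8$ and perfect-group cases are "excluded because the corresponding $H_3(\mathbb S^3/\Gamma)$ or $H_1(\mathbb S^3/\Gamma)$ would inject inconsistently," i.e.\ that Shen--Sormani plus the homology of $\tilde X$ kills them topologically. This cannot work: the content of Crisp--Hillman is precisely that $\mathbb S^3/Q_8$ and homology spheres $\mathbb S^3/\Gamma$ \emph{do} embed in homology $4$-spheres, so no further homological argument of the kind you sketch is available ($H_1$ of the boundary of an ALE end need not inject into $H_1(E)$ --- the paper's Case 2 explicitly allows $H_1(E,\mathbb{Z})$ to be a nontrivial finite group --- and $H_3(\mathbb S^3/\Gamma)=\mathbb{Z}$ carries no obstruction). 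The paper instead eliminates these two groups analytically: with $\chi(E)=1$ and $\tau(E)=0$, the Gauss--Bonnet formula with boundary term gives $\frac{1}{8\pi^2}\int_E(|W_+|^2+|W_-|^2)=1-\frac{1}{|\Gamma|}$, while the signature formula gives $\frac{1}{12\pi^2}\int_E(|W_+|^2-|W_-|^2)=\eta(\mathbb S^3/\Gamma)$; plugging in $\eta(\mathbb S^3/Q_8)=\frac34$ and the known eta invariant of the Poincar\'e sphere yields $\int_E(|W_+|^2-|W_-|^2)>\int_E(|W_+|^2+|W_-|^2)$, which is absurd. Without this eta-invariant computation (or some substitute), your argument only handles $\Gamma=\{1\}$ and the proof is incomplete at its crucial point.
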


\begin{proof}
We argue by contradiction. We assume there is blow up in the interior.  Let $\bar g_i 
$ be renormalized metrics as before and we denote $X_i=(X,\bar g_i)$ for simplicity. We now choose $E=X_\infty$ the blow up metric in the interior. Hence, it is Ricci flat. Moreover, 

on each $X_i$, and for any Lipschitz function with compact support $f$
$$
 \|f\|_{L^4}^2\le C  (\|\nabla f\|_{L^2}^2+ \frac{1}{6}\int R f^2)
$$
As $C$ is uniformly bounded from below, we can pass in the limit, that is,
$$
\|f\|_{L^4(X_\infty)}^2\le C(X_\infty)  \|\nabla f\|_{L^2(X_\infty)}^2
$$
since  the limiting metric is Ricci flat. This means there is no collapse on $E$. 

We divide the proof in two cases.\\

{\bf Case 1:} $H_2( E,\mathbb{R})\neq 0$.\\
We use mayer-Vietoris exact sequences for $E$ and $\overline{X\setminus E}$ 
$$
\cdots\to H_2(\mathbb{S}^3/\Gamma, \mathbb{R})\to H_2(E, \mathbb{R})\oplus H_2(\overline{X\setminus E}, \mathbb{R})\to H_2(X, \mathbb{R})\to \cdots
$$
We know $H_2(\mathbb{S}^3/\Gamma, \mathbb{R})=0=H_2(X, \mathbb{R})$. This contradicts the fact that the above sequence  is exact. \\

{\bf Case 2:} $H_2( E,\mathbb{R})= 0$.\\
From the result in \cite{BKN}, $E$ is then ALE of order $3$, that is, at the infinity, $\p_\infty E=\mathbb{S}^3/\Gamma$ is spherical 3-manifold where $\Gamma$ is some finite group of $SO(4)$. By Lemma \ref{lemma4.5}, the third betti number $b_3=0$. Let $\tilde E$ be the universal cover of $E$. Thus $\tilde E$ is also Ricci flat. By Bishop-Gromov volume comparison theorem,  $V(B(o,r))\le C_1r^4$.  As  a consequence, the fundamental group of $E$ is finite. Since $H_1(E,\mathbb{Z})$ is the abelization of the fundamental group, $H_1(E,\mathbb{Z})$ is finite and the first betti number $b_1=0$.  \\

We note  $H_2(E,\mathbb{R})= 0$.  From Lemma \ref{lemma4.5}, $H_3(E,\mathbb{Z})=0$. On the other hand,  it is clear $H_4(E,\mathbb{Z})=0$ since $E$ is an open manifold (see \cite{Hatcher} Proposition 3.29). Thus,  the second, third and forth betti numbers vanish  $b_2=b_3=b_4=0$, and the Euler characteristic number  $\chi(E)=1$.

We know a spherical 3-manifold is Seifert fibred 3-manifold and $\tilde X$ is a homology 4-sphere. By the result due to Crisp-Hillman Lemma \ref{lemma4.5bisbis}, we know the boundary of $E$ at the infinity is $S^3/\Gamma $ with $\Gamma=\{1\}$ or $\Gamma=Q_8$ or $ \Gamma$ the perfect group.\\

From Gauss-Bonnet formula,
we have
$$
\frac{1}{8\pi^2}\int_E (|W_+|^2+|W_-|^2)=\chi(E)-\frac{1}{|\Gamma|}=1-\frac{1}{|\Gamma|}
$$
On the other hand, we have the signature of $E$ is trivial since $H_2(E, \mathbb{R})=0$. Using the signature formula
$$
0=-\tau(E)=\frac{1}{12\pi^2}\int_E (|W_+|^2-|W_-|^2)-\eta(\mathbb{S}^3/\Gamma)
$$
where $\eta(\mathbb{S}^3/\Gamma)$ is the eta invariant.\\
When $\Gamma=\{1\}$, it follows from the Gauss-Bonnet formula  that 
$$
W=0
$$
Hence $E$ is flat since $E$ is Ricci flat. This contradicts the non-flatness of  $E$. Or alternatively, Bishop-Gromov comparison theorem yields that a ALE Ricci flat manifold asymptotic to $\R^4$  is flat, which yields the desired result.\\
When $\Gamma=Q_8$, we know $\eta(\mathbb{S}^3/\Gamma)=\frac 34$. Hence, we have
$$
\frac{1}{8\pi^2}\int_E (|W_+|^2+|W_-|^2)=\frac 78
$$
and
$$
\frac{1}{8\pi^2}\int_E (|W_+|^2-|W_-|^2)=\frac 98
$$
The above two equalities leads to a contradiction.\\
When $\Gamma$ is the perfect group,  that is, the  binary icosohedral group of order 120,  then $\eta(\mathbb{S}^3/\Gamma)=-1+\frac{1}{|\Gamma|}+\frac{1079}{360}$ (\cite{GibbonsPope-Romer}). Similarly, from the above two formulas, we get
$$
\frac{1}{8\pi^2}\int_E (|W_+|^2+|W_-|^2)=\frac{119}{120}
$$
and
$$
\frac{1}{8\pi^2}\int_E (|W_+|^2-|W_-|^2)=\frac{361}{120}
$$
This is a contradiction.\\
Thus all cases can not happen, thus there is no interior blow up and we have finished the proof of Proposition \ref{prop4.8}.
\end{proof}
\begin{rema} The same method permits us to do blow up analysis on $4$-dimensional homological sphere manifolds without boundary. In particular, one could get the compactness result of Einstein metrics (or the other canonical metrics) on such manifolds under suitable assumptions.
\end{rema}

\subsection{Proof of  Theorem \ref{Boundgeometry}}
\label{sb4.4}
\begin{proof}[Proof of  Theorem \ref{Boundgeometry}]
Propositions \ref{prop4.4} and  \ref{prop4.8} yields the boundness of $\|Rm_{g_i}\|_{C^1}$. Together with Theorem \ref{epsilonregularity1}, we prove the $C^{k-2}$ norm for the curvature is uniformly bounded. To handle the $C^{k+1}$ norm of the curvature, we recall first the curvature tensor satisfies some elliptic PDE with the Dirichlet boundary conditions
\beq
\label{ellipticsystem}
\left\{
\begin{array}{llll}
\triangle R=R^2-3|Ric|^2&\mbox{ in }X\\
R=3\hat{R}&\mbox{ on }M\\
\triangle A-\frac16 \nabla^2 R=Rm*A&\mbox{ in }X\\
 A_{\alpha\beta}=\hat{A}_{\alpha\beta}, A_{\alpha n}=0, A_{n\alpha }=0, A_{n n}=\frac{\hat{R}}{4}&\mbox{ on }M\\
\triangle W=Rm*W+g*W*A&\mbox{ in }X\\ 
W=0 &\mbox{ on }M
\end{array}
\right.
\eeq
The desired result follows from the classical elliptic regularity theory and Cheeger-Gromov-Hausdorff convergence theory (Lemma \ref{highorder} in the Appendix). This concludes the proof of Theorem \ref{Boundgeometry}.
\end{proof}

\section{Proof of  the results in Section 1}

\begin{proof}[Proof of  Theorem \ref{maintheorem}]
We have already proved the family of metrics $ g_i$ has the bounded curvature in $C^{k+1}$ norm in Section 4.  The uniform Sobolev inequality holds for the family metric $g_i$ by the assumptions in Theorem \ref{maintheorem}, which implies, as in Section 4, for all $i$, for all $x\in \bar X$, we have $vol(B_{g_i}(x,1))\ge C>0$ for some constant $C>0$ independent of $i,x$, that is, there is non-collapse for the volume. Working on the doubling manifold, it follows from a result of Cheeger-Gromov-Taylor \cite{CGT}, we have the uniform lower bound for any closed simple geodesic on the doubling manifolds, which yields that both the interior injectivity radius and the boundary injectivity radius  uniformly lower bound on $X$. By Cheeger-Gromov-Hausdorff compactness theory,  to prove the compactness of metrics $g_i$, it suffices to prove their diameters are uniformly bounded from above. We will prove this fact by contradiction. We divide the proof in 4 steps.\\

{\it Step 1.} Without loss of generality, we suppose the boundary injectivity radius is bigger than $1$. There exists some $C>0$ such that $v^{-1}_i\ge C$ provided $d_{ g_i}(x,\partial X)\ge 1$ and $v_i(x)d_{ g_i}(x,\partial X)\le C$ provided $0\le d_{ g_i}(x,\partial X)\le 1$.Thus the limit metric is conformal to an asymptotic hyperbolic Einstein manifold. The claim can be proved in the same way as in the proof of Lemma \ref{lemma4.3}.\\

Assume the diameter of $g_i$ is unbounded, then $g_i$ converges to some  non-compact metric $g_\infty$ on manifold $X_\infty$ with totally geodesic boundary  in the Cheeger-Gromov-Hausdorff sense, whose doubling is complete. Note that by our assumption of Theorem \ref{maintheorem} that  the boundary metric $\{\hat{g}_i \}$ of this family $\{g_i\}$ is a compact family.  \\

{\it Step 2.}  There exists some constant $C>0$ independent of $i$ such that $\ds\int |Rm_{g_i}|^2\le C$.\\
From the relation (\ref{relation4}), we infer
$$
\int_X v_i^{-1}R_{g_i}=-2\int_X \triangle_{ g_i} (v_i^{-1})=-2\oint_{\p X}\frac{\p v_i^{-1}}{\p n}=2vol(\p X, \hat{g}_i )
$$
since $\frac{\p _iv_i^{-1}}{\p n}=-1$ on the boundary. Thus, with the help of step 1, we get for some given constant $C>0$
\beq
\label{bound1}
\int_{d_{g_i}(x,\p X)\ge 1} R_{g_i}\le C
\eeq
On the other hand, the boundary metric $(\p X, h_i)$ is a compact family and also $g_i$ has uniform bound for the curvature tensor. Hence
\beq
\label{bound2}
\int_{d_{g_i}(x,\p X)\le 1} R_{g_i}\le C
\eeq
Combining (\ref{bound1}) and (\ref{bound2}), we obtain
$$
\int_{X} R_{g_i}\le C.
$$
which implies
\beq
\label{bound3}
\int_{X} R_{g_i}^2\le C
\eeq
since the scalar curvature $R_{g_i}$ is uniformly nounded. 
Using the free $Q$-curvature condition (\ref{Qflat}), we deduce
$$
\int_{X} |E_{g_i}|^2=\frac{1}{12}\int_{X} R_{g_i}^2-\frac13\oint_{\p X} \frac{\p R_{g_i}}{\p n}
$$
As we have the compactness of boundary metric and the curvature tensor is bounded in $C^k$ norm, we get the uniform bound for $L^2$ norm with the Ricci tensor. Now, thanks of the Gauss-Bonnet-Chern formula, we have
$$
\chi(X)=\frac{1}{32\pi^2}\int _{X} \left(|W_{g_i}|^2+\frac16 R_{g_i}^2 -2|E_{g_i}|^2\right)
$$
since the boundary is totally geodesic. This yields the $L^2$ bound for the  Weyl tensor. Therefore,  the desired result yields.\\

{\it Step 3.}  We denote $f=\lim  v_i^{-1}$, assuming $X_\infty$ is non-compact, we claim 
\beq
\label{conformalfactor}
\lim_{x\to\infty}\frac{f (x)}{d_{g_\infty}(x,\p X_\infty)}=1
\eeq
To prove the claim, we fix a point $0\in \p X_\infty$ and use the distance function $r=d_{g_\infty}(x,0)$.  Note $\p X_\infty$ is a compact set so that $\ds \lim_{x\to \infty}\frac{r(x)}{d_{g_\infty}(x,\p X_\infty)}=1$. 
We remark on the limiting metric one has always the Sobolev inequality, that is, for any compactly supported Lipschitz function $U$
$$
(\int_{X_\infty} U^4)^{1/2}\le C\int_{X_\infty} (|\nabla_{ g_\infty} U|^2 +\frac16 R_\infty U^2)
$$ 
From step 1, $\ds \int R_\infty^2< \infty$. Thus, we can find $r_0$ such that 
$$
\int_{\{x,r(x)\ge r_0\}} R_\infty^2\le \varepsilon,
$$
where $\varepsilon$ is a small constant appeared in Theorem \ref{epsilonregularity2}.   Recall from Theorem \ref{epsilonregularity2} and from the non-negative scalar curvature and also the relations (\ref{relation1}) and (\ref{relation3})
\beq
\label{gradient}
\|\nabla_{ g_\infty} f\|\le 1
\eeq
\beq
\label{gradient1}
|\nabla_{ g_\infty}(f)|^2=1-o(1)
\eeq
\beq
\label{gradient2}
{(\nabla^2_{ g_\infty}( f))_i}^j=o(r^{-1})
\eeq
In fact, we have from (\ref{gradient}) that $f(x)\le r(x)$. On the other hand, it follows from Theorem \ref{epsilonregularity2} 
$$
Rm=o(r^{-2})
$$
Together with the relations (\ref{relation3}) and (\ref{relation5}), we obtain (\ref{gradient1}) and (\ref{gradient2}).\\

Now we consider vector field $-\nabla_{g_\infty} f (x)$. For any $\varepsilon>0$, there exists some $A>0$ such that for all $r(x)>A$
$$
1-\varepsilon \le |\nabla_{ g_\infty}( f)|\le 1
$$
Set $S_t:=\{x;r(x)=t\}$. Define $m(t):=\inf_{S_t} f$. We remark $m(t)\ge 0$ since $f\ge 0$. 
We consider along the flow $F_t$ of  $-\nabla_{ g_\infty} f (x)$. 
We have $F_r(S_{r+A})\subset B(0, 2r+A)\setminus B(0,A)$ since $|\nabla_{ g_\infty}( f)|\le 1$. On the other hand $$m(2r+A)\le \inf_{S_{r+A}} f(F_r(S_{r+A}))\le \inf_{S_{r+A}} f(S_{r+A})-r(1-\varepsilon),$$
which  implies
$$
m(r+A)-r(1-\varepsilon)\ge m(2r+A)\ge 0
$$
However, $f(x)\le r$ for all $x\in S_r$ since $|\nabla_{\bar g_\infty}( f)|\le 1$. Gathering the above facts, we get the desired claim.\\

{\it Step 4.} A contradiction.\\

We know the Sobolev inequality is still true for the limiting metric, that is, for any Lipschitz function $U$ compactly supported in $\{x,r(x)\ge r_0\}$, we have 
$$
(\int_{X_\infty} U^4)^{1/2}\le C\int_{X_\infty} |\nabla_{ g_\infty} U|^2
$$
For large $s$, we fix a point $x$ with $r(x)=r_0+s$ and consider the function $U(y)=s-d(y,x)$ if $d(y,x)\le s$, otherwise $U(y)=0$. From the above inequality, we get
$$
\frac s2 vol(B(x,\frac s2))^{\frac14}\le vol(B(x,s))^{\frac12}
$$
so that 
$$
vol(B(0,2(r_0+s)))\ge vol(B(x,\frac s2))\ge C(2(r_0+s)))^4
$$
We denote $s_1=2(r_0+s)$. By Proposition 3.4 in  \cite{Chavel}, Proposition 4.5 in \cite{TV05} and the Courant-Lebesgue Lemma, there exists some $s_2\in (s_1/2,s_1)$ such that 
$$
vol(\p B(0, s_2))\ge cs_1^3
$$
and
$$
vol(\p B(0,s_2))=vol(\p B(0,s_2)\setminus N),
$$
where $N$ is the set of cut-locus w.r.t. $0$. Set $w=s_2$. Integrating on $B(0,w)$, we get 
$$
0\ge \int_{B(0,w)} \triangle_{ g_\infty}( f)=\oint_{\p B(0,w)} \langle\nabla_{ g_\infty} ( f), \nabla_{ g_\infty} r\rangle=\oint_{S_w} \langle\nabla_{ g_\infty} ( f), \nabla_{ g_\infty} r\rangle-vol(\p X_\infty)
$$
Hence we could find  a regular point $x$ on the sphere $\p S(0,w)$ such that 
$$
\langle \nabla_{ g_\infty} f,\nabla_{ g_\infty} r\rangle \le \varepsilon
$$
for some small $\varepsilon$ since $vol(\p B(0,w))\ge c w^3$.
Taking normalized radial geodesic connecting this point $\gamma(t)$ to the boundary $\p X$ , we set $l(t)=f(\gamma(t))$. Then 
$$
l'(s)=\langle \nabla_{ g_\infty} f,\nabla_{ g_\infty} r\rangle, \mbox{ and, }l''(s)= \nabla^2_{ g_\infty} f(\nabla_{ g_\infty} r,\nabla_{ g_\infty} r)=o(t^{-1})
$$
provided $s\in [t/2,t]$ with the large $t$. Thus, for any small $\varepsilon >0$ and for all $s\in [t/2,t]$ ($t$ depending on $\varepsilon$) one has
$$
l'(s)\le 2\varepsilon
$$limit
so that
$$
l(t)-l(t/2)\le t\varepsilon
$$
This contradicts of the claim (\ref{conformalfactor}) of the step 3 for the large $t$. Hence, we have finished  the proof of Theorem \ref{maintheorem}.
\end{proof}

\begin{rema} 
We can get more informations at the infinity by the strategy as in \cite{TV05}.
\end{rema}

\begin{proof}[Proof of   Corollary \ref{maincorollary0} ]
By the assumption (4), $\hat{g_i}$ is a compact family so that 
$$
\lim_{r\to 0}\sup_i\sup_x Vol(B(x,r)) =0
$$
Now the desired result follows from Theorem \ref{maintheorem} and the Dunford-Pettis' Theorem on the weak compactness in $L^1$. In fact, by Dunford-Pettis' Theorem, a subset in $L^1$ is relatively weakly compact if and only if it is uniformly integrable. Hence, relatively weak compactness of the family $\{S_i\}$ implies the condition (2) in Theorem \ref{maintheorem}.
\end{proof}

\begin{proof}[Proof of   Corollary \ref{maincorollary0bis}  ]
For $1<q<\infty$, we know the bounded set in $L^q$ is weakly compact in $L^q$ since $L^q$ is reflexive. Thus such set  is also weakly compact in $L^1$ and the  desired result follows from  Corollary \ref{maincorollary0}. When $q=\infty$, a bounded set in $L^q$ is also bounded in $L^2$ since $\{(M, \hat{g_i})\}$ is compact. Therefore, we prove the result.
\end{proof}

\begin{proof}[Proof of   Corollary  \ref{maincorollary01} ]
We argue by contradiction. Otherwise, we could find a sequence of conformally compact oriented Einstein metrics $(X, g_i^+)$ which satisfies the assumptions (1) and  (3-5) as in Theorem \ref{maintheorem}, 
$\oint_X|S_i|\to 0$ 
and whose compacitified metrics $(X,g_i)$ would blow up. Now $S$-tensor converge in $L^1$ and thus it is strongly compact in $L^1$. Therefore, it is weakly compact in $L^1$. It follows from Corollary \ref{maincorollary0} that it is a compact family of the compacitified metrics $(X,g_i)$. This contradiction gives the desired result.
\end{proof}

\begin{proof}[Proof of  Theorem \ref{maintheorem1}]
The proof is almost same as the one of Theorem \ref{maintheorem}. The only difference is to replace Lemma \ref{lemma4.3} by Lemma \ref{lemma4.3bis} for the boundary blow-up setting.
\end{proof}

\begin{proof}[Proof of   Corollary \ref{maincorollary1.0} ]
By the assumption (4), $\hat{g_i}$ is a compact family so that 
$$
\lim_{r\to 0}\sup_i\sup_x Vol(B(x,r)) =0
$$
Now the desired result follows from Theorem \ref{maintheorem1} and the Dunford-Pettis' Theorem on the weak compactness in $L^1$.
\end{proof}

\begin{proof}[Proof of   Corollary \ref{maincorollary1} ]
By the assumption $T_i$ is uniformly bounded from below, 
By the H\"older's inequality, we infer for any $x\in M$, $r>0$ and $i$
$$
\oint_{B(x,r)} (T_i)_-\le Vol(B(x,r))^{\frac{q}{q-1}}(\oint_{B(x,r)} ((T_i)_-)^q)^{1/q}\le C_6^{1/q}Vol(B(x,r))^{\frac{q}{q-1}}
$$
Here $(T_i)_-$ is the negative part of the $Q_3$ curvature. From the compactness of $\hat{g_i}$, we deduce that
$$
\liminf_{r\to 0}\inf_i\inf_{x\in M}\int_{B(x,r)}T_i\ge 0. 
$$
Finally, the desired result follows from Theorem \ref{maintheorem1}. Therefore, we prove the result.
\end{proof}

\begin{proof}[Proof of   Corollary \ref{maincorollary1.1} ]
This is a direct result of Theorem \ref{maintheorem1}. The proof is similar to the one of Corollary \ref{maincorollary01}.
\end{proof}

\appendix
\section{Cheeger-Gromov-Hausdorff Theory for manifolds with  boundary}
There is Cheeger-Gromov-Hausdorff theory for manifolds with boundary in the literature cf\cite{AKKLT, Kodani, Knox} etc. For the convenience of readers, we give a description.\\
Let $(X,g)$ be a Riemannian manifold with boundary, for $p\in X\setminus \p X$ define the interior injectivity radius of $p$, $i_{int}(p)$, to be the supremum over all $r > 0$ such that all unitary geodesics $\gamma : [0,t_\gamma] \to  X$ that start at $\gamma(0) = p$ are minimizing from $0$ to $\min\{t_\gamma,r\}$, where $t_\gamma$ is the first time the geodesic $\gamma$ intersects $\p X$. The interior injectivity radius of $M$ is defined as
$$
i_{int}(g) = \inf\{i_{int}(p)| p \in X\setminus \p X\}.
$$
The boundary injectivity radius of $p \in  \p X$ is defined by
$$
 i_\p(p) = \inf\{t | \gamma_p \mbox{ stops minimizing at } t\},
$$
where $\gamma_p$ is the geodesic in $M$ such that $\gamma_p'(0)$ is the inward unitary normal
tangent vector at $p$. The boundary injectivity radius of $X$ is defined by
$$
i_\p(g) = \inf\{ i_\p(p) | p\in\p X\}.
$$

Let us recall some definition about the harmonic radius for the manifolds with the boundary. Assume $X$ is a complete $4$-dimensional manifold with the boundary $\p X$. The local coordinates $(x_0,x_1,x_2,x_3)$ in $D$ around some interior point $p\in X\setminus \p X$ is called harmonic if $\triangle x^i=0$ for all $0\le i\le 3$. When $p\in \p X$, we need the coordinates $x_i$ harmonic and also $x_0|_{D\cap \p X}\equiv 0$ and $(x_1,x_2,x_3)|_{D\cap \p X}$ are also harmonic coordinates on the boundary. Given $\alpha\in (0,1)$ and $Q\in (1,2)$, we define  the harmonic radius $r^{1,\alpha}(Q)$ to be the biggest number $r$ satisfying the following properties:\\
1)if $dist(p, \p X)>r$, there is a neighborhood $D$ of $p$ in $X\setminus\p X$ and a coordinate chart $\varphi :B_{r/2}(0)\to D$ such that, in these coordinates for any tangent vector $\eta\in T_x X$
\beq
\label{relation1}
Q^{-2}|\eta|^2 \le g_{jk}(x)\eta^j\eta^k \le Q^{2}|\eta|^2
\eeq
and
\beq
\label{relation2}
r^{1+\alpha}\sup|x-y|^{-\alpha}|\p g_{jk}(x)-\p g_{jk}(y)|\le Q-1
\eeq
2)if $dist(p, \p X)\le r$, there is a neighborhood $D$ of $p$ in $\bar X$  and a coordinate chart $\varphi :B_{4r}^+(0)\to D$ such that $\{x^0=0\}$ maps to $\p X$ and relations (\ref{relation1}) and (\ref{relation2}) hold in these coordinates. \\

We recall a compactness result due to Anderson-Katsuda-Kurylev-Lassas-Taylor. Given $R_0,i_0,S_0, d_0\in (0,\infty)$, let us denote ${\mathcal M}(R_0,i_0,S_0, d_0)$ the class of compact, connected, 4-dimensional Riemannian manifolds with the boundary $(\bar X, g)$, with smooth metric tensor satisfying the following conditions:\\
\beq
\|Ric_X\|_{L^\infty(X)} \|\le  R_0, \|Ric_{\p X}\|_{L^\infty(\p X)} \le R_0
\eeq
where $Ric$ is the Ricci tensor;
\beq
\label{injec}
i_{int}(X)\ge i_0,  i(\p X)\ge i_0, i_\p(X)\ge 2i_0
\eeq
where $i(\p X)$ is the injectivity radius of the boundary manifolds $\p X$ with the induced metric;
\beq
\| H\|_{Lip(\p X)}\le S_0
\eeq
where $H$ is the mean curvature on the boundary;
\beq
diam(\bar X)\le d_0
\eeq

Let us denote ${\mathcal N}_*(\alpha, \rho,Q)$ the class of pointed manifolds $(\bar X,g,p)$ with $p\in \bar X$ such that the harmonic radius $r^{1,\alpha}(Q)\ge \rho$.

\begin{lemm}(Anderson-Katsuda-Kurylev-Lassas-Taylor)\\   
\label{AKKLT}
${\mathcal M}(R_0,i_0,S_0, d_0)$ is precompact in the $C^{1,\alpha}$  topology for any $\alpha\in (0,1)$. Moreover, there exists $\rho>0$ depending on $\alpha$ such that 
$$
{\mathcal M}(R_0,i_0,S_0, d_0) \subset {\mathcal N}_*(\alpha, \rho,Q).
$$
\end{lemm}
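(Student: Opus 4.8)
The plan is to establish the two assertions of Lemma~\ref{AKKLT} in turn, following Anderson--Katsuda--Kurylev--Lassas--Taylor: first the uniform lower bound on the harmonic radius, and then, as a consequence, the $C^{1,\alpha}$ precompactness.

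\textbf{Step 1 (uniform harmonic radius).} For an interior point $p$ with $\dist(p,\p X)$ bounded below on the relevant scale, one works in $g$-harmonic coordinates $(x^0,\dots,x^3)$, $\triangle_g x^i=0$, in which the metric solves the quasilinear elliptic system $g^{kl}\p_k\p_l g_{ij}=-2\,Ric_{ij}+Q(g,\p g)$ with $Q$ quadratic in $\p g$. The bound $\|Ric_X\|_{L^\infty}\le R_0$ together with $i_{int}(X)\ge i_0$ yields, by the standard interior harmonic-radius estimate (Anderson, Jost--Karcher), a uniform lower bound $\rho_{int}=\rho_{int}(R_0,i_0,\alpha,Q)$. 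Near and on the boundary one must instead build boundary-adapted harmonic coordinates: take $x^0$ to be the $g$-harmonic function vanishing on $\p X$ and positive inside (comparable to $\dist(\cdot,\p X)$ on the scale where $i_\p(X)\ge 2i_0$ controls the geodesics meeting $\p X$), and $x^1,x^2,x^3$ to be $g$-harmonic in $X$ with boundary values equal to $\hat g$-harmonic coordinates on $\p X\cap D$ (here $i(\p X)\ge i_0$ and $\|Ric_{\p X}\|_{L^\infty}\le R_0$ fix a definite chart size). In the half-ball the metric again solves the elliptic system, now with mixed Dirichlet/oblique boundary data involving the second fundamental form, and the Lipschitz bound $\|H\|_{Lip(\p X)}\le S_0$ is exactly what is needed to run boundary elliptic regularity and obtain a uniform lower bound $\rho_{\p}$ on the boundary $C^{1,\alpha}$-harmonic radius. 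Setting $\rho=\min(\rho_{int},\rho_{\p})$ gives ${\mathcal M}(R_0,i_0,S_0,d_0)\subset {\mathcal N}_*(\alpha,\rho,Q)$.

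\textbf{Step 2 (precompactness).} Fix $(\bar X,g)\in {\mathcal M}(R_0,i_0,S_0,d_0)$. Using $\diam(\bar X)\le d_0$ and the non-collapsing implied by $i_{int}(X)\ge i_0$ and $i_\p(X)\ge 2i_0$ (which forces a lower volume bound on all $\rho$-balls), cover $\bar X$ by at most $N=N(R_0,i_0,S_0,d_0)$ harmonic charts $\varphi_a:B_{\rho/2}(0)$ or $B_{4\rho}^+(0)\to D_a$ of the type produced in Step~1, with uniformly bounded multiplicity. On each chart $g$ is bounded in $C^{1,\alpha}$ by $Q$ and $\rho$, and the transition maps $\varphi_b^{-1}\circ\varphi_a$, being maps between harmonic coordinate systems, satisfy their own elliptic equation and are bounded in $C^{2,\alpha}$. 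Passing to a subsequence, applying the compact embedding $C^{1,\alpha}\hookrightarrow C^{1,\beta}$ for $\beta<\alpha$ (Arzel\`a--Ascoli) chart by chart and diagonalizing, the charts, transition maps and metric tensors converge; one patches the limiting charts into a manifold $\bar X_\infty$ with boundary carrying a $C^{1,\alpha}$ metric $g_\infty$, and, after pulling back by diffeomorphisms, $(\bar X,g)\to(\bar X_\infty,g_\infty)$ in $C^{1,\beta}$ for every $\beta<\alpha$ (equivalently, weak-$*$ in $C^{1,\alpha}$), which is the asserted precompactness.

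\textbf{Main obstacle.} The genuinely delicate point is Step~1 at the boundary: one must simultaneously solve for the normal harmonic function $x^0$ and the tangential harmonic functions, control that $x^0$ is uniformly comparable to $\dist(\cdot,\p X)$ (this is where $i_\p(X)\ge 2i_0$ and the behaviour of geodesics hitting $\p X$ enter), and then carry out elliptic regularity up to the boundary for a system whose boundary operator involves the second fundamental form --- so that only a Lipschitz, and not a $C^0$, bound on $H$ suffices and no curvature control on $X$ beyond $Ric$ is needed. The interior harmonic-radius estimates and the abstract Arzel\`a--Ascoli and patching machinery of Step~2 are by now routine; it is the boundary analysis, carried out in detail in \cite{AKKLT}, that carries the weight.
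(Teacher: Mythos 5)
The paper does not prove this lemma at all: it is quoted verbatim from Anderson--Katsuda--Kurylev--Lassas--Taylor \cite{AKKLT} and used as a black box in the Appendix. Your outline is a faithful sketch of the argument in that reference (boundary-adapted harmonic coordinates and elliptic estimates up to the boundary to bound the harmonic radius from below, then the standard covering/Arzel\`a--Ascoli/patching machinery for $C^{1,\alpha}$ precompactness), and you correctly identify that the substantive work is the boundary harmonic-radius estimate, which your proposal --- like the paper itself --- ultimately defers to \cite{AKKLT}.
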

The second assertion in above Lemma still holds without the diameter assumption if we work with the pointed complete connected manifolds.\\

We now give the high order regularity of metric tensors for the manifold with the boundary, provided the boundary metric and curvature tensor are regular, that is,

\begin{lemm}\label{highorder}
Given $\alpha\in (0,1)$,  let $(X,g)$ be a complete regular $n$-dimensional Riemmannian metric with $C^{k+2,\alpha}$ totally geodesic boundary $\p X$ for some $k\ge 1$. Assume (\ref{injec}) holds and there exists some positive constant $R_1$ such that 
\beq
\label{curv}
\|  Ric\|_{C^{k,\alpha}(X)}\le R_1,\; \| \hat{g}\|_{C^{k+2,\alpha}(\p X)}\le R_1
\eeq
Then there exists some positive constant $C=C(i_0,R_1,n)$ such that 
$$
\|  g\|_{C^{k+2,\alpha}(X)}\le C
$$
\end{lemm}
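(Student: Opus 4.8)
The plan is a harmonic--coordinate bootstrap, localised by means of the precompactness statement in Lemma~\ref{AKKLT}. First I would apply Lemma~\ref{AKKLT} in its pointed (non--compact) form, which requires no diameter bound: the $C^0$ bound on $Ric$ coming from (\ref{curv}) together with the injectivity radius bounds (\ref{injec}) supply all of its hypotheses, so one obtains a uniform lower bound $\rho=\rho(i_0,R_1,n)>0$ for the $C^{1,\alpha}$ harmonic radius $r^{1,\alpha}(Q)$ of $(X,g)$. This yields an atlas of interior harmonic charts $\varphi:B_{\rho/2}(0)\to X\setminus\p X$ and boundary harmonic charts $\varphi:B^{+}_{4\rho'}(0)\to\bar X$ (with $\rho'\sim\rho$), on each of which $Q^{-2}\delta\le g\le Q^{2}\delta$ and $\|g\|_{C^{1,\alpha}}\le C(Q)$. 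Since the transition functions of such an atlas are harmonic in a metric that is itself $C^{1,\alpha}$--controlled, hence controlled, it suffices to improve the regularity of the components $g_{ij}$ on each fixed chart, with all constants depending only on $i_0,R_1,n$.

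On an interior chart the decisive identity is that in harmonic coordinates
\[
R_{ij}=-\tfrac12 g^{kl}\p_k\p_l g_{ij}+B_{ij}(g,\p g),
\]
where $B_{ij}$ is a universal expression quadratic in $\p g$ with coefficients rational in $g$. Thus each $g_{ij}$ solves a linear elliptic equation with principal coefficients $g^{kl}$ and right--hand side $-2R_{ij}+2B_{ij}(g,\p g)$. Starting from $g\in C^{1,\alpha}$, interior Schauder estimates give $g\in C^{2,\alpha}$ on a slightly smaller ball (the coefficients are $C^{0,\alpha}$ and, since $Ric\in C^{k,\alpha}$ with $k\ge 1$ and $\p g\in C^{0,\alpha}$, the right--hand side is $C^{0,\alpha}$); feeding this back raises the coefficients to $C^{1,\alpha}$ and the quadratic term to $C^{1,\alpha}$, hence $g\in C^{3,\alpha}$, and iterating one gets $g\in C^{m+1,\alpha}$ as long as $m+1\le k+2$. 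The process terminates at $g\in C^{k+2,\alpha}$, the bottleneck being exactly the regularity $C^{k,\alpha}$ of $Ric$, and all Schauder constants are uniform.

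On a boundary chart the same elliptic system for $(g_{ij})$ must be complemented with boundary conditions on $\{x^0=0\}$. The tangential block carries the Dirichlet data $g_{\alpha\beta}|_{\p X}=\hat g_{\alpha\beta}$, which the hypothesis (\ref{curv}) bounds in $C^{k+2,\alpha}$ in the induced harmonic coordinates on $\p X$; the remaining components $g_{0\alpha},g_{00}$ inherit supplementary conditions of oblique (Neumann) type from the harmonicity relations $\p_i(\sqrt g\,g^{ij})=0$, and the totally--geodesic hypothesis $L_{\alpha\beta}=0$ contributes a further first--order relation among $g_{\alpha\beta},g_{0\gamma}$ on $\p X$. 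One then has to check that this mixed Dirichlet--Neumann system is elliptic and satisfies the Lopatinski--Shapiro complementing condition (the classical computation going back to DeTurck--Kazdan, here with the simplification afforded by a totally geodesic boundary), after which the Agmon--Douglis--Nirenberg boundary Schauder estimates make the same iteration run up to $\p X$ and deliver $g\in C^{k+2,\alpha}(\bar X)$ with uniform bounds. Patching the chart estimates gives $\|g\|_{C^{k+2,\alpha}(X)}\le C(i_0,R_1,n)$.

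I expect this last, boundary, step to be the main obstacle: correctly identifying the boundary conditions forced by harmonicity of the coordinates, verifying the complementing condition for the resulting system, and running the bootstrap cleanly up to $\p X$. It is worth stressing that the doubling device used elsewhere in the paper does \emph{not} bypass this. Even though $\p X$ is totally geodesic, the reflection of $g$ across $\p X$ is in general only $C^{2,1}$ (the third normal derivative jumps), so boundaryless regularity on the doubled manifold cannot yield a $C^{k+2,\alpha}$ bound and genuine one--sided boundary estimates are unavoidable.
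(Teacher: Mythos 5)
Your overall strategy---uniform harmonic radius from Lemma~\ref{AKKLT}, the harmonic-coordinate identity $\triangle g_{ij}=-2R_{ij}+\mathcal{P}_{ij}(g,\p g)$ with $\triangle=g^{kl}\p_k\p_l$, and a Schauder bootstrap whose ceiling is set by $Ric\in C^{k,\alpha}$ and $\hat g\in C^{k+2,\alpha}$---is exactly the paper's, and your interior argument and your remark that doubling cannot substitute for one-sided estimates (the reflected metric is only $C^{2,1}$ across a totally geodesic boundary) are both correct.

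The gap is the boundary step, which you explicitly defer: you propose to treat $g_{00},g_{0\alpha}$ as a coupled oblique system and ``check'' ellipticity and the Lopatinski--Shapiro complementing condition, but you never identify the boundary conditions concretely or verify anything, and that verification is the entire content of the lemma beyond standard interior theory. The paper avoids the coupled ADN machinery altogether by a decoupling trick: it bootstraps the \emph{inverse} metric components. In boundary harmonic coordinates the conormal derivative is $N(u)=(g^{00})^{-1/2}g^{0j}\p_j u$, and total geodesy ($H=0$) gives the homogeneous scalar Neumann condition $N(g^{00})=0$ and then $N(g^{0\gamma})=\tfrac12 (g^{00})^{-1/2}g^{\gamma j}\p_j g^{00}$. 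So the scheme is triangular: first the tangential block $g_{\gamma\beta}$ solves a scalar Dirichlet problem with data $\hat g_{\gamma\beta}\in C^{k+2,\alpha}$ (Gilbarg--Trudinger Thm.~6.6); next $g^{00}$ solves a scalar homogeneous Neumann problem (Thm.~6.30); next $g^{0\gamma}$ solves a scalar Neumann problem whose boundary data involve only the already-controlled $\p g^{00}$; finally $g_{00}$ and $g_{0\gamma}$ are recovered \emph{algebraically} from $g^{00},g^{0\gamma},g_{\gamma\beta}$ via cofactor/determinant identities, and one iterates. Each step uses only scalar Schauder estimates with oblique derivative, whose uniform applicability is guaranteed by the harmonic-radius bounds $\tfrac12\delta\le(g^{ij})\le 2\delta$. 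If you wish to keep your formulation, you must actually exhibit the boundary operators for $g_{00},g_{0\alpha}$ and carry out the complementing-condition computation; as written, the claim that the bootstrap ``runs up to $\p X$'' is asserted rather than proved.
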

For simplycity, we just prove the result for $4$-dimensional case. This part is some adaption of the result in \cite{AKKLT}. For the convenience of readers, we give the proof in details.
\begin{proof} 
Let $(x_0, x_1,x_2,x_3)$ be some harmonic coordinates on a neighborhood $D$ of some point $p\in \bar X$. From Lemma \ref{AKKLT}, we know the harmonic radius is bounded from below since we have lower bound for the injectivity radius and boundness of Ricci curvature. We work in such coordinates. Let $(g_{ij})_{0\le i,j\le 3}$ be metric matrix and its inverse matrix  $(g^{ij})_{0\le i,j\le 3}$. Denote $g=\det{g_{ij}}$ and $A_{ij}$ the determinant of $3\times 3$ matrix formed by omitting column $i$ and row $j$ from the matrix $(g_{ij})$. We have some elliptic PDE for  metric tensor
$$
\triangle g_{ij}=-2Ric_{ij}+{\mathcal P}_{ij}(g,\p g)
$$
where ${\mathcal P}(g,\p g)$ is a quadratic form in $\p g$  with coefficients that are rational functions of $g_{ij}$. Recall in harmonic coordinates, we could write Laplace-Betrami operator as follows
$$
\triangle u= g^{ij}\p_i\p_j u
$$
Again by Lemma \ref{AKKLT}, metric matrix $(g_{ij})$ and its invers $(g^{ij})$  are bounded in  the H\"older space $C^{1,\alpha}$ so that  the terms on the right hand side are bounded in the H\"older space $C^{0,\alpha}$. Let $p$ be some interior point on $X$. By the classical interior estimates \` a priori (see \cite{GT} Theorem 6.2), we get the boundness of the metric matrix $(g_{ij})$ in the H\"older space $C^{2,\alpha}$. Iterating the above procedure, we get the interior $C^{k+2,\alpha}$ estimates \` a priori. Now we treat the boundary case. Assume $p\in \p X$.  We use the above elliptic equations for the indexes $1\le i=\gamma,j=\beta\le 3$.  We note the boundary is totally geodesic so that $g_{\gamma\beta}=\hat{g}_{\gamma\beta}$ on $D\cap \p X$. Moreover, $g_{\gamma\beta}\in C^{k+2,\alpha}(D\cap \p X)$. Thus, we get the elliptic system with Dirichlet boundary conditions
$$
\left\{
\begin{array}{rllllll}
\triangle g_{\gamma\beta}&=&-2Ric_{\gamma\beta}+{\mathcal P}_{\gamma\beta}(g,\p g)&\mbox{in }D\\
g_{\gamma\beta}&=&\hat{g}_{\gamma\beta}&\mbox{on }D\cap \p X
\end{array}
\right.
$$
By Theorem 6.6 \cite{GT} , we infer the boundness of  $(g_{\gamma\beta})$ in the H\"older space $C^{2,\alpha}$.  Now we write elliptic PDE for $g^{0i}$ with Neumann boundary conditions. Let us denote $N=\frac{\nabla x_0}{|\nabla x_0|}$ the unit normal vector on the boundary $\p X$. In local coordinates, we can write on the boundary $\p X$
$$
N(u)=(g^{00})^{-1/2}g^{0j}\p_j u
$$
Note the boundary $\p X$ is totally geodesic so that on the boundary $D\cap \p X$ 
$$
N(g^{00})=-2H g^{00}=0
$$
and
$$
N(g^{0\gamma})=-H g^{0\gamma}+\frac12  (g^{00})^{-1/2}g^{\gamma j}\p_j g^{00}= \frac12  (g^{00})^{-1/2}g^{\gamma j}\p_j g^{00}
$$
We can write Ricci equation for the components $g^{0i}$ with Neumann boundary conditions
$$
\left\{
\begin{array}{llllll}
\triangle g^{0i}&=&-2Ric^{0i}+{\mathcal P}^{0i}(g,\p g)&\mbox{in }D\\
N(g^{00})&=&0&\mbox{on }D\cap \p X\\
N(g^{0\gamma})&=& \frac12  (g^{00})^{-1/2}g^{\gamma j}\p_j g^{00}&\mbox{on }D\cap \p X
\end{array}
\right.
$$
Using Theorem 6.30 \cite{GT}, we deduce the boundness of  $(g_{00})$ in the H\"older space $C^{2,\alpha}$. Going back the equation, we have $N(g^{0\gamma})\in C^{1,\alpha}(D\cap \p X)$. Again from Theorem 6.30 \cite{GT}, we deduce the boundness of  $(g^{0\gamma})$ in the H\"older space $C^{2,\alpha}$.  To see this, recall in harmonic coordinates, we have
$$
\frac 12(\delta_{ij})\le (g_{ij})\le 2 (\delta_{ij}),\; \frac 12(\delta_{ij})\le (g^{ij})\le 2 (\delta_{ij})
$$
Thus  for any $i,j\in\{0,1,2,3\}$, we have $\frac 12\le g_{ii},  g^{ii}\le 2$ and $|g_{ij}|\le 2$. Hence the coefficients in the boundary derivative $N$ are all in   $ C^{1,\alpha}(D\cap \p X)$ and $(g^{00})^{-1/2}g^{00} \ge \sqrt{1/2}$. On the other hand, the coefficients $g^{ij}$ in the Laplace-Betrami operator $\triangle =g^{ij}\p_i\p_j$ are in $ C^{0,\alpha}(D)$.  Thus the desired uniform estimates follows. 
From the fact $g=\det(g_{\gamma\beta})/g^{00}$, we obtain $g\in  C^{2,\alpha}(D)$. Therefore
$$
A_{0 i}=g g^{0i}\in  C^{2,\alpha}(D)
$$
Now we denote $(h_{\gamma\beta})=(g_{\gamma\beta})$ the $3\times 3$ matrix  and $(h^{\gamma\beta})$ the inverse matrix of $(h_{\gamma\beta})$, and $h=\det(h_{\gamma\beta})$. We remark for any $1\le \gamma\le 3$
$$
g_{\gamma 0}=(-1)^{3+\beta}\frac{ A_{0 \beta}}{hh^{\beta\gamma}}\in  C^{2,\alpha}(D)
$$
Finally, we have
$$
g_{0 0}=\frac{1-g_{ \gamma 0}g^{ \gamma 0}}{g^{0 0}}\in  C^{2,\alpha}(D)
$$
Now, iterating the above procedure, we prove the desired result $ g\in {C^{k+2,\alpha}(D)}$ since  the coefficients in the boundary derivative $N$ are all in   $ C^{k+1,\alpha}(D\cap \p X)$ and  the coefficients $g^{ij}$ in the Laplace-Betrami operator $\triangle =g^{ij}\p_i\p_j$ and the terms on the right hand side in Ricci equations are in $ C^{k,\alpha}(D)$. Thus, we finish the proof.
\end{proof}

\end{document}